\newtheorem{thm}{Theorem}[section]  %[chapter]
\newtheorem{lemma}[thm]{Lemma}
\newtheorem{cor}[thm]{Corollary}
\newtheorem{prp}[thm]{Proposition}
\theoremstyle{definition}
\newtheorem{dfn}[thm]{Definition}
\newtheorem{example}[thm]{Example}
\theoremstyle{remark}
\newtheorem{remark}[thm]{Remark}
\numberwithin{equation}{section} %{chapter}
  \newcommand{\blue}[1]{\textcolor{black}{#1}}
  \newcommand{\red}[1]{\textcolor{black}{#1}}
\def\comment#1{ }
\author{\large %Akihito Ebisu\\
Yoshishige Haraoka \\
Hiroyuki Ochiai\\
Takeshi Sasaki\\
Masaaki Yoshida}
\title{\bf  Fuchsian differential equations of order 3,...,6 with three singular points and an accessory parameter}
\date{\today}
\begin{document}\maketitle
\dosecttoc

\begin{abstract}
  Fuchsian differential equations $H_j$ of order $j=3,\dots,6$ with three singular points and one accessory parameter are presented. The shift operators for $H_6$ are studied. They lead to assign the accessory parameter of $H_6$ a cubic polynomial of local exponents so that the equation has several nice symmetries. The other equations will be studied in the forthcoming papers.
\end{abstract}
\tableofcontents
\vfill

\noindent
{\bf Subjectclass}[2020]: Primary 34A30; Secondary 34M35, 33C05, 33C20, 34M03.

\noindent
    {\bf Keywords}: Fuchsian differential equation, accessory parameters, shift operators,    reducibility,  factorization, middle convolution, 
  symmetry, hypergeometric differential equation.%, Dotsenko-Fateev equation.

\newpage
\section*{Introduction}\setcounter{stc}{1}%\nostcrule   
\addcontentsline{toc}{section}{\protect\numberline{}Introduction} 
A Fuchsian ordinary differential equation is called rigid
if it is uniquely determined
by the local behaviors at the regular singular points.
In other words, a Fuchsian ordinary differential equation is rigid
if it is free of accessory parameters.
For rigid Fuchsian ordinary differential equations,
we \blue{know how to obtain} integral representations of solutions,
monodromy representations, shift relations, irreducibility conditions,
connection coefficients and so on (cf. \cite{Osh,Hara}).
While for non-rigid differential equations,
we have no way to know those things in general.
\par\medskip

\red{In this paper and in the forthcoming paper \cite{HOSY2},}
we study several Fuchsian equations with three singular points $\{0,1,\infty\}$.
A most naive generalization of the Gauss hypergeometric equation $E_2$ with the Riemann scheme
$$    R_2:\left( \begin{array}{ccc}
    x=0:&0&a_1\\
    x=1:&0&a_2\\
    x=\infty:&a_4&a_3\end{array}\right),\quad a_1+\cdots+a_4=1,$$
    would be \blue{an equation} of order three with the Riemann scheme
$$    R_3:\left( \begin{array}{cccc}
    x=0:&0&b_1&b_2\\
    x=1:&0&b_3&b_4\\
    x=\infty:&b_7&b_5&b_6\end{array}\right),\quad b_1+\cdots+b_7=3,$$
\blue{which we denote by $H_3$.}    This has an expression as
$$H_3:x^2(x-1)^2\partial^3+x(x-1)p_2\partial^2+p_1\partial+p_0\in\mathbb{C}[x][\partial],\quad \partial=d/dx$$
where $p_2,\ p_1$ and $p_0$ are polynomials in $x$ at most of degree $1$, $2$ and 1, respectively.
The number of coefficients is 7, and the number of free local exponents is 6, thus one coefficient is not determined by the local exponents. Actually, the constant term of $p_0$ is not determined, which is often called the {\it accessory parameter}.
  
$H_3$ is connected via {\it addition and middle convolution}
%\footnote{\blue{There is no $H_7,\dots$, see \S \ref{fromH3toH654bymc}}}
 with equations $H_4,H_5$ and $H_6$ of order 4, 5 and 6, with respective Riemann schemes: 
$$R_4:\left( \begin{array}{ccccc}
  x=0:& 0&1&c_1&c_2\\
  x=1:& 0&1&c_3&c_4\\
  x=\infty:& c_8&c_5&c_6&c_7\end{array}\right), \quad
R_5:\left( \begin{array}{cccccc}
 x=0:&  0&1&d_1&d_2&d_3\\
 x=1:&  0&1&d_4&d_5&d_6\\
 x=\infty:&  d_9&\ d_9+1\ &d_9+2\ &d_7&d_8\end{array}\right), $$$$
R_6:\left( \begin{array}{ccccccc}
 x=0:&  0&1&2&e_1&e_2&e_3\\
 x=1:&  0&1&2&e_4&e_5&e_6\\
 x=\infty:&  e_0&\ e_0+1\ &\ e_0+2\ &e_7&e_8&e_9\end{array}\right),$$
 where $c_8,\ d_9$ and $e_0$ are determined by the Fuchs relation. We assume that these equations have no logarithmic solution at the singular points (except \S 2.4.2) unless otherwise stated.
   $H_j\ (j=3,4,5,6)$ has $j+3$ free local exponents and one accessory parameter.

   For example, $H_6$ is obtained from $H_3$ as follows: \par\smallskip\noindent(1) Compose $x(x-1)X$ from the left, and $X^{-1}$ from the right, where $X:=x^{g_0}(x-1)^{g_1}$. Then the head (top-order term) of the equation changes into $x^3(x-1)^3\partial^3$. \par\smallskip\noindent(2) Compose $\partial^3$ from the left to get $(\theta,\partial)$-form \blue{(refer to \S \ref{Genxz}), where $\theta:=x\partial$.} \par\smallskip\noindent(3) Replace $\theta$ by $\theta-u$ (middle convolution with parameter $u$). \par\smallskip\noindent
   Then the Riemann scheme of the resulting  equation is given as
$$ \left( \begin{array}{cccccc}
   0&1&2&g_0+u&b_1+g_0+u&b_2+g_0+u\\
   0&1&2&g_1+u&b_3+g_1+u&b_4+g_1+u\\
   1-u&2-u&3-u&b_5-g_0-g_1-u&b_6-g_0-g_1-u&b_7-g_0-g_1-u
     \end{array}\right).$$
We rename the local exponents as in $R_6$, and get the equation $H_6$.
The shifts of the three new parameters $g_0\to g_0\pm1,g_1\to g_1\pm1$ and $u\to u\pm1$
induce the shifts of the local exponents:
$$
 \begin{aligned}
  sh_1&:(e_1,e_2,e_3)\to(e_1\pm1,e_2\pm1,e_3\pm1),\\ 
  sh_2&:(e_4,e_5,e_6)\to(e_4\pm1,e_5\pm1,e_6\pm1),\\
  sh_3&:(e_1,\dots,e_7,e_8,e_9)\to
  (e_1\pm1,\dots,e_6\pm1,e_7\mp1,e_8\mp1,e_9\mp1).
 \end{aligned}
$$
For these shifts, we present the shift operators explicitly (Theorem \ref{shopH6}). 
When the equation is rigid, the construction of shift operators is known (\cite{Osh} Chapter 11).
 \par\medskip
   Since the equation $H_6$ has an accessory parameter, say $ap$, writing $H_6=H_6(e,ap)$, the shift operators for the shifts $sh_i$ send the solutions of $H_6(e,ap)$ to those of $H_6(sh_i(e),ap')$ for some $ap'$ not necessarily equal to $ap$.
   \par\medskip   
When $ap$ is a polynomial of $e$, say $f(e)$, if $H_6(e,f(e))$ admits a shift operator for each shift $sh_i$, then $f(e)$ has a specific form (Theorem 7.3). This is the main theorem in this paper. If $f(e)$ is a cubic polynomial of $e$, then $f(e)$ is explicitly written with coefficients $a=a(a_1,\dots,a_6)$ (Corollary 7.4). This is denoted by $f(e,a)$. 
\par\medskip
  {\sl We find a polynomial $f(e,a)$ of the local exponents $e$ with a set $a$ of parameters such that,
for every shift $sh_j$, the shift operator sends the solution of $H_6(e,f(e,a))$ to those of $H_6(sh_j(e),f(sh_j(e),a))$ (Theorem 7.3).
This is the main theorem in this paper.}
  \par\medskip
We set $G_6(e,a)=H_6(e,f(e,a))$. By operating a middle convolution to $G_6(e,a)$, we get the equation $G_3(e,a)$ of order 3.
 Then via addition and middle convolution, we get $G_4(e,a)$ and $G_5(e,a)$ from $G_3(e,a)$, where the accessory parameters are replaced by polynomials of the local exponents of $H_4,H_5$ and $H_3$, respectively. Finally, we get $E_j=E_j(e):=G_j(e,0)$, $(j=3,4,5,6)$.
 \par\medskip
\red{Codimension-2 specializations
\footnote{\red{For a Fuchsian equation $E$, a codimension-$k$ specialization of $E$ is an equation $E$ with $k$ linearly independent relations among the local exponents, apart from the Fuchsian relation}}
 $S\!E_3$ of $E_j$ $(j=3,4,5,6)$ having rich shift operators are studied in  \cite{HOSY2}; $S\!E_3$ is equivalent to the Dotsenko-Fateev equation.}

   \par\bigskip
   This paper is organized as follows. In Section 1, The equation $H_6$ is introduced. We tabulate the equations $H_5,H_4, H_3$ and define $G_j,E_j\ (j=3,4,5,6)$ without much explanation. This is to show the reader what kind of equations we treat. 
   
   In order to define equations and to study shift operators,
   we need various tools of investigation, which we prepare in Section 2. 
When a certain transformation such as a transformation caused by a
coordinate change is performed to an equation,
it may happen that the equation remains the same 
with certain change of parameters. In such a case,
the equation is said to be {\it symmetric} relative to this transformation.
We study the following symmetries
\begin{itemize}
\item adjoint symmetry; when the adjoint equation remains the same,
  with some change of parameters, 
\item differentiation symmetry; when derivatives of solutions satisfy the same equation, with some change of parameters, 
\item symmetry relative to the coordinate changes $x\to1/x$ and $x\to1-x$.
\end{itemize}
We recall the notion of {\it accessory parameters}, which plays a central role in this paper.
We see that each of $H_j\ (j=3,4,5,6)$ has one accessory parameter. 
\par\smallskip

In Section 3, we review the notion of {\sl addition and  middle convolution}, which is important to know how the equations are related among them. Explicit procedure of getting $H_6,H_5,H_6$ from $H_3$, and the inverse procedure are presented.
\par\smallskip\noindent

In general, for  shifts (Definition \ref{DefShift}) of local exponents $sh_\pm: e\to e_\pm$ of a differential equation $H(e,ap)$, \blue{where $e_\pm$ denote the shifted exponents,} 
if a non-zero differential operator $P_{\pm}=P_{\pm}(e)$ sends solutions of $H(e,ap)$
to those of $E(e_\pm,ap_\pm)$, we call the operator $P_{\pm}$ the {\it shift operators}
of $H$ for the shift of the local exponents $e\to e_\pm$. 
These operators are important tools to see the structure of the space
of solutions. If such operators $P_{\pm}$ exist, we define the operator
$Sv_{e}$ by $P_+(e_-)\circ P_-(e)$, which turns out to be a constant mod $H(e)$. \footnote{\blue{Composition of two differential operators $P$ and $Q$ is
  denoted by $P\circ Q$; we often  write it as $PQ$.}}
We call such a constant the {\it S-value} for the shifts $e\to e_\pm$.
When $Sv_{e}$ vanishes then $H(e)$ is {\it reducible}. These are discussed in Section 4.
\par\smallskip

In Section 5, we first present these procedures for 
the Gauss equation $E_2$, which plays the ideal model of our study:
we recall the well-known properties such as the shift operators, reducibility conditions,
and explicit decompositions when the equation is reducible, \dots,
which will be generalized later for the equations above.

\par\smallskip
In Section 6, we study shift operators of our main equation $H_6$.
We find shift operators for each shift $sh_j$, S-values, and reducibility conditions,
and when $H(\epsilon)$ is reducible for some $e=\epsilon$, we see how the factorization of $H(\epsilon)$ is inherited to $H(sh_j(\epsilon))$.

\par\smallskip
In section 7, we state the main theorem (Theorem \ref{shiftopG6}) in this paper: we find cubic polynomials $S_{10}, t_{2i}, t_{3i}\ (i=1,2,3)$ \blue{of the local exponents} such that if the accessory parameter $ap$ is assigned as
$$f(e,a)=S_{10}+a_0+a_1t_{21}(e)+\cdots+a_6t_{33}(e),$$
\blue{where $a_0,\dots,a_6$ are constants,} and if we put
$$G_6(e,a)=H_6(e,f(e,a)),$$
then the shift operator for the shift $sh_j$ sends the solution space of $G_6(e,a)$ to that of $G_6(sh_j(e),a)$.

\par\smallskip 
In Section 8, we finally reach the equation $E_6(e)=G_6(e,0)$, which enjoys fruitful symmetries (e.g. adjoint, differentiation, the coordinate changes $x\to1/x, x\to1-x$, ...).

\par\smallskip
In Section 9, the shift operators of $H_5$ is given; they are derived from the shift operators $P_{\pm00}$ and $P_{0\pm0}$ of $H_6$. The S-values and reducibility conditions are given.  For the equation $H_4$, we find only one shift operator $\partial$ and its inverse, which is in Section 10. No shift operator is found for the equation $H_3$. 
\par\medskip
\red{The equations we treat in this paper and the paper \cite{HOSY2}:
$$\begin{array}{cl}
{\rm this\ paper}&\quad H_j,\ G_j,\ E_j,\quad (j=6,5,4,3),\ {\rm and}\ E_2,\\[2mm]
{\rm  \cite{HOSY2}}&\quad S\!E_j, \ (j=6,5,4,3),
  \end{array}$$
where $E_2$ is the Gauss hypergeometric equation.
They are mutually related as in the following figure 
$$\begin{array}{cccccccc}
 H_6&\longrightarrow &G_6&\longrightarrow&E_6&\longrightarrow& S\!E_6&\\
\downarrow&&\downarrow&&\downarrow&&\downarrow \\
 H_5&\longrightarrow &G_5&\longrightarrow&E_5&\longrightarrow& S\!E_5&\\
\downarrow&&\downarrow&&\downarrow&&\downarrow \\
 H_4&\longrightarrow &G_4&\longrightarrow&E_4&\longrightarrow& S\!E_4&\\
\downarrow&&\downarrow&&\downarrow&&\downarrow \\
 H_3&\longrightarrow &G_3&\longrightarrow&E_3&\longrightarrow& S\!E_3&
  \end{array}$$
}
  Horizontal arrows stand for specializations keeping the spectral type,
      and vertical lines for factorizations. Every equation 
      has one accessory parameter.

\par\medskip
{\it Acknowledgment}: We used the software Maple, especially {\sl DEtools \!}-package for multiplication and division of differential operators.  Interested readers may refer to our list of data written in text files of Maple format
\footnote{ http://www.math.kobe-u.ac.jp/OpenXM/Math/FDEdata}
for the differential equations and the shift operators treated in this document.
\par\smallskip\noindent
We thank T. Oshima and the referee for critical comments.
We also thank N. Takayama for instructing us about computer systems as well as various computational skills.

\par\smallskip\noindent
We previously submitted to a journal a long paper that contains most of the results in \red{this paper and the paper \cite{HOSY2}.}  Two referees gave us kind and useful comments. These helped us rewrite the paper to make the reasoning much clear and the structure straight. We deeply appreciate their kindness. To clarify the story, we divided the long paper into \red{two} relatively short ones: this paper and \red{the paper \cite{HOSY2}. }

   \newpage
\section{Equations $H_j,G_j,E_j\ (j=3,4,5,6)$ and $E_2$}\label{TheTen}
\secttoc
In this section, we introduce Fuchsian ordinary differential equations $H_j,G_j,E_j\ (j=3,4,5,6)$ of order $3,\dots,6$, with three singular points $\{0,1,\infty\}$.

When we are studying a differential operator $E$, we often call $E$ a differential equation and speak about the solutions without assigning an unknown.

The {\it Riemann scheme} of an equation  is the table of local exponents
at the singular points. The {\it Fuchs relation} says that
the sum of all the exponents equals
\begin{equation}\label{Fuchsrelation}
  \frac12n(n-1)(m-2),\end{equation}
where $n$ is the order of the equation,
and $m$ is the number of singular points;
for our equations, $m=3$.

When an equation $E\in\mathbb{C}[x][\partial]$ of order $n$ is written as
$$E=p_n\partial^n+\sum_{i=0}^{n-1}p_i\partial^i,$$
where
$$ p_n=x^{n_0}(x-1)^{n_1},\  p_i=\sum_jp_{ij}x^j\ (i=0,\dots,n-1),\ \partial=d/dx,$$
for some integers $n_0$ and $n_1$, we assume the coefficients $p_0,\dots, p_n$ have no common factor. $p_n\partial^n$ is often called the {\it head} of the equation.

A subset $ap$ of coefficients $\{p_{ij}\}$ is called a set of {\it accessory parameters}, if all other coefficients are uniquely written in terms of $ap$ and the local exponents. The choice of $ap$ is not unique, but the cardinality of $ap$ is unique, which is called the {\it number of accessory parameters}. For $H_j$, it is 1, and we choose one and call it {\it the} accessory parameter.

When an equation is determined uniquely by the local exponents,
it is said to be {\it free of accessory parameters} or {\it rigid}.

\subsection{Equation $H_6$}\label{E6}
%\setcounter{stc}{2}%\nostcrule  
%\nostcrule
%\secttoc
We present a Fuchsian differential equation $H_6$ of order 6 with 9 free
local exponents, with 3 singular points, and with the Riemann scheme
$$R_6:\left(\begin{array}{lcccccc}x=0:&0&1&2&e_1&e_2&e_3\\x=1:&0&1&2&e_4&e_5&e_6\\x=\infty:&s&s+1&s+2&e_7&e_8&e_9\end{array}\right),\quad e_1+\cdots+e_9+3s=6,
  $$
  with spectral type\footnote{Any solution at the three singular points has no logarithmic terms; this is often called the {\sl no-logarithmic condition} (refer to \S \ref{GenLoc}).} $(3111,3111,3111)$ and with generic local exponents $e=(e_1,\dots,e_9)$.
This is the main equation in this article.
 
 Any equation with Riemann scheme $R_6$ and with the said spectral type
 has the following expression 
 \begin{equation}\label{eqTxdx}T=p_6(x)\partial^6+\cdots+p_1(x)\partial+p_0\in\mathbb{C}[x][\partial],\end{equation}
   where
 \begin{equation}\label{coeffT}
   \setlength{\arraycolsep}{2pt}
   \def\arraystretch{1.1}
   \begin{array}{lcllcl}
p_{6}&=& x^3(x-1)^3,&
p_{5}&=& (p_{50}+p_{51}x)x^2(x-1)^2,\\
p_{4}&=& (p_{40}+p_{41}x+p_{42}x^2)x(x-1),\ &   
p_{3}&=& p_{30}+p_{31}x+p_{32}x^2+p_{33}x^3,\\
p_{2}&=& p_{20}+p_{21}x+p_{22}x^2,&             
p_{1}&=& p_{10}+p_{11}x,
 \end{array}     \setlength{\arraycolsep}{3pt}
   \end{equation}
refer to Proposition \ref{nologcond}. \blue{We call such an expression by use of polynomial coefficients of $x$ and the differentiation $\partial$, the $(x,\partial)$-form (refer to \S \ref{Genxz} for related expressions).}
The indicial polynomial at $x=0$ is given by 
 \[\rho(\rho-1)(\rho-2)\{(\rho-3)(\rho-4)(\rho-5)
    +(\rho-3)(\rho-4)p_{50} +(\rho-3)p_{40}+p_{30}\}.\]
So the coefficients $p_{50},p_{40}$ and $p_{30}$ are expressed
as polynomials of the local exponents $\{e_1,e_2,e_3\}$.
Do the same at $x=1$. Then we find that 
most \blue{of the} coefficients (as well as $p_{31}-p_{32}$) can be expressed
by the local exponents $e_1,\dots,e_9$, except the following {\it four} coefficients:
  $$p_{10},\ p_{20},\ p_{21},\ p_{32}.$$ 
We next examine the no-logarithmic condition at $\infty$. Applying $T$ to the expression
  $$u(x)=x^{-\rho}\sum_{m=0}^\infty u_mx^{-m},$$
we see that $Tu$ is expanded as
  $$ f(\rho)u_0x^{-\rho}+[f(\rho+1)u_1+g(\rho)u_0]x^{-\rho-1}
+[f(\rho+2)u_2+g(\rho+1)u_1+h(\rho)u_0]x^{-\rho-2}+\cdots,
$$
where
\begin{equation}\label{inditialeq} \begin{aligned}
  f(\rho)
  &= \rho(\rho+1)\cdots(\rho+5)
  -p_{51}\rho(\rho+1)\cdots(\rho+4) \\
  &\quad +p_{42}\rho(\rho+1)\cdots(\rho+3) 
  -p_{33}\rho(\rho+1)(\rho+2) \\
  &\quad +p_{22}\rho(\rho+1)
  -p_{11}\rho
  +p_0 \end{aligned} 
\end{equation}
  is the indicial polynomial at infinity and
\begin{equation}\label{equationgh}  \begin{aligned}  g(\rho)
  &=
  -3\rho(\rho+1)\cdots(\rho+5)
  -(p_{50}-2p_{51})\rho(\rho+1)\cdots(\rho+4)\\
  &\quad
  +(p_{41}-p_{42})\rho\cdots(\rho+3)
  -p_{32}\rho(\rho+1)(\rho+2) \\
  &\quad +p_{21}\rho(\rho+1)
  -p_{10}\rho,\\
  h(\rho)
  &=
  3\rho(\rho+1)\cdots(\rho+5)
  -(-2p_{50}+p_{51})\rho(\rho+1)\cdots(\rho+4)\\
  &\quad
  +(p_{40}-p_{41})\rho\cdots(\rho+3)
  -p_{31}\rho(\rho+1)(\rho+2) \\
  &\quad +p_{20}\rho(\rho+1).
 \end{aligned} \end{equation}
The local exponents at infinity, the roots of $f(\rho)$,
are  $s,s+1,s+2,$ and the other three are generic; in particular,
\begin{equation} \label{nonzero} f(s+k)\neq0\quad(k\geq3).
\end{equation}
When $\rho=s+2$,
$u_m$ $(m\geq1)$ is determined by the recurrence relation
$$
 f(s+2+m)u_m=F_m(u_0,u_1,\dots,u_{m-1}),
$$
for some function $F_m$, thanks to \eqref{nonzero}.
When $\rho=s+1$, the equation for $u_1$ becomes
$$
 f(s+2)u_1+g(s+1)u_0=0
$$
with $f(s+2)=0$.
Therefore we need $g(s+1)=0$.
Then $u_m$ $(m\geq2)$ is determined thanks to \eqref{nonzero}.
When $\rho=s$,
the equation for $u_1$ becomes
$$
 f(s+1)u_1+g(s)u_0=0
$$
with $f(s+1)=0$, and so we need $g(s)=0$.
Moreover the equation for $u_2$ becomes
$$
 f(s+2)u_2+g(s+1)u_1+h(s)u_0=0
$$
with $f(s+2)=g(s+1)=0$. So we need $h(s)=0$.
\par
Hence  the no-logarithmic condition is given by the {\it three} equations:
\begin{equation}\label{equationggh}g(s)=0,\quad g(s+1)=0,\quad h(s)=0\end{equation}
  for the {\it four} coefficients $p_{10}$, $p_{20}$, $p_{21}$, $p_{32}$.
  Hence, it remains one freedom of choice of the coefficients. So we get

\begin{prp}\label{accpar1}
  The differential equation with the Riemann scheme $R_6$ such that any local solution at 0 and 1 does not have logarithmic terms can be written as \eqref{eqTxdx} with \eqref{coeffT}. This equation has four free coefficients $\{p_{10},p_{20},p_{21},p_{32}\}$. Defining three polynomials $\{f,g,h\}$ by \eqref{inditialeq} and \eqref{equationgh}, the condition that any local solution at $\infty$ does not have logarithmic terms is given by the system of three equations \eqref{equationggh}.
\end{prp}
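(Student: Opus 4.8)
The plan is to read the general form of the operator off the Fuchsian data at the two finite points, to pin down the coefficients by matching indicial polynomials, and then to extract the three conditions at infinity from the Frobenius recurrence already displayed above.

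First I would justify the $(x,\partial)$-form. Since the operator is Fuchsian with singular set $\{0,1,\infty\}$ and the exponents $0,1,2$ occur at both $0$ and $1$, the head is forced to be $p_6=x^3(x-1)^3$, and the degree bounds on $p_5,\dots,p_0$ in \eqref{coeffT} are precisely the requirement of a regular singularity together with the absence of logarithmic solutions at the two finite points; I would invoke Proposition \ref{nologcond} and Corollary \ref{nologcondcor} for this. This proves the first assertion and exhibits the $15$ coefficients $p_{50},p_{51},p_{40},\dots,p_0$ as the unknowns.

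Next I would compute the indicial polynomial at $x=0$ by feeding $x^\rho$ into $T$ and collecting the lowest-order terms; the shape of \eqref{coeffT} forces the factorization $\rho(\rho-1)(\rho-2)\{\cdots\}$, so that $0,1,2$ are automatic roots and the cubic brace must carry $e_1,e_2,e_3$. Equating the brace to $\prod_i(\rho-e_i)$ solves linearly for $p_{50},p_{40},p_{30}$. Repeating the computation at $x=1$ (equivalently under $x\mapsto 1-x$) and imposing that the indicial polynomial $f(\rho)$ of \eqref{inditialeq} carry the prescribed roots $s,s+1,s+2,e_7,e_8,e_9$ at infinity, I would solve the resulting linear system for all remaining coefficients except four. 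A bookkeeping of which linear combinations get fixed shows that all coefficients are pinned down (with the combination $p_{31}-p_{32}$ among those determined) except the four $p_{10},p_{20},p_{21},p_{32}$, which remain free; this is the second assertion.

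Finally, for the no-logarithm condition at infinity I would substitute $u(x)=x^{-\rho}\sum_m u_m x^{-m}$ into $T$ and read off the three-term recurrence whose coefficients attached to $u_m,u_{m-1},u_{m-2}$ are exactly $f,g,h$ of \eqref{inditialeq}--\eqref{equationgh}. Because $s,s+1,s+2$ are roots of $f$ differing by integers, the recurrence degenerates precisely when the shifted argument lands on one of these roots: for $\rho=s+2$ genericity \eqref{nonzero} makes every step solvable; for $\rho=s+1$ the $u_1$-equation forces $g(s+1)=0$; and for $\rho=s$ the $u_1$- and $u_2$-equations force $g(s)=0$ and, using $g(s+1)=0$, then $h(s)=0$. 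These are the three equations \eqref{equationggh}, which is the third assertion. The main obstacle, and the only genuinely delicate point, is the last step's bookkeeping: one must check that once $g(s),g(s+1),h(s)$ vanish, every later recurrence (for $m\ge 2$ when $\rho=s+1$ and for $m\ge 3$ when $\rho=s$) is nondegenerate, so that no further logarithmic obstruction hides at higher order. This is exactly where \eqref{nonzero}, namely $f(s+k)\ne 0$ for $k\ge 3$, is needed: it confines the resonances to the three integer-spaced exponents $s,s+1,s+2$ and guarantees that $e_7,e_8,e_9$ produce none.
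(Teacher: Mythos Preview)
Your proposal is correct and follows essentially the same route as the paper: invoke Proposition~\ref{nologcond} and Corollary~\ref{nologcondcor} for the form \eqref{eqTxdx}--\eqref{coeffT}, fix all coefficients except $p_{10},p_{20},p_{21},p_{32}$ from the three indicial polynomials, and then run the Frobenius analysis at infinity exactly as in the displayed expansion to extract \eqref{equationggh}. One small imprecision: the recurrence is actually four-term, not three-term (the head $x^3(x-1)^3\partial^6$ alone already spans four consecutive powers of $x^{-1}$), but this does not affect your argument, since the fourth coefficient function first enters at the step for $u_3$, where $f(s+3)\ne0$ by \eqref{nonzero} and no obstruction can occur.
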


\begin{prp}\label{eqwithR6}Let
  \begin{eqnarray}\label{eqT}T=T_0(\theta)+T_1(\theta)\partial+T_2(\theta)\partial^2+T_3(\theta)\partial^3\in\mathbb{C}[\theta,\partial],\quad \theta=x\partial\end{eqnarray}
be an equation with Riemann scheme $R_6$ and with the spectral type $(3111,3111,3111)$. Then most \blue{of the} coefficients can be expressed in terms of the local exponents as
{\setlength\arraycolsep{2pt} \def\arraystretch{1.1}
  \begin{eqnarray} 
\quad T_0&=&(\theta+2+s)(\theta+1+s)(\theta+s)B_0,\quad B_0=(\theta+e_7)(\theta+e_8)(\theta+e_9), \label{eqT0}\\
\quad      T_1&=&(\theta+2+s)(\theta+1+s)B_1,\quad B_1=T_{13}\theta^3+T_{12}\theta^2+T_{11}\theta+T_{10}, \label{eqT1}\\
\quad    T_2&=&(\theta+2+s)B_2,\quad B_2=T_{23}\theta^3+T_{22}\theta^2+T_{21}\theta+T_{20}, \label{eqT2}\\
\quad    T_3&=&(-\theta-3+e_1)(-\theta-3+e_2)(-\theta-3+e_3), \label{eqT3}
  \end{eqnarray}
  }
where
\[\def\arraystretch{1.2}\setlength\arraycolsep{3pt} \begin{array}{rcl}
  T_{13}&=&-3,\quad T_{23}=3,\quad  T_{12}=-9+s_{11}-2s_{13},\quad  T_{22}=18+s_{13}-2s_{11},\\
T_{11}&=&-8+(s_{11}^2+2s_{11}s_{13}-s_{12}^2+s_{13}^2)/3+s_{11}-5s_{13}-s_{21}+s_{22}-2s_{23},\\
T_{21}&=&35+(-s_{11}^2-2s_{11}s_{13}+s_{12}^2-s_{13}^2)/3-7s_{11}+5s_{13}+2s_{21}-s_{22}+s_{23},\\
T_{20}&=&-T_{10}+19+(s_{11}^2s_{13}-s_{11}s_{12}^2+s_{11}s_{13}^2-s_{12}^2s_{13})/9+(s_{13}^3+s_{11}^3-2s_{12}^3)/27\\
&&+(-2s_{11}^2-4s_{11}s_{13}+s_{11}s_{22}+2s_{12}^2+s_{22}s_{12}-2s_{13}^2+s_{22}s_{13})/3\\
&&
-5s_{11}+4s_{13}+3s_{21}-2s_{22}-s_{31}-s_{32}-s_{33},
\end{array}
\]
except $T_{10}$, which does not affect the local exponents. In this sense, we call this coefficient the {\rm accessory parameter}.
Here $s_*$ are symmetric polynomials of the local exponents:
\def\arraystretch{1.1}\setlength\arraycolsep{1pt}
\begin{equation}\label{e19tos}\begin{array}{rl}      %%# new linebreak
s_{11}&=e_1+e_2+e_3,\quad s_{12}=e_4+e_5+e_6,\quad s_{13}=e_7+e_8+e_9,\\
s_{21}&=e_1e_2+e_1e_3+e_2e_3,\quad s_{22}=e_4e_5+e_4e_6+e_5e_6,\\ 
s_{23}&=e_7e_8+e_7e_9+e_8e_9,\quad s_{31}=e_1e_2e_3,\quad s_{32}=e_4e_5e_6,\\ 
s_{33}& = e_7e_8e_9,\quad s=-(s_{11}+s_{12}+s_{13}-6)/3.
\end{array}\end{equation}
\setlength\arraycolsep{3pt}
\end{prp}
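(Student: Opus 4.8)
The plan is to pin down the four coefficients $T_0,\dots,T_3$ one at a time from the local data at $0,1,\infty$, using the dictionary between the $(x,\partial)$-form \eqref{coeffT} and the $(\theta,\partial)$-form \eqref{eqT}. The two relations I need are $\partial^{i}=x^{-i}\,\theta(\theta-1)\cdots(\theta-i+1)$ and $\theta\,x^{k}=x^{k}(\theta+k)$, which combine to $T_i(\theta)\partial^{i}=x^{-i}\,T_i(\theta-i)\,\theta(\theta-1)\cdots(\theta-i+1)$. The $\partial^{6}$-part of \eqref{eqT} is then $\sum_i(\operatorname{lead}T_i)\,x^{6-i}$, and matching it with the prescribed head $x^{3}(x-1)^{3}=x^{6}-3x^{5}+3x^{4}-x^{3}$ forces $\deg_\theta T_i=6-i$ and fixes the four leading coefficients to $1,-3,3,-1$; in particular $T_{13}=-3$ and $T_{23}=3$.

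First I would use $x=0$ and $x=\infty$, where $\theta$ is diagonal. Applying $T$ to $x^{-\rho}$, the coefficient of the dominant power $x^{-\rho}$ is $T_0(-\rho)$, so the indicial polynomial at $\infty$ is $f(\rho)=T_0(-\rho)$ of \eqref{inditialeq}; its roots being the exponents $s,s+1,s+2,e_7,e_8,e_9$, together with $\operatorname{lead}T_0=1$, give \eqref{eqT0}. Dually, applying $T$ to $x^{\rho}$ and keeping the lowest power $x^{\rho-3}$ (the $x^{0}$-term of $x^{3}T$) produces the indicial polynomial $\rho(\rho-1)(\rho-2)\,T_3(\rho-3)$ at $0$; its roots being $0,1,2,e_1,e_2,e_3$, with $\operatorname{lead}T_3=-1$, give \eqref{eqT3}.

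Next I would build the no-logarithmic conditions at $\infty$ into the shape of $T_1,T_2$. Reading off the first two off-diagonal coefficients in the expansion of $T\big(x^{-\rho}\sum_m u_mx^{-m}\big)$ that precedes \eqref{inditialeq} identifies $g(\rho)=-\rho\,T_1(-\rho-1)$ and $h(\rho)=\rho(\rho+1)\,T_2(-\rho-2)$. Hence, for generic $s$, the no-log conditions \eqref{equationggh} ($g(s)=g(s+1)=0$, $h(s)=0$) are equivalent to $T_1(-s-1)=T_1(-s-2)=0$ and $T_2(-s-2)=0$, that is, to the factorizations $T_1=(\theta+2+s)(\theta+1+s)B_1$ and $T_2=(\theta+2+s)B_2$ with $B_1,B_2$ cubic, establishing \eqref{eqT1}--\eqref{eqT2}. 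At this point only the six coefficients $T_{10},T_{11},T_{12}$ of $B_1$ and $T_{20},T_{21},T_{22}$ of $B_2$ are still free, and the only unused local data is that at $x=1$, where the exponents must be $0,1,2,e_4,e_5,e_6$ without logarithms. Passing to $y=x-1$, $\eta=y\partial$, $\theta=\eta+\partial$, clearing the order-$3$ zero of the head (multiply by $y^{3}$) and extracting from the lowest powers of $y$ the indicial polynomial at $1$ and the two relations playing the roles of $g,h$ there, one gets five independent linear equations; solving them expresses $T_{11},T_{12},T_{21},T_{22},T_{20}$ through the elementary symmetric functions \eqref{e19tos} and the one surviving coefficient $T_{10}$, which is the accessory parameter.

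I expect the step at $x=1$ to be the main obstacle. Unlike at $0$ and $\infty$, here $\theta=\eta+\partial$ is not diagonal on $(x-1)^{\sigma}$ (indeed $\theta(x-1)^{\sigma}=\sigma(x-1)^{\sigma-1}+\sigma(x-1)^{\sigma}$), so reading the indicial and no-log data means carefully collecting several of the lowest powers of $y$ in each $T_i(\theta)\partial^{i}y^{\sigma}$ after the triple zero of the head is cleared, and then organizing the linear system so that its solution comes out symmetric in each block $(e_1,e_2,e_3)$, $(e_4,e_5,e_6)$, $(e_7,e_8,e_9)$ as in \eqref{e19tos}. A useful shortcut would be the $x\mapsto1-x$ symmetry, which fixes $\infty$ and exchanges $(e_1,e_2,e_3)\leftrightarrow(e_4,e_5,e_6)$: it transports the already-known data at $0$ to $x=1$ and should cut the computation roughly in half, while also confirming that exactly one combination, $T_{10}$, is left undetermined.
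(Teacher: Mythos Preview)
Your proposal is correct and follows essentially the same architecture as the paper, but with one genuine simplification and one step organized differently.

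For the factored shape of $T_1,T_2$, your direct identification $g(\rho)=-\rho\,T_1(-\rho-1)$ and $h(\rho)=\rho(\rho+1)\,T_2(-\rho-2)$ from the $(\theta,\partial)$-expansion is cleaner than what the paper does: the paper performs the coordinate change $x\to 1/y$, conjugates by $y^s$, and then requires the resulting operator to be polynomial in $(w,\partial_y)$, which forces $(\theta+s+1)(\theta+s+2)\mid T_1$ and $(\theta+s+2)\mid T_2$. Your route gets the same divisibilities in one line from \eqref{equationggh}.

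For the five conditions at $x=1$, the paper does not pass to $y=x-1,\ \eta=y\partial$ as you propose. Instead it converts back to the $(x,\partial)$-form and reads off the constraints directly from \eqref{coeffT}: the requirements $(x-1)^2\mid p_5$ and $(x-1)\mid p_4$ give $T_{12},T_{22}$ and $T_{11}+T_{21}$ (three linear equations), and the indicial equation at $x=1$ then supplies two more, determining $T_{11}$ and $T_{10}+T_{20}$. This is exactly your count of five; note that the first three of these are precisely what makes $x=1$ a regular singular point with exponents $0,1,2$ and no logarithms (Corollary~\ref{nologcondcor}), so your phrasing ``indicial plus two $g,h$-type relations'' is slightly off---it is really ``regularity/no-log (three conditions) plus the remaining cubic factor of the indicial polynomial (two conditions after Fuchs).'' Your $x\mapsto 1-x$ shortcut is not used in the paper; the paper's direct $(x,\partial)$-form computation is already short. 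The minor slip ``multiply by $y^3$'' should read ``divide by $y^3$.''
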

\begin{dfn}This equation (\ref{eqT}) is denoted by
  $$H_6=H_6(e,T_{10}),\quad e=(e_1,\dots,e_9).$$
  \end{dfn}
%\begin{proof} . %  
\subsection{Proof of Proposition \ref{eqwithR6}}
  Since the above operator \eqref{eqTxdx}: $T=x^3(x-1)^3\partial^6+\cdots$
  can be expressed in $(\theta,\partial)$-form,
  we write this equation as \eqref{eqT}: $T=T_0+T_1\partial+\cdots$.
  Since the head (top order term) of $T$ is
$$p_6\red{(x)}\partial^6=x^3(x-1)^3\partial^6=x^6\red{\partial^6}-3x^5\partial^5\ \partial+3x^4\partial^4\ \partial^2-x^3\partial^3\ \partial^3,$$
and $x^i\partial^i=\theta(\theta-1)\cdots(\theta-i+1)$, the terms  $T_0$ and $T_3$ are determined by local exponents at $x=\infty$
  and at $x=0$, as \eqref{eqT0} and \eqref{eqT3},
  thanks to Propositions \ref{localat0} and \ref{localat1}. In addition we have $$T_{13}=-3,\quad T_{23}=3.$$
  We could then \red{substitute these} into $(x,\partial)$-form
  $p_6(x)\partial^6+\cdots$,
  and follow the recipe in Proposition \ref{accpar1}.
  Instead,  we make a coordinate change $x\to 1/x$ to this equation.
  Perform the transformation $x=1/y,w=y\partial_y, \partial_y=d/dy$
  to \eqref{eqT}: 
$$T|_{x=1/y}=T_0(-w)-T_1(-w)yw+T_2(-w)y^2(w+1)w-T_3(-w)y^3(w+2)(w+1)w.$$
Multiply $y^{s}$ from the right, and $y^{-s}$ from the left:
\[ \def\arraystretch{1.2}\begin{array}{l}
  T_0(-(w+s))-T_1(-(w+s))y(w+s)+T_2(-(w+s))y^2(w+1+s)(w+s)\\
  -T_3(-(w+s))y^3(w+2+s)(w+1+s)(w+s);
\end{array}\]
Multiply $y^{-3}$ from the left:
\begin{equation}\label{atinfty}
  \def\arraystretch{1.2}
  \begin{array}{l}T_0(-(w+s+3))y^{-3}-T_1(-(w+s+3))y^{-2}(w+s)\\
\quad    +T_2(-(w+s+3))y^{-1}\times(w+1+s)(w+s)\\
\quad    -T_3(-(w+s+3))(w+2+s)(w+1+s)(w+s).
\end{array}\end{equation}
The first term is 
\[ \def\arraystretch{1.2} \begin{array}{ll}
  &(-(w+s+3)+s)(-(w+s+3)+s+1)(-(w+s+3)+s+2)\\
  &\quad \times(-(w+s+3)+e_7)(-(w+s+3)+e_8)(-(w+s+3)+e_9)y^{-3}\\
&=(w+3)(w+2)(w+1)(w+s+2-e_7)(w+s+2-e_8)(w+s+2-e_9)y^{-3}\\
  &=(w+s+2-e_7)(w+s+2-e_8)(w+s+2-e_9)\partial_y^3,
\end{array}\]
(by $\partial_y^3=(w+1)(w+2)(w+3)y^{-3})$ the last term is 
\[ \def\arraystretch{1.2} \begin{array}{ll}
  &(-(w+s+3)+3-e_1)(-(w+s+3)+3-e_2)(-(w+s+3)+3-e_3)\\
  &\times(w+s)(w+s+1)(w+s+2)\\
  &=(w+s)(w+s+1)(w+s+2)(w+e_1+s)(w+e_2+s)(w+e_3+s),
\end{array}\]
and the second term is (polynomial of $\theta$)$y^{-2}$
and the third term is (polynomial of $\theta$)$y^{-1}$;
these must be polynomials of $(w,\partial_y)$.
Since
$$\partial_y^2=(w+1)(w+2)y^{-2}\quad {\rm and}\quad \partial_y=(w+1)y^{-1},
$$
$(w+1)(w+2)$ divides $T_1(-(w+s+3))$, and $(w+1)$ divides $T_2(-(w+s+3)),$
that is,
  $$(\theta+2+s)(\theta+1+s)\,|\,T_1(\theta),\quad {\rm and}\quad (\theta+2+s)\,|\,T_2(\theta).$$
Now we are ready.
We put $T_1(\theta)$ and $T_2(\theta)$ as in \eqref{eqT1} and \eqref{eqT2},
and transform it to $(x,\partial)$-form: $T=p_6\partial^6+p_5\partial^5+\cdots$.
We have
$$p_6=x^3(x^3+T_{13}x^2+T_{23}x-1),
  \quad p_5=x^2((e_7+e_8+e_9+3s+18)+\cdots),\dots$$
  All the coefficients $p_{ij}$ are expressed in terms of
  $$e_1,\dots, e_9,s,\quad T_{13},T_{12},T_{11},T_{10}\ (=p_{10}),
  \  T_{23},T_{22},T_{21},T_{20}\ (=p_{20}),$$
where $s=(6-e_1-\cdots-e_9)/3$. 

\begin{itemize}[leftmargin=30pt]%[leftmargin=*]
\item As we saw already, $T_{13}=-3,\ T_{23}=3$.
\item $x^2(x-1)^2\,|\,p_5$ leads to
  $$\begin{array}{ll}T_{12} &=e_1+e_2 +e_3 -2e_7 -2e_8 -2e_9 -9=s_{11}-2s_{13}-9,\\
  T_{22} &= -2e_1 -2e_2 -2e_3 +e_7 +e_8 +e_9 +18=-2s_{11}+s_{13}+18.\end{array}$$

\item $x(x-1)\,|\,p_4$ leads to
  $$T_{11}+  T_{21}=s_{21}-s_{23}-6s_{11}+27.$$%e_1e_2 + e_1e_3 + e_2e_3 - e_7e_8 - e_7e_9 - e_8e_9- 6e_1 - 6e_2 - 6e_3 + 27.$$

\item The requirement that local exponents at $x=1$ are $\{e_4,x_5,e_6\}$ is equivalent to the system 
  $$\begin{array}{l}T_{11}+3s^2 - (-2s_{11} - 2s_{13} + 12)s
      - 5s_{11} + s_{13} + s_{21} + 2s_{23} + 20=0,\\
  T_{10} + T_{20}+s^3 + (s_{11} + s_{13} - 6)s^2 - (-T_{11} + 5s_{11} - s_{13} \\
  \quad  - s_{21} - 2s_{23} - 20)s + s_{32} + s_{33} + 9s_{11} - 3s_{21}
  + s_{31} - 27=0.\end{array}$$
\end{itemize}
Thus $T_{13},\ T_{12},\ T_{11},\ T_{23},\ T_{22},\ T_{21},$
and $T_{10}+T_{20}$ are expressed by the local exponents.\par
%\end{proof} % 
%\hfill$\square$ 

\subsection{Table of equations $H_j\ (j=6,5,4,3)$ and $E_2$}\label{TheTenTable}
We {\it always assume}, for $H_j$,  that local solutions
have no logarithmic term, and the exponents $e_1$, $e_2$, $\dots$ are generic. $R_j$ denotes the Riemann scheme of $H_j$.

We tabulate the equations $H_j\ (j=6,5,4,3)$: they are related to $H_6$ via addition-middle-convolutions and restrictions (see \S \ref{fromH3toH654bymc}, \ref{fromH654toH3bymc} and \ref{fromH6toH53byfac}).
\begin{itemize}[leftmargin=*] \setlength{\itemsep}{5pt}
\item$H_6=H_6(e,T_{10}),\qquad e=(e_1, \dots, e_9)$
  $$\begin{array}{l}=x^3(x-1)^3\partial^6+x^2(x-1)^2P_1\partial^5+x(x-1)P_2\partial^4+P_3\partial^3+P_2\partial^2+P_1\partial+P_0,\\[2mm]
     = T_0+T_1\partial+T_2\partial^2+T_3\partial^3,\quad \theta=x\partial,\end{array}$$
  \[R_6:\left(\begin{array}{lcccccc}
    x=0:&0&1&2&e_1&e_2&e_3\\
    x=1:&0&1&2&e_4&e_5&e_6\\
    x=\infty:&s&s+1&s+2&e_7&e_8&e_9\end{array}\right),
    \quad  s=(6-e_1-\cdots-e_9)/3,
    \]
    where $P_j$ \blue{is used symbolically} for a polynomial of degree $j$ in $x$,    and
\[\def\arraystretch{1.1}
\begin{array}{lcl}
 T_0&=&(\theta+s+2)(\theta+s+1)(\theta+s)B_0,\quad B_0=(\theta+e_7)(\theta+e_8)(\theta+e_9),\\
 T_1&=&(\theta+s+2)(\theta+s+1)B_1,\quad B_1=T_{13}\theta^3+T_{12}\theta^2+T_{11}\theta+T_{10},\\
 T_2&=&(\theta+s+2)B_2,\quad B_2=T_{23}\theta^3+T_{22}\theta^2+T_{21}\theta+T_{20},\\
 T_3&=&-(\theta+3-e_1)(\theta+3-e_2)(\theta+3-e_3),
 \end{array}
 \] %
 where $T_{13},T_{12},T_{11},T_{23},T_{22},T_{21}$ and $T_{20}+T_{10}$ are polynomials in $e_1$, $\dots$, $e_9$; they are given in Proposition \ref{eqwithR6}. We choose $T_{10}$ as the accessory parameter. Spectral type (3111,3111,3111).

\item $H_5=H_5(e_1,\dots,e_8,B_{510})$
  $$\begin{array}{l}=x^3(x-1)^3\partial^5+x^2(x-1)^2P_1\partial^4+x(x-1)P_2\partial^3+P_3\partial^2+P_2\partial+P_1\\[2mm]
  =x\overline{T}_0+\overline{T}_1+\overline{T}_2\partial +\overline{T}_3\partial ^2,\end{array}$$
where $P_j$ \blue{is used symbolically} for a polynomial of degree $j$ in $x$,     
\[R_5: \left(\begin{array}{ccccc}
    0&1&e_1-1&e_2-1&e_3-1\\
    0&1&e_4-1&e_5-1&e_6-1\\ 
    1+s&2+s&3+s&e_7+1&e_8+1\end{array}\right),
    \qquad s=(6-e_1-\cdots-e_8)/3,
    \]
\[\def\arraystretch{1.1} \begin{array}{rcl}
   \overline{T}_0 &=& (\theta+s+1)(\theta+s+2)(\theta+s+3)B_{50},\quad B_{50}=B_0(\theta=\theta+1, e_9=0), \\
   \overline{T}_1 &=& (\theta+s+1)(\theta+s+2)B_{51},\quad B_{51}:=B_1(e_9=0),\\
   \overline{T}_2 &=& (\theta+s+2)B_{52},\quad B_{52}:=B_2(e_9=0), \\
   \overline{T}_3 &=& -(\theta+3-e_1)(\theta+3-e_2)(\theta+3-e_3).
\end{array} \]
This is obtained from $H_6$ by putting $e_9=0$,
and dividing from the right by $\partial$. The accessory parameter %$B_{510}$ 
is the constant term $B_{510}$ of the polynomial $B_{51}$ in $\theta$. Spectral type (2111,2111,311).
\item $H_4=H_4(c_1,\dots,c_7,\mathcal T_{10})$
  $$\begin{array}{l}
  =x^2(x-1)^2\partial^4+x(x-1)P_1\partial^3+P_2\partial^2+P_1\partial+P_0,\\[2mm]
  =\mathcal T_0+\mathcal T_1\partial+\mathcal T_2\partial^2,\end{array}$$
  where $P_j$ \blue{is used symbolically} for a polynomial of degree $j$ in $x$,
\[R_4:\left(\begin{array}{llcll}x=0:&0&1&c_1&c_2\\ x=1:&0&1&c_3&c_4\\ x=\infty:&c_8&c_5&c_6&c_7\end{array}\right),\quad c_1+\cdots+c_8=4,\]
 \[\def\arraystretch{1.1} \begin{array}{rcl}
    \mathcal T_0&=&(\theta+c_5)(\theta+c_6)(\theta+c_7)(\theta+c_8),\\
    \mathcal T_1&=&-2\theta^3+\mathcal T_{12}\theta^2+\mathcal T_{11}\theta+\mathcal T_{10},\\
    \mathcal T_{12}&=&c_1+c_2-c_5-c_6-c_7-c_8-5,\\
    \mathcal T_{11}&=&3(c_1+c_2)-c_1c_2+c_3c_4-c_5c_6-c_5c_7-c_5c_8-c_6c_7-c_6c_8-c_7c_8-8,\\
    \mathcal T_2&=&(\theta-c_1+2)(\theta-c_2+2),
 \end{array}
 \]
 where $\mathcal T_{10}$ is the accessory parameter. Spectral type (211,211,1111).

\item $H_3=H_3(b_1,\dots,b_6,a_{00})$
  $$\begin{array}{l}=x^2(x-1)^2\partial^3+x(x-1)P_1\partial^2+P_2\partial+P_1\\[2mm]
  =xS_{-1}+S_0+S_1\partial,\end{array}$$
where $P_j$ \blue{is used symbolically} for a polynomial of degree $j$ in $x$,
  \[R_3:\left(\begin{array}{llll}x=0:&0&b_1&b_2\\ x=1:&0&b_3&b_4\\ x=\infty:&b_7&b_5&b_6\end{array}\right),\quad b_1+\cdots+b_7=3,\]
\[\def\arraystretch{1.1}\setlength\arraycolsep{2pt} \begin{array}{rcl}
    S_{-1}&=&(\theta+b_5)(\theta+b_6)(\theta+b_7),\\
    S_0&=& -2\theta^3+(2b_1+2b_2+b_3+b_4-3)\theta^2 \\
    &&\qquad +(-b_1b_2+(b_3-1)(b_4-1)-b_5 b_6 - b_5 b_7 - b_6 b_7)\theta+a_{00},\\
    S_1&=&(\theta-b_1+1)(\theta-b_2+1),
  \end{array}
  \]
  where $a_{00}$ is the accessory parameter. Spectral type (111,111,111).
\item $E_2=E_2(a_1,a_2,a_3)=E(a,b,c)$ (the Gauss hypergeometric equation)
$$\begin{array}{l}
=x(x-1)\partial^2+((a+b+1)x-c)\partial +ab\\
=(\theta+a)(\theta+b)-(\theta+c)\partial   \end{array}$$   
  \[
     R_2=\left(\begin{array}{lll}x=0:&0&a_1\\
     x=1:&0&a_2\\
     x=\infty:&a_3&a_4\end{array}\right)=
         \left(\begin{array}{llc}x=0:&0&1-c\\
       x=1:&0&c-a-b\\
       \xi=\infty:&a&b\end{array}\right)= R_{abc},      
       \]
       where $a_1+\cdots+a_4=1.$ This equation is rigid. Spectral type (11,11,11).
\end{itemize}
Summing up,
 \par\medskip
\[\begin{array}{cccccc  }
{\rm name\ of\ the\ equation}  \ &\ H_6\ &\ H_5\ &\ H_4\ &\ H_3\ &\qquad E_2 \\[1mm]
{\rm order\ of\ the\ equation} &6  &5  &4  &3 &\qquad2\\
{\rm number\ of\ the\ free\ local\ exponents}&9&8 &7  &6 &\qquad3\\
{\rm number\ of\ accessory\ parameters} &\ 1\ &\ 1\  &\ 1\   &\ 1\  &\qquad 0
\end{array}
\]
\addtolength{\arraycolsep}{1pt}   %%# temp change of \arraycolsep
%\begin{remark}As in Proposition 1.2, $H_5$ is the equation with the Riemann scheme $R_5$ with the spectral type (2111,2111,311); and $H_4$ with $R_4$ and with $(211,211,1111)$.\end{remark}
\subsection{Equations $G_j, E_j\ (j=6,5,4,3)$}
Each of the equations $H_j\ (j=6,5,4,3)$ has one accessory parameter.
The equations $G_j, E_j$ are equations $H_j$ with a specified cubic polynomials of the local exponents $e=(e_1,e_2,\dots)$ as the accessory parameter.
\subsubsection{$G_6(e,a)$}
The accessory parameter of $H_6$ is denoted by $T_{10}$. The equation $G_6$ is $H_6$ with a specific cubic polynomial $T_{10}(e)$ of $e$ as $T_{10}$. This polynomial is determined roughly as follows:
If the equation $G_6$ admits  shift operators for the  block shifts
   $$sh_j:(e_j,e_{j+1},e_{j+2},s)\to(e_j+1,e_{j+1}+1,e_{j+2}+1,s-1)\quad (j=1,4,7),$$ then $T_{10}(e)$ must be 
  $$T_{10}(e)=S_{10}+R,\quad R=a_0+a_1t_{21}+a_2t_{22}+a_3t_{23}+a_4t_{31}+a_5t_{32}+a_6t_{33},$$
   where $S_{10}$ and $t_{ij}$ are cubic polynomials in $e$ defined in Theorem \ref{shiftopG6} and Corollary \ref{t2t3bis}, and $a_0,\dots,a_6$ are free constants. We denote the equation with the above $T_{10}$ by $G_6(e,a)$.
   \subsubsection{$G_j(e,a)\ (j=3,4,5)$}
 \blue{  The equation $H_3$ is obtained from $H_6$ by middle convolution (\S \ref{fromH6toH3}). The equations $H_4$ and $H_5$ are obtained from $H_3$ by addition and middle convolution (\S \ref{fromH3toH654bymc}). We follow these procedure starting from $G_6(e,a)$ and get $G_3(e,a),G_4(e,a)$ and $G_5(e,a)$.}
   \subsubsection{$E_j(e)\ (j=6,5,4,3)$} As the most symmetric equation, $E_6(e)$ is defined as $G_6(e,0)$. Equations $E_3(e),E_4(e)$\footnote{The differential equation $Z(A)$ found and  studied in \cite{Z12} is a codimension-2 specialization of $E_4(e)$.}  and $E_5(e)$ are $G_3(e,0),G_4(e,0)$ and $G_5(e,0)$, respectively.

\newpage
\adjuststc
\section{Generalities}\label{Gen}%\nostcrule 
\setcounter{stc}{3}
\secttoc
In this section, we prepare tools that we need to study
our equations in the following sections and the following papers.

\subsection{ Symmetry}\label{GenSym}
In this subsection, $H(e,ap)\in\mathbb{C}[x][\partial]$ denotes a differential equation with free local exponents $e$ and accessory parameters $ap$, 
$G(e,a)\in\mathbb{C}[x][\partial]$ a differential equation with local exponents $e$ and accessory parameters $ap$ assigned as polynomials of $e$ with a set $a$ of parameters, and
$E(e)\in\mathbb{C}[x][\partial]$ a differential equation with local exponents $e$ where accessory parameters are assigned as polynomials of $e$. % without parameters.    
Examples are
$$H_j,\quad G_j,\quad E_j\quad (j=3,4,5,6).$$
\subsubsection{Shift symmetry}
For a shift (Definition \ref{DefShift})  $sh(e)$ of free (generic) local exponents $e$ of a differential equation, if a non-zero differential operator $P\in\mathbb{C}(x)[\partial]$ sends
\begin{itemize}
     \item    solutions of $H(e,ap)$  to those of $H(sh(e),ap')$, for some $ap'$, 
     \item   solutions of $G(e,a)$  to those of $G(sh(e),a)$,
 \item   sends solutions of $E(e)$  to those of $E(sh(e))$,
\end{itemize}
     the operator $P$ is called a {\it shift operator} (Definition \ref{DefShift})  for the shift $sh(e):e_i\to e_i+n_i\ (n_i\in\mathbb{Z})$. The equation with such a property is said to be symmetric with respect to the shift $sh(e)$. 
\subsubsection{Differentiation, adjoint and coordinate change }
   If derivatives of solutions satisfy  the same equation, with some change of 
     \begin{itemize}
     \item the local exponents $e$ and the accessory parameters $ap$, for $H(e,ap)$, 
     \item the local exponents $e$ and the parameters  $a$, for $G(e,a)$,
\item  the local exponents $e$, for $E(e)$,
     \end{itemize}
     the equation is said to enjoy differentiation symmetry. %This is a kind of shift symmetry.

     If the adjoint equation (defined in \S \ref{GenAdj}) of an equation remains the same, with some change of the exponents and the parameters as itemized above,  the equation is said to enjoy adjoint symmetry.

 If an equation after a coordinate change of $x$, remains the same, with some change of the exponents and the parameters as itemized  above, 
        the equation is said to be symmetric relative to this transformation.

\subsubsection{Symmetries of $H_j,G_j,E_j$}
   We tabulate the symmetries that $K_j=\{H_j,G_j,E_j\}$ enjoy (Y=yes, N=no):
   \[ \def\arraystretch{1.2}
   \begin{array}{ccccccc}
   \text{Symmetry }&\ &\ K_{6}\ &\ K_{5}\ &\ K_{4}\ &\ K_3\ &\qquad E_2 \\
{\rm Shift\ operators} &\ \ &{\rm Y}&{\rm Y}&{\rm Y}&{\rm N'}&\qquad{\rm Y}\\
{\rm Differentiation} &\ &{\rm Y}&{\rm N}&{\rm Y}&{\rm N}&\qquad{\rm Y}\\
{\rm Adjoint} &\ \ &{\rm Y}&{\rm Y}&{\rm Y}&{\rm Y}&\qquad{\rm Y}\\
x\to1/x \ &\   &{\rm Y}&{\rm N}&{\rm N}&{\rm Y}&\qquad{\rm Y}\\
x\to1-x \  &\ &{\rm Y}&{\rm Y}&{\rm Y}&{\rm Y}&\qquad{\rm Y}
\end{array}
\]
where N' stands for `no shift operator is found to the authors'.
   \subsubsection{Examples }\label{cochH6}%coch-H6-1,coch-H6-2
   \begin{itemize}
     \item Adjoint of $H_3(e,a_{00})$ is $H_3(-e_1,\dots,-e_4,2-e_5,2-e_6,a'_{00})$, where %$a'_{00}$ is 
$$\begin{array}{ll}a'_{00}&=-e_1 e_2 +(e_1+e_2+e_3+e_4)(e_5+e_6-2)\\&+(e_5-1)^2+(e_6-1)^2+(e_5-1)(e_6-1)-1-a_{00}
%a'_{00}&=-e_1e_2 + e_1e_5 + e_1e_6 + e_2e_5 + e_2e_6 + e_3e_4 + e_3e_5 + e_3e_6 + e_4e_5\\ &+ e_4e_6 + e_5^2 + e_5e_6 + e_6^2- 2e_1 - 2e_2 - 2e_3 - 2e_4 - 3e_5 - 3e_6 + 2-a_{00}.
\end{array}$$
%$$-e1*e2 + e1*e5 + e1*e6 + e2*e5 + e2*e6 + e3*e4 + e3*e5 + e3*e6 + e4*e5 + e4*e6 + e5^2 + e5*e6 + e6^2- 2*e1 - 2*e2 - 2*e3 - 2*e4 - 3*e5 - 3*e6 + 2-a00,$$
\item Adjoint of $H_6(e,T_{10})$ is $H_6(2-e_1,\dots,2-e_6,1-e_7,1-e_8,1-e_9, T'_{10})$, where %$T'_{10}$ is
$$
%T'_{10} = -(s_{13}+s_{23}+1) s -3 (s_{23}+s_{33}+1) -T_{10}.
T'_{10}=6s^2 + (4s_{12} - 18)s - 6s_{12} - 2s_{21} + 2s_{22} - 4s_{23} + 8 - T_{10}.
$$
\item Coordinate change $x\to1-x$ of $H_6$:
     $$H_6({\bm e}_1,{\bm e}_4,{\bm e}_7,T_{10})|_{x\to1-x}=H_6({\bm e}_4,{\bm e}_1,{\bm e}_7,T'_{10}),$$
   where ${\bm e}_1=\{e_1,e_2,e_3\},{\bm e}_4=\{e_4,e_5,e_6\},{\bm e}_7=\{e_7,e_8,e_9\}$,
   $$T'_{10}=3s^2 + (s_{11} + s_{12} - s_{23} + 2)s + 3s_{11} + 3s_{12} - 3s_{23} - 3s_{33} - 21-T_{10}.$$
 \item Coordinate change $x\to1/x$ of $H_6$:   $$x^{-s-3}\circ H_6({\bm e}_1,{\bm e}_4,{\bm e}_7,T_{10})|_{x\to1/x}\circ x^s=H_6({\bm e}_7-s{\bm1},{\bm e}_4,{\bm e}_1+s{\bm1},T'_{10}),$$
 %  $$x^{r-3}G_6({\bm e},a)|_{x\to1/x}\circ x^{-r}=G_6({\bm e}_7-s{\bm1},{\bm e}_4,{\bm e}_1+s{\bm1},-a_0,-a_3,-a_2,-a_1,-a_6,-a_5,-a_4),$$
   where
   $$\begin{array}{ll}T'_{10}&=4s^3 + (3s_{11} + 9)s^2 + (6s_{11} - s_{12} + 2s_{21} + s_{23} + 8)s + s_{33} + 6s_{12} + 3s_{21} \\
     &- 3s_{22}+ 3s_{23} + s_{31} + s_{32} - 3+T_{10}.\end{array}$$
   \end{itemize}
Here $H_6|_{x\to1-x}$ and $H_6|_{x\to1/x}$ are $H_6$ after the coordinate changes $x\to1-x$ and $x\to1/x$, respectively.

   \subsection{$(\theta,\partial)$-form and $(x,\theta,\partial)$-form}\label{Genxz}
Given a differential operator $P=a_n(x)\partial^n+\cdots\in\mathbb{C}[x][\partial]$
of order $n$ in $(x,\partial)$-form.
Rewrite each term as
  $$x^i\partial^j=x^{i-j}(x^j\partial^j),\quad i\ge j,\qquad
x^i\partial^j=(x^i\partial^i)\partial^{j-i},\quad i\le j,
    $$
and substitute 
  $$x^i\partial^i=\theta(\theta-1)\cdots(\theta-i+1),\quad i\ge1,\quad \theta=x\partial.$$
Then we have
\begin{prp}\label{xzdxexpression}
    Any differential operator $P=a_n(x)\partial^n + \cdots\in\mathbb{C}[x][\partial]$  %%# added linebreak
  of order $n$ can be written uniquely as
  $$P=x^qP_{-q}(\theta)+\cdots+xP_{-1}(\theta)+P_0(\theta)+P_1(\theta)\partial+\cdots+P_p(\theta)\partial^p,\ \ p\le n,\ q\ge0,$$
  where $P_*$ is a polynomial in $\theta$ of degree as follows:
  $$\deg (P_{-q})\le n, \dots, \deg (P_0)\le n,\quad \deg(P_1)
    \le n-1,\dots, \deg (P_p)\le n-p.$$
This expression is called the $(x,\theta,\partial)$-{\rm form} of $P$.
\end{prp}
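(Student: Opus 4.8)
The plan is to verify both the existence and the uniqueness of the $(x,\theta,\partial)$-form, together with the stated degree bounds, working in the associative (Weyl) algebra generated by $x$ and $\partial$ with polynomial coefficients. The two relations I would record at the outset are the commutation rules $\partial\,g(\theta)=g(\theta+1)\,\partial$ and $x^k g(\theta)=g(\theta-k)\,x^k$ for any polynomial $g$ (both immediate from $\theta=x\partial$ and $\theta x=x(\theta+1)$), together with the key identity $x^i\partial^i=\theta(\theta-1)\cdots(\theta-i+1)$ already recorded above, whose right-hand side is a falling factorial of degree exactly $i$ in $\theta$.

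For \textbf{existence} I would simply run the reduction described in the paragraph preceding the statement on each monomial $x^i\partial^b$ of $P$. If $i\ge b$ the monomial becomes $x^{i-b}\cdot\theta(\theta-1)\cdots(\theta-b+1)$, a nonnegative power of $x$ times a $\theta$-polynomial of degree $b$; if $i\le b$ it becomes $\theta(\theta-1)\cdots(\theta-i+1)\cdot\partial^{b-i}$, a $\theta$-polynomial of degree $i$ times a nonnegative power of $\partial$. Collecting the results according to the surviving power of $x$ (for $i\ge b$) or of $\partial$ (for $i\le b$) produces the asserted shape. The degree bounds then drop out of this bookkeeping: a contribution to the coefficient $P_{-k}$ of $x^k$ comes from a pair with $i-b=k$ and carries $\theta$-degree $b\le n$, so $\deg P_{-k}\le n$ (the case $k=0$ gives $\deg P_0\le n$); a contribution to the coefficient $P_j$ of $\partial^j$ comes from a pair with $b-i=j$ and carries $\theta$-degree $i=b-j\le n-j$, so $\deg P_j\le n-j$. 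Since $b\le n$ throughout, one also reads off $p\le n$, and $q\ge0$ is clear.

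For \textbf{uniqueness} I would exploit the $\mathbb{Z}$-grading in which $\deg x=+1$, $\deg\partial=-1$, hence $\deg\theta=0$. Each summand $x^k P_{-k}(\theta)$ is homogeneous of degree $+k$, the term $P_0(\theta)$ of degree $0$, and each $P_j(\theta)\partial^j$ of degree $-j$, so the three families occupy pairwise distinct graded components and cannot interfere. It then suffices to prove linear independence inside each component, which I would do by evaluating operators on the one-parameter family $x^s$: since $\theta\,x^s=s\,x^s$ and $\partial^j x^s=s(s-1)\cdots(s-j+1)x^{s-j}$, one finds $x^k\theta^m\,x^s=s^m x^{s+k}$, $\theta^m x^s=s^m x^s$, and $\theta^m\partial^j\,x^s=(s-j)^m\,s(s-1)\cdots(s-j+1)\,x^{s-j}$. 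Applying a hypothetical vanishing combination to $x^s$ and separating the distinct powers $x^{s+k},x^s,x^{s-j}$ forces, in each component, a polynomial in $s$ to vanish for all $s$, whence all its coefficients vanish; this pins down every $P_*$ and yields uniqueness.

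I expect the only real care to be needed in the uniqueness step, specifically in confirming that the positive-$x$ family and the positive-$\partial$ family genuinely occupy separate graded pieces and that evaluation on $x^s$ detects each coefficient polynomial; the existence half and the degree estimates are a direct and essentially mechanical consequence of the falling-factorial identity.
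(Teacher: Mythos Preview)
Your argument is correct, and your existence step is exactly the paper's own: the paragraph immediately preceding the proposition is the paper's entire proof, and it consists precisely of the monomial rewriting $x^i\partial^j=x^{i-j}(x^j\partial^j)$ for $i\ge j$ and $x^i\partial^j=(x^i\partial^i)\partial^{j-i}$ for $i\le j$, followed by the substitution $x^i\partial^i=\theta(\theta-1)\cdots(\theta-i+1)$. The paper gives no further justification for the degree bounds or for uniqueness, so your explicit bookkeeping for the bounds and your grading-plus-evaluation argument for uniqueness supply details the paper simply omits rather than deviate from its approach.
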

When $q=0$, the equation has a $(\theta,\partial)$-form.
% Referring to the table of equations in \S \ref{TheTenTable}, we see that
\[ \def\arraystretch{1.2}
   \begin{array}{ccccccc}
     {\rm equation}&& H_{6}&\ H_{5}&\ H_{4}&\ H_{3}&\qquad E_2 \\
     p&&3&2&2&1&\qquad1\\
     q&&0&1&0&1&\qquad0
     \end{array}\]
             
\subsubsection{Local exponents at 0 and $\infty$}\label{GenxzLoc}
Given an operator $P=x^qP_{-q}(\theta)+\cdots+P_p(\theta)\partial^p$
of $(x,\theta,\partial)$-form. Assume
$$p,\ q\ge0, \qquad P_{-q},\ P_p\not=0.$$
Applying $P$ to a local solution
around $x=0$: $u=x^\rho(1+\cdots)$,
we see only the last term is effective to compute local exponents:
$$P_p(\theta)\,\partial^p\, u=\rho(\rho-1)\cdots(\rho-p+1)P_p(\rho-p)x^{\rho-p}(1+\cdots).$$

\begin{prp}\label{localat0}The local exponents at $x=0$ are
      $0$, $1$, $\dots$, $p-1$ and the roots of $P_p(\rho-p)$.
\end{prp}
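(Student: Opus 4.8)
The plan is to read the local exponents off the indicial equation at $x=0$, which is obtained by substituting a Frobenius ansatz $u=x^\rho\sum_{m\ge0}u_mx^m$ (with $u_0\neq0$) into $Pu$ and collecting the coefficient of the lowest power of $x$. The two computational facts I would use are that $\theta=x\partial$ acts on a pure power by $\theta\,x^\rho=\rho\,x^\rho$, so that $P_j(\theta)\,x^\rho=P_j(\rho)\,x^\rho$ for any polynomial $P_j$, together with $\partial^j x^\rho=\rho(\rho-1)\cdots(\rho-j+1)\,x^{\rho-j}$. It suffices to track the leading term $u_0x^\rho$, since the tail $\sum_{m\ge1}u_mx^{m+\rho}$ produces only strictly higher powers.

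Applying $P=x^qP_{-q}(\theta)+\cdots+P_0(\theta)+P_1(\theta)\partial+\cdots+P_p(\theta)\partial^p$ term by term to $x^\rho$, the block $x^iP_{-i}(\theta)$ $(0\le i\le q)$ yields $P_{-i}(\rho)\,x^{\rho+i}$, contributing only powers $x^\rho,x^{\rho+1},\dots$, while the block $P_j(\theta)\partial^j$ $(1\le j\le p)$ yields $\rho(\rho-1)\cdots(\rho-j+1)\,P_j(\rho-j)\,x^{\rho-j}$, contributing the powers $x^{\rho-1},\dots,x^{\rho-p}$. Since $p\ge0$, the smallest power occurring anywhere is $x^{\rho-p}$, and because $P_p\neq0$ by hypothesis this power is produced by the single term $P_p(\theta)\partial^p$ alone. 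Hence the coefficient of $x^{\rho-p}$ in $Pu$ equals exactly
$$\rho(\rho-1)\cdots(\rho-p+1)\,P_p(\rho-p),$$
which is the indicial polynomial displayed immediately before the statement.

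The local exponents at $x=0$ are by definition the roots of this indicial polynomial, so I would finish by factoring: the factor $\rho(\rho-1)\cdots(\rho-p+1)$ supplies the roots $0,1,\dots,p-1$, and the factor $P_p(\rho-p)$ supplies its own roots, giving precisely the list claimed. To confirm that each such root is a genuine local exponent and not merely a formal root of the indicial equation, I would invoke the Frobenius recursion: writing $f(\rho)=\rho(\rho-1)\cdots(\rho-p+1)P_p(\rho-p)$, the coefficient $u_m$ is governed by $f(\rho+m)u_m=(\text{combination of }u_0,\dots,u_{m-1})$, which is solvable for $u_m$ whenever $f(\rho+m)\neq0$; under the standing genericity assumption (no integer differences forcing logarithms) this determines a bona fide solution $x^\rho(1+\cdots)$ for each root. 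The computation itself is pure bookkeeping with $\theta x^\rho=\rho x^\rho$ and $\partial^j x^\rho$; the only points demanding care, and the nearest thing to an obstacle, are (i) verifying that the $x^qP_{-q}$ block really only raises the power of $x$, so that $x^{\rho-p}$ remains minimal, and (ii) leaning on the genericity hypothesis to pass from indicial roots to actual exponents.
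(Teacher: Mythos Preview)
Your proof is correct and follows essentially the same approach as the paper: apply $P$ to $u=x^\rho(1+\cdots)$, observe that only the last term $P_p(\theta)\partial^p$ contributes to the lowest power $x^{\rho-p}$, and read off the indicial polynomial $\rho(\rho-1)\cdots(\rho-p+1)P_p(\rho-p)$. You have spelled out in more detail why the other terms contribute only higher powers and added the Frobenius recursion remark, but the core argument is identical.
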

At $x=\infty$, perform the change $x=1/y, w=y\partial_{y}$,
and use the formulae
$$\partial=-yw,\quad \partial^2=y^2w(w+1),
\quad \partial^3=-y^3w(w+1)(w+2).\dots
$$
Then $P$ changes into
$$y^{-q}P_{-q}(-w)+\cdots+P_0(-w)-P_1(-w)yw+P_2(-w)y^2w(w+1)+\cdots.
$$
Applying this to a local solution around $y=0$: $v=y^\rho(1+\cdots)$,
we see only the first term is effective:
$$y^{-q}P_{-q}(-w)\ v=y^{-q}P_{-q}(-\rho)y^{\rho}(1+\cdots).
$$

\begin{prp}\label{localat1}The local exponents at $x=\infty$
  are the roots of $P_{-q}(-\rho)$.
\end{prp}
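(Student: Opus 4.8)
The plan is to compute the indicial equation at $x=\infty$ directly, in exact parallel with the argument just given for Proposition \ref{localat0}, but now isolating the term of \emph{lowest} order in the local coordinate $y=1/x$ rather than the highest-order $\partial$-term. First I would recall the definition: the local exponents at $\infty$ are precisely those $\rho$ for which the equation admits a formal solution $v=y^\rho(1+c_1y+\cdots)$ in the coordinate $y=1/x$, and these are exactly the roots of the indicial polynomial. So the whole task reduces to reading that indicial polynomial off from the $(x,\theta,\partial)$-form of $P$ supplied by Proposition \ref{xzdxexpression}.

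Next I would carry out the coordinate change $x=1/y$ carefully at the operator level. Setting $w=y\partial_y$, one computes $\theta=x\partial=-w$ and $\partial=-yw$, and an easy induction using the commutation rule $wy=y(w+1)$ gives $\partial^j=(-1)^jy^jw(w+1)\cdots(w+j-1)$. Substituting these, together with $x^i=y^{-i}$, into $P=x^qP_{-q}(\theta)+\cdots+P_0(\theta)+P_1(\theta)\partial+\cdots+P_p(\theta)\partial^p$ yields the displayed expression $y^{-q}P_{-q}(-w)+\cdots+P_0(-w)-P_1(-w)yw+\cdots$. The crucial bookkeeping is the range of powers of $y$: the terms from $x^iP_{-i}$ contribute $y^{-i}$ with $1\le i\le q$, the term $P_0$ contributes $y^0$, and the terms with $\partial^j$ contribute $y^{j}$ with $1\le j\le p$. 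Hence the unique term of lowest order in $y$ is $y^{-q}P_{-q}(-w)$.

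I would then apply the transformed operator to $v=y^\rho(1+c_1y+\cdots)$ and extract the coefficient of the lowest power of $y$. Since $wy^\rho=\rho y^\rho$, the lowest-order term $y^{-q}P_{-q}(-w)$ sends $y^\rho$ to $P_{-q}(-\rho)y^{\rho-q}$; every other term of the operator produces a strictly higher power of $y$ when applied to $y^\rho$, as do the higher-order corrections $c_1y,c_2y^2,\dots$ carried by $v$. Therefore the coefficient of $y^{\rho-q}$ in $Pv$ is exactly $P_{-q}(-\rho)$, and requiring it to vanish is precisely the indicial equation $P_{-q}(-\rho)=0$. By the definition recalled at the outset, the local exponents at $\infty$ are the roots of $P_{-q}(-\rho)$.

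The commutation induction and the power-counting are routine; the only genuine subtlety is the converse direction, namely that each root $\rho$ of $P_{-q}(-\rho)$ really extends to a bona fide local solution. This is the standard Frobenius recursion: once $P_{-q}(-\rho)=0$, equating the coefficient of $y^{(\rho-q)+m}$ to zero gives $P_{-q}(-(\rho+m))\,c_m=(\text{known lower-order data})$, solvable as long as $P_{-q}(-(\rho+m))\neq0$ for all $m\ge1$. Under the genericity/non-resonance hypothesis assumed throughout (no integer differences among exponents forcing logarithms), this holds and no logarithmic terms intervene, so the roots and the local exponents coincide. I expect this non-resonance bookkeeping to be the one point to state with care, the indicial identification itself being immediate from the lowest-order term.
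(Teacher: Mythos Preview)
Your proof is correct and follows essentially the same approach as the paper: perform the coordinate change $x=1/y$, $w=y\partial_y$ so that $\theta=-w$ and $\partial^j=(-1)^jy^jw(w+1)\cdots(w+j-1)$, read off the term of lowest $y$-order as $y^{-q}P_{-q}(-w)$, and apply to $v=y^\rho(1+\cdots)$ to obtain the indicial polynomial $P_{-q}(-\rho)$. Your added discussion of the Frobenius recursion for the converse direction is more explicit than what the paper records, but the argument is otherwise identical.
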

This means that the first and the last terms of the expression
$P=x^qP_{-q}+\cdots+P_p\partial^p$ are determined,
up to multiplicative constants,
by the local exponents at $\infty$ and $x=0$, respectively.

For example, for $H_6$, the first term is
$$(\theta+s+2)(\theta+s+1)(\theta+s)(\theta+e_7)(\theta+e_8)(\theta+e_9),$$
and the last term is
$$-(\theta+3-e_1)(\theta+3-e_2)(\theta+3-e_3).$$
\subsection{Spectral type and the number of accessory parameters}\label{GenLoc}
In this section, the spectral type of an equation at a singular point, which characterizes local behavior of solutions at the singular point, is introduced. The set of spectral types of a Fuchsian differential equation determines the number of accessory parameters.
\begin{dfn}Consider a Fuchsian differential equation $P$ of order $n$.
  Suppose at a singular point,  the local exponents are given
  as $\{s,\  s+1,\  \dots,\  s+r-1,\  e_1,\  \dots,\ e_{n-r}\}$, %%# linebreak
  where $s, e_1,\dots,e_{n-r}$ are {\it generic}, %(up to the Fuchs relation
and the local solutions do not have logarithmic   terms ({\it i.e.}, local monodromy is semi-simple).
  In this case, we say the singular point has
  the {\it spectral type} $r1\dots1$.
For the spectral type in a more general situation, see \cite{Osh,Hara}.\end{dfn}

For example, the equations $H_6$ and the Gauss equation $E_2$ have spectral types 3111 and 11 at the three singular points, respectively. They are written as
  $$(3111,\ 3111,\ 3111)\quad{\rm and}\quad (11,\ 11,\ 11),$$
respectively.
The following is well-known (e.g. \cite{Heffter}, Satz II): 
\begin{prp}\label{nologcond}
  Let $P$ be a differential operator which is  regular singular at $x=0$:
  $$P=x^n\partial^{n}+x^{n-1}p_{n-1}\partial^{n-1}+\cdots+xp_1\partial+p_0,$$
  where $p_j$ are holomorphic at $x=0$. If the local exponents at $x=0$
  are $\{0,1,\dots,r-1,$ $e_1,\dots,e_{n-r}\}$ $(r=1,\ldots, n)$ 
%and $\{e_1,\dots,e_{n-r}\}$  are generic, 
then
$$p_j(0)=0,\quad j=0,\dots,r-1.$$
  Moreover, if the local solutions do not have logarithmic terms, {\it i.e.},
  if the spectral type at $x=0$ is $r1\dots1$, then
  $$x^2|p_{r-2},\ \dots,\ x^{r-1}|p_1,\ \ x^{r}|p_0.$$
Note that $p_{r-1}(0)=0$ implies $x|p_{r-1}$.

\end{prp}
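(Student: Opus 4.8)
The plan is to rewrite $P$ in $(\theta,\partial)$-form and read everything off from the coefficient polynomials in $\theta$. Since the coefficient of $\partial^j$ is $x^j p_j(x)$ and $x^j\partial^j=\theta(\theta-1)\cdots(\theta-j+1)=:[\theta]_j$, I can write $P=\sum_{j=0}^n p_j(x)\,[\theta]_j$ with $p_n\equiv 1$. Expanding $p_j(x)=\sum_{k\ge 0}p_{j,k}x^k$ and setting, for each $k\ge 0$, $I_k(\nu):=\sum_{j=0}^n p_{j,k}\,[\nu]_j$, a direct computation of $P\bigl(x^{\rho+m}\bigr)$ shows that the coefficient of $x^{\rho+M}$ in $P\bigl(\sum_m u_m x^{\rho+m}\bigr)$ equals $\sum_{m=0}^M u_m\,I_{M-m}(\rho+m)$. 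Here $I_0=I$ is the indicial polynomial, whose roots are the local exponents. The two assertions $x^{r+1-j}\mid p_j$ ($0\le j\le r$) together say precisely that $p_{j,k}=0$ whenever $j+k\le r$, so the whole proposition reduces to this triangular vanishing.

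The first half ($p_j(0)=0$, the case $k=0$) follows purely from the location of the exponents, with no use of the no-logarithm hypothesis. Because $0,1,\dots,r$ are roots of $I=I_0$, we have $I_0(i)=0$ for $i=0,\dots,r$. Since $[\nu]_j$ vanishes at $\nu=0,1,\dots,j-1$, evaluating $I_0$ successively at $\nu=0,1,\dots,r$ gives a lower-triangular system: $I_0(0)=p_{0,0}$, $I_0(1)=p_{0,0}+p_{1,0}$, and in general $I_0(i)=\sum_{j\le i}p_{j,0}[i]_j$ with top coefficient $i!\,p_{i,0}$. Solving from the top down forces $p_{i,0}=0$ for $i=0,\dots,r$, which is exactly $p_j(0)=0$ and hence $x\mid p_j$ for $j\le r$.

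For the second half I will show that the no-logarithm condition forces $I_k(\nu)=0$ for all $1\le k\le r$ and all integers $0\le \nu\le r-k$; granting this, the same triangular inversion applied to each fixed $k$ (evaluating $I_k$ at $\nu=0,1,\dots,r-k$) yields $p_{j,k}=0$ for $j\le r-k$, i.e. $p_{j,k}=0$ for $j+k\le r$, which is the claimed divisibility. To obtain the vanishing of the $I_k$ I will run the Frobenius recurrence $I(\rho+M)u_M=-\sum_{m=0}^{M-1}u_m I_{M-m}(\rho+m)$, with $u_0=1$, for each block exponent $\rho=r-\ell$ ($\ell=0,1,\dots,r$). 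By genericity of $e_1,\dots,e_{n-r-1}$ the polynomial $I$ has no roots in $(r-\ell)+\mathbb Z_{>0}$ except the block values $r-\ell+1,\dots,r$, so the only resonances occur at steps $M=1,\dots,\ell$, and absence of logarithms at $\rho=r-\ell$ is exactly the requirement that the right-hand side vanish at each such step. Processing $\ell$ upward, and within a level the steps $M=k$ upward, each cross term $u_m I_{k-m}(r-\ell+m)$ with $1\le m\le k-1$ carries a factor $I_{k'}(\nu')$ with $k'=k-m\ge 1$ and $\nu'=r-\ell+m\le r-k'$, hence vanishes by an already-established condition of the lower level $\ell''=\ell-m<\ell$; what survives is the single term $I_k(r-\ell)$, so the no-logarithm requirement reads $I_k(r-\ell)=0$. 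Letting $\ell$ run over $1,\dots,r$ produces exactly the asserted set $\{I_k(\nu)=0:1\le k\le r,\ 0\le\nu\le r-k\}$.

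The main obstacle is the inductive Frobenius bookkeeping of this last paragraph: one must verify that the only resonances are the block values (this is where genericity enters) and that, at each resonance, the accumulated right-hand side collapses—after invoking the lower-level conditions—to the single clean term $I_k(r-\ell)$. Once this triangular cancellation is in place, both halves of the proposition are immediate consequences of inverting the unipotent change of basis between $\{[\nu]_j\}_j$ and the monomials, and no further computation is required.
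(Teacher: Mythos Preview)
Your proof is correct and follows essentially the same Frobenius-series approach as the paper: both identify the no-logarithm condition with the vanishing of certain low-order coefficients of $Pu$ and then read off $p_{j,k}=0$ for $j+k\le r$. The only organizational difference is that the paper (illustrated for $n=6$, $r=2$) uses the equivalent characterization that \emph{all} of $u_0,\dots,u_r$ are simultaneously free, so the coefficient of $x^M$ in $Pu$ must vanish identically in $u_0,\dots,u_{M-1}$ for each $M\le r$; this yields every condition $I_k(\nu)=0$ (for $k+\nu\le r$, $k\ge 1$) in one stroke and bypasses your level-by-level induction, though your inductive cancellation is of course also valid.
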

\comment{
\begin{proof} % \noindent {\sl Proof.}
For notational simplicity we let $n=6$ and $r=2$. Set
  $$p_j=p_{j0}+p_{j1}x+p_{j2}x^2+\cdots,\quad u=u_0+u_1x+u_2x^2+\cdots$$
Then 
\[ \def\arraystretch{1.1}
\begin{array}{c}
Pu=(p_{00}+p_{01}x+p_{02}x^2+p_{03}x^3+\cdots)(u_0+u_1x+u_2x^2+u_3x^3+\cdots)\\
+x(p_{10}+p_{11}x+p_{12}x^2+\cdots)(u_1+2u_2x+3u_3x^2+\cdots)\\
+x^2(p_{20}+p_{21}x+\cdots)(2u_2+3!u_3x+\cdots)\\
+x^3(p_{30}+\cdots)(3!u_3+\cdots)+\cdots,
\end{array}\]
where
  $$p_{00}=p_{10}=p_{20}=0.$$
The solutions have no logarithmic term  if and only if for arbitrary $u_0,u_1$
and $u_2$, the coefficients $u_3,u_4,\dots$ are uniquely determined
by $Pu=0$.
The coefficient of $x$ is $p_{01}u_0$,
that of $x^2$ is $p_{01}u_1+p_{02}u_0+p_{11}u_1$. Thus we have
  $$p_{01}=p_{02}=p_{11}=0.$$
Since the genericity of the local exponents asserts $p_{30}\not=0$, the vanishing of the coefficient of $x^3$ determines $u_3$
as a linear form of $\{u_0,u_1,u_2\}$, and so on.
\end{proof} % \hfill$\square$ % \bigbreak 
\begin{cor}\label{nologcondcor}
  Let $P$ be as above. Set
  $$p_r=xq_r,\  p_{r-1}=x^2q_{r-1},\ \dots,\ p_1=x^rq_1,\ p_0=x^{r+1}q_0,$$
  where $q_0,\dots,q_r$ are holomorphic at $x=0$. Then $P$ has
  the following expression:
  $$x^{-r-1}P=x^{n-r-1}\partial^n+x^{n-r-2}p_{n-1}\partial^{n-1}+\cdots+p_{r+1}\partial^{r+1}+q_r\partial^r+\cdots+q_1\partial+q_0.  $$
\end{cor}}
  In particular when $n=6$ and $r=3$, $(${\it i.e.},
  spectral type is $3111$ $)$
  $$x^{-3}P=x^3\partial^6+x^2p_5\partial^5+xp_4\partial^4+q_3\partial^3+q_2\partial^2+q_1\partial+q_0,\quad p_i,q_j\in\mathbb{C}[x].
  $$
  
Recall that for an equation $P=\sum_j\sum_ip_{ij}x^i\partial^j \in\mathbb{C}[x][\partial]$, a subset $ap$ of coefficients $\{p_{ij}\}$ is called a set of {\it accessory parameters}, if all other coefficients are uniquely written in terms of $ap$ and the local exponents. The cardinality of $ap$ is called the {\it number of accessory parameters}.

\begin{prp}\label{numberofAP}{\rm(cf. \cite{Osh,Hara})}
  The number of accessory parameters of a Fuchsian equation of order $n$
  with $m$ singular points, with semi-simple local monodromy, is given by
  \[\frac12\left\{(m-2)n^2-\sum_{{\rm singular\ points}}({\rm multiplicity\ of\ local\ exponents}\ mod\ 1)^2+2\right\}.
  \]
\end{prp}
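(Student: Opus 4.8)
The plan is to obtain the formula as a dimension count: the number of accessory parameters is the dimension of the space of admissible coefficient vectors $\{p_{ij}\}$ minus the dimension of the space of free local-exponent data, and I would set up the bookkeeping so that the contributions of the individual spectral blocks assemble into the stated sum of squares.

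First I would fix the singular points ($m-1$ finite points $a_1,\dots,a_{m-1}$ together with $\infty$) and describe the ambient family. Clearing denominators in the normalized form $\partial^n+\sum_{k=1}^nQ_k(x)\partial^{n-k}$, the Fuchsian condition at the finite points forces $Q_k=R_k(x)/\prod_i(x-a_i)^k$, and regularity at $\infty$ forces $\deg R_k\le (m-2)k$. Hence each $R_k$ carries $(m-2)k+1$ free coefficients and the ambient family of equations with generic exponents has dimension
\[ C \;=\; \sum_{k=1}^n\big[(m-2)k+1\big] \;=\; (m-2)\tfrac{n(n+1)}{2}+n. \]
By Propositions \ref{localat0} and \ref{localat1} the local exponents at each singular point are the roots of the degree-$n$ indicial polynomial there, so the indicial data amount to $n$ symmetric functions per point. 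For generic exponents I would argue that the map from the coefficient space to this indicial data is a submersion onto the hyperplane cut out by the Fuchs relation \eqref{Fuchsrelation}; its fibres are the accessory-parameter spaces, of dimension $C-(mn-1)$, which already gives the formula in the case where each exponent is its own block (there $\sum d_j^2=mn$).

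Next I would install the spectral type. At a point whose exponents split, modulo $1$, into blocks of sizes $d_1,\dots,d_\ell$ (each block a run $s_j,s_j+1,\dots,s_j+d_j-1$), prescribing this block structure confines the indicial polynomial to an $\ell$-parameter subvariety of the $n$-dimensional space of monic polynomials, hence imposes $n-\ell$ conditions on the coefficients; summed over the singular points this is $mn-L$ conditions, where $L=\sum_{\mathrm{sing}}\ell$. Demanding in addition that no local solution be logarithmic (semisimple local monodromy) costs, for a single block of size $d$, exactly $\binom{d}{2}$ conditions; this is the generalization of the three equations $g(s)=g(s+1)=h(s)=0$ found for the size-$3$ block of $H_6$ in the derivation of Proposition \ref{accpar1}, with Proposition \ref{nologcond} and Corollary \ref{nologcondcor} recording the same phenomenon at $0$. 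Writing $B=\sum_{\mathrm{sing}}\sum_j\binom{d_j}{2}$, the surviving coefficient space has dimension $C-(mn-L)-B$ and fibres over the space of free exponent bases, of dimension $L-1$ after the Fuchs relation; the number of accessory parameters is the difference
\[ A \;=\; \big[C-(mn-L)-B\big]-(L-1)\;=\;C-mn-B+1, \]
in which the block count $L$ cancels. A short symmetric-function identity then finishes the proof: since $\sum_jd_j=n$ at each point, $mn+2B=\sum_{\mathrm{sing}}\sum_j\big(d_j+d_j(d_j-1)\big)=\sum_{\mathrm{sing}}\sum_jd_j^2$, and $\sum_jd_j^2$ is precisely $(\text{multiplicity of local exponents mod }1)^2$ at that point; substituting $C=(m-2)\tfrac{n(n+1)}{2}+n$ turns $A=C-mn-B+1$ into $\tfrac12\{(m-2)n^2-\sum_{\mathrm{sing}}(\text{mult mod }1)^2+2\}$.

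The hard part will be the transversality that makes these counts exact rather than inequalities: I must check that the coefficient-to-exponent map is genuinely submersive, and that the block-structure conditions and the no-log conditions are independent of one another and across the singular points. The crux is that a size-$d$ block contributes exactly $\binom{d}{2}$ \emph{independent} no-log conditions; I would prove this by the resonance analysis used for $H_6$—the solution attached to the exponent $s+k$ of a block can fail to terminate only at the $d-1-k$ higher resonances $s+k+1,\dots,s+d-1$, and $\sum_{k=0}^{d-1}(d-1-k)=\binom{d}{2}$—while genericity of the remaining exponents guarantees nonvanishing of the leading coefficients $f(s+k)$ and hence independence. An alternative and perhaps cleaner route is to recognize $A=1-\tfrac12\,\mathrm{rig}$, where $\mathrm{rig}=(2-m)n^2+\sum_{\mathrm{sing}}\sum_jd_j^2$ is Katz's index of rigidity and $\sum_jd_j^2$ is the dimension of the centralizer of the semisimple local monodromy; but the elementary count above keeps everything inside the concrete $(x,\partial)$-picture used throughout the paper.
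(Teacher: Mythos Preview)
The paper does not give its own proof of this proposition: it is stated with the tag ``(cf.\ \cite{Osh,Hara})'' and immediately applied to the examples $H_j$, so there is nothing in the paper to compare your argument against line by line. Your dimension count is the standard derivation of this formula (essentially the one in Oshima's monograph), and the algebra you record is correct: with $C=(m-2)\tfrac{n(n+1)}{2}+n$, $B=\sum_{\mathrm{sing}}\sum_j\binom{d_j}{2}$, and the identity $mn+2B=\sum_{\mathrm{sing}}\sum_j d_j^2$, one indeed obtains $A=C-mn-B+1=\tfrac12\{(m-2)n^2-\sum d_j^2+2\}$.

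Your own caveat is the right one. The counting gives only an \emph{expected} dimension; what turns it into an equality is the independence of (i) the indicial conditions across the $m$ singular points and (ii) the no-logarithm conditions within each block. You gesture at this via the resonance analysis used for $H_6$ (the $g(s)=g(s+1)=h(s)=0$ computation), and that is exactly the mechanism, but note that the paper's Proposition~\ref{nologcond} only treats blocks of the special shape $\{0,1,\dots,r,e_1,\dots\}$ at $x=0$; to make your sketch a proof you would need to redo that computation for a general block $\{s,s+1,\dots,s+d-1\}$ and check that the $\binom{d}{2}$ resulting linear conditions on the $p_{ij}$ are independent and remain so when imposed simultaneously at all singular points. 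This is routine under the genericity hypothesis (the nonvanishing of the off-block indicial factors $f(s+k)$ for $k\ge d$ is what makes the recursion solvable and the conditions linear in disjoint sets of coefficients), but it should be stated rather than assumed. Your alternative remark that $A=1-\tfrac12\,\mathrm{rig}$ with $\mathrm{rig}$ Katz's index of rigidity is also correct and is in fact the form in which the result is usually quoted in \cite{Osh}.
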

For $H_j$, $m=3$. The equation $H_6$ has Riemann scheme $R_6$ (Introduction), its spectral type is $(3111,3111,3111)$; since $\{6^2-3(3^2+3\cdot1^2)+2\}/2=1$, it has one accessory parameter. The others are computed as
  \[\def\arraystretch{1.1}
  \begin{array}{lll}
   {\rm equation}&{\rm spectral\ type}&\\ 
 \quad H_{6}\quad& (3111,3111,3111)&:\quad \{6^2-3(3^2+3\cdot1^2)+2\}/2=1,\\ 
 \quad H_{5}\quad& (2111,2111,311)&:\quad \{5^2-2(2^2+3\cdot1^2)-(3^2+2\cdot1^2)+2\}/2=1,\\ 
 \quad  H_{4}\quad& (211,211,1111)&:\quad\{4^2-2(2^2+2\cdot1^2)-(4\cdot1^2)+2\}/2=1,\\ 
 \quad H_{3}\quad& (111,111,111)&:\quad \{3^2-3(3\cdot1^2)+2\}/2=1,\\ 
\quad  E_2\quad& (11,11,11)&:\quad \{2^2-3(2\cdot1^2)+2\}/2=0.
  \end{array}\]
 The Gauss equation $E_2$ has no accessory parameter. The others have one.
  
   \subsection{Adjoint equations}\label{GenAdj}
Adjoint equation of a linear differential equation should be discussed under the frame work of projective differential geometry, as we sketch below. In this article, however, we make the following practical definition for {\it operators}. 

\begin{dfn}The {\sl adjoint} $P^*$ of $P=\sum p_j(x)\partial^j$ is defined as
  $$P^*=\sum (-)^j\partial^j\circ p_j(x).$$
\end{dfn}
When we are working on differential operators and their adjoints,
we {\it always assume} that the coefficients are polynomials in $x$
free of common factor. Otherwise we can not speak of adjoint symmetry:

\begin{remark}
  As we see in \S \ref{adjE2proof}, the adjoint of the Gauss
  operator $E=E(a,b,c)$ is again the Gauss operator $E^*=E(1-a,1-b,2-c)$.
  However, if we apply the above formula for
  $$P=\dfrac{1}{x(x-1)}\ E=\partial^2+\dfrac{(a+b+1)x-c}{x(x-1)}\ \partial+\dfrac{ab}{x(x-1)},
  $$
  then the adjoint $P^*$ is an operator with the Riemann scheme
  $$\left(\begin{array}{lcc}x=0:&1&c\\ x=1:&1&a+b-c+1\\ x=\infty:&-a-1&-b-1\end{array}\right),
  $$
  which is not Gauss, but $P^*\circ x(x-1)=E^*.$    
\end{remark}
\noindent

\subsubsection{Adjoints of the operators}\label{GenAdjAdj}
The adjoint operator of $H_j$ is the same operator with a simple change of the local exponents, and the accessory parameter. Once the operator is expressed in the $(x,\theta,\partial)$-form, this is easily checked by using the following formulae:

$$(PQ)^*=Q^*P^*,\quad \partial^*=-\partial,\quad \theta^*=-\partial\cdot x=-(\theta+1),$$
$$(\theta^i(\partial)^j)^*=(-\partial)^j(-\theta-1)^i=(-\theta-1-j)^i(-\partial)^j,\quad \partial\theta=(\theta+1)\partial.$$
For example, the adjoint of $H_6$ is computed as
$$\begin{array}{ll}
  T_0^*&=(-\theta+s+1)(-\theta+s)(-\theta+s-1)(-\theta-1-e_7)\cdots(-\theta-1-e_9),\\
  (T_1\partial )^*&=\partial^*(-\theta+1+s)(-\theta+s)B_1(-\theta-1)\\&=(-\theta+s)(-\theta+s-1)B_1(-\theta-2)\cdot(-\partial ),\\
(T_2\partial ^2)^*&=(-\theta+s-1)B_2(-\theta-3)\cdot(-\partial )^2,\\
  (T_3\partial ^3)^*&=(\theta+1+e_1) (\theta+1+e_2) (\theta+1+e_3)(-\partial )^3.\end{array}$$
\blue{The accessory parameter $T_{10}$ changes as in \S \ref{cochH6}.}%into  $$4*s_{12}*s+6*s^2-6*s_{12}-2*s_{21}+2*s_{22}-4*s_{23}-18*s+8-T_{10}.$$
\par\medskip
  Change of the Riemann schemes is given as follows:%will jump to the eyes:
\begin{itemize}
\item  $H_6$:
  \[
  \begin{array}{l}
    \left(\begin{array}{ccccccccc}
x=0:&0&1&2&e_1&e_2&e_3\\
x=1:&0&1&2&e_4&e_5&e_6\\
x=\infty:&s&\ s+1\ &s+2&e_7&e_8&e_9\end{array}\right)
\\
\hskip60pt
\to\ \left(\begin{array}{cccccccc}
0&1&2&2-e_1&2-e_2&2-e_3\\
0&1&2&2-e_4&2-e_5&2-e_6\\
-1-s&\ -s\ &1-s&1-e_7&1-e_8&1-e_9\end{array}\right),
  \end{array}
  \]
  
%$9+3s+\sum e=6\cdot5/2=15.$
  %\end{example}
  \comment{
  $S\!E_6$:
  $$S\!E_6^*(a,b,c,g,p,q,r)=S\!E_6( -a - 1, -b - 1, -c - 1, -g, -p - 1, -q - 1, 1 - r).$$
%\end{example}
  }

\item  $H_5$:
$$ 
  \begin{array}{l}
    \left(\begin{array}{cccccc}
x=0:&0&1&e_1-1&e_2-1&e_3-1\\
x=1:&0&1&e_4-1&e_5-1&e_6-1\\
x=\infty:&1+s&\ 2+s\ &3+s&e_7+1&e_8+1\end{array}\right)
\\
\hskip100pt
\to\ \left(\begin{array}{cccccc}
0&1&2-e_1&2-e_2&2-e_3\\
0&1&2-e_4&2-e_5&2-e_6\\
-s-1&\ -s\ &-s+1&1-e_7&1-e_8\end{array}\right),\end{array}$$
%$5+3s+\sum e=5\cdot4/2=10.$
%\end{example}
\comment{
\item  $S\!E_5$:
  $$S\!E_5^*(a,b,c,g,p,q)=S\!E_5( -a - 1, -b - 1, -c - 1, -g, -p - 1, -q - 1).$$
%\end{example}
}
\item $H_4$:
\[\left(\begin{array}{ccccc}
x=0:&0&1&e_1&e_2\\
x=1:&0&1&e_3&e_4\\
x=\infty:&e_5&e_6&e_7&e_8\end{array}\right)\to
\left(\begin{array}{cccc}
0&1&1-e_1&1-e_2\\
0&1&1-e_3&1-e_4\\
1-e_5&1-e_6&1-e_7&1-e_8\end{array}\right),
\]
% $3+2s+\sum e=4\cdot3/2=6.$
%\end{example}
\comment{ $S\!E_4$:
$$S\!E_4^*(a,b,c,g,q)=S\!E_4(-a-1,-b-1,-c-1,-g,-q-1.)$$
%\end{example}
 $Z_4$:
$$ Z_4^*(A,\!k)=Z_4(A,-k).$$
%\end{example}
 ${\rm ST}_4$:
\[\left(\begin{array}{ccccc}
x=0:&0&1&e_1&e_2\\
x=1:&0&1&e_3&e_4\\
x=\infty:&s&s+1&e_5&e_6\end{array}\right)\to
\left(\begin{array}{cccc}
0&1&1-e_1&1-e_2\\
0&1&1-e_3&1-e_4\\
-s&1-s&1-e_5&1-e_6\end{array}\right),
\]
% $3+2s+\sum e=4\cdot3/2=6.$
%\end{example}
  $_4H_3$:
\[\left(\begin{array}{ccccc}
x=0:&0&e_1&e_2&e_3\\x=1:&0&1&2&e_4\\x=\infty:&e_5&e_6&e_7&e_8\end{array}\right)\to
\left(\begin{array}{cccc}
0&-e_1&-e_2&-e_3\\0&1&2&2-e_3\\1-e_5&1-e_6&1-e_7&1-e_8\end{array}\right),\]
$$_4E^*_3(a_0,a_1,a_2,a_3;b_1,b_2,b_3)= {}_4E_3(1-a_0,1-a_1,1-a_2,1-a_3;2-b_1,2-b_2,2-b_3).$$
%\end{example}
}
\item  $H_3$:
\[
\left(\begin{array}{cccc}
x=0:&0&e_1&e_2\\
x=1:&0&e_3&e_4\\
x=\infty:&e_5&e_6&e_7\end{array}\right)\to
\left(\begin{array}{ccc}
0&-e_1&-e_2\\
0&-e_3&-e_4\\
2-e_5&2-e_6&2-e_7\end{array}\right),
\]    %$\sum e=3\cdot2/2=3.$
%\end{example}
\comment{
  $S\!E_3$:
  $$S\!E_3^*(a,b,c,g)=S\!E_3( -a - 1, -b - 1, -c - 1, -g).$$
%\end{example}
\item $Z_3$:
$$Z_3^*(A_0,A_1,A_2,A_3)=Z_3(A_0,A_1,-A_2,A_3).$$
%\end{example}
  $_3E_2$:
\[\left(\begin{array}{cccc}
  x=0:&0&e_1&e_2\\
  x=1:&0&1&e_3\\
  x=\infty:&e_4&e_5&e_6
\end{array}\right)\to
\left(\begin{array}{ccc}
  0&-e_1&-e_2\\
  0&1&1-e_3\\
  1-e_4&1-e_5&1-e_6
\end{array}\right),\]
$$_3E^*_2(a_0,a_1,a_2;b_1,b_2)= {}_3E_2(1-a_0,1-a_1,1-a_2;2-b_1,2-b_2).$$
%\end{example}
}
\comment{\item\label{adjE2} $E_2$:
  \[ \begin{array}{l}       %%# changed to array to make linebreak
    R_2(a,b,c)=\left( \begin{array}{cc}
    0&1-c\\
    0&c-a-b\\
    a&b\end{array}\right) \\
\hskip60pt \to
\left( \begin{array}{cc}
    0&-(1-c)\\
    0&-(c-a-b)\\
    1-a&1-b\end{array}\right)=R_2(1-a,1-b,2-c).
  \end{array}
  \]%\end{example}
}
\item  $E_2$:
\[
\left(\begin{array}{ccc}
x=0:&0&e_1\\
x=1:&0&e_2\\
x=\infty:&e_3&e_4\end{array}\right)\to
\left(\begin{array}{ccc}
0&-e_1\\
0&-e_2\\
1-e_3&1-e_4\end{array}\right).
\]    %$\sum e=1.$
\end{itemize}

\begin{remark}\label{adjisminus}(See the end \S \ref{GenAdjPrj}.)
Let $\{e_1,e_2,\dots\}$ be the set of local exponents of an equation at a singular point. The set of local exponents of the adjoint equation at the point is   $\{p-e_1,p-e_2,\dots\}$ for some integer $p$.
\end{remark}

\subsubsection{Self-adjoint equations}
For the equation $H_j$, the self-adjoint one is a special $H_j$ with the following Riemann scheme, and it turns out to be a special $E_j$. In this subsection, the equations may have logarithmic singularities.
\begin{lemma}
If $\{x_1,x_2,\dots,x_r\}=\{-x_1,-x_2,\dots,-x_r\}$, by changing the indices, we have
\begin{itemize}
\item $x_1=-x_2,x_3=-x_4,\dots, x_{r-1}=-x_r$ when $r$ is even, 
\item $x_1=-x_2,x_3=-x_4,\dots, x_{r-2}=-x_{r-1},x_r=0$ when $r$ is odd.
\end{itemize}
\end{lemma}
 For example, %by this Lemma,
if $e=\{e_1,e_2,e_3\}=\{2-e_1,2-e_2,2-e_3\}$, then
we may assume $e=\{e_1,2-e_1,1\}$ by putting $x_i=e_i -1$;
if $e=\{e_5,e_6,e_7,e_8\}=\{1-e_5,1-e_6,1-e_7,1-e_8\}$, then
$e=\{e_5,1-e_5,e_7,1-e_7\}$ by putting $x_i = e_i - 1/2$.

\begin {itemize}

\item The self-adjoint $H_6$ has local exponents as 
\[\left(\begin{array}{ccccccc}
 x=0:&0&1&2&e_1&2-e_1&1\\ x=1:& 0&1&2&e_4&2-e_4&1\\ x=\infty:&-1/2&\ 1/2\ &3/2&\ e_7\ &1-e_7&1/2\end{array}\right),\]
it is irreducible for generic $\{e_1,e_4,e_7\}$, 
\footnote{Suppose $H_6=P_1\circ P_2$. If ${\rm order}(P_2)=1,$ (resp. 3) choose one (resp. three) element(s) from the set of local exponents of each singular point, the sum is not an integer.
If ${\rm order}(P_1)=2,$ choose two elements and do the same, if the sum is an integer then it is  $\ge2+k$, where $k$ is the number of apparent singular points of $P_2$. On the other hand,
Fuchs relation of $P_2$ says the sum is $\le k+1$.}
and has the accessory parameter as
$$T_{10} = e_1^2 - e_4^2 + 2 e_7^2 - 2 e_1 + 2 e_4 - 2 e_7 - 15/4.$$

\item The self-adjoint $H_5$ has the Riemann scheme 
\[\left(\begin{array}{cccccc}
 x=0:&0&1&\ e_1-1\ &2-e_1&1/2\\ x=1:& 0&1&e_4-1&2-e_4&1/2\\  x=\infty:&0&1&2&e_7+1&\ 1-e_7\ \end{array}\right).\]
It is reducible and is equal to $\partial\circ X\circ\partial,$ where 
$$X=A^3dx^3+\cdots, \quad A=x(x-1)$$ is essentially a self-adjoint $H_3$ defined below, that is 
 $A\circ X\circ A^{-1}$ is the self-adjoint $H_3$ with the Riemann scheme
\[\left(\begin{array}{cccc}
 x=0:&0&\ e_1-3/2\ &3/2-e_1\\ x=1:& 0&\ e_4-3/2\ &3/2-e_4 \\  x=\infty:&e_7+1&\ 1-e_7\ &1\end{array}\right).\]

\item  The self-adjoint $H_4$ has the Riemann scheme 
\[\left(\begin{array}{ccccc}
 x=0:&0&1&e_1&1-e_1\\  x=1:&0&1&e_3&1-e_3\\  x=\infty:&e_5&\ 1-e_5\ &e_7&1-e_7\end{array}\right), \]
it is irreducible for generic $\{e_1,e_3,e_5,e_7\}$, and has the accessory parameter as
$$\mathcal T_{10}=e_1^2 - e_3^2 + e_5^2 + e_7^2 - e_1 + e_3 - e_5 - e_7 - 2.$$
\comment{ T0:=mult((z+c5),(z+c6),(z+c7),(z+c8),[dx,x]):
     T1:=-2*mult(z,z,z,[dx,x])+ T12*mult(z,z,[dx,x])+ T11* z+ T10:
     T12:=c1+c2-c5-c6-c7-c8-5:
     T11:=3 (c1+c2)-c1 c2+c3 c4-c5 c6-c5 c7-c5 c8-c6 c7-c6 c8-c7 c8-8:
     T2:=mult((z-c1+2),(z-c2+2),[dx,x]):}

\item  The self-adjoint $H_3$ has the Riemann scheme 
\[\left(\begin{array}{cccc}
 x=0:&0&e_1&-e_1\\ x=1:& 0&e_3&-e_3\\  x=\infty:&e_5&\ 2-e_5\ &1\end{array}\right),\]
it is irreducible for generic $\{e_1,e_3,e_5\}$, and has the accessory parameter as
$$a_{00}=(e_1^2 - e_3^2 + e_5^2)/2 - e_5.$$

\item The self-adjoint $E_2$ has the Riemann scheme 
\[\left(\begin{array}{ccc}
 x=0:&0&0\\  x=1:&0&0\\  x=\infty:&e_3&1-e_3\end{array}\right),\]
it is irreducible for generic $e_3$.
\end{itemize}

\subsubsection{Adjoint equation in projective differential geometry}\label{GenAdjPrj}
In general, two linear homogeneous ordinary differential equations are said to be projectively equivalent if one changes into the other by multiplying a function to the equation, multiplying a function to the unknown, and by changing the independent variable. 
We give a short discussion on the notion of adjoint defined
projectively invariant way as follows (cf. \cite{Wil}).
For notational simplicity, we consider a third-order equation
  \[E: \quad u'''+p_1u''+p_2u'+p_3u=0,\]
and its Schwarz map: $x\mapsto u(x)=(u^1(x),u^2(x),u^3(x))$,
where $u^i$ are independent solutions. It is seen as a curve in the 3-space
or on the projective plane relative to the homogeneous coordinates.
Define its dual curve by the map:
$x\mapsto \xi(x)= u(x)\wedge u(x)' \in \wedge^2 V$, that is,
$\xi(x)=(\xi_1(x),\xi_2(x),\xi_3(x))$, where
\[ \xi_1=
  \left|\begin{array}{cc}u^2&u^3\\(u^2)'&(u^3)'\end{array}\right|, \quad
   \xi_2=\left|\begin{array}{cc}u^3&u^1\\(u^3)'&(u^1)'\end{array}\right|, \quad
   \xi_3=\left|\begin{array}{cc}u^1&u^2\\(u^1)'&(u^2)'\end{array}\right|.\]
By computation, we see $\xi_1,\xi_2$ and $\xi_3$ satisfy
  \[\xi'''+2p_1\xi''+(p_1'+p_1^2+p_2)\xi'+(p_2'+p_1p_2-p_3)\xi =0,\]
while the adjoint equation $E^*$  of $E$ is given as
  \[E^*:  \quad v'''-(p_1v)''+(p_2v)'-p_3v=0.\]
These two equations look different, but both are equivalent
projectively (change $\xi$ to $\lambda^{-2}\xi$ and
$v$ to $\lambda v$ where
$\lambda=\exp(\int\frac{1}{3}p_1\, dx)$) to the equation
  \[{\rm adj}E:\quad w'''+P_2w'+(P_2'-P_3)w=0,\] %prE:\quad
where
  \[P_2=p_2-p_1'-\frac{1}{3}p_1^2, \quad
    P_3=p_3-\frac{1}{3}p_1''+\frac{2}{27}p_1^3-\frac{1}{3}p_1p_2.\]
Namely, the equation $E^*$ is equivalent to the equation satisfied
by $\xi$; this equation of $\xi$ is sometimes called the
Wronskian equation. By the way, the equation $E$ itself is %known to be
equivalent projectively (change of coordinate) to
  \[   u'''+P_2u'+P_3u=0.\] %PE:\quad
Though $P_2$ and $P_3$ are not projectively invariant, the cubic form
  \[ Rdx^3,\quad{\rm where}\quad R=P_3-\frac12 P_2'\]
is invariant (the Laguerre-Forsyth invariant).
Writing this invariant $R^*$ for $adjE$, we see that 
  \[R^*=-R.\]
This identity of invariants shows a relation of a differential equation
and its adjoint equation. In general for an equation of order $n$,
invariants $R_3,\dots,R_n$ are defined,
and they are related to the invariants $R^*_3,\dots,R^*_n$
of the adjoint equation as $R_j^*=(-)^jR_j$ (cf. \cite{Wil}).
%\par\medskip

Now we apply the above general theory to the Fuchsian differential
equation $E$. The local exponents of the adjoint equation are given as follows.
Let $e_1$, $e_2$, $e_3$ be local exponents of $E$ at $x=0$: assume that
$u^i$ are chosen as
  \[ u^1=x^{e_1}f_1,\quad u^2=x^{e_2}f_2,\quad u^3=x^{e_3}f_3,\]
where $f_i$ are holomorphic at $x=0$ (and $f_i(0)=1$ for simplicity).
%Within the projective consideration, the differences $e_2-e_1$ and $e_3-e_1$ make sense.
It is easy to see that
  \[u\wedge u' = ( x^{e_2+e_3-1}g_1, x^{e_1+e_3-1}g_2,x^{e_1+e_2-1}g_3),\]
where $g_1=(e_3-e_2)f_2f_3+ xh_1$, $h_1$ being holomorphic at $x=0$, and so on. 
Within the projective consideration, the differences of the exponents make sense.
%On the other hand, we have
%$$p_1 = (3-e_1-e_2-e_3)/x + h_1,\quad \lambda^3 =x^{3-e_1-e_2-e_3} \cdot h_2,$$
%where $h_1$ and $h_2$ are holomorphic at $x=0$. 
These explain why $\{p-e_1,\, p-e_2,\, p-e_3\}$ ($p\in \mathbb{Z}$)
  (cf. Remark \ref{adjisminus}) appears as a set of local exponents of the adjoint equation $E^*$ at $x=0$.

\section{Addition and middle convolution}\label{GenAdd}\secttoc
In this section, addition and middle convolution are introduced.
We consider the Weyl algebra $W[x]=\mathbb{C}[x][\partial]$,
and put
$$
 W(x)=\mathbb{C}(x)\otimes_{\mathbb{C}[x]}W[x]. 
$$
We regard a differential equation (a differential operator) as an element
of $W[x]$ or $W(x)$.

\subsection{Definition of addition and middle convolution}
\begin{dfn}\label{GenAddAdd}
 For $P\in W(x)$ and a function $f$ in $x$, {\it addition} by $f$ is defined as
 $${\rm Ad}(f)P:=f\circ P\circ f^{-1}.$$
\end{dfn}

Katz (\cite{katz}) introduced the middle convolution as an operation for local systems on a Riemann surface, and Dettweiler and Reiter (\cite{DR}) made an additive analogue for Fuchsian systems of ordinary differential equations.
Oshima (\cite{Osh,Oshi2}) interpreted the middle convolution for Fuchsian systems as an operation on the Weyl algebra $W[x]$.

\begin{dfn}\label{defmc}\label{GenAddMid}
i)
For $P\in W(x)\setminus\{0\}$, we choose an element in
$(\mathbb{C}(x)\setminus\{0\})P\cap W[x]$ with the minimal degree,
and denote it by ${\rm R}(P)$.
For $P=0$, we put ${\rm R}(P)=0$.
Note that ${\rm R}(P)$ is determined up to multiplication by non-zero elements of $\mathbb{C}$.

\par\noindent
ii)
We define an automorphism ${\rm L}$ of $W[x]$ by
$$
 {\rm L}(\partial)=x,\  {\rm L}(x)=-\partial,
$$
which is called the Laplace transformation.

\par\noindent
iii)
We define the {\it middle convolution} $mc_{\mu}$ with parameter $\mu\in\mathbb{C}$ by
$$
 mc_{\mu}={\rm L}^{-1}\circ{\rm R}\circ{\rm Ad}(x^{-\mu})\circ{\rm L}\circ{\rm R}.
$$
Owing to the ambiguity of ${\rm R}$, for $P\in W[x]$, $mc_{\mu}(P)\in W[x]$ is determined
up to multiplication by non-zero elements of $\mathbb{C}$.
\end{dfn}

\begin{dfn}\label{GenAddRL}
  For a function $u(x)$, {\it Riemann-Liouville transformation} of $u$
  with parameter $\mu$ is defined as the function in $x$:
 $$I^{\mu}_{0}(u)(x)=\frac1{\Gamma(\mu)}\int_0^x u(t)(x-t)^{\mu-1}\, dt.$$
% where $\gamma$ is a cycle.\footnote{\blue{$\gamma$ is topologically closed and the values of the integrand at the starting point and the ending point agree.}}
\end{dfn}

Analytically, the middle convolution $mc_{\mu}$ is realized by Riemann-Liouville transformation.
Namely,
if $u$ is a solution of a linear differential equation $P$,
the function $I^{\mu}_{0}(u)$ is a solution of
the differential equation $mc_{\mu}(P)$ under some condition (\cite[Theorem 4.2]{Oshi2}).

\medskip   
Algorithmically,
the equation $mc_{\mu}(P)$ is obtained as follows (\cite{Osh,Oshi2,Hara}):
Write $P\in  W(x)$ as in the form $\sum_{j=0}^na_j(x)\partial^j$ with $a_j(x)\in\mathbb{C}(x)$,
and remove poles and the common factor of $a_n(x),a_{n-1}(x),\dots,\allowbreak a_0(x)$ (the operation ${\rm R}$).
We use the same letter $P$ for the result.
 Multiply $P$ by $\partial^k$ with sufficiently large positive integer $k$
 from the left so that $\partial^kP$ can be written
 as a linear combination of $\theta^i\circ\partial^j$ over $\mathbb{C}$, where $\theta=x\partial$.
Then replace $\theta$ by $\theta-\mu$, and divide the result by $\partial$
from the left as many times as possible.
 (The result is independent of $k$.) 
\subsubsection{Some properties of middle convolution}
We put $W[x]^0=W[x]\setminus\partial W[x]$.
On the set $W[x]^0$, the middle convolution has the additive property
$$
 mc_0={\rm id.},\quad
 mc_{\mu}\circ mc_{\mu'}=mc_{\mu+\mu'},
$$
and so $mc_{\mu}$ is invertible:
$$
(mc_{\mu})^{-1}=mc_{-\mu}.
$$

\noindent
For an operator $P\in W[x]^0$ with singular points $0$, $1$, $\infty$, set
$$\begin{array}{lcl}
  d&=&({\rm multiplicity\ of\ }0{\ \rm in\ the\ exponents\ at\ }x=0)\\
  & +&({\rm multiplicity\ of\ }0{\ \rm in\ the\ exponents\ at\ }x=1)\\
  & +&({\rm multiplicity\ of\ }\mu{\ \rm in\ the\ exponents\ at\ }x=\infty)-{\rm order}(P),\end{array}
$$     %%# added \\
where the exponents are regarded mod 1.
%Here and in the following, ${\rm order}(P)$ denotes the order of the operator $P$. 
Then we have
  $${\rm order}(mc_{\mu}(P))={\rm order}(P)-d.$$
The change of the Riemann scheme and the spectral type of $P$ by the middle convolution $mc_{\mu}$
is described in \cite[Theorem 5.2]{Osh}. 
It is known  that middle convolutions do not change the number of accessory parameters.

\subsubsection{Simple  examples}
  If $P=E_2$, then $d=1+1+(0{\ \rm or\ }1)-2=0{\ \rm or\ }1$.
  Thus any middle convolution of $E_2$ is again a Gauss operator
  or a 1st order operator. But if we perform an addition first
  to change the local exponent 0 of $x=0$ or/and $x=1$ non-zero,
  then $d=-2,-1{\ \rm or\ }0$. %See \S \ref{E4MC}
  So
  ${\rm order}(mc_{\mu}(E_2))$ can be $4$ or $3$ or $2$. In the following we see how the Gauss equation $E_2$ is transformed to the generalized hypergeometric equation ${}_3E_2$:

  \begin{itemize}[leftmargin=36pt] %[leftmargin=*]
  \item $E_2\longrightarrow{}_3E_2$: For a solution $u$ of
  the Gauss equation $E_2(e)$, perform a multiplication (called an addition)
  $u(x)\rightarrow x^\nu u(x)$ with $\nu\in\mathbb{C}$
 and then make a middle convolution
  with parameter $\mu$. 
The Riemann scheme changes as
  \[\begin{array}{lcl}   %%# added array and displaced 
    \left(\begin{array}{lll}x=0:&0&e_1\\x=1:&0&e_2\\x=\infty:&e_3&e_4  \end{array}\right) & \underset{x^\nu}\to &
    \left(\begin{array}{ll}\nu&e_1+\nu\\0&e_2\\ e_3-\nu&e_4-\nu \end{array}\right)\\
&  \underset{mc_{\mu}}\to &
\left(\begin{array}{lll}0&\nu+\mu&e_1+\nu+\mu\\ 0&1&e_2+\mu\\ 1-\mu&e_3-\nu-\mu&e_4-\nu-\mu  \end{array}\right),
\end{array}
\] 
where $e_1+\cdots+e_4=1$.
Thanks to \cite[Theorem 5.2]{Osh}, the spectral type of the last one is $(111,21,111)$. 
Thus the last one is the Riemann scheme of
a generalized hypergeometric equation ${}_3E_2$.\medskip

\item $E_2\longleftarrow{}_3E_2$: For the operator
  $${}_3E_2= (\theta+a_0)(\theta+a_1)(\theta+a_2)-(\theta+b_1)(\theta+b_2)\partial,$$
  replace $\theta$ by $\theta-a_2+1$, and we get
  \[ \def\arraystretch{1.1}
  \begin{array}{l}(\theta+a_0-a_2+1)(\theta+a_1-a_2+1)(\theta+1)%\\
%    \qquad \qquad 
-(\theta+b_1-a_2+1)(\theta+b_2-a_2+1)\partial\\
    \quad =\partial\ [x(\theta+a_0-a_2+1)(\theta+a_1-a_2+1)%\\
 %     \quad \qquad \qquad  
-(\theta+b_1-a_2)(\theta+b_2-a_2)].
  \end{array}\]
  Dividing by $\partial$ from the left we have a second-order equation. Multiplying a certain power of $x$, we get a Gauss equation $E_2$.
  \end{itemize}

\subsection{From $H_3$ to $H_6,H_5$,  and $ H_4$}\label{fromH3toH654bymc}
In this section and \S \ref{fromH654toH3bymc}, statements for $H_j$ are valid also for $G_j$ and $E_j$.
\subsubsection{From $H_3$ to $H_6$}\label{fromH3toH6}
We repeat the statement in the Introduction. Perform an addition to $H_3=x^2(x-1)^2\partial^3+\cdots$: 
$$L:=x(x-1){\rm Ad}(x^{g_0}(x-1)^{g_1})H_3=x^3(x-1)^3\partial^3+\cdots.$$
Then the Riemann scheme changes as 
$$
R_3:  \begin{pmatrix}
  x=0:&0&b_1&b_2\\
  x=1:&0&b_3&b_4\\
  x=\infty:&b_7&b_5&b_6
 \end{pmatrix}
 \to
R(L): \begin{pmatrix}
  g_0&b_1+g_0&b_2+g_0\\
  g_1&b_3+g_1&b_4+g_1\\
 b_7-g_0-g_1&b_5-g_0-g_1&b_6-g_0-g_1
 \end{pmatrix}.$$
Note $b_1+\cdots+b_7=3.$ 
  Since $\partial^3\circ L$ has a $(\theta,\partial)$-form,
  we perform a middle convolution (replace $\theta$ by $\theta-u$), and we get
  $$\begin{pmatrix}
x=0:&0&1&2&g_0+u&b_1+g_0+u&b_2+g_0+u\\
x=1:&0&1&2&g_1+u&b_3+g_1+u&b_4+g_1+u\\
x=\infty:&-u+1&-u+2&-u+3&b_5-g_0-g_1-u&b_6-g_0-g_1-u&b_7-g_0-g_1-u
  \end{pmatrix}.$$
  Finally we change the names of the exponents as
 $$\begin{pmatrix}
   x=0:&0&1&2&e_1&e_2&e_3\\
   x=1:&0&1&2&e_4&e_5&e_6\\
   x=\infty:&s&s+1&s+2&e_7&e_8&e_9
  \end{pmatrix}$$
  and regard $e_1,\dots,e_9$ are free and $s$ is determined
  by the Fuchs relation. 
Thanks to \cite[Theorem 5.2]{Osh}, 
the spectral type is $(3111,3111,3111)$. Thus we find that this is the Riemann scheme of $H_6(e)$.
%2023/01/23 re-checked (Part3/G3/G3toG6changeofT10.mw.)
  
\subsubsection{From $H_3$ to $H_5$}\label{fromH3toH5}
Perform an addition:
$$(x-1){\rm Ad}((x-1)^{g_1})H_3=x^2(x-1)^3\partial^3+\cdots,$$
and multiply $\partial^2$ from the left. This admits a $(\theta,\partial)$-form. Replace $\theta$ by $\theta-u$.
The resulting equation has the Riemann scheme
$$\begin{pmatrix}
0& 1& 2& b_2 + u& b_1 + u\\
0& 1& g_1 + u&  b_4+ g_1  + u& b_3 + g_1 + u\\
-u + 1& 2 - u& b_6 - g_1 - u& b_5 - g_1 - u&\quad -\sum_{i=1}^6b_i - g_1 - u + 3
\end{pmatrix}.$$
Exchange the singularities $x=0$ and $x=\infty$, perform an addition to make the local exponents at $x=0$ as $\{0,1,*,*,*\}$, and rename the local exponents to find the result is the Riemann scheme of $H_5$.

  \subsubsection{From $H_3$ to $H_4$}\label{fromH3toH4}
  Without performing an addition to $H_3$, multiply $\partial$ from the left and get a $(\theta,\partial)$-form. Replace $\theta$ by $\theta-u$, and do the same as above to get the Riemann scheme of $H_4$.

  \subsection{From $H_6$, $H_5$,  and  $H_4$ to $H_3$}\label{fromH654toH3bymc}
\subsubsection{From $H_6$ to $H_3$}\label{fromH6toH3}
Recall the $(\theta,\partial)$-form of $H_6$ given in Proposition \ref{eqwithR6}, and  the formulae
\[(\theta+3)(\theta+2)(\theta+1)=\partial^3x^3,       %%# spacing
   \ (\theta+3)(\theta+2)\partial=\partial^3x^2,
   \ (\theta+3)\partial^2=\partial^3x,
   \ \theta\partial=\partial(\theta-1).\]
We see that  by replacing
$\theta$ by $\theta-t$ (middle convolution with parameter $t$),
where
  \[t:=s-1,\quad s=2-\frac13\sum_{i=1}^9e_i,\]
the expression of $H_6(\theta=\theta-t)$ is divisible by $\partial^3$ from the left. If we write the quotient by $mcH=x^3(x-1)^3\partial^3+\cdots$,
then its Riemann scheme is
$$R(mcH):\ \left(\begin{array}{ccc}
  e_1+t&e_2+t&e_3+t\\ e_4+t&e_5+t&e_6+t\\ e_7-t&e_8-t&e_9-t\end{array}\right).
$$
We next transform it into $x^{-(t+e_1)-1}(x-1)^{-(t+e_4)-1}mcH\circ x^{t+e_1}(x-1)^{t+e_4}$. Then the equation can be expressed as $x^2(x-1)^2\partial^3+\cdots$, and 
the Riemann scheme changes into
$$ \left(\begin{array}{ccc}
  0&e_2-e_1&e_3-e_1\\ 0&e_5-e_4&e_6-e_4\\ e_7+e_1+e_4+t&e_8+e_1+e_4+t&e_9+e_1+e_4+t
\end{array}\right).
$$

\noindent
Introduce parameters $\epsilon_1,\dots,\epsilon_7$ by
$$\begin{array}{l}
e_2 - e_1 = \epsilon_1,\  e_3 - e_1 = \epsilon_2,\  e_5 - e_4 = \epsilon_3,\  e_6 - e_4 = \epsilon_4,\\
e_1 + e_4 + e_7+t = \epsilon_5,\  e_1 + e_4 + e_8+t =\epsilon_6,\  e_1 + e_4 + e_9+t = \epsilon_7,
\end{array}$$
$ \epsilon_1+\cdots+\epsilon_7=3$. The equation is $H_3(\epsilon)$, that is, $H_3(e)$ replaced $e$ by $\epsilon$.

\begin{remark}\label{fromH6toH5}(From $H_6$ to $H_5$) 
On the other hand, replace $\theta$ by $\theta-e_9+1$ in $H_6$ and divide by $\partial$ from the left. The Riemann scheme turns out to be
$$\begin{pmatrix}
  0& 1& e_1 + e_9 - 1& e_2 + e_9 - 1& e_3 + e_9 - 1\\
  0& 1& e_4 + e_9 - 1& e_5 + e_9 - 1& e_6 + e_9 - 1\\
  s+1-e_9&s+2-e_9&s+3-e_9&e_7-e_9+1&e_8-e_9+1
\end{pmatrix}.$$
Put
  $e_i+e_9=\epsilon_i\ (i=1,\dots,6)$, and $e_j-e_9=\epsilon_j\ (j=7,8)$, and replace $s-e_9$ by $s$. Then it is equal to $H_5(\epsilon)$.
\end{remark}
 
 \subsubsection{From $H_5$ to $H_3$}\label{fromH5toH3}
 Recall the $(x,\theta,\partial)$-form of
$H_5=H_6(e_9=0)/\partial=x\overline{T}_0+\overline{T}_1+\cdots=x^3(x-1)^3\partial^5+\cdots$:
Perform a middle convolution:
multiply $\partial$ to $H_5$ from the left
and get a $(\theta,\partial)$-form,
then  replace $\theta$ this time by $\theta-s$ ($s=2-\frac13\sum_{i=1}^8e_i$),
and divide it from the left by $\partial^3$, and multiply powers of $x$ and $x-1$ to make one of the local exponents at $x=0$ and $x=1$ to be 0. Then we get $H_3$.
The procedure is quite analogous to that
of getting $H_3$ from $H_6$ shown above.
\subsubsection{From $H_4$ to $H_3$}\label{fromH4toH3}
Recall the $(\theta,\partial)$-form of
$H_4=\mathcal T_0+\mathcal T_1\partial+\mathcal T_2\partial^2=x^2(x-1)^2\partial^4+\cdots$.
Perform a middle convolution: Replace $\theta$  by $\theta-c_8$,  
and divide it from the left by $\partial$, and multiply powers of $x$ and $x-1$ to make one of the local exponents at $x=0$ and $x=1$ to be 0. Then we get $H_3$.

\section{Shifts, shift  operators, shift relations and S-values}\label{GenShift}\secttoc
For the hypergeometric series $F$ (\S 5), the following identities 
$$P_{a+} F(a,b,c;x)=aF(a+1,b,c;x),\ P_{a-} F(a,b,c;x)=(a-c)F(a-1,b,c;x),$$
where
$$P_{a+}=x\partial+a,\quad P_{a-}=x(x-1)\partial+bx+a-c$$
are well-known. They can be para-phrased by using the hypergeometric operator $E$ (\S 4.2) as
$$(EPQE):\ E(a+1,b,c)P_{a+}=Q_{a+}E(a,b,c),\quad E(a-1,b,c)P_{a-}=Q_{a-}E(a,b,c),$$
where 
$$Q_{a+}=x\partial +a+1,\quad Q_{a-}=x(x-1)\partial +(b+1)x+a-c-1.$$
The operators $P_{a\pm}$ are called by various names such as ladder operator, step-up/down operator, contiguity operator,... In this paper and in the paper \cite{HOSY2}, we call them 
\begin{center}{\it shift operators} for the {\it shifts} $a\to a\pm1$.
\end{center}
Another example: Since $$\partial F(a,b,c;x)=\frac{ab}cF(a+1,b+1,c+1;x),\quad E(a+1,b+1,c+1)\partial=\partial E(a,b,c),$$
$\partial$ is the shift operator for the shift $(a,b,c)\to(a+1,b+1,c+1)$.\par\smallskip
The relations $(EPQE)$ are also well-known (e.g. \cite{vPS} Proposition 1.13). We call them
\begin{center}{\it shift relations} for the {\it shifts}  $a\to a\pm1$.\end{center}
The computations such as
$$P_{a+}(a-1)\circ P_{a-}-xE=(a-1)(a-c),\quad P_{a-}(a+1)\circ P_{a+} -xE=a(a-c+1)$$ 
are also popular. We call these constants 
\begin{center}{\it S-values} for the {\it shifts} $a\to a\pm1$,
\end{center}
and write as
$$Sv_{a-}= (a-1)(a-c),\quad Sv_{a+}=a(a-c+1).$$
In this section we define these in a general setting. 
\subsection{The ring of differential operators, left ideals and reducibility}\label{GenShiftRing}
Let $D=\mathbb{C}(x)[\partial]$ be the ring of ordinary differential operators with coefficients in rational functions of $x$. \blue{We call the degree of the differential operator $P$ relative to $\partial$ the {\it order} of $P$ and denote it as ${\rm order}(P)$.}

\begin{itemize}[leftmargin=36pt] %[leftmargin=*]
\item Every left ideal of $D$ is principal, because $D$ admits Euclidean algorithm.

\item An operator $E\in D$ is said to be {\it reducible} if it can be written as the product of two operators of positive order. When $E$ is Fuchsian, it is reducible if and only if its solution space has a monodromy invariant proper non-trivial subspace. $E$ is said to be {\it irreducible} if it is not reducible.

\item If $E$ is irreducible, the left ideal $DE$ generated by $E$ is maximal, because, if not, there is a left ideal $L$ such that $D\supsetneq L\supsetneq E$, since $L$ is generated by an element $F\in D$, $E$ is divisible by $F$.
\end{itemize}

\begin{lemma}\label{inverseofP}
  Consider two operators $P,E\in D$ such that $0<{\rm order}(P)<{\rm order}(E).$
  If $E$ is irreducible, then $P$ has its (left) inverse in $D$ modulo $E$.
\end{lemma}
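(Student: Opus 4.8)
The plan is to exploit the fact, already recorded in this section, that every left ideal of $D=\mathbb{C}(x)[\partial]$ is principal (because $D$ carries a Euclidean algorithm relative to $\partial$, leading coefficients being invertible in the field $\mathbb{C}(x)$). First I would form the left ideal $DP+DE$ generated by $P$ and $E$, and write it as $DP+DE=DF$ for a single generator $F\in D$. By construction $P\in DF$ and $E\in DF$, so $F$ is a common right divisor: there are $A,B\in D$ with $P=AF$ and $E=BF$. Thus $F$ plays the role of a greatest common right divisor of $P$ and $E$, and the whole argument reduces to controlling ${\rm order}(F)$.

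Next I would bring in the irreducibility of $E$. From $E=BF$ with $E$ irreducible, the definition of reducibility (no factorization into two factors of positive order) forces one of the two factors to have order $0$, i.e.\ to be a nonzero element of $\mathbb{C}(x)$. Hence either $F$ is a unit, in which case $DF=D$, or $B$ is a unit, in which case ${\rm order}(F)={\rm order}(E)$ and $DF=DE$. These are the only two possibilities, and the second must be excluded.

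To discard the case $DF=DE$, I would use the order hypothesis. If $DF=DE$ then $P\in DE$, so $P=SE$ for some $S\in D$. Taking orders gives ${\rm order}(P)={\rm order}(S)+{\rm order}(E)\ge{\rm order}(E)$ whenever $S\neq0$, contradicting the assumption ${\rm order}(P)<{\rm order}(E)$; and $S=0$ would give $P=0$, contradicting ${\rm order}(P)>0$. Therefore $F$ is a unit and $DP+DE=D$. In particular $1\in DP+DE$, so there exist $Q,R\in D$ with $QP+RE=1$, that is $QP\equiv 1\pmod{E}$. This exhibits $Q$ as the desired left inverse of $P$ modulo $E$, completing the proof.

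The only delicate point, and the step I expect to require the most care, is the bookkeeping that identifies the generator $F$ of $DP+DE$ as a genuine right divisor of both $P$ and $E$ and then reads off the dichotomy from irreducibility: one must keep the sidedness straight (left ideals, right divisors) and verify that an order-$0$ factor in $\mathbb{C}(x)$ really is a unit of $D$, so that $DF=D$ yields the Bézout-type identity. Everything else is a short order count.
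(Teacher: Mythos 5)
Your proof is correct and is essentially the paper's argument: the paper notes that irreducibility of $E$ makes the left ideal $DE$ maximal (via principality of left ideals) and that ${\rm order}(P)<{\rm order}(E)$ forces $P\notin DE$, whence $DP+DE=D$ and $1=QP+RE$. You merely inline the maximality step as a greatest-common-right-divisor analysis of the generator $F$ of $DP+DE$, which is the same dichotomy (unit versus associate of $E$) ruled out by the same order count.
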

\begin{proof} %n\noindent {\sl Proof.}
  Since $DE$ is maximal and $P\not\in DE$, we have $D=DP+DE,$ that is, there exist $R,Q\in D$ satisfying $1=QP+RE$.
\end{proof} %   \hfill$\square$ % \bigbreak 

\begin{dfn}A singular point of an equation is said to be {\it apparent} if every solution at this point is holomorphic. 
\end{dfn}

\begin{prp}\label{generic_is_irred}$H_j$ $(j=2,\dots,6)$ are irreducible if the local exponents $e$  are generic.
\end{prp}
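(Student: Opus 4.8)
The plan is to use the criterion recalled in \S\ref{GenShiftRing}: a Fuchsian operator is reducible if and only if its solution space $V$ carries a proper nontrivial subspace $W$ invariant under the monodromy group. Thus it suffices to show that for generic local exponents no such $W$ exists. I would argue by contradiction, extracting a single ``determinant'' obstruction from the Fuchs relation \eqref{Fuchsrelation}. Suppose $0<\dim W=k<n$, and let $A_0,A_1,A_\infty$ be the local monodromies; by the no-logarithm assumption each is diagonalizable, so its restriction to the invariant subspace $W$ is again diagonalizable with eigenvalues a sub-multiset of its own. Writing the eigenvalues as $e^{2\pi i\rho}$ with $\rho$ a local exponent and using $A_0A_1A_\infty=\mathrm{id}$, restriction to $W$ gives $\det(A_0|_W)\det(A_1|_W)\det(A_\infty|_W)=1$, that is
\[ \sigma_0+\sigma_1+\sigma_\infty\in\mathbb{Z}, \]
where $\sigma_p$ is the sum of the exponents at $p$ selected by $W$ (well defined modulo $\mathbb{Z}$, so the value of the full sum modulo $\mathbb{Z}$ is unambiguous even for the repeated exponents $0,1,2$ and $s,s+1,s+2$).

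Next I would read off the admissible selections from $R_6$. At $0$ and $1$ the repeated exponents are integers and contribute nothing mod $\mathbb{Z}$; at $\infty$ the cluster $\{s,s+1,s+2\}$ contributes $a\,s$ mod $\mathbb{Z}$, where $a\in\{0,1,2,3\}$ is the multiplicity with which $W$ meets that cluster. Hence the obstruction becomes $\sum_{i\in S}e_i+a\,s\in\mathbb{Z}$ for some $S\subseteq\{1,\dots,9\}$. Substituting $s=2-\tfrac13(e_1+\cdots+e_9)$ and treating $e_1,\dots,e_9$ as algebraically independent, the coefficient of $e_j$ is $\mathbf{1}[j\in S]-a/3$; for the sum to lie in $\mathbb{Z}$ generically every coefficient must vanish, forcing $a\in\{0,3\}$ and $S=\emptyset$ or $S=\{1,\dots,9\}$. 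Counting dimensions at $\infty$ (where $a+|S\cap\{7,8,9\}|=k$) then gives $k=0$ in the first case and $k=6$ in the second, both excluded. This contradiction proves irreducibility of $H_6$.

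The same computation handles $H_5$ and $H_4$, whose $\infty$-clusters again involve the Fuchs-determined exponent $s$. For $H_3$ and $E_2$, where the local exponents are pairwise distinct, I would instead check directly that no choice of one exponent per point makes $\sigma_0+\sigma_1+\sigma_\infty$ an integer for generic $b$, using the single Fuchs relation $b_1+\cdots+b_7=3$: in the free coordinates $b_1,\dots,b_6$ the selected sum is affine-linear, and one verifies that its linear part never vanishes (the $\infty$-contribution is either a single $b_i$ or $-(b_1+\cdots+b_6)$, neither of which can cancel against the nonnegative contributions from $0$ and $1$). The case $k=n-1$ reduces to $k=1$ by applying the identical argument to the quotient $V/W$: its selected exponents at each point are the complementary subsets, so $\sigma_{V/W}=\sigma_V-\sigma_W$, and $\sigma_V\in\mathbb{Z}$ by Fuchs, whence $\sigma_{V/W}\in\mathbb{Z}$ exactly when $\sigma_W\in\mathbb{Z}$.

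The only delicate point is the bookkeeping of the repeated eigenvalues: one must confirm that the selected-exponent sum is well defined mod $\mathbb{Z}$ and that its $e$-linear part is computed correctly, but once the $s$-cluster is isolated the genericity step is immediate. An alternative route, closer to the paper's machinery, is to start from the classical irreducibility of $E_2$ for generic exponents and invoke that additions and middle convolutions (\S\ref{GenAdd}) preserve irreducibility of the monodromy, transporting it along the chains $E_2\to{}_3E_2$ and $H_3\to H_6,H_5,H_4$; there the main thing to verify is that generic exponents are carried to generic exponents under the affine parameter correspondences recorded in \S\ref{GenAdd}.
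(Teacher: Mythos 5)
Your proposal is correct, but it derives the key integrality obstruction by a different mechanism than the paper. The paper argues directly on a factorization $H_j=F_1\circ F_2$: the local exponents of $F_2$ at $\{0,1,\infty\}$ form a subset of those of $H_j$, any further singular points of $F_2$ are apparent and hence contribute non-negative integers, and the Fuchs relation \eqref{Fuchsrelation} applied to $F_2$ then forces a proper subset of the exponents of $H_j$ (plus integers) to sum to an integer --- impossible generically. You instead take a monodromy-invariant subspace $W$, restrict the local monodromies, and take determinants in $A_0A_1A_\infty=\mathrm{id}$, reaching the same kind of obstruction but only mod $\mathbb{Z}$. The trade-off: your route never has to discuss apparent singularities of the factors (monodromy is blind to them), but the mod-$\mathbb{Z}$ ambiguity of the repeated exponents forces the bookkeeping with the multiplicity $a$ and the dimension count at $\infty$; the paper's route gets exact exponent sums but needs the input that extra singularities of a factor are apparent with non-negative integer exponents. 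Notably, your explicit genericity verification --- substituting $s=2-\tfrac13(e_1+\cdots+e_9)$, forcing each coefficient $\mathbf{1}[j\in S]-a/3$ to vanish, hence $(a,S)=(0,\emptyset)$ or $(3,\{1,\dots,9\})$, then excluding both by counting dimensions at $\infty$ --- is exactly the computation the paper compresses into its final one-sentence assertion that a proper subset of generic exponents cannot sum to an integer; since $s$ is not a free parameter, this is where the real content lies, and your write-up makes it explicit (for $H_4$ the role of the $s$-cluster is played by the single Fuchs-determined exponent $c_8$, so your phrase ``$\infty$-cluster'' is a slight misnomer there, though the computation goes through verbatim). Your closing alternative, transporting irreducibility from $E_2$ along additions and middle convolutions, would also work, but it rests on the fact that middle convolution preserves irreducibility, which the paper nowhere establishes.
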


\begin{proof} Suppose a differential operator $E$ is reducible and is written as $F_1\circ F_2$, where ${\rm order}(F_1)\neq 0$ and ${\rm order}(F_2)\neq 0$.
At each of the singular points of $E$, % $\{0,\, 1,\, \infty\}$,
the set of local exponents of $F_2$ is a subset of that of $E$. The singular points of $F_2$ other than the  singular points of $E$ are apparent, so the local exponents at such points are non-negative integers. The Fuchs relation (\ref{Fuchsrelation}) for $F_2$ says that the sum of all the local exponents is an integer.  When $E=H_j$, \blue{the sum of a proper subset of the local exponents $e_1,e_2,\dots$ can not be an integer when the local exponents are generic.}
\end{proof} %\hfill$\square$ \bigbreak 

\begin{dfn}\label{solutionspace}
  For a given $E\in D$ with the set of singular points $S$, choose any point $x_0\in \mathbb{C}-S$. Let ${\rm Sol}(E)(x_0)$ be the solution space of $E$ at $x_0$.  For a loop $\rho\in \pi_1(\mathbb{C}-S,x_0)$ with base point $x_0$, we can analytically continue a solution at $x_0$ to get another solution at $x_0$. In this sense, ${\rm Sol}(E)(x_0)$ is a $\pi_1(\mathbb{C}-S,x_0)$-module. Since $x_0$ does not matter in the following arguments, from now on we drop $x_0$, and call this space simply {\it the solution space} and write as ${\rm Sol}(E)$, which is a  $\pi_1(\mathbb{C}-S)$-module.
\end{dfn}

\begin{lemma}\label{monodinv} $E\in D$ is reducible if and only if the solution space ${\rm Sol}(E)$ has a non-zero proper  $\pi_1(\mathbb{C}-S)$-submodule, which is often called a monodromy invariant subspace.
\end{lemma}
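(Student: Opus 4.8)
The plan is to establish the standard correspondence between right factors of $E$ and monodromy-invariant subspaces of ${\rm Sol}(E)$, using that $E$ is Fuchsian (as in the bullet preceding the lemma) so that $\dim {\rm Sol}(E) = {\rm order}(E) =: n$ at a base point $x_0 \notin S$, and that solutions have moderate growth at each point of $S$. A $\pi_1(\mathbb{C}-S)$-submodule of ${\rm Sol}(E)$ is exactly a linear subspace preserved by analytic continuation along every loop, i.e. a monodromy-invariant subspace, so the two directions below are precisely the two implications of the lemma.

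\emph{Forward direction.} Suppose $E = F_1\circ F_2$ with ${\rm order}(F_1),\,{\rm order}(F_2)>0$, and set $W := {\rm Sol}(F_2)$. Any $w\in W$ satisfies $Ew = F_1(F_2 w)=0$, so $W \subseteq {\rm Sol}(E)$. Writing the operators with polynomial coefficients, the leading coefficient (in $\partial$) of $E$ is the product of those of $F_1$ and $F_2$, so the singular points of $F_2$ lie in $S$; hence every $w\in W$ continues as a solution of $F_2$ along any loop in $\pi_1(\mathbb{C}-S)$, and since $F_2$ has single-valued coefficients the continuation again lies in $W$. Thus $W$ is monodromy invariant, and as $x_0$ is an ordinary point of $F_2$ we have $\dim W = {\rm order}(F_2)$ with $0 < {\rm order}(F_2) < n$, so $W$ is a non-zero proper submodule.

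\emph{Reverse direction.} Conversely, let $W\subseteq {\rm Sol}(E)$ be monodromy invariant with $0 < d := \dim W < n$, and pick a basis $w_1,\dots,w_d$. Let $F_2$ be the monic operator of order $d$ annihilating exactly these, given by the Wronskian quotient
$$F_2(y) = \frac{{\rm Wr}(w_1,\dots,w_d,y)}{{\rm Wr}(w_1,\dots,w_d)}.$$
Since $W$ determines $F_2$ independently of the basis, and monodromy replaces $(w_1,\dots,w_d)$ by $(w_1,\dots,w_d)M$ with $M$ invertible (multiplying both Wronskians by $\det M$), the coefficients of $F_2$ are single-valued on $\mathbb{C}-S$; being Fuchsian, they have moderate growth at the points of $S$ and at $\infty$, hence extend meromorphically to $\mathbb{P}^1$ and are therefore rational, so $F_2\in D$. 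Euclidean division in $D$ then gives $E = F_1\circ F_2 + R$ with ${\rm order}(R)<d$; as $F_2 w_i = 0$ and $E w_i = 0$ we get $R w_i = 0$ for all $i$, and an operator of order $<d$ killing $d$ linearly independent functions vanishes, so $R=0$. Thus $E = F_1\circ F_2$ with both factors of positive order, and $E$ is reducible.

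\emph{Main obstacle.} The only non-formal point is the passage from monodromy invariance to $F_2\in D$, that is, the rationality of the Wronskian coefficients. Single-valuedness is immediate from invariance of $W$, but rationality genuinely uses the regular-singular hypothesis: one must verify that near each $x_0\in S$ the functions spanning $W$, and hence the quotients of Wronskians, grow at most polynomially in $1/|x-x_0|$, ruling out essential singularities so that the single-valued coefficients are meromorphic on all of $\mathbb{P}^1$. I would record this growth estimate explicitly, since it is the one place where the Fuchsian nature of $E$ is indispensable.
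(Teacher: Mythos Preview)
Your proof is correct and considerably more complete than the paper's, which gives only the forward implication in a single sentence (``If $E$ factors as $F_1\circ F_2$, then ${\rm Sol}(F_2)$ gives a $\pi_1(\mathbb{C}-S)$-submodule of ${\rm Sol}(E)$'') and leaves the converse entirely to the reader. Your Wronskian construction for the reverse direction, together with the moderate-growth argument to force rationality of the coefficients, is the standard route and is fine.

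One small correction to your forward direction: the assertion that ``the singular points of $F_2$ lie in $S$'' is false in general. The paper itself supplies a counterexample in the Remark following Proposition~\ref{apparentsing}, namely $\partial^2 = (\partial + \frac{1}{x-c})\circ(\partial - \frac{1}{x-c})$, where the right factor has a singularity at an arbitrary $c$. The leading-coefficient argument you give breaks down because once you clear denominators separately in $F_1$ and $F_2$, the product $F_1\circ F_2$ need only equal $E$ up to a rational factor. Fortunately the conclusion survives for a simpler reason: since $F_2\in D$ has rational, hence single-valued, coefficients, and each $w\in W$ continues holomorphically as a solution of $E$ along any loop in $\mathbb{C}-S$, the identity $F_2 w = 0$ (a relation between single-valued coefficients and the continued $w$) persists under continuation. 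No control on the singular locus of $F_2$ is needed.
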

\begin{proof}
If $E$ factors as $F_1\circ F_2$ $(F_1,F_2\in D)$, then ${\rm Sol}(F_2)$
gives a $\pi_1(\mathbb{C}-S)$-submodule of  ${\rm Sol}(E)$.
\end{proof} % \hfill$\square$ 

\subsection{Shift operators and shift relations}\label{GenShiftShift}
In this and the next subsections, we study shift operators for  differential equations with an accessory parameter $ap$. When $ap$ is specified as a function of the local exponents, or the differential equation is rigid, just forget $ap$.
\begin{dfn}\label{DefShift}In general,
  let $H(e,ap)$ be an operator of order $n$
  with the local exponents $e=(e_1,\dots)$ and a  parameter $ap$, and ${\rm Sol}(H(e,ap))$ its solution space. For a shift 
$$sh_+:e\to e_+,\quad (e_+)_i=e_i+n_i,\quad n_i\in\mathbb{Z},$$
  a non-zero operator $P\in D$ of order lower than $n$   sending 
$${\rm Sol}(H(e,ap))\quad \ {\rm to} \quad {\rm Sol}(H(e_+,ap_+)),$$
for some $ap_+$, is called
  a {\it shift operator} for the shift $sh_+$ 
  and is denoted by $P_{+}$. A shift operator for the shift 
$sh_-:e\to e_-,\ (e_-)_i=e_i-n_i$   is denoted by $P_{-}$.
\end{dfn}

Here we make an important assumption:
\par\smallskip\noindent
{\bf Assumption:} $ap_+=ap-\alpha(e)$, where $\alpha$ is a polynomial in $e$.\par\smallskip
Without this, we can not go further; we can not define S-values, which play an important role in studying reducibility of the equations.
For every shift operator, we can assume that the  coefficients are polynomials of $(e,ap)$ free of common factors.
\begin{remark}
   When a differential {\it equation} in question is $Hu=0$, by multiplying a non-zero polynomial to the {\it operator} $H$, we can assume that the coefficients of $H$ has no poles. However, shift operators may have poles as functions of $x$.
  \end{remark}
Since $P_\pm\in D$, we have

\begin{lemma}\label{pi1morphsim}The shift operators
  are $\pi_1(\mathbb{C}-S)$-morphisms, {\it i.e.},
  they commute with the $\pi_1(\mathbb{C}-S)$-action.
\end{lemma}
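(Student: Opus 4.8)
The plan is to reduce the statement to the elementary fact that analytic continuation along a loop commutes with any differential operator whose coefficients are single-valued on $\mathbb{C}-S$. Concretely, fix a base point $x_0\in\mathbb{C}-S$ and a loop $\rho\in\pi_1(\mathbb{C}-S,x_0)$. Analytic continuation along $\rho$ defines a linear automorphism $\rho_*$ of the germs of multivalued holomorphic functions at $x_0$, and the $\pi_1$-action on ${\rm Sol}(H(e,ap))$ and on ${\rm Sol}(H(e_+,ap_+))$ is, by Definition \ref{solutionspace}, the restriction of $\rho_*$ to these finite-dimensional invariant subspaces. Thus the assertion that $P_\pm$ is a $\pi_1(\mathbb{C}-S)$-morphism is exactly the identity $\rho_*(P_\pm u)=P_\pm(\rho_* u)$ for every solution $u$ and every loop $\rho$, and this is what I would establish.

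First I would record the two building blocks. Analytic continuation commutes with differentiation, since $\partial$ is a local operation on germs while $\rho_*$ is realized by chaining together local identifications along $\rho$; hence $\rho_*(\partial v)=\partial(\rho_* v)$. Likewise, if $a(x)$ is single-valued on $\mathbb{C}-S$, then $\rho_* a=a$, so $\rho_*(a\cdot v)=a\cdot\rho_* v$. Since $P_\pm\in D=\mathbb{C}(x)[\partial]$ is a finite sum $\sum_j a_j(x)\partial^j$ with $a_j\in\mathbb{C}(x)$ rational, hence single-valued on $\mathbb{C}-S$, combining the two building blocks gives $\rho_*(P_\pm v)=P_\pm(\rho_* v)$ on germs, and in particular on solutions. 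This is the entire content of the commutation, and it is purely formal once the coefficients are known to be single-valued.

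The one point that needs care is that a shift operator may have poles at points $x_1\notin S$ (see the Remark preceding the statement), so a priori $P_\pm u$ is defined only on $\mathbb{C}-S$ with these extra punctures, whereas the target module ${\rm Sol}(H(e_+,ap_+))$ lives on all of $\mathbb{C}-S$. Here I would invoke the defining property of a shift operator: $P_\pm u$ is a solution of the Fuchsian equation $H(e_+,ap_+)$, whose only singular points are $S$, so $P_\pm u$ is holomorphic at each such $x_1$ and extends across it; that is, the poles of $P_\pm$ are apparent for the image, and $P_\pm u$ is a genuine element of the $\pi_1(\mathbb{C}-S)$-module. The commutation identity, already verified on $\mathbb{C}-S$ minus the poles of $P_\pm$, then propagates to all of $\mathbb{C}-S$ by uniqueness of analytic continuation. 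This removable-singularity bookkeeping is the only real subtlety; the remainder is the formal commutation of $\rho_*$ with a single-valued operator.
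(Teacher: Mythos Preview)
Your argument is correct and is essentially the same as the paper's: the paper simply writes ``Since $P_\pm\in D$, we have'' and then states the lemma, relying on the fact that operators in $D=\mathbb{C}(x)[\partial]$ have single-valued coefficients and hence commute with analytic continuation. Your version spells this out in detail and additionally handles the removable-singularity issue for poles of $P_\pm$ outside $S$, which the paper leaves implicit.
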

Suppose a shift operator $P_{+}\in D$ for a shift $sh_+$ exists.
Since $H(e_+,ap_+)\circ P_{+}$ is divisible from right by $H(e,ap)$,
there is an operator $Q_{+}\in D$ satisfying the {\it shift relation}:
  $$(EPQE):\quad H(e_+,ap_+)\circ P_{+}=Q_{+}\circ H(e,ap).$$
Conversely, if there is a pair of non-zero operators $(P_{+},Q_{+})\in D^2$
of order smaller than $n$ satisfying this relation,
then $P_{+}$ is a shift operator for the shift $sh_+$.
We often call also the pair $(P_{+},Q_{+})$
the shift operator for $sh_+$.
  Lemma \ref{inverseofP} implies
\begin{prp}\label{inverseofshiftop}If $H(e,ap)$ is irreducible and $P_{+}$
%  $($resp. $P_{-}\,)$ 
exists then the inverse operator $P_{-}$
%  $($resp. $P_{+}\,)$ 
exists. More precisely, 
$$\begin{array}{ll}
P_+(e):& {\rm Sol}(H_6(e,ap))\rightarrow {\rm Sol}(H_6(e_+,ap_+)),\quad \ ap_+=ap-\alpha(e),\\%={\rm Sol}(H_6(e+n,ap-\alpha(e))),\\
P_-(e):& {\rm Sol}(H_6(e,ap))\rightarrow {\rm Sol}(H_6(e_-,ap_-)),\quad \ ap_-=ap+\alpha(e-n),\end{array}
$$
where 
%$S(e,ap)={\rm Sol}(H_6(e,ap))$, 
$e_\pm=e\pm n$. Same for $P_-$ and $P_+$.
\end{prp}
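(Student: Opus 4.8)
The plan is to deduce the existence of $P_-$ from the purely algebraic one-sided inverse furnished by Lemma \ref{inverseofP}, and then to identify that inverse with the geometric inverse of the isomorphism of solution spaces induced by $P_+$. First I would record the hypotheses in operator form: $P_+=P_+(e)$ is a nonzero operator with $0<{\rm order}(P_+)<n$ satisfying the shift relation $(EPQE)$, namely $H(e_+,ap_+)\circ P_+=Q_+\circ H(e,ap)$ with $ap_+=ap-\alpha(e)$; in particular $P_+$ carries ${\rm Sol}(H(e,ap))$ into ${\rm Sol}(H(e_+,ap_+))$ and, by Lemma \ref{pi1morphsim}, does so as a $\pi_1(\mathbb{C}-S)$-morphism. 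Since irreducibility holds at every generic base point by Proposition \ref{generic_is_irred}, the argument below may be run at any $e$ we like.

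The first genuine step is to show that $P_+$ induces an isomorphism of solution spaces. The kernel of $v\mapsto P_+v$ on ${\rm Sol}(H(e,ap))$ consists of the common solutions of $P_+$ and $H(e,ap)$, i.e. of the solutions of their right greatest common divisor $G$. Now $G$ right-divides the irreducible operator $H(e,ap)$, so $G$ is a unit or $H(e,ap)$; as $G$ also right-divides $P_+$ and ${\rm order}(P_+)<n={\rm order}(H(e,ap))$, the second possibility is excluded, whence $G=1$ and the map is injective. Both spaces having dimension $n$, it is an isomorphism of $\pi_1(\mathbb{C}-S)$-modules (this is exactly where irreducibility enters, via Lemma \ref{monodinv}: surjectivity onto the target is forced by injectivity and the dimension count).

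Next I would apply Lemma \ref{inverseofP} with $E=H(e,ap)$ and $P=P_+$, obtaining $P_-\in D$ with $P_-\circ P_+\equiv 1\pmod{H(e,ap)}$, say $P_-\circ P_+=1-R\circ H(e,ap)$. Evaluating on any $v\in{\rm Sol}(H(e,ap))$ gives $P_-(P_+v)=v$. Because $P_+$ is onto ${\rm Sol}(H(e_+,ap_+))$ by the previous step, every $u\in{\rm Sol}(H(e_+,ap_+))$ is of the form $P_+v$, whence $P_-u=v\in{\rm Sol}(H(e,ap))$; thus $P_-$ sends ${\rm Sol}(H(e_+,ap_+))$ bijectively onto ${\rm Sol}(H(e,ap))$, realizing the inverse of $P_+$. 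Finally I would reduce $P_-$ modulo the target operator by right division, writing $P_-=T\circ H(e_+,ap_+)+\widetilde P_-$ with ${\rm order}(\widetilde P_-)<n$; since $H(e_+,ap_+)$ annihilates ${\rm Sol}(H(e_+,ap_+))$, the operator $\widetilde P_-$ acts there exactly as $P_-$, so it is a shift operator for the downward shift $e_+\to e$. Relabelling $e\mapsto e-n$ turns this into the asserted $P_-(e)$, and reading the shift relation $(EPQE)$ at $e-n$ forces the target accessory parameter to be $ap_-=ap+\alpha(e-n)$. The symmetric assertion, interchanging the roles of $P_+$ and $P_-$, is proved identically.

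The main obstacle is the passage from algebra to geometry in the third step: Lemma \ref{inverseofP} only yields a left inverse modulo $H(e,ap)$, and it is not a priori clear that this algebraic inverse maps the \emph{target} solution space back into the source. The surjectivity of $P_+$ onto ${\rm Sol}(H(e_+,ap_+))$ established in the second step is precisely what converts the congruence $P_-\circ P_+\equiv 1$ into the desired mapping property of $P_-$. A secondary point, routine but needing care, is the order control: the B\'ezout output of Lemma \ref{inverseofP} may have order $\ge n$, and one must reduce it modulo the target equation $H(e_+,ap_+)$ rather than modulo $H(e,ap)$, so as not to disturb its action on ${\rm Sol}(H(e_+,ap_+))$.
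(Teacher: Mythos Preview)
Your proof is correct and follows the same approach as the paper, which simply says ``Lemma~\ref{inverseofP} implies'' and nothing more. You supply the details the paper omits: the injectivity of $P_+$ via the right-GCD argument, the surjectivity by dimension count, the passage from the B\'ezout identity $P_-\circ P_+\equiv 1\pmod{H(e,ap)}$ to the mapping property of $P_-$ on ${\rm Sol}(H(e_+,ap_+))$, the order reduction modulo the target operator, and the bookkeeping for $ap_-$ after relabelling. These elaborations are sound and arguably necessary for a complete argument; the paper's terseness leaves exactly the ``algebra-to-geometry'' gap you identify and resolve.
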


\subsection{S-values}\label{GenShiftS}
Consider  compositions of the two  shift operators in the previous subsection:
$$P_+(e_-,ap_-)\circ P_-(e,ap):{\rm Sol}(H(e,ap))\to {\rm Sol}(H(e_-,ap_-))\to {\rm Sol}(H(e,ap)),$$
and
$$P_-(e_+,ap_+)\circ P_+(e,ap):{\rm Sol}(H(e,ap))\to {\rm Sol}(H(e_+,ap_+))\to {\rm Sol}(H(e,ap)),$$
and assume that these maps are constants (times the identity)  independent of $ap$.
\begin{dfn}These constants will be called the {\it S-values} for $sh_\mp$, and are denoted as 
$$Sv_{sh_-}=P_+(e_-,ap_-)\circ P_-(e,ap) {\rm\quad mod\quad} H(e,ap)$$
and
$$Sv_{sh+}=P_-(e_+,ap_+)\circ P_+(e,ap) {\rm\quad mod\quad} H(e,ap).$$
\end{dfn}
\begin{prp}\label{twoSvalues} The two S-values are related as
  $$Sv_{sh_-}(e)=Sv_{sh_+}(e_-).$$
\end{prp}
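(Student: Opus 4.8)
The plan is to recognize that the two S-values in question are built from the \emph{same} pair of shift operators composed in the two opposite orders, and then to use the shift relations together with a degree argument to identify the two resulting scalars. The identity is thus a purely formal consequence of the relation $(EPQE)$ of \S\ref{GenShiftShift}, and does not require any explicit computation of the operators.

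First I would substitute $e\mapsto e_-$ in the definition of $Sv_{sh_+}$. Since the shifts are mutually inverse, $(e_-)_+=e$, so the defining composition becomes $Sv_{sh_+}(e_-)=P_-(e)\circ P_+(e_-)\bmod H(e_-)$. Writing $A:=P_-(e)$, which sends ${\rm Sol}(H(e))$ to ${\rm Sol}(H(e_-))$, and $B:=P_+(e_-)$, which sends ${\rm Sol}(H(e_-))$ back to ${\rm Sol}(H(e))$, the two S-values are exactly the two ways of composing $A$ and $B$:
$$Sv_{sh_-}(e)=B\circ A \bmod H(e),\qquad Sv_{sh_+}(e_-)=A\circ B \bmod H(e_-).$$
By the standing assumption in the definition of S-values, each composition is a constant times the identity; call these constants $c_1$ and $c_2$.

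Next I would compute the ``sandwich'' $A\circ B\circ A$ in two ways, using the shift relation $H(e_-)\circ A=Q_-\circ H(e)$ attached to $A=P_-(e)$. Grouping it as $(A\circ B)\circ A$ and reducing $A\circ B\equiv c_2\pmod{H(e_-)}$ gives $A\circ B\circ A\equiv c_2\,A\pmod{H(e)}$, because the error term $R\circ H(e_-)\circ A=R\circ Q_-\circ H(e)$ is a right multiple of $H(e)$. Grouping instead as $A\circ(B\circ A)$ and reducing $B\circ A\equiv c_1\pmod{H(e)}$ gives $A\circ B\circ A\equiv c_1\,A\pmod{H(e)}$. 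Comparing the two expressions yields $(c_1-c_2)\,P_-(e)\equiv 0\pmod{H(e)}$.

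The final step, which is also the only point requiring care, is to conclude $c_1=c_2$. Here ``$\bmod\,H(e)$'' means equality as maps on ${\rm Sol}(H(e))$, i.e.\ divisibility of the difference from the right by $H(e)$. Since $P_-(e)$ is a nonzero operator of order strictly less than ${\rm order}(H(e))=n$, a degree comparison shows it cannot be a right multiple of $H(e)$, so $P_-(e)\not\equiv 0\pmod{H(e)}$; hence the scalar $c_1-c_2$ must vanish, giving $Sv_{sh_-}(e)=Sv_{sh_+}(e_-)$. The main obstacle is purely bookkeeping: keeping straight which operator acts on ${\rm Sol}(H(e))$ and which on ${\rm Sol}(H(e_-))$, and being precise that the two congruences live modulo \emph{different} operators ($H(e)$ versus $H(e_-)$), so that the shift relation for $P_-$ is exactly what converts the one reduction into the other.
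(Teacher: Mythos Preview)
Your proof is correct and is essentially the same ``sandwich'' argument the paper uses: form a triple product of the two shift operators and group it two different ways. The only cosmetic difference is that the paper sandwiches with $P_+(e_-)\circ P_-(e)\circ P_+(e_-)$ whereas you use the mirror image $P_-(e)\circ P_+(e_-)\circ P_-(e)$; you also spell out explicitly the final cancellation step (that $P_-(e)\not\equiv 0\bmod H(e)$ by order comparison), which the paper leaves implicit.
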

\begin{proof}  
Consider the product of three operators:
\[ \begin{array}{l}  
  P_{+}(e_-,ap_-)\circ P_{-}(e,ap)\circ P_{+}(e_-,ap_-) : \\[2mm]
 \hskip60pt
{\rm Sol}(H(e_-,ap_-)) \to {\rm Sol}(H(e,ap))\to {\rm Sol}(H(e_-,ap_-)) \to {\rm Sol}(H(e,ap)).
\end{array}\]
The product of the left two is a constant $Sv_{sh_-}(e)$,
and that of the right two is a constant $Sv_{sh_+}(e_-)$.
\end{proof} %\hfill $\square$

\begin{prp}\label{Sred}   
If for some $e=\epsilon$, $Sv_{sh_+}(\epsilon)=0$ $($resp. $Sv_{sh_-}(\epsilon)=0)$, then $H(\epsilon,ap)$ and $H(\epsilon_+,ap_+)$ $($resp. $H(\epsilon_-,ap_-))$ are reducible. If  $Sv_{sh_+}(\epsilon)\not=0$ $($resp. $Sv_{sh_-}(\epsilon)\not=0)$, then $P_{sh_+}$ $($resp. $P_{sh_-})$ gives an isomorphism: ${\rm Sol}(H(\epsilon,ap))\to {\rm Sol}(H(\epsilon_+,ap_+))$ $($resp. ${\rm Sol}(H(\epsilon,ap))\to {\rm Sol}(H(\epsilon_-,ap_-)))$ as $\pi_1(\mathbb{C}-S)$-modules.
\end{prp}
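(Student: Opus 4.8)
The plan is to translate everything into the finite-dimensional linear algebra of the two solution spaces $V:={\rm Sol}(H(\epsilon,ap))$ and $V_+:={\rm Sol}(H(\epsilon_+,ap_+))$, both of which are $n$-dimensional $\pi_1(\mathbb{C}-\{0,1\})$-modules. The two crucial inputs are: (i) the specialized shift operators $P_+(\epsilon)\colon V\to V_+$ and $P_-(\epsilon_+)\colon V_+\to V$ are $\pi_1$-morphisms (Lemma \ref{pi1morphsim}); and (ii) a nonzero operator $P\in D$ of order $<n$ induces a \emph{nonzero} map on the solution space of an order-$n$ operator $H$, since otherwise every solution of $H$ would solve $P$, forcing ${\rm order}(P)\ge n$. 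In particular, for such a $P$ the induced map has proper kernel and nonzero image. I would begin by recording that at $e=\epsilon$ both $P_+$ and $P_-$ remain nonzero operators of order $<n$, so that (ii) applies to $P_+(\epsilon)$ and to $P_-(\epsilon_+)$, and by checking the accessory-parameter bookkeeping: from the Assumption $ap_+=ap-\alpha(e)$ and Proposition \ref{inverseofshiftop}, together with $(\epsilon_+)_-=\epsilon$, one gets $(ap_+)_-=ap$, so the composites really return to $V$ and to $V_+$.

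For the nonvanishing case, suppose $Sv_{sh_+}(\epsilon)\neq0$. By the definition of the S-value the composite $P_-(\epsilon_+)\circ P_+(\epsilon)$ equals $Sv_{sh_+}(\epsilon)\cdot\mathrm{id}_V$ modulo $H(\epsilon,ap)$, hence acts on $V$ as multiplication by a nonzero scalar and is bijective. Therefore $P_+(\epsilon)\colon V\to V_+$ is injective, and since $\dim V=\dim V_+=n$ it is an isomorphism of vector spaces; being a $\pi_1$-morphism (Lemma \ref{pi1morphsim}) it is an isomorphism of $\pi_1(\mathbb{C}-\{0,1\})$-modules. The case $Sv_{sh_-}(\epsilon)\neq0$ is identical with the roles of $P_+$ and $P_-$ interchanged.

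For the vanishing case, suppose $Sv_{sh_+}(\epsilon)=0$, so that $P_-(\epsilon_+)\circ P_+(\epsilon)=0$ on $V$ and thus $\mathrm{Im}\,P_+(\epsilon)\subseteq\ker P_-(\epsilon_+)$. By (ii), $\mathrm{Im}\,P_+(\epsilon)$ is a nonzero $\pi_1$-submodule of $V_+$ while $\ker P_-(\epsilon_+)$ is proper; hence $\ker P_-(\epsilon_+)$ is a nonzero proper monodromy-invariant subspace, and $H(\epsilon_+,ap_+)$ is reducible by Lemma \ref{monodinv}. To get reducibility of $H(\epsilon,ap)$ as well, I would invoke Proposition \ref{twoSvalues}: since $Sv_{sh_-}(\epsilon_+)=Sv_{sh_+}(\epsilon)=0$, we obtain $P_+(\epsilon)\circ P_-(\epsilon_+)=0$ on $V_+$, so $\mathrm{Im}\,P_-(\epsilon_+)\subseteq\ker P_+(\epsilon)$, making $\ker P_+(\epsilon)$ a nonzero proper $\pi_1$-submodule of $V$; Lemma \ref{monodinv} then gives reducibility of $H(\epsilon,ap)$. (Equivalently, the rank--nullity count $\dim\ker P_+(\epsilon)=n-\dim\mathrm{Im}\,P_+(\epsilon)\ge\dim\mathrm{Im}\,P_-(\epsilon_+)\ge1$ yields $\ker P_+(\epsilon)\neq0$ directly.) The statements for $sh_-$ follow from the symmetry $Sv_{sh_-}(e)=Sv_{sh_+}(e_-)$ of Proposition \ref{twoSvalues}.

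The main obstacle I anticipate lies in the hypothesis hidden in step (ii): one must ensure that, upon specializing $e$ to $\epsilon$, the polynomial coefficients of $P_+$ and $P_-$ do not all vanish, so that $P_\pm(\epsilon)$ are genuinely nonzero operators of order $<n$ and the induced maps are nonzero with proper kernels. Granting this (it holds away from the degeneration loci of the shift operators), the remaining work is purely the dimension dichotomy above, so the argument is structurally short once the linear-algebraic translation is in place.
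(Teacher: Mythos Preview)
Your proposal is correct and follows essentially the same approach as the paper. The paper's own proof is extremely terse (``Shift operators are, by definition, non-zero; this leads to the first statement. Lemma \ref{pi1morphsim} implies the second statement.''), and what you have written is precisely the linear-algebraic unpacking of those two sentences: nonzero operators of order $<n$ have nonzero image and proper kernel on an $n$-dimensional solution space, so vanishing of the S-value forces a nonzero proper monodromy-invariant subspace, while nonvanishing makes the composite a scalar isomorphism. Your use of Proposition \ref{twoSvalues} to obtain reducibility on both sides, and your caveat about the shift operators remaining nonzero after specialization to $e=\epsilon$, are appropriate refinements of what the paper leaves implicit.
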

\begin{proof} % \noindent {\sl Proof.}
  Shift operators are, by definition, non-zero; this leads to the first statement. Lemma \ref{pi1morphsim}  implies the second statement.
  \end{proof} % \hfill$\square$ 

\subsection{When $ap$ is a function of $e$}
For a given differential equation $H(e,ap)$, suppose the accessory parameters $ap$ are functions $ap(e)$ of the local exponents $e$; put $G(e)=H(e,ap(e))$. \blue{We can now discuss shift operators without worrying about the change of accessory parameters.}

\subsubsection{Uniqueness of shift operators}Paraphrasing \cite[Proposition 2.13]{vPS}, we have
\begin{prp}\label{uniquenessofshiftop} If $G(e)$ is irreducible and if a shift operator $P$ exists for a shift $sh:e\to e'$, then it is unique up to multiplicative constant. % including parameters.
\end{prp}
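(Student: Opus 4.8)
The plan is to exploit the irreducibility of $G(e)$ together with Lemma \ref{inverseofP}, which guarantees that any operator of order strictly between $0$ and $\mathrm{order}(G(e))$ is invertible modulo $G(e)$. Shift operators are, by Definition \ref{DefShift}, nonzero operators of order lower than $n=\mathrm{order}(G(e))$, so this invertibility applies to them directly.

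First I would suppose, for contradiction, that $P_1$ and $P_2$ are two shift operators for the same shift $sh:e\to e'$, each sending $\mathrm{Sol}(G(e))$ into $\mathrm{Sol}(G(e'))$ and each of order less than $n$. Since $G(e)$ is irreducible and $0<\mathrm{order}(P_1)<n$, Lemma \ref{inverseofP} provides an operator $R$ with $R\circ P_1\equiv 1\pmod{G(e)}$. The natural object to study is the difference (or a suitable combination) of $P_1$ and $P_2$ reduced modulo the relevant equations. The key observation is that both $P_1$ and $P_2$ carry the same solution space $\mathrm{Sol}(G(e))$ to the same target $\mathrm{Sol}(G(e'))$, so the composite $R\circ P_2$ acts as a $\pi_1(\mathbb{C}-S)$-morphism from $\mathrm{Sol}(G(e))$ to itself (using Lemma \ref{pi1morphsim} and the fact that the target equation is fixed once the shift is fixed). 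Because $G(e)$ is irreducible, its solution space is an irreducible $\pi_1(\mathbb{C}-S)$-module, and by Schur's lemma any such self-morphism is a scalar. Hence $R\circ P_2\equiv c\cdot\mathrm{id}\pmod{G(e)}$ for some constant $c$, which upon composing with $P_1$ yields $P_2\equiv c\,P_1\pmod{G(e)}$.

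It then remains to upgrade this congruence modulo $G(e)$ to an honest equality $P_2=c\,P_1$ as differential operators. This is the step I expect to be the main obstacle: a priori $P_2-c\,P_1$ could be a nonzero element of the left ideal $D\,G(e)$. However, every operator in $D\,G(e)$ has order at least $n$, while $P_2-c\,P_1$ has order at most $n-1$ since both $P_1$ and $P_2$ do; the only element of $D\,G(e)$ of order below $n$ is $0$. Therefore $P_2-c\,P_1=0$, giving $P_2=c\,P_1$ exactly, which is the claimed uniqueness up to a multiplicative constant.

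The delicate point to handle carefully is the scalar $c$ produced by Schur's lemma: one must confirm that the same scalar propagates through the composition with the genuine inverse (rather than an inverse modulo $G(e)$ only), and that the reduction of $R\circ P_2$ really lands in the endomorphism ring of the irreducible module $\mathrm{Sol}(G(e))$ rather than a morphism into a differently-parametrized space. Both are ensured once we fix the shift $sh$, since then $e'$ and hence $G(e')$ are determined, so $P_1$ and $P_2$ share a common, well-defined target; the irreducibility of $G(e)$ (Proposition \ref{generic_is_irred} for generic exponents) supplies the Schur hypothesis, and the order bound on $D\,G(e)$ removes the ambiguity modulo $G(e)$.
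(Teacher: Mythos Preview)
Your argument is correct and follows essentially the same route as the paper: invert one of the two shift operators modulo $G(e)$ via Lemma~\ref{inverseofP}, compose with the other to obtain a $\pi_1$-endomorphism of the irreducible module $\mathrm{Sol}(G(e))$, apply Schur's lemma to get a scalar, and conclude. The paper inverts $P_2$ rather than $P_1$ and does not spell out the final step (that $P_2-cP_1$, having order below $n$, cannot lie in $D\,G(e)$ unless it vanishes), which you make explicit.
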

\begin{proof}
 Suppose there are two shift operators $P_1$ and $P_2$
that map ${\rm Sol}(G(e))$ to ${\rm Sol}(G(e'))$.
Let $R_1$ denote the inverse operator of $P_1$, as given in Proposition 4.9.
Then the composition $R_1 P_2$ % : {\rm Sol}(G(e)) \to {\rm Sol}(G(e))$ 
is a linear operator on ${\rm Sol}(G(e))$,
and hence admits an eigenvalue $c \in \mathbb{C}$ with corresponding eigenvector $0 \neq u \in {\rm Sol}(G(e))$.
That is, $R_1 P_2 u = c u$.
Applying $P_1$ to both sides, % and using that
% $P_1$ is the inverse of $R_1$,
we have $P_2 u = P_1 R_1 P_2 u = P_1 c u = c P_1 u$.
\red{This yields $(P_2- c P_1) u=0$.
Therefore $D G(e) \subset D G(e) + D(P_2-c P_1) \subsetneq D$.
Since $G(e)$ is irreducible, this implies
$D G(e) = D G(e) + D(P_2-c P_1)$,
and hence $P_2 - c P_1 \in D G(e)$.}
\end{proof}
\comment{
\begin{proof} 
  Suppose there are two shift operators $P_1$ and $P_2$ sending ${\rm Sol}(G(e))$ to ${\rm Sol}(G(e')$. Let $R_2$ be the inverse operator of $P_2$ modulo $G(e)$. The operator $R_2P_1:{\rm Sol}(G(e))\to{\rm Sol}(G(e))$ is equivalent modulo $G(e)$ to an operator of order lower than the order of $G(e)$. Since it is not zero, by Schur's lemma, it is an isomorphism. Choosing an eigenvector $f$ with eigenvalue $c$, we have $R_2P_1f=cf$, that is, $(R_2P_1-c)f=0$. This implies $R_2P_1=c$, that is, $P_1=cP_2$.
  \end{proof}
}

\subsubsection{Composition of shift operators}
\begin{lemma}\label{composition} Let $G$ be a differential operator with local exponents $e$.
For given shift operators and shift relations \blue{for two shifts $e_1\to e_2$ and $e_2\to e_3$ as}
\[\begin{array}{l}
 G(e_2)\circ P(e_1\rightarrow e_2) = Q(e_1\rightarrow e_2)\circ G(e_1), \\[1mm]
 G(e_3)\circ P(e_2\rightarrow e_3) = Q(e_2\rightarrow e_3)\circ G(e_2), 
\end{array}
\]
define the composed operators
\[\begin{array}{l}
P(e_1\rightarrow e_3):= P(e_2\rightarrow e_3)\circ P(e_1\rightarrow e_2), \\[1mm]
Q(e_1\rightarrow e_3):= Q(e_2\rightarrow e_3)\circ Q(e_1\rightarrow e_2). 
\end{array}
\]
Then they satisfy
\[G(e_3)P(e_1\rightarrow e_3) = Q(e_1\rightarrow e_3)G(e_1),\]
\blue{for the composed shift $e_1\to e_3$.}
\end{lemma}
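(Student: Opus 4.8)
The plan is to verify the composed shift relation by direct substitution, treating the two given shift relations as rewriting rules and chaining them through the intermediate equation $G(e_2)$. The whole content of Lemma \ref{composition} is that the two triangles
\[
G(e_2)\,P(e_1\to e_2)=Q(e_1\to e_2)\,G(e_1),\qquad
G(e_3)\,P(e_2\to e_3)=Q(e_2\to e_3)\,G(e_2)
\]
can be glued along $G(e_2)$. First I would compute $G(e_3)\,P(e_1\to e_3)$ by unfolding the definition of the composed operator, i.e. $P(e_1\to e_3)=P(e_2\to e_3)\circ P(e_1\to e_2)$, so that
\[
G(e_3)\,P(e_1\to e_3)=\bigl(G(e_3)\,P(e_2\to e_3)\bigr)\circ P(e_1\to e_2).
\]

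Next I would apply the second shift relation to the parenthesized factor, replacing $G(e_3)\,P(e_2\to e_3)$ by $Q(e_2\to e_3)\,G(e_2)$, which gives $Q(e_2\to e_3)\,G(e_2)\,P(e_1\to e_2)$. Then I would apply the first shift relation to the remaining factor $G(e_2)\,P(e_1\to e_2)$, rewriting it as $Q(e_1\to e_2)\,G(e_1)$. Assembling the pieces yields
\[
G(e_3)\,P(e_1\to e_3)=Q(e_2\to e_3)\,Q(e_1\to e_2)\,G(e_1)=Q(e_1\to e_3)\,G(e_1),
\]
the last equality being just the definition of the composed $Q(e_1\to e_3)$. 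This is the desired identity.

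The only subtlety to flag is that these manipulations take place in the \emph{non-commutative} ring $D=\mathbb{C}(x)[\partial]$, so associativity of $\circ$ is the sole algebraic fact being used and the order of all factors must be preserved throughout; in particular, substituting $Q(e_2\to e_3)\,G(e_2)$ for $G(e_3)\,P(e_2\to e_3)$ inside a product is legitimate precisely because it is an equality of operators and left/right positions are respected. There is genuinely no obstacle here: the statement is a formal consequence of associativity together with the two hypotheses, and no property of shift operators beyond their defining relations (Definition \ref{DefShift} and the relation $(EPQE)$) is needed. If one wishes to record that $P(e_1\to e_3)$ is again a bona fide shift operator, it suffices to note that it is non-zero (a composition of non-zero operators in the domain $D$) and of order lower than $n$ after reduction modulo $G(e_1)$, so that it indeed sends $\mathrm{Sol}(G(e_1))$ into $\mathrm{Sol}(G(e_3))$.
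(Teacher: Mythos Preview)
Your proof is correct and is exactly the (trivial) argument the paper has in mind; the paper in fact states this lemma without proof, since it is a direct consequence of associativity in $D$ together with the two given shift relations. Your closing remark about reducing the composite modulo $G(e_1)$ to recover an operator of order $<n$ also matches the paper's subsequent comment.
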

In view of this lemma, we may consider %the product of two $P$'s 
the composition of the maps
$P(e_1\rightarrow e_2):{\rm Sol}(G(e_1))\longrightarrow {\rm Sol}(G(e_2))$
and 
$P(e_2\rightarrow e_3):{\rm Sol}(G(e_2))\longrightarrow {\rm Sol}(G(e_3))$
\blue{modulo $G(e_1)$, denoted by $P$, on the space ${\rm Sol}(G(e_1))$.
We solve the equation $G(e_3)P=QG(e_1)$ to get the corresponding operator $Q$.}

\subsubsection{Remote S-values}\label{GenShiftRemote}
 We consider generally a differential operator $G(e)$ with local exponents $e$
and let $P_+(e)$ and $P_-(e)$ be shift operators for the shifts $sh_\pm:e\to e_\pm$:
   $$P_+(e): {\rm Sol}(G(e))\to {\rm Sol}(G(e_+)),\quad P_-(e): {\rm Sol}(G(e))\to {\rm Sol}(G(e_-))$$
satisfying the shift relations
   $$G(e_-)\circ P_-(e)=Q_-\circ G(e),\quad G(e_+)\circ P_+(e)=Q_+\circ G(e),$$
for some $Q_-$ and $Q_+$.
We have seen that we get constant $S(e,-1):=S_{sh_-}$ independent of $x$ such that 
  $$P_+(e_-)\circ P_-(e)=S(e,-1)+R\circ G(e)$$
for some \blue{operator} $R$. Composing these kind of identities,
we get a constant $S(e,-2)$, called a {\it remote S-value}: 
  $$P_+(e_-)\circ P_+(e_{-2})\circ P_-(e_-)\circ P_-(e)=S(e,-2)+R\circ G(e)$$
for some $R$, where $e_{-2}:=(sh_-)^2(e)$.  Comparing this identity with the identity 
  $$P_+(e_{-2})\circ P_-(e_-)=S(e_-,-1)+R\circ G(e_-)$$
for some $R$, 
multiplied by $P_+(e_-)$ on the left and $P_-(e)$ on the right, we get
 $$S(e,-2)=S(e_-,-1)S(e,-1).$$
Continuing this process, we have
\begin{prp}\label{remoteS} In general, define the \text{remote S-value}
  $S(e,-k)$   by
  $$P_+(e_-)\cdots P_+(e_{-(k+1)})P_-(e_{-k})\cdots P_-(e)=S(e,-k)+R\circ G(e)$$
  for some $R$, where $e_{-k}:=(sh_-)^k(e)$.  Then, it is the product of S-values:
 $$S(e,-k)=S(e_{-k+1},-1)\cdots S(e_-,-1)S(e,-1),\quad k=2,\, 3,\, \dots.$$
  Similarly, define  the \text{remote S-value} $S(e,k)$ by
  $$P_-(e_+)\cdots P_-(e_{k})P_+(e_{k-1})\cdots P_+(e)=S(e,k)+R\circ G(e)$$
  for some $R$, where $e_{k}:=(sh_+)^k(e)$.   Then, it is the product of S-values:
  $$S(e,k)=S(e_{k-1},1)\cdots S(e_+,1)S(e,1),\quad k=2,\, 3,\, \dots.$$
\end{prp}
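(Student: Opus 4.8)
The plan is to prove the identity by induction on $k$, taking the single-step relation $P_+(e_-)\circ P_-(e)=S(e,-1)+R\circ G(e)$ (the case ``$k=1$'') as the base, and using the shift relations to discard the remainder terms modulo $G(e)$. The statement for the ``forward'' remote S-value $S(e,k)$ is entirely parallel, exchanging the roles of $P_+$ and $P_-$ and the signs of the shifts, so I would prove the $S(e,-k)$ statement in detail and only indicate the symmetric changes for $S(e,k)$.

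First I would isolate the two facts that do all the work. (i) Each single-step S-value $S(\cdot,-1)$ is, by hypothesis, a \emph{constant}, hence a scalar that commutes with every operator and may be pulled out of any composition. (ii) By the shift relation $G(e_-)\circ P_-(e)=Q_-\circ G(e)$, any operator of the form $A\circ G(e_-)\circ P_-(e)$ lies in the left ideal $D\circ G(e)$, and therefore vanishes modulo $G(e)$; this is precisely the mechanism by which remainder operators are absorbed.

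For the inductive step I would peel off the innermost factor $P_-(e)$ and the outermost factor $P_+(e_-)$ from the length-$k$ composition defining $S(e,-k)$, and recognize the bracketed middle block as exactly the composition defining the $(k-1)$-step remote S-value based at $e_-$:
\[
P_+(e_{-2})\cdots P_+(e_{-k})\,P_-(e_{-(k-1)})\cdots P_-(e_-)=S(e_-,-(k-1))+R'\circ G(e_-).
\]
Substituting this in, the constant $S(e_-,-(k-1))$ pulls out by (i), leaving the term $S(e_-,-(k-1))\cdot P_+(e_-)\circ P_-(e)$ together with $P_+(e_-)\circ R'\circ G(e_-)\circ P_-(e)$. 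The second term is killed modulo $G(e)$ by (ii), while the first reduces, again by the single-step relation, to $S(e_-,-(k-1))\,S(e,-1)$ modulo $G(e)$. This yields the recursion $S(e,-k)=S(e_-,-(k-1))\,S(e,-1)$, and unwinding the induction hypothesis for $S(e_-,-(k-1))$ produces the claimed product $S(e_{-k+1},-1)\cdots S(e_-,-1)\,S(e,-1)$. The $k=2$ case, already carried out in the text, is the first nontrivial instance.

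The step I expect to require the most care is the bookkeeping that keeps $G(e)$ on the right throughout, so that every discarded operator genuinely lies in $D\circ G(e)$ rather than merely in $G(e)\circ D$: one must apply the shift relation in the correct direction at each stage and verify that $S(e_-,-(k-1))$ is truly a scalar, not an $x$-dependent operator, before commuting it past $P_+(e_-)$. Once these two points are in place the induction is routine, and it simultaneously shows that $S(e,-k)$ is well defined as a constant, since a product of constants is again a constant.
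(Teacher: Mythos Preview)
Your proof is correct and follows essentially the same approach as the paper: the paper carries out the $k=2$ case explicitly by sandwiching the identity $P_+(e_{-2})\circ P_-(e_-)=S(e_-,-1)+R\circ G(e_-)$ between $P_+(e_-)$ on the left and $P_-(e)$ on the right, then invokes the shift relation to absorb the remainder into $D\circ G(e)$, and simply says ``continuing this process'' for general $k$. Your write-up spells out this induction in full and flags the two genuine technical points (scalarity of $S(\cdot,-1)$ and keeping $G(e)$ on the right via the shift relation), which is exactly what the paper's sketch leaves implicit.
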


\subsubsection{Relation between $P$ and $Q$}\label{RelationPQ}
Assume an operator $E=E(e)$  has adjoint symmetry: $E(e)^*=E({\rm adj}(e))$ for a linear transformation ${\rm adj}$ \blue{on the space of local exponents}, assume also $E$ admits a shift relation
$$E(\sigma(e))\circ P=Q\circ E(e)$$
for a shift $\sigma$. Taking adjoint, we have
  $$E(e)^*\circ Q^*=P^*\circ E(\sigma(e)^*),\quad{\rm that\ is,}\quad E({\rm adj}(e))\circ Q^*=P^*\circ E({\rm adj}\circ \sigma(e)).$$
Since ${\rm adj}(e)=\sigma\circ {\rm adj}\circ\sigma(e),$ (recall Remark \ref{adjisminus}: $({\rm adj}(e)_j={\rm constant}-e_j$)
we have
  $$Q^*=(-)^\nu P({\rm adj}\circ \sigma(e)),\quad \nu={\rm order}(P)$$
and so we have

\begin{prp}\label{expofQ} If an operator $E(e)$ with the adjoint symmetry
  $E(e)^*=\\E ({\rm adj}(e))$    %%# added  'the' and linebreak
  admits a shift relation $E(\sigma(e))\circ P=Q\circ E(e)$, then 
  $$Q=(-)^\nu P({\rm adj}\circ \sigma(e))^*,\quad \nu={\rm order}(P).$$
\end{prp}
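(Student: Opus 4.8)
The plan is to take the adjoint of the given shift relation, recognize the outcome as another instance of the \emph{same} shift relation evaluated at a shifted base point, and then use the uniqueness of shift operators to force $Q^*$ and $P$ to be proportional; the proportionality constant is fixed afterwards by a leading-coefficient comparison.

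First I would apply $(AB)^*=B^*A^*$ to $E(\sigma(e))\circ P=Q\circ E(e)$, obtaining $P^*\circ E(\sigma(e))^*=E(e)^*\circ Q^*$, and then invoke the adjoint symmetry $E(\cdot)^*=E(adj(\cdot))$ on both copies of $E$ to get
$$E(adj(e))\circ Q^*=P^*\circ E(adj\circ\sigma(e)).$$
The structural point is that $adj$ is the involution $e_j\mapsto n_j-e_j$ of Remark \ref{adjisminus}, so for the integer shift $\sigma$ one verifies directly that $\sigma\circ adj\circ\sigma=adj$: writing $\sigma(e)_j=e_j+m_j$, one finds $adj(e)_j=n_j-e_j=\sigma(adj(\sigma(e)))_j$. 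Setting $\tilde e:=adj\circ\sigma(e)$, this says exactly $adj(e)=\sigma(\tilde e)$, so the displayed identity becomes a genuine shift relation for the shift $\sigma$ based at $\tilde e$:
$$E(\sigma(\tilde e))\circ Q^*=P^*\circ E(\tilde e).$$

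A degree count in the original relation gives ${\rm order}(Q)=\nu$, hence ${\rm order}(Q^*)=\nu<n$ and likewise ${\rm order}(P^*)=\nu<n$. By the converse statement in \S\ref{GenShiftShift}, the pair $(Q^*,P^*)$ makes $Q^*$ a shift operator for $\sigma$ at $\tilde e$; the operator $P(\tilde e)$ is such a shift operator by definition. Since $E(\tilde e)$ is irreducible for generic exponents (Proposition \ref{generic_is_irred}), the uniqueness of shift operators (Proposition \ref{uniquenessofshiftop}) yields $Q^*=c\,P(\tilde e)$ for some constant $c$. Taking adjoints once more gives $Q=c\,P(\tilde e)^*=c\,P(adj\circ\sigma(e))^*$, which is the asserted formula up to the value of $c$.

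The one non-formal step, and the one I expect to be the real obstacle, is showing $c=(-)^\nu$. I would settle this by comparing highest-order terms. Because the heads $p_n\partial^n$ of $E(\sigma(e))$ and $E(e)$ coincide and are independent of $e$, matching the top-order parts of $E(\sigma(e))\circ P=Q\circ E(e)$ shows that $P$ and $Q$ share the same leading coefficient $p_\nu(e)$ in $x$. Taking the adjoint multiplies the coefficient of $\partial^\nu$ by $(-1)^\nu$ (since $\partial^*=-\partial$), so $Q^*$ has leading coefficient $(-1)^\nu p_\nu(e)$, while $P(\tilde e)$ has leading coefficient $p_\nu(\tilde e)$. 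Matching these in $Q^*=c\,P(\tilde e)$ gives $c=(-1)^\nu p_\nu(e)/p_\nu(\tilde e)$, so the clean value $c=(-)^\nu$ requires that the leading coefficient of the shift operator be invariant under $e\mapsto\tilde e$ (in the cases at hand it is an explicit $e$-independent power of $x$, $x-1$, or a constant). Granting this, we conclude $Q=(-)^\nu P(adj\circ\sigma(e))^*$.
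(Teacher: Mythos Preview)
Your argument is essentially the same as the paper's: take the adjoint of the shift relation, use the adjoint symmetry $E(\cdot)^*=E(adj(\cdot))$ and the identity $\sigma\circ adj\circ\sigma=adj$ to recognize the result as a shift relation based at $\tilde e=adj\circ\sigma(e)$, then identify $Q^*$ with $P(\tilde e)$ up to a constant. The paper's derivation (the paragraph preceding the proposition) does exactly this, but in a more compressed way---it does not spell out the appeal to uniqueness (Proposition~\ref{uniquenessofshiftop}) and simply asserts $Q^*=(-)^\nu P(adj\circ\sigma(e))$ without justifying the value of the constant. Your leading-coefficient comparison and the caveat that it requires $p_\nu$ to be $e$-independent is a point the paper leaves implicit; in all the concrete cases treated (the $H_j$), the heads of the shift operators are indeed explicit monomials in $x,\,x-1$ independent of $e$, so the normalization $c=(-)^\nu$ goes through.
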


\subsection{Reducibility type and shift operators}\label{GenRed}
We discuss factorization of Fuchsian operators in $D=\mathbb{C}(x)[\partial]$.

\begin{dfn}When $H\in D$ is reducible and factorizes as
  $$H=F_1\circ \cdots\circ F_r,\quad F_j\in D,\quad 0<{\rm order}(F_j)=n_j,\ (j=1, \dots, r),$$
  we say $H$ is {\it reducible of type} $[n_1, \dots, n_r]$;
  we sometimes call $[n_1, \dots, n_r]$ the {\it type of factors}.
We often forget commas, for example, we write [23] in place of [2, 3].
When only a set of factors matters,
we say $H$ is {\it reducible of type} $\{n_1, \dots, n_r\}$. 
\end{dfn}
By repeated use of Lemma \ref{monodinv}, we have

\begin{prp}\label{invsubsp}
  $H$ admits a factorization  $F_1\circ \cdots\circ F_r$
  of type $[n_1, \dots, n_r]$ if and only if ${\rm Sol}(H)$
  has monodromy invariant subspaces
  $${\rm Sol}(H)=S_1\supset S_2\supset\cdots\supset S_r,$$
  with
  $$\dim S_1/S_2=n_1,\ \dim S_2/S_3=n_2,\dots,\  \dim S_r=n_r.$$
\end{prp}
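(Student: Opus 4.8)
The plan is to prove the equivalence by induction on the number $r$ of factors, using Lemma \ref{monodinv} (in both directions) as the engine; the phrase ``repeated use of Lemma \ref{monodinv}'' in the text refers precisely to this iteration. The two routine ingredients I would lean on throughout are that the solution space of an operator in $D$ is $n$-dimensional when the operator has order $n$, and that ${\rm order}$ is additive under composition.

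For the forward implication, suppose $H=F_1\circ\cdots\circ F_r$ with ${\rm order}(F_j)=n_j$. First I would introduce the partial right products
$$G_k:=F_k\circ F_{k+1}\circ\cdots\circ F_r\qquad(k=1,\dots,r),$$
so that $G_1=H$, $G_r=F_r$, and $G_k=F_k\circ G_{k+1}$. Setting $S_k:={\rm Sol}(G_k)$, the factorization $G_k=F_k\circ G_{k+1}$ shows that every solution of $G_{k+1}$ is annihilated by $G_k$, whence $S_{k+1}\subseteq S_k$ and the chain $S_1\supseteq\cdots\supseteq S_r$ is nested. Because each $G_k$ lies in $D$, its solution space is a $\pi_1(\mathbb{C}-S)$-module by Definition \ref{solutionspace}, so every $S_k$ is monodromy invariant. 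Finally, since $\dim S_k={\rm order}(G_k)=n_k+\cdots+n_r$, the successive quotients have dimensions $\dim S_k/S_{k+1}={\rm order}(G_k)-{\rm order}(G_{k+1})=n_k$ and $\dim S_r=n_r$, as required.

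For the converse I would induct on $r$, the case $r=1$ being trivial. Given the chain $S_1\supset\cdots\supset S_r$, I would apply Lemma \ref{monodinv} to the invariant subspace $S_2\subset{\rm Sol}(H)$ to produce a factorization $H=F_1\circ G_2$ whose right factor $G_2\in D$ satisfies ${\rm Sol}(G_2)=S_2$ and ${\rm order}(F_1)=n_1$. The inherited chain $S_2\supset S_3\supset\cdots\supset S_r$ is then a chain of monodromy invariant subspaces of ${\rm Sol}(G_2)$ with dimension jumps $n_2,\dots,n_r$, so the induction hypothesis applied to $G_2$ yields $G_2=F_2\circ\cdots\circ F_r$ with ${\rm order}(F_j)=n_j$. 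Composing gives $H=F_1\circ\cdots\circ F_r$ of type $[n_1,\dots,n_r]$.

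The step that deserves care, and which is the real content hidden in Lemma \ref{monodinv}, is the converse passage from an invariant subspace $S$ of dimension $m$ to an honest right factor of order $m$ with ${\rm Sol}=S$. The point I would make precise is that $S$ is exactly the solution space of a unique monic operator $L_S$ of order $m$: its coefficients are single valued because $S$ is monodromy invariant, and meromorphic (hence rational) because the singularities are regular, so $L_S\in D$. That $L_S$ right-divides $H$ then follows from Euclidean division in $D$: writing $H=A\circ L_S+B$ with ${\rm order}(B)<m$, every $u\in S$ gives $Bu=0$, and a nonzero $B$ cannot annihilate a space of dimension $m>{\rm order}(B)$, forcing $B=0$. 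Granting this correspondence, all the remaining dimension bookkeeping in both directions is routine.
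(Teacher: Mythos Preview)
Your argument is correct and is precisely the unpacking of the paper's one-line justification ``By repeated use of Lemma \ref{monodinv}'': you iterate that lemma in both directions, with the right products $G_k$ and an induction on $r$, and you also make explicit the converse step of Lemma \ref{monodinv} (constructing $L_S\in D$ from an invariant subspace via Wronskians and regularity) that the paper itself only states. There is nothing to add.
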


%\textsc{Notation}: If $F_j\ (j=1,\dots,r)$ have no singular points other than the singular points of $H$, we write $$[n_1, \dots, n_r]A0.$$
%\par\smallskip

\blue{Note that} even if the equation $H$ has singularity only at $S=\{0, 1, \infty\}$,
the factors may have singularities out of $S$.

\begin{prp}\label{apparentsing} If $H$ has singularity only at $S$, then the singular points of $F_1$ and $F_r$ out of $S$ are apparent.
\end{prp}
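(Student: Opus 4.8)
The plan is to treat the two outermost factors separately: $F_r$ can be handled directly from a solution-space containment, and $F_1$ can be reduced to the case of $F_r$ by passing to the adjoint.

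First I would dispose of the rightmost factor $F_r$. Since $H=(F_1\circ\cdots\circ F_{r-1})\circ F_r$, every solution of $F_r$ is annihilated by $H$, so ${\rm Sol}(F_r)\subseteq {\rm Sol}(H)$. Let $p\notin S$ be a singular point of $F_r$. Because $p\notin S$, the point $p$ is an ordinary point of $H$, so every solution of $H$ is holomorphic at $p$; a fortiori every solution of $F_r$ is holomorphic at $p$. By the definition of an apparent singularity, $p$ is apparent for $F_r$. This is exactly the reasoning already used for the right factor in the proof of Proposition \ref{generic_is_irred}.

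For the leftmost factor $F_1$ I would pass to the adjoint. Using $(PQ)^*=Q^*P^*$ we obtain
$$H^*=F_r^*\circ\cdots\circ F_1^*,$$
so that $F_1^*$ is now the \emph{rightmost} factor of $H^*$. The adjoint shares the leading coefficient of $H$ up to sign, so (after clearing any common factor) $H^*$ is again singular only on $S$. Applying the previous paragraph to this factorization of $H^*$ shows that every singular point $p\notin S$ of $F_1^*$ is apparent for $F_1^*$. I would then transfer this back to $F_1$: the operators $F_1$ and $F_1^*$ have the same singular locus, and by Remark \ref{adjisminus} their local exponents at $p$ are related by $e_j\mapsto n_j-e_j$ with $n_j\in\mathbb{Z}$; in particular the local monodromy of $F_1$ at $p$ is trivial, which is the apparentness we seek.

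The main obstacle is precisely this last transfer. The containment argument yields holomorphy of the solutions of $F_1^*$, not of $F_1$ itself, and under the adjoint the exponents are reflected as $e_j\mapsto n_j-e_j$, so non-negativity of the exponents is not automatically preserved; the clean statement one really extracts for $F_1$ is the triviality of its local monodromy at $p$, inherited from the genuine apparentness of $F_1^*$. The only other step needing care is checking that forming the adjoint creates no new singular points, i.e. that $H^*$ remains singular only on $S$, which I would justify by noting that $H^*$ and $H$ have the same leading coefficient and that, by convention, the coefficients are taken free of a common factor.
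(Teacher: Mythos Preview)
Your approach is exactly the paper's: the paper's entire proof is ``For the factor $F_r$, the claim is obvious. The claim for $F_1$ follows by taking adjoint.'' You have simply spelled out what those two sentences mean.

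More importantly, the ``obstacle'' you flag is real and the paper glosses over it. With the paper's literal definition of \emph{apparent} (``every solution is holomorphic''), the statement for $F_1$ is actually false: take $H=\partial^2$, $S=\{\infty\}$, and the factorization $\partial^2=\bigl(\partial+\tfrac1{x-c}\bigr)\circ\bigl(\partial-\tfrac1{x-c}\bigr)$ that the paper itself records. The right factor has the holomorphic solution $x-c$, so $c$ is apparent for $F_r$; but the left factor $F_1=\partial+\tfrac1{x-c}$ has the solution $\tfrac1{x-c}$, which is \emph{not} holomorphic at $c$. What the adjoint argument genuinely delivers is that $F_1^*$ has holomorphic solutions at $p$, hence trivial local monodromy, and since the monodromy of $F_1$ is the contragredient of that of $F_1^*$, the local monodromy of $F_1$ at $p$ is trivial too. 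That is exactly the conclusion you draw, and it is the correct one; the paper is implicitly using ``apparent'' in the weaker sense of single-valued (meromorphic) solutions. Your care here is warranted, and your resolution is right.
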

\begin{proof} %\noindent {\sl Proof.}
  For the factor $F_r$, the claim is obvious.
  The claim for $F_1$ follows by taking adjoint. \end{proof} % \hfill$\square$ %\bigbreak 

\begin{remark} The way of factorization is far from unique: in fact,
an operator can have different types of factorization such as 
the shift relation $H'\circ P=Q\circ H$ and the factorizations
\begin{gather*}
  A\circ B = (A\circ f)\circ (f^{-1}\circ B),\ f\in\mathbb{C}(x),\ f\not=0,\\
  \partial^2=\left(\partial+\frac1{x-c}\right)\circ
  \left(\partial-\frac1{x-c}\right),\ c\in \mathbb{C}.
\end{gather*}
Therefore, when we discuss the singularity of the factors of a decomposition,
we usually choose the factors so that they have least number of singular points.
\end{remark}

\noindent
Proposition \ref{Sred} and Proposition \ref{invsubsp} lead to

\begin{prp}\label{FactorType}
  Suppose $H(e)$ and $H(e_\pm)$ are connected by shift relations. If $Sv_+(\epsilon)\not=0$ (resp. $Sv_-(\epsilon)\not=0$) for some $e=\epsilon$, then $H(\epsilon)$ and $H(\epsilon_+)$ (resp. $H(\epsilon_-)$) admit the factorization of the same type.
\end{prp}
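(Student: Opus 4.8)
The plan is to reduce the statement entirely to the two structural results already established: Proposition \ref{Sred}, which converts the non-vanishing of an S-value into a $\pi_1$-module isomorphism, and Proposition \ref{invsubsp}, which translates factorization type into the combinatorics of a descending chain of monodromy invariant subspaces. The only extra ingredient needed is the elementary observation that a module isomorphism transports such chains faithfully, preserving inclusions and successive dimensions.

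First I would invoke Proposition \ref{Sred} in the case $Sv_+(\epsilon)\neq0$: it asserts that the shift operator $P_+$ for $sh_+$ induces an isomorphism
$$\varphi := P_+: {\rm Sol}(H(\epsilon,ap)) \longrightarrow {\rm Sol}(H(\epsilon_+,ap_+))$$
of $\pi_1(\mathbb{C}-\{0,1\})$-modules. By Lemma \ref{pi1morphsim} the shift operator is already a $\pi_1$-morphism; the content of Proposition \ref{Sred} is precisely that the non-vanishing S-value forces it to be bijective. The case $Sv_-(\epsilon)\neq0$ is identical, with $P_-$ in place of $P_+$.

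Next, suppose $H(\epsilon)$ is reducible of type $[n_1,\dots,n_r]$. By Proposition \ref{invsubsp} this is equivalent to the existence of a descending chain of monodromy invariant subspaces
$${\rm Sol}(H(\epsilon)) = S_1 \supset S_2 \supset \cdots \supset S_r,\quad \dim S_1/S_2 = n_1,\ \dots,\ \dim S_r = n_r.$$
Since $\varphi$ is an isomorphism of $\pi_1$-modules, each image $\varphi(S_i)$ is again a monodromy invariant subspace of ${\rm Sol}(H(\epsilon_+))$, the chain $\varphi(S_1)\supset\cdots\supset\varphi(S_r)$ remains strictly descending, and $\varphi$ induces linear isomorphisms $S_i/S_{i+1}\cong \varphi(S_i)/\varphi(S_{i+1})$ (and $S_r\cong\varphi(S_r)$), so every successive dimension $n_i$ is preserved. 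Applying Proposition \ref{invsubsp} in the reverse direction to the transported chain shows that $H(\epsilon_+)$ is reducible of the same type $[n_1,\dots,n_r]$. Running the same argument with $\varphi^{-1}$ gives the converse implication, so the two operators share exactly the same factorization types. The case $Sv_-(\epsilon)\neq0$ is verbatim the same with $\epsilon_-$ and $\varphi=P_-$.

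The argument is almost formal once Propositions \ref{Sred} and \ref{invsubsp} are in hand, and I expect no genuine analytic obstacle. The one point that must be handled with a little care is that Proposition \ref{invsubsp} records not merely the multiset of factor sizes but their ordering along the chain, so $\varphi$ must be seen to respect that ordering; this is immediate, since an isomorphism preserves the inclusions $S_{i+1}\subset S_i$. A secondary caveat worth flagging is that the accessory parameter shifts, $ap\to ap_+$, so the two equations being compared are $H(\epsilon,ap)$ and $H(\epsilon_+,ap_+)$ rather than sharing one value of $ap$. This is harmless, because by Lemma \ref{monodinv} the factorization type is a property of the monodromy module alone, and that module is exactly what $\varphi$ matches.
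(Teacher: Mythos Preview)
Your proof is correct and follows exactly the route the paper indicates: the paper simply states that Proposition~\ref{Sred} and Proposition~\ref{invsubsp} lead to the result, and you have spelled out precisely how those two propositions combine. Your additional remarks on the ordering of the chain and on the accessory-parameter shift are accurate refinements of the same argument.
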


\begin{thm}\label{red_atoap}
  Assume $H$ and $H'$ are connected by the shift relation $H'P=QH$.
  If $H$ is reducible, so is $H'$. If $H'$ is reducible, so is $H$.
\end{thm}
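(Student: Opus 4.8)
The plan is to read the statement as the equivalence ``$H$ is reducible $\iff$ $H'$ is reducible'' (the second sentence evidently should read \emph{if $H$ is reducible, so is $H'$}) and to argue entirely on the level of monodromy modules, using Lemma \ref{monodinv} in the form: $E$ is irreducible iff $\mathrm{Sol}(E)$ is a simple $\pi_1(\mathbb{C}-S)$-module. Throughout I use that a shift relation comes from shift operators, so $\mathrm{order}(H)=\mathrm{order}(H')=:n$; comparing orders in $H'P=QH$ then forces $\mathrm{order}(P)=\mathrm{order}(Q)=:\nu<n$ (Definition \ref{DefShift}). The relation $H'P=QH$ shows that for $u\in\mathrm{Sol}(H)$ one has $H'(Pu)=Q(Hu)=0$, so $P$ restricts to a map $\bar P:\mathrm{Sol}(H)\to\mathrm{Sol}(H')$, and by Lemma \ref{pi1morphsim} this $\bar P$ is a $\pi_1$-morphism.

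First I would prove the contrapositive of ``$H'$ reducible $\Rightarrow H$ reducible'', namely $H$ irreducible $\Rightarrow H'$ irreducible. Assume $\mathrm{Sol}(H)$ is simple. Fixing a generic base point, $\ker\bar P=\{u:Hu=Pu=0\}$ is contained in the solution space of $P$, which has dimension $\nu<n$; hence $\ker\bar P$ is a proper $\pi_1$-submodule of the $n$-dimensional $\mathrm{Sol}(H)$, and by simplicity $\ker\bar P=0$. Thus $\bar P$ is injective, so $\mathrm{im}\,\bar P$ is a $\pi_1$-submodule of $\mathrm{Sol}(H')$ of dimension $n=\dim\mathrm{Sol}(H')$, whence $\mathrm{im}\,\bar P=\mathrm{Sol}(H')$. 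Therefore $\bar P$ is an isomorphism of $\pi_1$-modules and $\mathrm{Sol}(H')\cong\mathrm{Sol}(H)$ is simple, so $H'$ is irreducible.

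For the converse ``$H$ reducible $\Rightarrow H'$ reducible'' I would not repeat the argument but reduce to the first implication by taking adjoints. Applying $(PQ)^*=Q^*P^*$ (\S \ref{GenAdjAdj}) to $H'P=QH$ gives
$$H^*\circ Q^*=P^*\circ (H')^*,$$
which is again a shift relation, now connecting $(H')^*$ and $H^*$ through the non-zero operator $Q^*$; adjoints preserve order, so the order hypotheses ($\mathrm{order}(H^*)=\mathrm{order}((H')^*)=n$, $\mathrm{order}(Q^*)=\nu<n$) persist. The first implication applied to this relation yields ``$H^*$ reducible $\Rightarrow (H')^*$ reducible''. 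Since $(F_1\circ F_2)^*=F_2^*\circ F_1^*$ turns a nontrivial factorization of $E$ into one of $E^*$, reducibility is invariant under $*$; hence $H$ reducible $\Rightarrow H^*$ reducible $\Rightarrow (H')^*$ reducible $\Rightarrow H'$ reducible.

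The point requiring the most care is the interplay of two structural inputs: (i) that $\bar P$ has proper kernel, for which the clean route is the dimension bound $\dim\ker\bar P\le\mathrm{order}(P)<n$ rather than any divisibility argument; and (ii) the dimension count $\mathrm{im}\,\bar P=\mathrm{Sol}(H')$, which is exactly where equality of orders $\mathrm{order}(H)=\mathrm{order}(H')$ is indispensable --- without it the image could be a proper invariant subspace and the implication would genuinely fail. One must also verify that the adjoint relation really satisfies the shift-operator order constraints (orders and non-vanishing are preserved under $*$, and reducibility is adjoint-invariant), but these are routine given the formulae in \S \ref{GenAdjAdj}, so the real content is the simple-module dimension bookkeeping in the first paragraph of the argument.
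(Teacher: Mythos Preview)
Your proof is correct. Both you and the paper reduce one implication to the other via the adjoint relation $H^*Q^*=P^*(H')^*$, so the real comparison lies in the non-adjoint half. There the paper argues the direct form ($H$ reducible $\Rightarrow H'$ reducible): it takes a factorization $H=F_1\circ F_2$ with $F_2$ irreducible and performs a case analysis on $\dim P(\mathrm{Sol}(F_2))$, in the degenerate case $P(\mathrm{Sol}(F_2))=0$ dividing through by $F_2$ to obtain a new shift-type relation $H'\circ P_1=Q\circ F_1$. You instead prove the contrapositive ($H$ irreducible $\Rightarrow H'$ irreducible) in one stroke: simplicity of $\mathrm{Sol}(H)$ plus the bound $\dim\ker\bar P\le\mathrm{order}(P)<n$ forces $\bar P$ to be injective, and the equality $\mathrm{order}(H)=\mathrm{order}(H')$ then makes it bijective. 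Your route avoids the case split and the auxiliary division step; the paper's route is more constructive, in that it locates an explicit invariant subspace of $\mathrm{Sol}(H')$ from one of $\mathrm{Sol}(H)$, which feeds into the surrounding discussion of how factorization types propagate under shifts.
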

\begin{proof}Assume $H$ is reducible:
  \[H=F_1\circ F_2,\quad n_j={\rm order}(F_j),\quad j=1,\, 2,\]
and $F_2$ is irreducible. Then, considering 
the dimension of $P({\rm Sol}(F_2))$, we have three cases:

\smallskip
(1)\quad $\dim P({\rm Sol}(F_2))=n_2$,

\smallskip
(2) \quad $0<\dim P({\rm Sol}(F_2))<n_2$,

\smallskip
(3)\quad $P({\rm Sol}(F_2))=0$.
\smallskip

In the first case, 
$H'$ has an $n_2$-dimensional solution space $P({\rm Sol}(F_2))$, and, therefore, it is divisible by an irreducible operator of order $n_2$. Thus $H'$ is reducible.

The second case does not occur because
the kernel of $P$ is a nontrivial invariant subspace of ${\rm Sol}(F_2)$ and this contradicts to the irreducibility of $F_2$.

Assume the third case; we write $P$ as $P=P_1\circ F_2$
and divide both sides of $H'P=QH$ by $F_2$. Then, we have
  \[H'\circ P_1=Q\circ F_1.\]
Since ${\rm order} (P)<n=n_1+n_2$, we see that
${\rm order} (P_1) < n_1$ and that $P_1({\rm Sol}(F_1))\not=0$.
Thus ${\rm Sol}(H')$ admits a non-trivial invariant subspace,
which implies that $H'$ is reducible.
The latter statement is obtained by taking adjoint. \end{proof}
\begin{remark}
If $Sv_-(e)\ (=Sv_+(e_-))=0$, shift operators
$$P_-(e):\ {\rm Sol}(H(e))\longleftrightarrow{\rm Sol}(H(e_-))\ :P_+(e_-)$$
are not bijective. So the reducible types of $H(e)$ and $H(e_-)$ may be different. \footnote{In general, for a reducible operator $H$,  reducible type is not unique (typical example is $H:=E'\circ P=Q\circ E$). 
However for the operator $H_j(e)$ having generic exponents $e$ but with one reducibility condition, the reducible type is unique. So `different' makes sense.}
In many cases (all the equations of order greater than 2 in this paper) they are actually different, but not always (see e.g. \cite{EOY}).
\end{remark} 
\comment{
\subsection{Reducibility type and shift operator when ${\rm order}(P)=1$}\label{RedOrd1}
  Consider a situation that an equation $H$ and a shifted equation $H'$ is connected by a shift operator $(P,Q)$:
  $$H'P=QH,$$
  equivalent to say that $P$ is a monodromy-preserving linear map sending the solution space ${\rm Sol}(H)$ of $H$ to the solution space ${\rm Sol}(H')$ of $H'$. %\par  \smallskip
  Assume $H$ is reducible
  $$H=F_1\circ \cdots \circ F_t,$$
  equivalent to say that ${\rm Sol}(H)$ admits a filtration of monodromy invariant subspaces
  $${\rm Sol}(H)=S_1\supset \cdots \supset S_t={\rm Sol}(F_t).$$
  \begin{prp}\label{H_1PQH_0H_0red}
    Suppose ${\rm order}(P)=1$ and $H=F_1\circ \cdots \circ F_t$.
    If $P$ is constant times $F_t$, then
    $$H'=Q\circ F_1\circ \cdots \circ F_{t-1},$$
    if ${\rm Sol}(P)\not\subset {\rm Sol}(H)$ otherwise, $P$ keeps the filtration:
    $${\rm Sol}(H')=P(S_1)\supset \cdots \supset P(S_t),$$
    equivalent to say that $H'$ admit a decomposition as
    $$H'=F'_1\circ\cdots\circ F'_t,\quad {\rm order}(F'_i)={\rm order}(F_i).$$
    \end{prp}
On the other hand, assume $H'$ is reducible:
  $H'=F'_1\circ \cdots \circ F'_t.$ 
\blue{Turn to} adjoint situation:
$$H^*Q^*=P^*H'^*,\quad (H')^*=(F'_t)^*\circ \cdots \circ (F'_1)^*,$$
${\rm Sol}((H')^*)$ admits a filtration as $T_t\supset\cdots\supset T_1={\rm Sol}((F'_1)^*)$.
Apply Proposition \ref{H_1PQH_0H_0red}. If $Q^*=(F'_1)^*$, then
$$H^*=P^*\circ (F'_t)^*\circ\cdots\circ(F'_2)^*,{\rm\quad that\ is\quad}
H=F'_2\circ\cdots\circ F'_t\circ P,$$
otherwise $Q^*$ keeps the filtration:
$${\rm Sol}(H^*)=Q^*{\rm Sol}((H')^*)=Q^*T_t\supset\cdots\supset Q^*T_1,$$
that is, $H^*$ admits a decomposition as
$$H^*=H^*_t\circ\cdots\circ H^*_1,\quad {\rm order}(H^*_i)={\rm order}(F'_i).$$
%equivalent to say
%$$H=F_1\circ\cdots\circ F_t,\quad {\rm order}(F_i)={\rm order}(F'_i).$$
\begin{prp}\label{H_1PQH_0H_1red}
    Suppose ${\rm order}(Q)=1$ and $H'=F'_1\circ \cdots \circ F'_t.$
    If $Q$ is constant times $F'_1$, then
    $$H=F'_2\circ \cdots \circ F'_1\circ P,$$
    otherwise, %$Q$ keeps the filtration and 
    $H$ admits a decomposition as
    $$H=F_1\circ\cdots\circ F_t,\quad {\rm order}(F_i)={\rm order}(F'_i).$$
\end{prp}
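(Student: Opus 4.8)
The plan is to deduce this proposition from its companion Proposition \ref{H_1PQH_0H_0red} by passing to adjoints, following the adjoint computation sketched just before the statement. First I would take the adjoint of the shift relation $H'\circ P=Q\circ H$; using $(A\circ B)^*=B^*\circ A^*$ together with the involutivity $(A^*)^*=A$ of the adjoint on operators with polynomial coefficients free of common factor (\S\ref{GenAdj}), this yields
$$H^*\circ Q^*=P^*\circ H'^*.$$
I would then read this as a new shift relation of exactly the shape treated in Proposition \ref{H_1PQH_0H_0red}: the reducible equation is now $H'^*$, the order-one shift operator is $Q^*$ (note ${\rm order}(Q^*)={\rm order}(Q)=1$), the shifted equation is $H^*$, and the intertwining operator is $P^*$. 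Thus the hypotheses of Proposition \ref{H_1PQH_0H_0red} are satisfied by the adjoint data.

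Next I would record the factorization of the reducible equation in this adjoint picture. Since $H'=F'_1\circ\cdots\circ F'_t$, the order-reversal of the adjoint gives
$$H'^*=(F'_t)^*\circ\cdots\circ(F'_1)^*,$$
with ${\rm order}((F'_i)^*)={\rm order}(F'_i)$. In the notation of Proposition \ref{H_1PQH_0H_0red} applied to this adjoint relation, the rightmost (last) factor of the reducible equation is $(F'_1)^*$, so its branching hypothesis ``the order-one shift operator is a constant times the last factor'' reads ``$Q^*$ is a constant multiple of $(F'_1)^*$''; this is equivalent to the condition ``$Q$ is a constant times $F'_1$'' stated here.

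In the first case, Proposition \ref{H_1PQH_0H_0red} yields $H^*=P^*\circ(F'_t)^*\circ\cdots\circ(F'_2)^*$. Taking adjoints once more and using involutivity gives $H=F'_2\circ\cdots\circ F'_t\circ P$, the first asserted conclusion. In the remaining case, Proposition \ref{H_1PQH_0H_0red} provides a decomposition $H^*=G_1\circ\cdots\circ G_t$ with ${\rm order}(G_i)={\rm order}((F'_{t-i+1})^*)={\rm order}(F'_{t-i+1})$. Taking adjoints gives $H=G_t^*\circ\cdots\circ G_1^*$, and after renaming $F_i:=G_{t-i+1}^*$ one obtains $H=F_1\circ\cdots\circ F_t$ with ${\rm order}(F_i)={\rm order}(F'_i)$, the second asserted conclusion.

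The argument is essentially bookkeeping, so the step I expect to require the most care is the double index-reversal: the first adjoint sends the $i$-th factor of $H'$ to the $(t-i+1)$-th factor of $H'^*$, and the final adjoint reverses the order again, so one must check that the two reversals compose to realign the orders of the factors of $H$ precisely with ${\rm order}(F'_1),\dots,{\rm order}(F'_t)$ in the correct order. The only other point needing a word of justification is that every operator occurring here is a genuine differential operator with polynomial coefficients free of common factor, so that its adjoint is well defined and involutive; granting this (as assumed throughout \S\ref{GenAdj}) there is no analytic obstacle and the proof is complete.
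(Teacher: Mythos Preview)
Your proof is correct and follows essentially the same approach as the paper: the paragraph immediately preceding the proposition already carries out exactly this adjoint reduction to Proposition~\ref{H_1PQH_0H_0red}, including the identification of $Q^*$ with the last factor $(F'_1)^*$ of $H'^*$ and the resulting case split. Your careful bookkeeping of the double index-reversal is a welcome clarification of what the paper leaves implicit.
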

}
\subsection{From $H_6$ to $H_5$ and $H_3$ by factorization}\label{fromH6toH53byfac}%\quad Papers/G6oG5twoways}
\blue{Recall that middle convolutions send $H_6$ to $H_5$ (Remark \ref{fromH6toH5}), and $H_6$ to $H_3$ (\S \ref{fromH6toH3}). In this section we show that $H_5$ and $H_3$ can be also obtained from $H_6$ by factorizations.}
\subsubsection{From $H_6$ to $H_5$ by factorization}\label{G6G5}%\quad Papers/G6oG5twoways}
Recall the $(\theta,\partial)$-form of $H_6:=H_6(e,a)=T_0+T_1\partial+T_2\partial^2+T_3\partial^3$. Since
$$ T_0=(\theta+s+2)(\theta+s+1)(\theta+s)B_0,\quad B_0=(\theta+e_7)(\theta+e_8)(\theta+e_9),$$
if $e_9=0$,  $H_6$ is divisible  by $\partial$ from the right. We get, as in \S 1.2,
$$H_5=H_5(e_1,\dots,e_8)=H_6(e_1,\dots,e_8,e_9=0)/\partial.$$

\subsubsection{From $H_6$ to $H_3$ by factorization}\label{fromG6toG3byFactor}\label{fromG6toG3}% \quad Papers/G6toG3byFactor}
When $s=1$,  the coefficients of $H_6$ change as
\[ \def\arraystretch{1.3} \begin{array}{rcl}       %%# added linebreak
  T_0 &=&(\theta+3)(\theta+2)(\theta+1)B_0\\% (\theta+e_7)(\theta+e_8)(\theta+e_9) \\ 
  &=& \partial^3 x^3B_0,\\%  (\theta+e_7)(\theta+e_8)(\theta+e_9),\\
  T_1 \partial &=& (\theta+3)(\theta+2) B_1(\theta,s=1) \partial
  = \partial (\theta+2)(\theta+1) B_1(\theta-1, s=1)\\
  &=& \partial^3 x^2 B_1(\theta-1,s=1),\\
  T_2 \partial^2 &=& (\theta+3) B_2(\theta,s=1) \partial^2
  = \partial^2 (\theta+1) B_2(\theta-2,s=1) \\
&=& \partial^3 x B_2(\theta-2,s=1),\\
  T_3 \partial^3 &=& \partial^3 T_3(\theta-3, s=1).
\end{array}\]
\blue{We have the factorization} $H_6=\partial^3\circ V$, where $V$ is a differential operator of order 3:
$$ V=x^3B_0+x^2B_1(\theta-1)+xB_2(\theta-2)+T_3(\theta-3),\quad e_9=3-e_1-\cdots-e_8.$$
In order to get a relation of $V$ with equation $H_3$, 
we multiply $x^{e_1}(x-1)^{e_4}$ from the right to $V$,
and rename the local exponents as follows. By following  these transformations
by the move of the Riemann scheme $R_V$ of $V$ as
$$\begin{array}{l}R_V=\left(\begin{array}{lll}
  e_1&e_2&e_3\\
  e_4&e_5&e_6\\
  *&e_7&e_8\end{array}\right)
  \to
  \left(\begin{array}{lll}
  0&e_2-e_1&e_3-e_1\\
  0&e_5-e_4&e_6-e_4\\
  *&e_7+e_1+e_4&e_8+e_1+e_4\end{array}\right)
=\left(\begin{array}{lll}
  0&b_1&b_2\\
  0&b_3&b_4\\
  b_7&b_5&b_6\end{array}\right)=R_3,\end{array}
  $$
we see that the transformed equation is $H_3$.

\subsection{Polynomial solutions}
The equation $H_6$ can have polynomial solutions (\S \ref{polynomSolH6}), more generally, we have
\begin{prp}\label{polynomSol}
 Let $H$ be an equation admitting a $(\theta,\partial)$-form. If $H$ can be written as
  $$H=({\rm a\ polynomial\ in\ }\theta)(\theta-m)+({\rm a\ polynomial\ in\ }\theta\ {\rm and}\ \partial)\ \partial$$
  for a non-negative integer $m$, then $H$ is divisible from the right by $\partial-f'/f$, where $f$ is a polynomial of $x$  of degree $\le m$.
\end{prp}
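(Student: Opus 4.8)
The plan is to reduce the divisibility assertion to the existence of a single nonzero polynomial solution of $H$ of degree at most $m$, and then to construct that solution by a coefficient recursion that exploits the hypothesized factor $\theta-m$.

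First I would record the shape of $H$. Writing $H$ in its $(\theta,\partial)$-form (Proposition \ref{xzdxexpression}) as $H=P_0(\theta)+P_1(\theta)\partial+\cdots+P_p(\theta)\partial^p$, the hypothesis says precisely that the $\partial$-free part is $P_0(\theta)=A(\theta)(\theta-m)$ for some polynomial $A$; in particular $P_0(m)=0$. Next I would look for a solution $f=\sum_{k=0}^m c_k x^k$. Using $\partial^j x^k=k(k-1)\cdots(k-j+1)\,x^{k-j}$ together with $P_j(\theta)x^{k-j}=P_j(k-j)\,x^{k-j}$, the coefficient of $x^l$ in $Hf$ is
$$[Hf]_l=\sum_{j\ge 0}\frac{(l+j)!}{l!}\,P_j(l)\,c_{l+j}=P_0(l)c_l+(\text{a linear form in }c_{l+1},\dots,c_{l+p}).$$
The crucial observation is that at the top level $l=m$ all higher coefficients vanish, so $[Hf]_m=P_0(m)c_m=0$ holds \emph{automatically} because $P_0(m)=0$. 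Thus $c_m$ is a free parameter, which I would normalize to $c_m=1$, and then solve the equations $[Hf]_l=0$ for $l=m-1,m-2,\dots,0$ downward, each one determining $c_l$ from the already-known $c_{l+1},\dots,c_{l+p}$ via $P_0(l)\,c_l=-(\text{known})$. This yields a nonzero $f$ of degree exactly $m$.

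The divisibility then follows formally. Since $\partial-f'/f$ is a monic first-order operator in $D=\mathbb{C}(x)[\partial]$ that annihilates $f$, right-dividing $H$ by it gives $H=G\circ(\partial-f'/f)+r$ with a remainder $r\in\mathbb{C}(x)$ of order $0$; applying both sides to $f$ gives $0=Hf=rf$, whence $r=0$ and $H=G\circ(\partial-f'/f)$. So nothing beyond the existence of $f$ is needed here.

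The step I expect to be the main obstacle is the solvability of the downward recursion, which requires $P_0(l)=A(l)(l-m)\neq0$ for $0\le l<m$. The factor $l-m$ never vanishes on this range, so the only danger is a resonance $A(l)=0$, i.e.\ that one of the roots of $P_0$ other than $m$ happens to be an integer in $[0,m-1]$. Under the standing genericity of the local exponents this cannot occur (those roots are negatives of generic exponents), and I would invoke that hypothesis to close the argument; in a genuinely resonant case one would instead have to verify that the offending equation is consistent and settle for a solution of lower degree, still of degree at most $m$.
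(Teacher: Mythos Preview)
Your approach differs from the paper's, which uses a one-line dimension count: since $\theta-m$ kills the $x^m$ coefficient and each $P_j(\theta)\partial^j$ with $j\ge1$ lowers degree, $H$ maps the $(m{+}1)$-dimensional space of polynomials of degree $\le m$ into the $m$-dimensional space of polynomials of degree $\le m-1$, so the kernel is nontrivial. This handles all cases uniformly with no recursion and no genericity assumption.

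Your recursive construction, by contrast, has a genuine gap in the resonant case. The proposition carries no genericity hypothesis, so you cannot invoke one; and your fallback (``verify that the offending equation is consistent and settle for a solution of lower degree'') is not right as stated: if $P_0(l)=0$ at some $l<m$ while you are recursing down from $c_m=1$, the equation $0\cdot c_l=-(\text{known})$ may simply be inconsistent with the higher $c$'s already fixed. The correct patch is to start the recursion not at $m$ but at the \emph{smallest} nonnegative integer $l^*$ with $P_0(l^*)=0$ (which exists since $P_0(m)=0$): set $c_l=0$ for $l>l^*$, $c_{l^*}=1$, and recurse down; minimality of $l^*$ guarantees $P_0(l)\neq0$ for $0\le l<l^*$. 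With this fix your argument goes through and even identifies $\deg f$ explicitly as $l^*$, which the dimension count does not.
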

\begin{proof} % \noindent {\sl Proof.}
 $H$ maps the set of polynomials of $x$ of degree  $\le m$ to that of degree  $\le m-1$, %and maps that of degree $\le m$ bijectively to itself,
  so there is such $f$ killed by $H$.
  \end{proof} % \hfill$\square$ \bigbreak 

\par\noindent
A well-known example: the Gauss hypergeometric operator $(\theta+a)(\theta+b)-(\theta+c)\partial$  admits a polynomial solution when $a$ is a non-positive integer (see \S \ref{E2Fact}).
%\begin{remark}
  The zeros of the polynomial solution other than $\{0,1\}$ are
  apparent singular points; a special case of Proposition \ref{apparentsing}.
%\end{remark}

\newpage
\section{The Gauss hypergeometric equation $E_2$}\label{E2}
%\nostcrule
\secttoc
\blue{In order to make clear the story of this and the following papers,} we review some known facts about the Gauss hypergeometric  equation. We start with the hypergeometric operator in $(x,\partial)$-form
  $$E_2=E(a,b,c):=x(x-1)\partial^2+((a+b+1)x-c)\partial +ab,\quad \partial=d/dx.$$
It has singularities at $\{0, 1, \infty\}$, and is symmetric under the exchange $a\leftrightarrow b$. Its $(\theta,\partial)$-form is given as
  $$E(a,b,c)=E_0+E_1\partial,\quad E_0(\theta,a,b)=(\theta+a)(\theta+b),\ E_1(\theta,c)=-(\theta+c).$$
Historically, the hypergeometric series 
  $$F(a,b,c;x)=\sum\frac{(a)_n(b)_n}{(c)_n(1)_n}x^n$$
\blue{studied before the  hypergeometric equation was found. However our main objects $H_6,G_6,\dots$ have no simple expression of local solutions, so we started with the differential equation. } 

\subsection{Exponents at $x=0$ and $x=1$}\label{E2Ex}
To see the local exponents at $x=0$, we use the $(\theta,\partial)$-form.
Apply $E(a,b,c)$ to $u=x^\rho(1+\cdots)$.
Since $E_0$ keeps the local exponents $\rho$,
we neglect it, and see the effect of $E_1$:
  $$E_1\partial u=-(\theta+c)\rho x^{\rho-1}(1+\cdots)=(\rho-1+c)\rho x^{\rho-1}+O(x^{\rho}).$$
       {\sl The local exponents at $x=0$ are determined by the last term $E_1$,
         and are given as $\rho=0$, $1-c$.} (Special case of Proposition \ref{localat0})

% \par\medskip

   Apply the transformation $x\rightarrow 1-x$ in the $(x,\partial)$-form of $E(a,b,c)$. We find the resulting equation coincides with $E(a,b,a+b-c+1)$. Thus the local exponents at $x=1$ are $\{0,c-a-b\}$.

\subsection{Transformation $x\to1/x$ and the local exponents at $x=\infty$}\label{E20toinfty}
Put $x=1/y, w=y\partial_{y}(=-\theta), \partial_y=d/dy$ in the $(\theta,\partial)$-form:
\begin{equation}\label{Ey}E_y=(-w+a)(-w+b)-(-w+c)(-y)w.\end{equation}
Apply this to $u=y^\rho(1+\cdots)$. Since the second term increases the local exponent $\rho$, we neglect it, and see the effect of the first term:
$$(-w+a)(-w+b)y^\rho(1+\cdots)=(-\rho+a)(-\rho+b)y^\rho(1+\cdots).$$
{\sl The local exponents at $x=\infty$ are determined by the first term $E_0$, and are given as $\rho=a$, $b$.} (Special case of Proposition \ref{localat1}) 

Let us see that $E_y$ can be transformed to a Gauss operator. Compose $y^a$ ($a$: one of the local exponents at infinity) from the right 
\[\def\arraystretch{1.2}\setlength\arraycolsep{2pt} \begin{array}{rcl}
E_y y^a&=&y^a\left[\ \{a-(w+a)\}\{b-(w+a)\}-\{c-(w+a)\}(-y)(w+a)\ \right]\\
    &=&y^a\left[\ (-w)(-w+b-a)-(-w+c-a)(-y)(w+a)\ \right].\end{array}
\]
By multiplying $-y^{-a-1}$ to the expression of the last line, we see that 
\[
\begin{array}{l} -\{\ (-w-1)(-w+b-a-1)y^{-1}-(-w+c-a-1)(-)(w+a)\ \}\\
    \quad =(w+a)(w-c+a+1)-(w-b+a+1)(w+1)y^{-1}.\end{array}
\]
 In the last line, we exchanged the first and the second terms. 
Since $\partial_{y}=(w+1)y^{-1}$, the last operator is equal to 
$$E(a,1-c+a,1+a-b)= (w+a)(w-c+a+1)-(w-b+a+1)\partial_{y}.$$
The transformations above from $E(a,b,c)$ to
         $E(a,1-c+a,1+c-b)$ can be visualized by the Riemann schemes as
\[R_2(a,b,c):=
\left(\begin{array}{ccc}
x=0:& 0&1-c\\
x=1:& 0&c-a-b\\
x=\infty:& a&b\end{array}\right)
\to
\left(\begin{array}{cc}
a&b\\
0&c-a-b\\
0&1-c
\end{array}\right)
\to
\left(\begin{array}{cc}
0&b-a\\
0&c-a-b\\
a&1-c+a
\end{array}\right),
\]
\noindent
which is the transformation $R_2(a,b,c)\to R_2(a,1-c+a,1+a-b)$.
Summing up, we have
$$x^{-a-1}E(a,b,c)|_{x\to1/x}\circ x^a=-E(a,1-c+a,1+a-b),$$
where $E(a,b,c)|_{x\to1/x}$ denotes $E_y$ in \eqref{Ey}
with the change $y\to x, w\to \theta$.

\subsection{Adjoint operator of $E_2$}\label{adjE2proof}
The adjoint of $E(a,b,c)=E_0(\theta,a,b)+E_1(\theta,c)\partial$ is computed as
\[\def\arraystretch{1.2}\setlength\arraycolsep{3pt} \begin{array}{rcl}
   E_0(\theta,a,b)^*&=&(-\theta-1+b)(-\theta-1+a)=(\theta+1-a)(\theta+1-b)\\ 
   &=&E_0(\theta,1-a,1-b),\\
(E_1(\theta,c)\partial)^*&=&-\partial E_1^*=-\partial(-1)(-1-\theta+c)=-(\theta+2-c)\partial\\
  &=&E_1(\theta,2-c)\partial,\end{array}\]
and we have
\[E(a,b,c)^*=E(1-a,1-b,2-c).\]

\subsection{Differentiation}\label{E2Der}
The differentiation of any solution $u$ of the Gauss equation 
$E(a,b,c)$ is again a solution of another Gauss equation $E(a+1,b+1,c+1)$.
This is seen by differentiating the hypergeometric series or
by composing $\partial$ and the equation $E$ to see that $u'$ satisfies
the equation with parameter $(a+1,b+1,c+1)$:
Since $\partial\circ \theta=(\theta+1)\circ \partial$,
\[\def\arraystretch{1.2}\setlength\arraycolsep{2pt} \begin{array}{rcl}
\partial\circ E(a,b,c)&=&\partial\circ(E_0(\theta,a,b)+E_1(\theta,c)\partial)
  =(E_0(\theta+1,a,b)+E_1(\theta+1,c)\partial)\circ \partial\\
  &=& (E_0(\theta,a+1,b+1)+E_1(\theta,c+1)\partial)\circ \partial\\
  &=& E(a+1,b+1,c+1)\circ \partial.\end{array}
\]  
In terms of the Riemann scheme, this is expressed as
 \addtolength{\arraycolsep}{-1pt}  %%# temp change of \arraycolsep
\[R_2(a,b,c)=\left( \begin{array}{cc}
0&1-c\\
0&c-a-b\\
a&b\end{array}\right)
\underset{\partial}{\to}
\left(\begin{array}{cc}
0&1-c-1\\
0&c-a-b-1\\
a+1&b+1\end{array}\right)=R_2(a+1,b+1,c+1).\]
 \addtolength{\arraycolsep}{1pt}  %%# temp change of \arraycolsep
The inverse of $\partial$ is obtained as follows:
Write the Gauss equation as 
$$E(a,b,c)=E'\circ \partial-ab,\qquad E'=E'(a,b,c)=x(x1)\partial+(a+b+1)x-c,$$
The derivation of the Gauss series $F(a,b,c;x)$
is $\frac{ab}cF(a+1,b+1,c+1;x)$; hence, we have
$$\frac1cE'(a,b,c)F(a+1,b+1,c+1;x)=F(a,b,c;x),
$$
which means that the operator $\partial$ is read as the shift operator of the parameter shift $(a,b,c)\to(a+1,b+1,c+1)$ and
$E'$ that of the reverse  shift $(a+1,b+1,c+1)\to(a,b,c)$.

\subsection{Shift operators of $E_2$}\label{E2Shift}
The shift operator $P_{a+}$ for the parameter-ascending shift $a\to a+1$ is 
obtained by the following procedure (we write $R_{abc}$ for $R_2(a,b,c)$):
\[ \begin{array}{l}     %%# changed to array 
R_{abc}
%=\left( \begin{array}{cc}
%0&1-c\\
%0&c-a-b\\
%a&b\end{array}\right) % & 
\underset{x^a}{\to}
\left( \begin{array}{cc}
a&a+1-c\\
0&c-a-b\\
0&b-a\end{array}\right) 
\underset{\partial}{\to}
\left( \begin{array}{cc}
a-1&a-c\\
0&c-a-b-1\\
2&b-a+1\end{array}\right) %\\  \noalign{\medskip}%&
 \underset{x^{1-a}}{\to}
\left( \begin{array}{cc}
0&1-c\\
0&c-a-b-1\\
a+1&b\end{array}\right).
\end{array} \]
Thus, we have the operator
$$P_{a+}=x^{1-a}\circ \partial\circ x^a =x^{1-a}\circ (ax^{a-1}+x^a\circ \partial)=x\partial+a.$$
The descending operator $P_{-a}$ for $a\to a-1$ is obtained by 
\[ \begin{array}{ll}
  R_{abc}  &
\underset{X}{\to}
\left( \begin{array}{cc}
c-a&1-a\\
a+b-c&0\\
a-b&0\end{array}\right)\underset{\partial}{\to}
\left( \begin{array}{cc}
c-a-1&-a\\
a+b-c-1&0\\
a-b+1&2\end{array}\right)\\ \noalign{\medskip}&
\underset{X^{-1}x(x-1)}{\to}
\left( \begin{array}{cc}
0&1-c\\
0&c-a-b+1\\
a-1&b\end{array}\right),
\end{array} \]
where $X=x^{c-a}(x-1)^{a+b-c}$. Hence, we get the operator $-P_{a-}$,
where $P_{a-}=x(1-x)\partial+c-a-bx$,
which is a little more complicated than that for $a\to a+1$.
When $c\to c-1$, we see that
\[ \begin{array}{ll}
R_{abc} &\underset{x^{c-1}}{\to}
\left( \begin{array}{cc}
c-1&0\\
0&c-a-b\\
a-c+1&b-c+1\end{array}\right)\underset{\partial}{\to}
\left( \begin{array}{cc}
c-2&0\\
0&c-a-b-1\\
a-c+2&b-c+2\end{array}\right)\\  \noalign{\medskip}
&\underset{x^{2-c}}{\to}
\left( \begin{array}{cc}
0&2-c\\
0&c-a-b-1\\
a&b\end{array}\right)
\end{array} \]
and we get a descending operator $P_{c-}=x\partial+c-1$. 
For the ascending case $c\to c+1$, we see that
\[  \begin{array}{ll}
  R_{abc}&  \underset{(x-1)^{a+b-c}}{\to}
\left( \begin{array}{cc}
0&1-c\\
a+b-c&0\\
c-b&c-a\end{array}\right)\underset{\partial}{\to}
\left( \begin{array}{cc}
0&-c\\
a+b-c-1&0\\
c-b+1&c-a+1\end{array}\right)\\
&  \underset{(x-1)^{1+c-a-b}}{\to}
\left( \begin{array}{cc}
0&-c\\
0&1+c-a-b\\
a&b\end{array}\right);
\end{array}\]
thus we get an ascending operator $P_{c+}=(x-1)\partial+a+b-c$.

By changing the notation of parameters
from $(a,b,c)$ to $(e_1,e_2,e_3,s=1-e_1-e_2-e_3)$, \blue{we repeat the process above as follows: }
\[
\begin{array}{ll}
  R_2=\left( \begin{array}{ccc}
x=0:&0&e_1\\
x=1:&0&e_2\\
x=\infty:&s&e_3\end{array}\right)
&  
\underset{x^s}{\to}
\left( \begin{array}{cc}
s&e_1+s\\
0&e_2\\
0&e_3-s\end{array}\right)\underset{\partial}{\to}
\left( \begin{array}{cc}
s-1&e_1+s-1\\
0&e_2-1\\
2&e_3-s+1\end{array}\right)\\  \noalign{\medskip}&
\underset{x^{1-s}}{\to}
\left( \begin{array}{cc}
0&e_1\\
0&e_2-1\\
s+1&e_3\end{array}\right)
\end{array} \]
  and, therefore, we get the shift operator $P_{2-}:=x\partial+s$ for the shift $e_2\to e_2-1$. Since
  \[ \begin{array}{l}       %%# changed to array
R_2 %&
 \underset{X}{\to}
\left( \begin{array}{cc}
e_2+e_3&e_{123}\\
-e_2&0\\
s-e_3&0\end{array}\right)\underset{\partial}{\to}
\left( \begin{array}{cc}
e_2+e_3-1&e_{123}-1\\
-e_2-1&0\\
s-e_3+1&2\end{array}\right) %\\ \noalign{\medskip}&
 \underset{X^{-1}x(x-1)}{\to}
\left( \begin{array}{cc}
0&e_1\\
0&e_2+1\\
s-1&e_3\end{array}\right),
\end{array} \]
  where $e_{123}=e_1+e_2+e_3,X=x^{e_2+e_3}(x-1)^{-e_2}$, we have $-P_{2+}$,
  where $P_{2+}:=x(1-x)\partial+e_2+e_3-e_3x$ is the shift operator for the shift $e_2\to e_2+1$. Since
\[ \begin{array}{l}         %%# changed to array
R_2 % &
 \underset{x^{-e_1}}{\to}
\left( \begin{array}{cc}
-e_1&0\\
0&e_2\\
s+e_1&e_3+e_1\end{array}\right)\underset{\partial}{\to}
\left( \begin{array}{cc}
-e_1-1&0\\
0&e_2-1\\
s+e_1+1&e_3+e_1+1\end{array}\right) %\\ \noalign{\medskip}&
 \underset{x^{e_1+1}}{\to}
\left( \begin{array}{cc}
0&e_1+1\\
0&e_2-1\\
s&e_3\end{array}\right),
\end{array} \]
we have the shift operator $P_{1+2-}:=x\partial-e_1$  for the shift $(e_1,e_2)\to(e_1+1,e_2-1)$. Since
\[ \begin{array}{l}     
R_2  %&
\underset{(x-1)^{-e_2}}{\to}
\left( \begin{array}{cc}
0&e_1\\
-e_2&0\\
s+e_2&e_3+e_2\end{array}\right)\underset{\partial}{\to}
\left( \begin{array}{cc}
0&e_1-1\\
-e_2-1&0\\
s+e_2+1&e_3+e_2+1\end{array}\right)%\\ \noalign{\medskip}&
\underset{(x-1)^{e_2+1}}{\to}
\left( \begin{array}{cc}
0&e_1-1\\
0&e_2+1\\
s&e_3\end{array}\right),
\end{array}\]
we have the shift operator  $P_{1-2+}:=(x-1)\partial-e_2$  for the shift $(e_1,e_2)\to(e_1-1,e_2+1)$.

The shift operators relative to $\{a,b,c\}$ and $\{e_1, e_2,e_3\}$ are
related as
$$P_{2-}=P_{a+},\quad P_{2+}=P_{a-},\quad P_{1+2-}=P_{c-},\quad P_{1-2+}=P_{c+}.$$

\begin{remark}\label{E2ShiftGen}
The general shift operators for 
$$ {\rm Sol}(E(a,b,c))\to{\rm Sol}(E(a+p,b+q,c+r)),\quad p,q,r\in \mathbb{Z}$$
are given in \cite{Eb1, Eb2}. We thank H. Ando for his Maple program computing them.
\end{remark}

\subsubsection{Relation between $P$ and $Q$}\label{E2ShiftPQ}
Let us see Proposition \ref{expofQ} for $E_2$. By taking adjoint of the shift relation, for example,
 $$ E(a+1,b,c)\circ P_{a+}=Q_{a+}\circ E(a,b,c),\quad P_{a+}=x\partial+a,$$
we have
 $$E(1-a,1-b,2-c)Q_{a+}^*=P_{a+}^*E(-a,1-b,2-c),$$
since the adjoint of $E(a,b,c)$ is $E(1-a,1-b,2-c)$. Hence we have
 $$Q_{a+}^*=-P_{a+}(-a,1-b,2-c)=-(x\partial-a)\quad{\rm so}\quad Q_{a+}=x\partial+1+a.$$ 
In this way $Q_{a+}$ can be computed from $P_{a+}$. List of pairs of shift operators $(P,Q)$:
\[\def\arraystretch{1.2}
\begin{array}{llllll}
P_{a+}&=&x\partial+a,&Q_{a+}&=&x\partial+a+1,\\
P_{a-}&=&x(x-1)\partial+a+bx-c,\quad
&Q_{a-}&=&x(x-1)\partial+a+bx-c+x-1,\\  %=&\frac1x\circ P_{a-}\circ x,\\
P_{c+}&=&(x-1)\partial+a+b-c,&Q_{c+}&=&P_{c+},\\
P_{c-}&=&x\partial+c-1,&Q_{c-}&=&P_{c-}.
\end{array}
\]

\subsection{S-values and reducibility conditions of $E_2$}\label{E2S}
Since $P_{a+}=x\partial+a$, $P_{a-}=x(x-1)\partial+bx+a-c$, and
$E(a,b,c)=x(x-1)\partial^2+\cdots$,
the S-value $Sv_{a-}$ for the shift $a\to a-1\to a$ is computed as
  $$P_{a+}(a-1)\circ P_{a-}(a)-xE(a,b,c)=(a - 1)(a - c).$$
Similarly, we get
  $$Sv_{b-}=(b-1)(b-c),\quad Sv_{c-}=(b - c + 1)(a - c + 1).$$
Thus $E(a,b,c) $ is reducible if one of 
  $$a-1,\ a-c,\ b-c+1,\ a-c+1 $$
vanishes, and we get by Theorem \ref{red_atoap} 
the well known condition of reducibility
  $$a,\ b,\ c-a,\ c-b \in\ \mathbb{Z}.$$

\subsection{Reducibility conditions and the Euler integral representation}\label{E2Red}
The identity
    $$E(a,b,c)\varphi=-b\frac{\partial}{\partial s}\left(\frac{s(1-s)}{x-s}\varphi\right),\quad \varphi=s^{b-c}(1-s)^{c-a-1}(x-s)^{-b}$$
implies that the function defined by the integral
    $$F_\gamma(x)=\int_\gamma\varphi \, ds$$
along a closed path $\gamma$ \footnote{$\gamma$ is topologically closed and the values of $\varphi$ at the starting point and the ending point agree.}  gives a solution to $E(a,b,c)$.
The integrand has exponents $$b-c,\quad c-a-1,\quad -b, \quad a$$
at $0$, $1$, $x$, $\infty$, respectively.
If one of the exponents is a negative integer,
then we can choose as $C$ a small loop around this point,
and $F_C(x)\not=0$ generates an invariant subspace of the solution space,
which means the equation is reducible.    

\subsection{Reducible cases of $E_2$}\label{E2Fact}
When $E(a,b,c)$ is reducible, we see its factorization,
which gives examples of the discussion in \S \ref{GenRed}. % and \ref{RedOrd1}. 
Recall the first four solutions among
the Kummer's 24 solutions (cf. \cite{Er}):
   $$\begin{array}{ccl}
  {\rm I}&:&F(a,b,c;x),\\[3pt]
  {\rm II}&:&(1-x)^{c-a-b}F(c-a,c-b,c;x),\\[3pt]
  {\rm III}&:&x^{1-c}F(a-c+1,b-c+1,2-c;x),\\[3pt]
  {\rm IV}&:&x^{1-c}(1-x)^{c-a-b}F(1-a,1-b,2-c;x).
\end{array}$$
Note that the parameters of hypergeometric series in I and IV
as well as II and III are related; recall the adjoint relation:
$$  E^*(a,b,c)=E(1-a,1-b,2-c),\quad E^*(c-a,c-b,c)=E(a-c+1,b-c+1,2-c).$$

When the operator $E(a,b,c)$ is
reducible ($a$, $b$, $c-a$, or $c-b\in \mathbb{Z} $), 
$E$ factorizes into $F_1\circ F_2$,
$$F_2=\partial-\frac{G'}G, \quad G=x^\mu(x-1)^{\nu}g,$$
\blue{where
  $$(\mu,\nu)=(0,0),\ (0,c-a-b),\ (1-c,0),\ (1-c,c-a-b),$$
  according to the types ${\rm I},\dots,{\rm IV}$ of $G$, respectively},
and $g$ is a hypergeometric polynomial:
  $$\begin{array}{lcc}
 {\rm condition}& {\rm type\ of\ }G&{\rm degree\ of\ the\ polynomial\ }g\\[3pt]
    a=\cdots,-2,-1& {\rm I}& -a\\[2pt]
  a=0& {\rm I}&0\\[2pt]
  a=1& {\rm IV}&0\\[2pt]
  a=2,3,\cdots&{\rm IV}&a-1\\ [3pt]
  &&\\%\end{array}$$  $$\begin{array}{llc}
   % {\rm condition}& {\rm type\ of\ }G&g\\
  c-a=\cdots,-2,-1& {\rm II}&-(c-a)\\[2pt]
  c-a=0& {\rm II}&0\\[2pt]
  c-a=1& {\rm III}&0\\[2pt]
  c-a=2,3,\cdots&{\rm III}&c-a-1
 \end{array}$$
The zeros of $g$ are the apparent singular points of $F_2$, and so of $F_1$. 
Therefore, the apparent singularities are the zeros of
the hypergeometric series (cf. Proposition \ref{polynomSol}).

\newpage

\section{Shift operators of $H_6$}\label{shiftopH6}\secttoc
We use the following notation to denote blocks of local exponents as
\[ \def\arraystretch{1.1} \begin{array}{l}
  {\bm e}_1=(e_1,e_2,e_3),\ {\bm e}_4=(e_4,e_5,e_6),\ {\bm e}_7=(e_7,e_8,e_9),\ 
  e=({\bm e}_1,{\bm e}_4,{\bm e}_7),\\ {\bf 1}=(1,1,1), \ 
  {\bm e}_1\pm{\bf 1}=(e_1\pm1,e_2\pm1,e_3\pm1),\ \dots,
\end{array}\]
and call the shifts generated by 
$$ {\bm e}_1\to {\bm e}_1\pm {\bm 1},\quad{\bm e}_4\to {\bm e}_4\pm {\bm 1},\quad{\bm e}_7\to {\bm e}_7\pm {\bm 1}$$
the {\it block shifts.} In this section we find the shift operators of $H_6$ for the block shifts:
  $$%\begin{array}{ll}
  sh_1:{\bm e}_1\to{\bm e}_1-{\bf1},\quad
  sh_2:{\bm e}_4\to{\bm e}_4-{\bf1},\quad
  sh_3:e\to ({\bm e}_1-{\bm 1},{\bm e}_4-{\bm1},{\bm e}_7+{\bm 1}),
$$%\end{array} $$
  (Note that $sh_1^{-1}\circ sh_2^{-1}\circ sh_3:{\bm e}_7\to{\bm e}_7+{\bf1}.$)

The move of the Riemann scheme as we saw in \S\ref{E2ShiftPQ} for the Gauss equation $E_2$, for example,
$$\begin{array}{l}\quad\ 
\left(\begin{array}{ccccl}
 x=0:& 0&1&2&e_1\dots\\
 x=1:& 0&1&2&e_4\dots\\
 x=\infty:& s&s+1&s+2&e_7\dots
\end{array}\right)
  \underset{x^s}{\to}
\left(\begin{array}{cccl}
  s&s+1&s+2&e_1+s\dots\\
  0&1&2&e_4\dots\\
  0&1&2&e_7-s\dots
\end{array}\right)\\[7mm]
  \underset{\partial}{\to}
\left(\begin{array}{cccl}
  s-1&s&s+1&e_1+s-1\dots\\
  0&1&2&e_4-1\dots\\
  2&3&4&e_7-s+1\dots
\end{array}\right)
\underset{x^{1-s}}{\to}
\left(\begin{array}{cccl}
  0&1&2&e_1-1\dots\\
  0&1&2&e_4-1\dots\\
  s+1&s+2&s+3&e_7+1\dots
\end{array}\right)
\end{array}$$
suggests $P_{0-0}=x\partial +s$ \blue{(refer to Definition \ref{E6PQnotation} for index notation of $P$)}. More generally, 
\begin{thm}\label{shopH6}For every block shift $sh$, the equation $H_6(e,T_{10}=u)$ admits a shift operator $(P,Q,\alpha):$
  $$H_6(sh(e), u-\alpha)\circ P=Q\circ H_6(e,u).$$ For a set of generators $\{sh_1,sh_2,sh_3\},$   the shift operators are given as follows:
  $$\begin{array}{llll}
sh_1: &P_{-00}=(x-1)\partial+s,&Q_{-00}=(x-1)\partial+3+s,&\alpha_1=s_{13}+s_{23}+1,\\[2mm]
sh_2: &P_{0-0}=x\partial+s,    &Q_{0-0}=x\partial+3+s ,   &\alpha_2=0,\\[2mm]
sh_3: &P_{--+}=\partial,       &Q_{--+}=\partial,         &\alpha_3,\end{array}$$
  where
  $$\alpha_3=20-s_{11}^2/3-2s_{11}s_{13}/3+s_{12}^2/3-s_{13}^2/3
  -2s_{11}+7s_{13}+s_{21}-s_{22}+2s_{23}.$$
\end{thm}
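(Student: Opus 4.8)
The plan is to verify, for each of the three generating block shifts, the shift relation $H_6(sh_j(e),u-\alpha_j)\circ P_j=Q_j\circ H_6(e,u)$ directly, working throughout in the $(\theta,\partial)$-form $H_6=T_0+T_1\partial+T_2\partial^2+T_3\partial^3$ of Proposition \ref{eqwithR6} and using the commutation rule $\partial^k f(\theta)=f(\theta+k)\partial^k$. I would treat the three generators by three different mechanisms of increasing cost: differentiation symmetry for $sh_3$, a direct $(\theta,\partial)$-computation for $sh_2$, and the $x\to1-x$ coordinate symmetry to transport $sh_2$ to $sh_1$.

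For $sh_3$ the operator is just $\partial$, so this is the differentiation symmetry. Computing $\partial\circ H_6(e,u)$ and pushing $\partial$ to the right gives $\partial\circ H_6(e,u)=\widetilde H\circ\partial$ with $\widetilde H=T_0(\theta+1)+T_1(\theta+1)\partial+T_2(\theta+1)\partial^2+T_3(\theta+1)\partial^3$. The heads match immediately: $T_0(\theta+1)$ and $T_3(\theta+1)$ are exactly the $T_0,T_3$ of $H_6(sh_3(e),\cdot)$ once one substitutes $s\to s+1$, ${\bm e}_7\to{\bm e}_7+{\bf 1}$, ${\bm e}_1\to{\bm e}_1-{\bf 1}$. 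The remaining content is to check $B_1(\theta+1)=B_1^{sh_3}(\theta)$ and $B_2(\theta+1)=B_2^{sh_3}(\theta)$ as polynomials in $\theta$; the leading and middle coefficients give consistency identities among the symmetric functions $s_{ij}$, and the constant term of the first identity reads $u-\alpha_3=B_1(1)=-3+T_{12}+T_{11}+u$, i.e. $\alpha_3=3-T_{11}(e)-T_{12}(e)$, which unwinds to the stated cubic polynomial.

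For $sh_2$ I would posit $P_{0-0}=\theta+s$ (read off from the Riemann-scheme chase displayed before the theorem) and compute both $(\theta+s+3)\circ H_6(e,u)$ and $H_6(sh_2(e),u')\circ(\theta+s)$, using $\partial^k(\theta+s)=(\theta+s+k)\partial^k$. The $\partial^0$ and $\partial^3$ coefficients match automatically from the factored shapes of $T_0,T_3$ (here $T_0$ changes only through $s\to s+1$ and ${\bm e}_4$ enters neither $T_0$ nor $T_3$). The $\partial^1$ and $\partial^2$ coefficients then collapse to the single requirements $B_1^{sh_2}=B_1$ and $B_2^{sh_2}=B_2$. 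Here $T_{12},T_{22}$ are $sh_2$-invariant because they involve only $s_{11},s_{13}$, which do not see ${\bm e}_4$, while in $T_{11},T_{21}$ the variations of the $s_{12}^2$ and $s_{22}$ terms cancel; matching the constant term of $B_1$ then forces $u'=u$, i.e. $\alpha_2=0$, and $Q_{0-0}=\theta+3+s$ drops out of the same computation (or, alternatively, can be produced from $P_{0-0}$ through the adjoint-symmetry formula of Proposition \ref{expofQ}).

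Finally, $sh_1$ I would obtain from $sh_2$ by the $x\to1-x$ symmetry of \S\ref{cochH6}, which swaps ${\bm e}_1\leftrightarrow{\bm e}_4$ and sends $\theta+s=x\partial+s$ to $(x-1)\partial+s=P_{-00}$; conjugating the $sh_2$ shift relation by this coordinate change therefore yields the $sh_1$ relation with $P_{-00},Q_{-00}$ as stated. The main obstacle throughout is the $\partial^2$-coefficient, i.e. the invariance $B_2^{sh_2}=B_2$ together with its analogue $B_2(\theta+1)=B_2^{sh_3}$: this is the only place where the degree-three symmetric functions and the accessory parameter enter simultaneously, through $T_{20}=-T_{10}+19+\cdots$, so it is the heaviest polynomial identity to confirm. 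A secondary bookkeeping difficulty is tracking the accessory parameter through the two evaluations of the $x\to1-x$ formula, each of which shifts $T_{10}$, in order to pin down $\alpha_1=s_{13}+s_{23}+1$ exactly.
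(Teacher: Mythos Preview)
Your plan is correct, and it takes a genuinely different route from the paper's own proof. The paper simply posits, for each generator, $P$ of the shape suggested by the Riemann-scheme computation and $Q$ of the same shape with an undetermined constant, then solves the shift relation $H_6(sh(e),u-\alpha)\circ P=Q\circ H_6(e,u)$ for the pair of unknowns $(\alpha,q)$ by direct (computer-assisted) calculation, treating all three cases uniformly. You instead separate the three generators by mechanism: differentiation symmetry handles $sh_3$ cleanly and yields the closed form $\alpha_3=3-T_{11}-T_{12}$; a hand computation in the $(\theta,\partial)$-form reduces $sh_2$ to the invariance statements $B_1^{sh_2}=B_1$ and $B_2^{sh_2}=B_2$; and the $x\to1-x$ symmetry of \S\ref{cochH6} transports $sh_2$ to $sh_1$. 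Your approach is more transparent about \emph{why} the first-order ans\"atze succeed and where the computational weight actually lies (the $sh_2$-invariance of the cubic expression $T_{20}+T_{10}$ and the two evaluations of the $x\to1-x$ accessory-parameter formula needed to extract $\alpha_1$). The paper's approach, in contrast, buys uniformity and brevity at the cost of leaving those mechanisms implicit; it also avoids any dependence on the $x\to1-x$ formula, whose own derivation is a comparable direct calculation.
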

\begin{proof}%\noindent{\bf Sketch of the proof: }  
The first one is obtained as follows: Put
\[P=(x-1)\partial+s,\quad Q=(x-1)\partial+ q\]
and solve the equation 
\[H_6(sh_1(e),u-\alpha)\circ P=Q\circ H_6(e,u)\]
with respect to the set of unknowns $\{\alpha,q\}$. 
Solution is
\[\alpha=s_{13}+s_{23}+1,\quad  q = s+3.\]
The second and the third ones are obtained similarly.\end{proof}

%%% deleting comments
 
\subsection{Inverse shift operators and S-values of $H_6$}
We have determined the shift operators of the equation $H_6$ 
for the shifts $sh_1$, $sh_2$ and
$sh_3$ and denoted them as \blue{$(P_{-00},Q_{-00}),\dots, (P_{--+},Q_{--+})$.} Generally,
we introduce notation as follows.
\begin{dfn}\label{E6PQnotation}
%  The shift operator for the shift ${\bm e}_1\to{\bm e}_1\pm{\bf 1}$
%  is denoted by $P_{\pm00}$,
%  for the shift ${\bm e}\to ({\bm e}_1+{\bm 1},{\bm e}_4-{\bm1},{\bm e}_7) $
%  by $P_{+-0}$, and so on.
If $(P,Q,\alpha)$ solves the equation
  $$H_6({\bm e}_1+\epsilon_1{\bf 1},{\bm e}_4+\epsilon_4{\bf 1},
  {\bm e}_7+\epsilon_7{\bf 1},u-\alpha)\circ P
  =Q\circ H_6(e,u),\quad \epsilon_1,\epsilon_4,\epsilon_7 =-1, 0, 1,$$
then the operators $P$ and $Q$ are denoted as
$P_{\delta_1\delta_4\delta_7}$ and $Q_{\delta_1\delta_4\delta_7}$,
where $\delta_i={-}, {0}, {+}$ according as $\epsilon_i=-1, 0, 1$.
For example, 
for the shift 
$e\to({\bm e}_1+{\bf 1},{\bm e}_4+{\bf 1},
{\bm e}_7-{\bf 1})$, the shift operators are $P_{++-}$ and $Q_{++-}$.
\end{dfn}
%\subsubsection{Simple shift operators}\label{simplecases}
\subsubsection{$P_{++-}$ and the S-value $Sv_{--+}=P_{+--}\circ P_{--+}$ for $H_6$}\label{P_0p0}
While the operator $P_{--+}$ defines a map from ${\rm Sol}(H_6({e},u))$
to ${\rm Sol}(H_6({\bm e}_1-{\bf 1},{\bm e}_4-{\bf 1},{\bm e}_7+{\bf 1},u-\alpha))$,
its inverse map is given by the operator $P_{++-}$ evaluated at
$({\bm e}_1-1,{\bm e}_4-1,{\bm e}_7+1,u-\alpha)$ and the composition gives the
S-value; refer to \ref{GenShiftS}. We call the operator $P_{++-}$
itself the inverse of $P_{--+}$ for simplicity in the following.
In view of this property, we see that
\[P_{++-}({\bm e}_1-1,{\bm e}_4-1,{\bm e}_7+1)
=(H_6-p_0)/\partial=x^3(x-1)^3\partial^5+\cdots,\]
where $p_0$ is the constant term of the $(x,\partial)$-form
of $H_6=x^3(x-1)^3\partial^6+p_5\partial^5+\cdots+p_1\partial +p_0$
and that the S-value in this case, which we denote as $Sv_{--+}$, is
\[\begin{array}{ll}Sv_{--+}&=\
  P_{++-}({\bm e}_1-{\bm1},{\bm e}_4-{\bm1},{\bm e}_7+{\bm1})\circ P_{--+}\\
&=\  H_6-p_0 \equiv -p_0=-s(s+1)(s+2)e_7e_8e_9\quad{\rm mod}\ H_6.\end{array}
\]

%This procedure works also for the shift $sh_2$ (\S \ref{P_p00}) and we have
%\[P_{0+0}=x^5 (x-1)^3 \partial^5+\cdots.\]
%%% sasaki equationsG6 %% yoshida inverse-S-value
\subsubsection{$P_{0+0}$ and the S-value $Sv_{0+0}=P_{0-0}\circ P_{0+0}$ for $H_6$}
The inverse of $P_{0-0}$, denoted $P_{0+0}$, is obtained by the relation
$$P_{0-0}({\bm e}_4+{\bf 1})\circ P_{0+0}-U\circ H_6(e)+{\rm constant}$$
 for some differential operator $U$; the constant is the S-value $Sv_{0+0}$. 
In this case, $P_{0-0}=x\partial+s$ and $H_6=x^3(x-1)^3\partial^6+\cdots$; we set
\[P_{0+0}=x^5(x-1)^3\partial^5+\cdots,\quad {\rm and}\quad U=x^3. \]
and solve
\begin{equation}\label{inverse_and_S-value}
P_{0-0}({\bm e}_4={\bm e}_4+{\bm 1})\circ P_{0+0}=x^3H_6+Sv_{0+0},
\end{equation}
to find $P_{0+0}$ and $Sv_{0+0}$. 
The $(\theta,\partial)$-form of $H_6$:
$$H_6=T_0+T_1\partial+T_2\partial^2+T_3\partial^3,$$
implies that $x^3H_6$ has $(x,\theta)$-form as:
$$x^3H_6=x^3T_0+x^2\theta T_1(\theta-1)+x\theta(\theta-1)T_2(\theta-2)+\theta(\theta-1)(\theta-2)T_3(\theta-3).$$
Note that this expression has no constant (independent of $x,\theta,\partial$) term.
\par\noindent
Since $P_{0-0}({\bm e}_4={\bm e}_4+{\bm 1})=\theta+s-1,$ and the composite $(\theta+s-1)P_{0+0}$ differs from $x^3H_6$ only by additive constant, $P_{0+0}$ has $(x,\theta)$-form as
$$P_{0+0}=x^3P_{-3}+x^2P_{-2}+xP_{-1}+P_0.$$
Thus
$$(\theta+s-1)P_{0+0}=x^3(\theta+2+s)P_{-3}+x^2(\theta+1+s)P_{-2}+x(\theta+s)P_{-1}+(\theta+s-1)P_0.$$
Note that the constant term of this expression is the S-value
$$Sv_{0+0}=P_{0-0}\circ P_{0+0}=(s-1)P_0(\theta=0).$$
\par\noindent
Since the $(x,\partial)$-form is unique, we have
$$\begin{array}{ll}T_0&=(\theta+2+s)P_{-3},\\
  \theta T_1(\theta-1)&=(\theta+1+s)P_{-2},\\
  \theta(\theta-1)T_2(\theta-2)&=(\theta+s)P_{-1},\\
  \theta(\theta-1)(\theta-2)T_3(\theta-3)&=(\theta+s-1)P_0-(s-1)P_0(0).\end{array}$$
Since  $T_3=-(\theta+3-e_1)(\theta+3-e_2)(\theta+3-e_3),$ 
$$-\theta(\theta-1)(\theta-2)(\theta-e_1)(\theta-e_2)(\theta-e_3)=(\theta+s-1)P_0-(s-1)P_0(0),$$
and putting $\theta=1-s$, we have the S-value $Sv_{0+0}=P_{0-0}\circ P_{0+0}$:
$$(s-1)P_0(0)=(1-s)(-s)(-1-s)(1-s-e_1)(1-s-e_2)(1-s-e_3)$$
and $P_{0+0}=x^3P_{-3}+x^2P_{-2}+xP_{-1}+P_0,$ where
$$\begin{array}{ll} 
P_{-3}&=(\theta+s+1)(\theta+s)B_0(\theta),\\
P_{-2}&=\theta(\theta+s+1)B_1(\theta-1),\\
P_{-1}&=\theta(\theta-1)B_2(\theta-2),\\
P_0&=\displaystyle{-\frac{\theta(\theta-1)(\theta-2)(\theta-e_1)(\theta-e_2)(\theta-e_3)+Sv_{0-0}}{(\theta+s-1)}}.
\end{array}$$
\comment{
Thus we got
\begin{equation}\label{inverse_and_S-value}
  P_{0-0}({\bm e}_4={\bm e}_4+{\bm 1})\circ P_{0+0}=x^3H_6+Sv_{0+0},\end{equation}
where $P_{0-0}({\bm e}_4={\bm e}_4+{\bm 1})=\theta+s-1$.
}
\newpage
\subsubsection{$P_{+00}$ and the S-value $Sv_{+00}=P_{-00}\circ P_{+00}$ for $H_6$}\label{P_p00}
Perform the coordinate change $x\to1-x$ to \eqref{inverse_and_S-value}:
\begin{itemize}
  \item $P_{0-0}({\bm e}_4={\bm e}_4+{\bm 1})=x\partial+s-1$ changes into
    $$(x-1)\partial +s-1=P_{-00}({\bm e}_1={\bm e}_1+{\bm 1}).$$
  \item
    $x^3H_6({\bm e}_1,{\bm e}_4,{\bm e}_7,T_{10})$ changes into (\S \ref{cochH6})
    $$-(x-1)^3H_6({\bm e}_4,{\bm e}_1,{\bm e}_7,-T_{10}+\alpha(e)),$$
   where 
   $$\alpha(e)=3s^2+(s_{11}+s_{12}-s_{23}+2)s+3s_{11}+3s_{12}-3s_{23}-3s_{33}-21.$$
    \end{itemize}
   Perform next the parameter change ${\bm e}_1\leftrightarrow{\bm e}_4$ and the accessory parameter change $T_{10}\to -T_{10}+\alpha(e)$, to get
      $$ P_{-00}({\bm e}_1={\bm e}_1+{\bm 1})\circ P_{+00}=-(x-1)^3H_6+Sv_{+00},$$
   where $P_{+00}$ is $P_{0+0}$ with the substitution
   $$x\to1-x,\quad \theta\to (x-1)\partial,\quad {\bm e}_1\to {\bm e}_4,\quad{\bm e}_4\to {\bm e}_1,\quad T_{10}\to -T_{10}+\alpha(e),$$
   and
   $$Sv_{+00}=(1-s)(-s)(-1-s)(1-s-e_4)(1-s-e_5)(1-s-e_6).$$
\subsubsection{S-values and reducibility conditions}
We list the S-values for the three simple shifts above:
\begin{prp}\label{SvalueE6}
  The three S-values of the simple shift operators above are given as
\[\def\arraystretch{1.3}\setlength\arraycolsep{3pt} \begin{array}{rcl}
Sv_{--+} &=& P_{++-}({\bm e}_1-{\bf 1}, {\bm e}_4-1, {\bm e}_7+1)\circ P_{--+}
       =-s(s+1)(s+2)e_7e_8e_9,\\
Sv_{-00}&=& P_{+00}({\bm e}_1-{\bf 1})\circ P_{-00}
      =-s(s + 1)(s + 2)(s + e_4)(s + e_5)(s + e_6),\\
      %r(r-1)(r-2)(-r+e_4)(-r+e_5)(-r+e_6),  \\     
%
Sv_{0-0} &=& P_{0+0}({\bm e}_4-{\bf 1})\circ P_{0-0}
      =s(s + 1)(s + 2)(s + e_1)(s + e_2)(s + e_3).\\
\end{array}\]
\end{prp}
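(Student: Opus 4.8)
The plan is to handle the three S-values separately, doing genuine computation only for one of them and reducing the other two to formulas already established for the \emph{inverse} (up-shift) operators. The value $Sv_{--+}$ needs no new work: it is exactly what was extracted when the inverse $P_{++-}$ of $P_{--+}$ was identified with $(H_6-p_0)/\partial$. Composing $P_{++-}({\bm e}_1-{\bf 1},{\bm e}_4-{\bf 1},{\bm e}_7+{\bf 1})$ with $P_{--+}=\partial$ recovers $H_6-p_0$, which is congruent to $-p_0$ modulo $H_6$, and reading off the constant term $p_0$ of the $(x,\partial)$-form gives $Sv_{--+}=-p_0=-s(s+1)(s+2)e_7e_8e_9$.

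For $Sv_{-00}$ and $Sv_{0-0}$ I would not recompute the compositions from scratch. Instead I would invoke the reciprocity $Sv_{sh_-}(e)=Sv_{sh_+}(e_-)$ of Proposition \ref{twoSvalues}, taking $sh_-=sh_1$ (respectively $sh_-=sh_2$) together with its inverse up-shift $sh_+$. This converts the two sought S-values into the already-known up-shift S-values evaluated at the down-shifted exponents, namely $Sv_{-00}(e)=Sv_{+00}(sh_1(e))$ and $Sv_{0-0}(e)=Sv_{0+0}(sh_2(e))$, where $Sv_{+00}$ and $Sv_{0+0}$ are the constants computed when $P_{+00}$ and $P_{0+0}$ were constructed. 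The only remaining task is then a substitution of local exponents into these explicit cubic expressions.

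The delicate point, which I expect to be the main obstacle, is the bookkeeping of this substitution, because a \emph{down}-shift of a block of three exponents does not fix $s$: the Fuchs relation $s=(6-e_1-\cdots-e_9)/3$ forces $s\mapsto s+1$ under $sh_1$ or $sh_2$, while the three exponents of the \emph{other} block are left untouched. Carrying this through $Sv_{0+0}=(1-s)(-s)(-1-s)(1-s-e_1)(1-s-e_2)(1-s-e_3)$, the leading cubic $(1-s)(-s)(-1-s)$ becomes $-s(s+1)(s+2)$ and each factor $1-s-e_j$ becomes $-(s+e_j)$; collecting the six sign changes yields $Sv_{0-0}=s(s+1)(s+2)(s+e_1)(s+e_2)(s+e_3)$, and the analogous substitution into $Sv_{+00}$ (with $e_1,e_2,e_3$ replaced by $e_4,e_5,e_6$) yields the formula for $Sv_{-00}$. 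Here one must also track the normalization sign that entered when $P_{+00}$ was produced from $P_{0+0}$ through the coordinate change $x\to 1-x$ — the factor $-(x-1)^3$ appearing in place of $x^3$ — since it is precisely this that governs the overall sign of $Sv_{-00}$ relative to $Sv_{0-0}$. As a consistency check I would confirm that the $x\to 1-x$ symmetry, which swaps ${\bm e}_1\leftrightarrow{\bm e}_4$ and interchanges the roles of $sh_1$ and $sh_2$, is compatible with the two resulting expressions.
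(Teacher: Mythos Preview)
Your approach is correct and mirrors the paper's: the three S-values are not proved separately but are extracted from the constructions in the immediately preceding subsections, where $Sv_{--+}$ is read off as $-p_0$ from $P_{++-}=(H_6-p_0)/\partial$, while $Sv_{0+0}$ and $Sv_{+00}$ are computed explicitly and then converted to $Sv_{0-0}$ and $Sv_{-00}$ via the reciprocity $Sv_{sh_-}(e)=Sv_{sh_+}(e_-)$ of Proposition~\ref{twoSvalues}. Your care about the sign normalization of $P_{+00}$ coming from the $x\to 1-x$ transformation is exactly the right point to watch, and the $x\leftrightarrow 1-x$ consistency check you propose is a good sanity test.
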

Note the order of composition of two maps. The S-value changes
following the rule described in Proposition \ref{twoSvalues}. 
\par\smallskip
Theorem \ref{shopH6} %, Propositions \ref{H_1PQH_0H_0red} and \ref{H_1PQH_0H_1red} 
leads to
\begin{cor}\label{redcondE6}If one of
$$s,\quad e_i+s\ (i=1,\dots,6),\quad e_7,\ e_8, e_9$$
is an integer, then the equation $H_6$ is reducible.
\end{cor}
This can be obtained directly from the Scott theorem (e.g. \cite{Scott,Osh}) since
$${\rm rank}(T_0-{\rm id})+{\rm rank}(T_1-{\rm id})+{\rm rank}(T_\infty-{\rm id})<2\,{\rm rank}(T_0)$$
where $T_x$ denotes the local monodromy around $x\in\{0,1,\infty\}$. 

\subsection{Reducible cases of $H_6$}
\begin{dfn}
  Two operators $H$ and $H'$ with accessory parameters are said to be {\it essentially the same} if $H$ is transformed into $H'$ by
  \begin{enumerate}
  \item changing coordinates by a permutation of $\{x=0,1,\infty\}$,
  \item multiplying a function from the left,
  \item multiplying a factor  $x^*(x-1)^{**}$ from the right,
  \item renaming the local exponents,% $e_1,e_2,\dots$,
    \item and by changing the accessory parameters.
  \end{enumerate}%the parameters $a_0,a_1\dots$.
  Let $G$ be an equation such that its accessory parameters are assigned as functions of local exponents. Two operators $G$ and $G'$ are said to be {\it essentially the same} if $G$ is transformed into $G'$ by the changes $1,\dots,4$ above.% \blue{and 5: the accessory parameters, functions of $e$, change according to the renaming of $e$.}
\end{dfn}
All the statements in this section about $H_6, H_5$ and $H_3$ are valid word to word about $G_6, G_5$ and $G_3$, which will be defined in the next section.

\subsubsection{Factorization when $e_9=0,1$ and when $s=-2,-1,0,1$}\label{factorH6}
\blue{We examine the cases where $e_9=0,1$ and the cases  $s=-2,-1,0,1$.} Recall the $(\theta,\partial)$-form of $H_6$: $T_0+T_1\partial+T_2\partial^2+T_3\partial^3$
 in Proposition \ref{eqwithR6},
\begin{equation}\begin{array}{llll}
   x\partial&=\theta,& \partial x&=\theta+1,\\
   x^2\partial^2&=\theta(\theta-1),& \partial^2x^2&=(\theta+1)(\theta+2),\\
   x^3\partial^3&=\theta(\theta-1)(\theta-2),\quad& \partial^3x^3&=(\theta+1)(\theta+2)(\theta+3),\end{array}\end{equation}
 and
 $$\theta\partial=\partial(\theta-1),\quad \theta\partial^2=\partial^2(\theta-2),\quad\theta\partial^3=\partial^3(\theta-3),\dots$$
 \begin{itemize}
   
 \item When $e_9=0$,\par\noindent
 Since $T_0$ is divisible by $\partial$ from the right, $H_6$ factorizes as
 $$H_6(e_9=0)=H_5\circ \partial,$$ where $H_5=H_6(e_9=0)/\partial,$ which we have explained in \S \ref{G6G5}.
 
\item When $e_9=1$,\par\noindent
Since $\theta+e_9=\theta+1=\partial x$ and $\theta\partial=\partial(\theta-1)$,
$T_0$ is divisible by $\partial$ from the left.   $$ \begin{array}{lcl}
 T_0(e_9=1)&=&\partial (\theta+s+1)(\theta+s)(\theta+s-1)(\theta +e_7-1)(\theta +e_8-1),\\
 T_1(e_9=1)\partial &=&\partial (\theta+s+1)(\theta+s)B_1(\theta -1),\\
 T_2(e_9=1)\partial ^2&=&\partial (\theta+s+1)B_2(\theta -1)\partial ,\\
 T_3(e_9=1)\partial ^3&=&-\partial (\theta+2-e_1)(\theta+2-e_2)(\theta+2-e_3)\partial ^2,
 \end{array}$$
leads to $$H_6(e_9=1)=\partial\circ X_5,$$ where $X_5$ is essentially equal to $H_5$.
  
  \item When $s=1$, the coefficients of $H_6$ change as\par\noindent
  $$\begin{array}{ll}
    T_0(s=1)&=(\theta +3)(\theta +2)(\theta +1)B_0(\theta ,s=1)=\partial^3x^3B_0(\theta ,s=1),\\
    T_1(s=1)\partial &=(\theta +3)(\theta +2)B_1(\theta ,s=1)\partial =\partial(\theta +2)(\theta +1)B_1(\theta -1,s=1)\\
    &=\partial ^3x^2B_1(\theta -1,s=1),\\
    T_2(s=1)\partial ^2&=(\theta +3)B_2(\theta ,s=1)\partial ^2=\partial ^2(\theta +1)B_2(\theta -2,s=1)\\
    &=\partial ^3xB_2(\theta -2,s=1),\\
    T_3(s=1)\partial ^3&=\partial ^3B_3(\theta -3,s=1),\end{array}$$
  which lead to
  $$H_6(s=1)=\partial ^3\circ H_3,$$
  as we have stated in \S \ref{fromG6toG3}.
  
  \item When $s=0$,\par\noindent
  $$\begin{array}{ll}
    T_0(s=0)&=(\theta +2)(\theta +1)\theta B_0(\theta ,s=0)=\partial^2x^2B_0(\theta ,s=0)x\partial ,\\
    T_1(s=0)\partial &=(\theta +2)(\theta +1)B_1(\theta ,s=0)\partial =\partial^2x^2B_1(\theta ,s=0)\partial ,\\
    T_2(s=0)\partial ^2&=(\theta +2)B_2(\theta ,s=0)\partial ^2=(\theta +2)\partial B_2(\theta -1,s=0)\partial \\
    &=\partial (\theta +1)B_2(\theta -1,s=0)\partial =\partial ^2xB_2(\theta -1,s=0)\partial ,\\
    T_3(s=0)\partial ^3&=\partial ^2B_3(\theta -2,s=0)\partial \end{array}$$
  leads to
  $$H_6(s=0)=\partial ^2\circ X_3\circ \partial ,$$
  where $X_3$ is essentially equal to $H_3$.

 \item When $s=-1$,\par\noindent
  $$\begin{array}{ll}
   T_0(s=-1)&=(\theta +1)\theta (\theta -1)B_0(\theta ,s=-1)=\partial x\cdot x^2\partial ^2B_0(\theta ,s=-1)\\
   &=\partial  x^3B_0(\theta +2,s=-1)\partial ^2,\\
   T_1(s=-1)\partial &=(\theta +1)\theta B_1(\theta ,s=-1)\partial =\partial x x\partial  B_1(\theta ,s=-1)\partial \\
   &=\partial  x^2B_1(\theta +1,s=-1)\partial ^2,\\
    T_2(s=-1)\partial ^2&=(\theta +1)B_2(\theta ,s=-1)\partial ^2=\partial  x B_2(\theta ,s=-1)\partial ^2,\\
    T_3(s=-1)\partial ^3&=\partial  B_3(\theta -1,s=-1)\partial ^2\end{array}$$
  lead to
  $$H_6(s=-1)=\partial \circ X_3'\circ \partial ^2,$$
  where $X_3'$ is essentially equal to $H_3$.

\item When $s=-2$,\par\noindent
  $$\begin{array}{ll}
   T_0(s=-2)&=\theta (\theta -1)(\theta -2)B_0(\theta ,s=-2)=x^3\partial ^3B_0(\theta ,s=-2)\\
   &=x^3B_0(\theta +3,s=-2)\partial ^3,\\
   T_1(s=-2)\partial &=\theta (\theta -1)B_1(\theta ,s=-2)\partial =x^2\partial ^2 B_1(\theta ,s=-2)\partial \\
   &=x^2B_1(\theta +2,s=-2)\partial ^3,\\
    T_2(s=-2)\partial ^2&=\theta  B_2(\theta ,s=-2)\partial ^2=x B_2(\theta+1,s=-2)\partial ^3,\\
    T_3(s=-2)\partial ^3&=T_3(s=-2)\partial ^3\end{array}$$
  lead to
  $$H_6(s=-2)=X_3''\circ \partial ^3,$$
  where $X_3''$ is essentially equal to $H_3$.
\end{itemize}

\subsubsection{Factorization when $e_9\in{\mathbb Z}$, $e_1+s\in{\mathbb Z}$ and $s\in{\mathbb Z}$}
The factorizations obtained in \S 6.2.1 and Proposition \ref{FactorType} lead to
\begin{prp}\label{E6red_e9}
  If $e_9\in {\mathbb Z}$, then $H_6$ factorizes as follows: when $e_9$ is
  a non-positive integer, the type of factorization is $[51]$ and, when
  it is a positive integer, $[15]:$
   $$\begin{array}{ccccccccc}
e_9=\ & \cdots &-2    &-1   & 0   & 1     & 2    &3      &\cdots\\   
& &[51]&[51]&[51]A0&[15]A0&[15]&[15]&    
  \end{array}$$
  The notation $A0$ means that the \blue{factors} have no singularity other than   $\{0, 1, \infty\}$.
\end{prp}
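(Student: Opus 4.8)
The plan is to propagate the two explicit factorizations already obtained in \S\ref{factorH6}---namely $H_6(e_9=0)=H_5\circ\partial$ of type $[51]$ and $H_6(e_9=1)=\partial\circ X_5$ of type $[15]$---to all integer values of $e_9$ by means of the shift operator for $sh_3$ together with Proposition \ref{FactorType}. The shift operator $P_{--+}=\partial$ realizes the block shift ${\bm e}_7\to{\bm e}_7+{\bf 1}$, which increments $e_9$ by $1$ (while shifting $e_7,e_8$ up and $e_1,\dots,e_6$ down by $1$, and $s$ up by $1$), and its inverse $P_{++-}$ is the order-$5$ operator of \S\ref{P_p00}. The associated round-trip S-value, recorded in Proposition \ref{SvalueE6}, is
$$Sv_{--+}=-s(s+1)(s+2)\,e_7e_8e_9.$$

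First I would settle the genericity bookkeeping. Fix $e_9=k\in\mathbb{Z}$ and keep $e_1,\dots,e_8$ generic; then $s=(6-e_1-\cdots-e_9)/3$, $e_7$, $e_8$ and each $e_i+s$ $(i\le 6)$ are generic, so by Theorem \ref{redcondE6} the only integrality relation in force is $e_9\in\mathbb{Z}$. Under $sh_3$ one has $e_9\to e_9+1$, $e_7\to e_7+1$, $e_8\to e_8+1$, $s\to s+1$ and $e_i+s\to e_i+s$ for $i\le 6$, so this single integrality relation is preserved and every other parameter stays generic. Hence the factorization type of $H_6$ is a well-defined function of the integer $k$, and $Sv_{--+}$ evaluated at the source $e_9=k$ is, for generic $e_7,e_8,s$, a nonzero constant times $k$; it vanishes precisely at $k=0$.

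Next I would run the induction. Since $Sv_{--+}(e_9=k)\neq 0$ for every $k\neq 0$, Proposition \ref{FactorType} shows that $H_6(e_9=k)$ and $H_6(e_9=k+1)$ admit factorizations of the same type whenever $k\neq 0$. Starting from the base case $e_9=1$ of type $[15]$ and stepping upward ($k=1,2,3,\dots$, each step having $Sv_{--+}\neq 0$) yields type $[15]$ for every $e_9\ge 1$. Starting from the base case $e_9=0$ of type $[51]$ and stepping downward---connecting $e_9=-1$ to $e_9=0$ through the nonzero S-value at $k=-1$, then $e_9=-2$ to $e_9=-1$ through the S-value at $k=-2$, and so on---yields type $[51]$ for every $e_9\le 0$. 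The vanishing of $Sv_{--+}$ exactly at $k=0$ is the single barrier separating the two regimes; it is what forces the two base cases to be computed independently and is consistent with the types $[51]$ and $[15]$ on the two sides genuinely differing. Finally the $A0$ assertions are read off directly from the explicit factorizations at $e_9=0,1$, whose factors $\partial$, $H_5$ and $X_5$ are Fuchsian with singular points only in $\{0,1,\infty\}$.

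The main obstacle I anticipate is not the induction itself but the bookkeeping that guarantees the propagation stays within the regime where $e_9$ is the unique integral relation: one must verify that $sh_3$ leaves $s$, $e_7$, $e_8$ and the combinations $e_i+s$ $(i\le 6)$ generic, so that Proposition \ref{FactorType} applies cleanly at each step and, via the irreducibility of $H_5$ with generic exponents (Proposition \ref{generic_is_irred}), the order-$5$ factor in the base cases is genuinely irreducible---ensuring that $[51]$ and $[15]$ are honest maximal factorization types transported by $P_{--+}$ rather than coarsenings of finer ones.
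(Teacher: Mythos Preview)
Your proposal is correct and takes essentially the same approach as the paper: the paper's entire argument is the one-line remark ``Proposition \ref{FactorType} leads to'' this result, and you have simply spelled out how that proposition is applied---propagating the base factorizations at $e_9=0,1$ from \S\ref{factorH6} via the shift $sh_3$ with $P_{--+}=\partial$, using that $Sv_{--+}=-s(s+1)(s+2)e_7e_8e_9$ vanishes only at $e_9=0$ for generic remaining parameters. Your additional genericity bookkeeping and the irreducibility check on the order-$5$ factor are more detail than the paper provides, but entirely in the same spirit.
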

When $e_9=-1$, the factors have one apparent singular point and when $e_9=-2$,
two apparent singular points (cf. Proposition \ref{apparentsing}).
%and \S \ref{OQ}).

By the change $x\to1/x$, the condition $e_9\in\mathbb{Z}$ is converted to $e_1+s\in\mathbb{Z}$:
\begin{prp}If $e_1+s\in\mathbb{Z}$, $H_6$ factorizes as follows:
 $$\begin{array}{ccccccccc}
e_1+s=\ & \cdots &-2    &-1   & 0   & 1     & 2    &3      &\cdots\\   
& &[51]&[51]&[51]A0&[15]A0&[15]&[15]&    
  \end{array}$$
  When $e_1+s=0,1$, the factor $[5]$ is essentially equal to $H_5$.
  \end{prp}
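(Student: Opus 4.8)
The plan is to deduce this statement from the preceding proposition (the case $e_9\in\mathbb Z$) by exploiting the coordinate-change symmetry $x\to1/x$ of $H_6$ recorded in \S\ref{cochH6}. First I would recall that the conjugated coordinate change
$$x^{-s-3}\circ H_6({\bm e}_1,{\bm e}_4,{\bm e}_7,T_{10})|_{x\to1/x}\circ x^s=H_6({\bm e}_7-s{\bm1},{\bm e}_4,{\bm e}_1+s{\bm1},T'_{10})$$
carries $H_6(e)$ to $H_6(\tilde e)$ with $\tilde{\bm e}_1={\bm e}_7-s{\bm1}$, $\tilde{\bm e}_4={\bm e}_4$ and $\tilde{\bm e}_7={\bm e}_1+s{\bm1}$, and that the new auxiliary exponent $\tilde s$ is again $s$ (by the Fuchs relation, since the nine generic exponents sum to $6-3s$ before and after). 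In particular the generic exponents at infinity of $H_6(\tilde e)$ are $(e_1+s,e_2+s,e_3+s)$, so the hypothesis $e_1+s\in\mathbb Z$ says precisely that the exponent $\tilde e_7=e_1+s$ is an integer.

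Since $H_6$ is symmetric under permutations of the block $\{e_7,e_8,e_9\}$, the preceding proposition applies verbatim to $H_6(\tilde e)$ with $\tilde e_7=e_1+s$ playing the role of $e_9$, giving a factorization whose type is read off the same table at the value $e_1+s$. Next I would transport this factorization back through $x\to1/x$. The key point is that a coordinate change is a $\mathbb C$-algebra homomorphism of $D$ (it sends $x\mapsto 1/y$ and $\partial\mapsto -y^2\partial_y$), hence it preserves both the order of each factor and the order of composition; the left multiplication by $x^{-s-3}$ and the right multiplication by $x^s$ only rescale the two outer factors. Consequently a factorization of $H_6(\tilde e)$ of type $[51]$ (resp. $[15]$) pulls back to a factorization of $H_6(e)$ of the same type, accounting for the entries of the table. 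Because $x\to1/x$ fixes the set $\{0,1,\infty\}$ and carries $c\notin\{0,1,\infty\}$ to $1/c\notin\{0,1,\infty\}$, the ``no extra singular point'' property $A0$ is preserved, matching the annotations at $e_1+s=0,1$.

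Finally, for the last assertion I would note that when $e_1+s=0$ or $1$ the corresponding value for $H_6(\tilde e)$ is $\tilde e_7=0$ or $1$, where the preceding proposition together with \S\ref{G6G5} and \S\ref{factorH6} identifies the order-$5$ factor as an operator essentially equal to $H_5$. Since $x\to1/x$ is one of the moves allowed in the definition of ``essentially the same'' (it is the transposition of $\{0,\infty\}$ inside $\{0,1,\infty\}$), and the conjugating factors are powers of $x$ and $x-1$, which remain powers of $x$ and $x-1$ under $x\to1/x$, the pulled-back order-$5$ factor is again essentially $H_5$.

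The only genuine bookkeeping — and the step I expect to need the most care — is verifying that the factorization type is preserved rather than reversed under $x\to1/x$, i.e. that the left/right roles of the two factors are not interchanged. This is guaranteed precisely because the coordinate change is a ring homomorphism (not an anti-homomorphism like the adjoint in \S\ref{GenAdj}), so no separate case analysis of the $[51]$ versus $[15]$ columns is needed; everything reduces to matching the single table with the already-proved one.
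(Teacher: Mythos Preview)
Your proposal is correct and follows exactly the paper's approach: the paper proves this proposition in a single sentence preceding its statement, ``By the change $x\to1/x$, the condition $e_9\in\mathbb{Z}$ is converted to $e_1+s\in\mathbb{Z}$,'' and you have unpacked precisely that reduction, supplying the details (preservation of factorization type under a ring homomorphism, preservation of $A0$, and stability of ``essentially $H_5$'' under the allowed moves) that the paper leaves implicit.
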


\begin{prp}\label{prp1113}
  If $s\in\mathbb{Z}$, $H_6$ is reducible of type $\{3111\}$:
  $$\begin{array}{ccccccccc}
      s=\ & \cdots &-3 &-2  & -1  & 0      & 1      &2       &\cdots\\   
          &        &[3111]&[3111]A0&[1311]A0&[1131]A0&[1113]A0&[1113]&
  \end{array}$$
  %When $s=-2,-1,0,1$, the factor $[3]$ is essentially $G_3$,
  %and no apparent singularities appear. %$G_3$ is defined in \S \ref{G6G3}.
\end{prp}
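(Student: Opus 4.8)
\emph{Sketch of the intended proof.} The plan is to reduce everything to the four ``central'' values $s=-2,-1,0,1$, which are already factorized by hand in \S\ref{factorH6}, and then to push these factorizations outward along the integer line by means of shift operators, invoking Proposition~\ref{FactorType}.

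First I would record what \S\ref{factorH6} already gives. The explicit $(\theta,\partial)$-form computations there show
$$H_6(s=-2)=X_3''\circ\partial^3,\quad H_6(s=-1)=\partial\circ X_3'\circ\partial^2,\quad H_6(s=0)=\partial^2\circ X_3\circ\partial,\quad H_6(s=1)=\partial^3\circ H_3,$$
where each of $X_3,X_3',X_3''$ is essentially $H_3$. Reading off the orders of the factors (three copies of $\partial$ together with one order-$3$ factor) gives, respectively, the ordered types $[3111]$, $[1311]$, $[1131]$, $[1113]$; and since every factor is either $\partial$ or a copy of $H_3$, none introduces a singular point outside $\{0,1,\infty\}$, which is exactly the meaning of the annotation $A0$. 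This settles the four central columns of the table.

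Next I would propagate outward by shift operators. By Theorem~\ref{shopH6} the shift $sh_3:e\to({\bm e}_1-{\bf 1},{\bm e}_4-{\bf 1},{\bm e}_7+{\bf 1})$, which raises $s$ to $s+1$, is realized by $P_{--+}=\partial$, and by Proposition~\ref{SvalueE6} its S-value is $Sv_{--+}=-s(s+1)(s+2)e_7e_8e_9$. For every integer $s\geq 1$ and generic $e_7,e_8,e_9$ this is nonzero, so Proposition~\ref{Sred} makes $P_{--+}$ an isomorphism of solution spaces and Proposition~\ref{FactorType} transports the ordered factorization type from $s$ to $s+1$. Starting from the type $[1113]$ at $s=1$ and iterating, $H_6$ has type $[1113]$ for all integers $s\geq 1$. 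For the downward direction I would use the inverse shift $P_{++-}$; by Proposition~\ref{twoSvalues} its S-value is $Sv_{--+}$ evaluated at the down-shifted argument, namely $-(s-1)\,s\,(s+1)(e_7-1)(e_8-1)(e_9-1)$, which is nonzero for every integer $s\leq -2$ and generic exponents. Hence Proposition~\ref{FactorType} carries the type $[3111]$ from $s=-2$ to all integers $s\leq -2$. Combining the two half-lines with the four central values yields every entry of the table, all of underlying set type $\{3111\}$.

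The one genuine obstacle, and the reason the four central values cannot be bypassed, is that every block-shift S-value in Proposition~\ref{SvalueE6} carries the factor $s(s+1)(s+2)$, which vanishes exactly at $s=0,-1,-2$. No shift operator can therefore be used to cross these three values, so the explicit factorizations of \S\ref{factorH6} are indispensable there; the shift operators serve only to extend the conclusion to $s\geq 2$ and to $s\leq -3$. The remaining care is (i) to choose, for each half-line, the shift direction whose S-value is nonzero on it, and (ii) to appeal to Proposition~\ref{FactorType} rather than to a mere multiset statement, so that the \emph{ordered} type, i.e.\ the position of the order-$3$ factor and not just the set $\{3111\}$, is preserved along each chain.
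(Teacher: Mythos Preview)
Your proof is correct and follows exactly the approach the paper intends: the text preceding the three propositions on reducible cases says ``Proposition~\ref{FactorType} leads to'', and the explicit factorizations of \S\ref{factorH6} supply the anchor values $s=-2,-1,0,1$, which you then propagate outward via $P_{--+}$ and $P_{++-}$ just as the paper would. Your observation that the factor $s(s+1)(s+2)$ in every S-value forces the four central values to be treated by hand is precisely why \S\ref{factorH6} exists.
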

%\blue{These exhaust all the reducible cases.}
\subsubsection{\blue{Polynomial solutions}}\label{polynomSolH6}
We apply Proposition \ref{polynomSol} to 
  $$H_6=(\theta+s)(\theta+s+1)(\theta+s+2)(\theta+e_7)(\theta+e_8)(\theta+e_9)+(T_1+T_2\partial+T_3\partial^2)\partial.$$

\begin{prp}\label{polynomSol1}If one of $e_j$ $(j=7,8,9)$ and $s$ is a non-positive integer $-m$, then $H_6$ has a polynomial solution of degree $\le m$.
\end{prp}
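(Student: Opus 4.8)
The plan is to deduce this directly from Proposition \ref{polynomSol}, so the only real work is to put $H_6$ into the shape required by that proposition and to check that each of the four hypotheses supplies the needed factor $(\theta-m)$.

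First I would recall the $(\theta,\partial)$-form displayed just above the statement,
$$H_6=(\theta+s)(\theta+s+1)(\theta+s+2)(\theta+e_7)(\theta+e_8)(\theta+e_9)+(T_1+T_2\partial+T_3\partial^2)\partial,$$
and observe that the second summand is already right-divisible by $\partial$, hence is of the form $(\text{a polynomial in }\theta\text{ and }\partial)\,\partial$. Thus it suffices to exhibit the $\partial$-free part $T_0=(\theta+s)(\theta+s+1)(\theta+s+2)(\theta+e_7)(\theta+e_8)(\theta+e_9)$ as a polynomial in $\theta$ times a factor $(\theta-m)$ with $m$ a non-negative integer.

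Next I would treat the four cases uniformly. If $e_j=-m$ for some $j\in\{7,8,9\}$, then the corresponding factor is $\theta+e_j=\theta-m$; if instead $s=-m$, then $\theta+s=\theta-m$. In either case, since all factors of $T_0$ are polynomials in the single symbol $\theta$ and hence commute, I can move $(\theta-m)$ to the right and write $T_0=(\text{polynomial in }\theta)(\theta-m)$. This places $H_6$ exactly in the hypothesis of Proposition \ref{polynomSol} with the given $m\ge 0$, and that proposition yields a right factor $\partial-f'/f$ with $f$ a polynomial of degree $\le m$; since $(\partial-f'/f)f=f'-(f'/f)f=0$, this $f$ is annihilated by $H_6$ and is the desired polynomial solution.

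I do not expect a genuine obstacle here: the statement is essentially a specialization of Proposition \ref{polynomSol}, and its entire content is the elementary factorization observation above. The only points demanding a line of care are that $T_0$ is the full $\partial$-free part of $H_6$ (no other term contributes to the ``polynomial in $\theta$'' summand), which is immediate from the displayed $(\theta,\partial)$-form, and that the several stated conditions ($e_7$, $e_8$, $e_9$, or $s$ equal to $-m$) are genuinely interchangeable, each supplying one linear factor $(\theta-m)$ of $T_0$.
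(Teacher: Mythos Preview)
Your proof is correct and follows precisely the paper's approach: the paper simply states that one applies Proposition \ref{polynomSol} to the displayed $(\theta,\partial)$-form of $H_6$, and your write-up fills in the (straightforward) details of why each hypothesis produces the needed factor $(\theta-m)$ in $T_0$.
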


Moreover, since the symmetry $x\to1/x$ takes ${\bm e}_7\to {\bm e}_1+{ s}$ (see \S 4.2.3), we have
\begin{prp}\label{polynomSol2}If $e_i+s$ $(i=1,2,3)$ is 0 or a negative integer $-m$, then $H_6$ has a solution: a power of $x$ times a polynomial of degree $\le m$. 
\end{prp}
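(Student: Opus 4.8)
The plan is to derive this as a corollary of Proposition \ref{polynomSol1} by invoking the $x\to1/x$ symmetry recorded in \S \ref{cochH6}: that symmetry is exactly what converts the $x=\infty$--block hypothesis of Proposition \ref{polynomSol1} into a condition at $x=0$. First I would recall the coordinate-change identity
$$x^{-s-3}\circ H_6({\bm e}_1,{\bm e}_4,{\bm e}_7,T_{10})|_{x\to1/x}\circ x^s=H_6({\bm e}_7-s{\bm1},{\bm e}_4,{\bm e}_1+s{\bm1},T'_{10}),$$
whose right-hand equation has generic $x=\infty$ exponents exactly ${\bm e}_1+s{\bm 1}=(e_1+s,e_2+s,e_3+s)$. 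I would stress that Proposition \ref{polynomSol1} is valid for $H_6$ with an \emph{arbitrary} accessory parameter, since its proof uses only the factor $(\theta+s)(\theta+s+1)(\theta+s+2)(\theta+e_7)(\theta+e_8)(\theta+e_9)$ of $T_0$, which is independent of $T_{10}$; hence it applies verbatim to the dual operator $\widehat H:=H_6({\bm e}_7-s{\bm1},{\bm e}_4,{\bm e}_1+s{\bm1},T'_{10})$.

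Next, assuming $e_i+s=-m$ is $0$ or a negative integer for some $i\in\{1,2,3\}$, the corresponding generic $\infty$-exponent of $\widehat H$ equals the non-positive integer $-m$. Proposition \ref{polynomSol1} then produces a polynomial solution $g(x)$ of $\widehat H$ with $\deg g\le m$. Substituting $g$ into the displayed identity and using that the left factor $x^{-s-3}$ is invertible, I get $H_6(e,T_{10})|_{x\to1/x}\,(x^s g)=0$, so $x^s g$ solves the $x\to1/x$-transform of $H_6$. Undoing the coordinate change (replacing $x$ by $1/x$) turns this into a genuine solution of $H_6(e,T_{10})$, namely $u(x):=x^{-s}g(1/x)$. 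Writing $Q(x):=x^m g(1/x)$, which is the degree-reversal of $g$ and hence a polynomial of degree $\le m$, I obtain
$$u(x)=x^{-s}g(1/x)=x^{-s-m}Q(x)=x^{e_i}Q(x),\qquad -s-m=e_i,$$
which is precisely a power of $x$ times a polynomial of degree $\le m$, as asserted.

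The argument carries no genuine analytic difficulty; the only step requiring care is the bookkeeping of the three multiplicative factors in the symmetry identity. I would track how composing $x^s$ on the right sends a solution $\phi$ to $x^{-s}\phi$, how multiplying $x^{-s-3}$ on the left leaves the solution space unchanged, and how the substitution $x\mapsto 1/x$ turns the polynomial $g$ into $x^{-m}$ times its reversal $Q$; assembling these correctly is exactly what yields the exponent $x^{e_i}$ and the degree bound $\le m$. In this sense the proposition is simply Proposition \ref{polynomSol1} read through the duality $x\to1/x$.
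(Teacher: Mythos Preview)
Your proof is correct and follows essentially the same route as the paper: both arguments reduce Proposition~\ref{polynomSol2} to Proposition~\ref{polynomSol1} via the $x\to1/x$ symmetry of $H_6$ recorded in \S\ref{cochH6}, which interchanges the ${\bm e}_7$-block at $\infty$ with ${\bm e}_1+s{\bm 1}$. Your careful bookkeeping of how the polynomial solution $g$ of the dual operator pulls back to $x^{e_i}Q(x)$ with $\deg Q\le m$ simply spells out what the paper leaves implicit in its one-line appeal to the symmetry.
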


\newpage
  %%%%% sasaki 2023/02/09 equationG6
\section{Equation $G_6$}\secttoc
In this section, we define the equation $G_6$ with Riemann scheme
$R_6$ by replacing the coefficient $T_{10}$ of the equation $H_6$ by a polynomial in the local exponents $e$. The equation $G_6$ admits shift operators for any block shifts of $e$. 
%given in the previous proposition.
%For this purpose, referring to the set of fundamental shift operators 
%for the equation $E_2$ and will try to extend those operators 
%to the present equation $H_6$ to the effect that the parameter $T_{10}$
%is determined as a polynomial of exponents.
\medskip

We prepare an algebraic lemma for later use.

\begin{lemma}The ring of symmetric polynomials in $x_1,\dots,x_n$ invariant under the shift $sh:(x_1,\dots,x_n)\to(x_1+1,\dots,x_n+1)$ is generated by $1$ and the fundamental symmetric polynomials $t_i$ of degree $i$ $(i=2,\dots,n)$ in
  $$y_k := x_k - y_0\ (k=1,2,\dots,n),$$
where $y_0 := (x_1+x_2+\cdots+x_n)/n.$   $\{t_2,\dots,t_n\}$ are algebraically independent.
\end{lemma}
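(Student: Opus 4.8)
The plan is to change the generating set of the ring $\mathbb{C}[x_1,\ldots,x_n]^{S_n}$ of symmetric polynomials, replacing the elementary symmetric polynomials of the $x_i$ by the pair consisting of $p_1 := x_1+\cdots+x_n = n\,y_0$ together with the elementary symmetric polynomials $t_2,\ldots,t_n$ of the centered variables $y_1,\ldots,y_n$, and then to read off the shift-invariants from this presentation. The whole point is that $sh$ acts diagonally on the new generators: it fixes every $y_k$ (since $y_k\mapsto (x_k+1)-(y_0+1)=y_k$), hence fixes each $t_i$, while it sends $p_1\mapsto p_1+n$ and $y_0\mapsto y_0+1$.

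First I would establish the ring identity $\mathbb{C}[x_1,\ldots,x_n]^{S_n} = \mathbb{C}[p_1,t_2,\ldots,t_n]$. Since $x_i = y_0 + y_i$, the characteristic polynomial factors as
$$\prod_{i=1}^n (T-x_i) = \prod_{i=1}^n\bigl((T-y_0)-y_i\bigr) = \sum_{k=0}^n (-1)^k\,t_k\,(T-y_0)^{n-k},$$
where $t_0 = 1$ and $t_1 = y_1+\cdots+y_n = 0$. Expanding the right-hand side in powers of $T$ and comparing with $\sum_j (-1)^j e_j(x)\,T^{\,n-j}$ exhibits each elementary symmetric polynomial $e_j(x)$ as a polynomial in $y_0 = p_1/n$ and $t_2,\ldots,t_n$, giving $\mathbb{C}[x]^{S_n}\subseteq \mathbb{C}[p_1,t_2,\ldots,t_n]$; the reverse inclusion is clear since $p_1 = e_1$ and each $t_i$ is symmetric in the $x_i$. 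Because $\mathbb{C}[x]^{S_n}$ is a domain of transcendence degree $n$ over $\mathbb{C}$ now seen to be generated by the $n$ elements $p_1,t_2,\ldots,t_n$, those generators are automatically algebraically independent (any generating set of a field extension of transcendence degree $n$ by $n$ elements is a transcendence basis); in particular $t_2,\ldots,t_n$ are algebraically independent, which is the last assertion of the lemma. The vanishing $t_1 = 0$ is exactly why only $t_2,\ldots,t_n$ together with the constant $1$ appear.

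Next I would extract the invariant subring. By the previous paragraph and the algebraic independence, any symmetric polynomial can be written uniquely as $f = \sum_{j\ge 0} c_j\,p_1^{\,j}$ with coefficients $c_j\in\mathbb{C}[t_2,\ldots,t_n]$. Since $sh$ fixes each $t_i$ and sends $p_1\mapsto p_1+n$, the invariance $sh(f)=f$ is equivalent to the one-variable identity $f(p_1+n)=f(p_1)$ over the field $\mathbb{C}(t_2,\ldots,t_n)$. In characteristic zero with $n\neq 0$, such a polynomial must be constant in $p_1$: if $d:=\deg_{p_1} f \ge 1$ with leading coefficient $c_d\neq 0$, then $f(p_1+n)-f(p_1)$ has $p_1$-degree $d-1$ and leading coefficient $d\,n\,c_d\neq 0$, contradicting $f(p_1+n)=f(p_1)$. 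Hence $d=0$, i.e. $f\in\mathbb{C}[t_2,\ldots,t_n]$, so the shift-invariant symmetric polynomials are precisely $\mathbb{C}[t_2,\ldots,t_n]$, generated by $1,t_2,\ldots,t_n$.

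I expect the main obstacle to be the careful justification of the change of generators $\mathbb{C}[x]^{S_n}=\mathbb{C}[p_1,t_2,\ldots,t_n]$ — that is, recovering the $e_j(x)$ polynomially from $p_1$ and the $t_i$ — together with invoking the fact that $n$ generators of a transcendence-degree-$n$ domain are algebraically independent. Both are standard; the generating-function identity above makes the first mechanical, and the subsequent degree-drop computation under $p_1\mapsto p_1+n$, which is where characteristic zero enters, is then routine.
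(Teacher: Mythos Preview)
Your proof is correct and follows the same approach as the paper: change variables to $y_0$ and the centered $y_k$, observe that the shift fixes each $y_k$ while translating $y_0$, and that $S_n$ permutes the $y_k$ while fixing $y_0$. The paper's argument is a two-sentence sketch stating precisely these observations; you have filled in the details (the change of generators via the characteristic polynomial, the algebraic independence via transcendence degree, and the degree-drop argument forcing $p_1$-constancy), but the underlying idea is identical.
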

\begin{proof}  $y_1,\dots,y_n$ are stable by the shift $sh$, and $y_0$ changes to $y_0+1$. On the other hand, permutations of $x_1,\dots,x_n$ correspond those of $y_1,\dots,y_n$; $y_0$ does not change.
\end{proof}

We apply this lemma to the ring of polynomials of the variables as $x_1=e_1,x_2=e_2,x_3=e_3$ when $n=3$:% stated as follows.

\begin{cor}\label{t2t3}The ring of polynomials invariant under the shift $(e_1,e_2,e_3)\to(e_1+1,e_2+1,e_3+1)$ is generated by $t_2$ and $t_3$, where
\[
\begin{array}{ll}
    t_2&=(e_1-e_0)(e_2-e_0)+(e_2-e_0)(e_3-e_0)+(e_3-e_0)(e_1-e_0),\\
    &=(-e_1^2 + e_1e_2 + e_1e_3 - e_2^2 + e_2e_3 - e_3^2)/3\\
    &=s_2-s_1^2/3,\\[2mm]
    t_3&=(e_1 - e_0)(e_2 - e_0)(e_3 - e_0)\\
    &=(2e_1 - e_2 - e_3)(2e_2 - e_1 - e_3)(2e_3 - e_1 - e_2)/27\\
    &=2s_1^3/27 - s_1s_2/3 + s_3,    \\
    e_0&=(e_1+e_2+e_3)/3, \\
    s_1&=e_1+e_2+e_3, \quad s_2=e_1e_2+e_1e_3+e_2e_3, \quad s_3=e_1e_2e_3.
  \end{array}
\]
\end{cor}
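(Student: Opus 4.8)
The plan is to read the statement off directly from the preceding Lemma, specialized to $n=3$, and then to carry out the elementary bookkeeping that puts the two generators into the closed forms claimed. Taking $x_1=e_1$, $x_2=e_2$, $x_3=e_3$, the Lemma asserts that the ring of symmetric polynomials invariant under the diagonal shift $(e_1,e_2,e_3)\to(e_1+1,e_2+1,e_3+1)$ is generated by $1$ together with the elementary symmetric functions of degrees $2$ and $3$ in the centered variables $y_k=e_k-e_0$, where $e_0=(e_1+e_2+e_3)/3$, and that these generators are algebraically independent. One point I would emphasize at the outset is that \emph{polynomials} in the corollary must be read as \emph{symmetric} polynomials, as inherited from the Lemma: dropping symmetry would enlarge the invariant ring to $\mathbb{C}[y_1,y_2]$ (using $y_1+y_2+y_3=0$), which is not generated by $t_2$ and $t_3$ alone.

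The first thing to record is that the degree-$1$ elementary symmetric function vanishes, since $y_1+y_2+y_3=\sum_k e_k-3e_0=0$; this is precisely why the generating set begins at degree $2$, with $t_2$ and $t_3$ and no $t_1$. Hence the two remaining generators may be taken to be $t_2=y_1y_2+y_2y_3+y_3y_1$ and $t_3=y_1y_2y_3$, matching the first lines of the two displayed definitions.

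The only computation left is to re-express $t_2$ and $t_3$ through $s_1=e_1+e_2+e_3$, $s_2=e_1e_2+e_1e_3+e_2e_3$, $s_3=e_1e_2e_3$. For $t_2$ I would use $\sum_k y_k=0$, so that $t_2=-\tfrac12\sum_k y_k^2$, combined with $\sum_k y_k^2=\sum_k e_k^2-3e_0^2=(s_1^2-2s_2)-s_1^2/3$; this yields $t_2=s_2-s_1^2/3$. For $t_3$ I would expand the product directly as $t_3=s_3-e_0s_2+e_0^2s_1-e_0^3$ and substitute $e_0=s_1/3$, giving $t_3=2s_1^3/27-s_1s_2/3+s_3$. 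A convenient independent cross-check of the middle forms is the identity $2e_i-e_j-e_k=3(e_i-e_0)=3y_i$, whence $(2e_1-e_2-e_3)(2e_2-e_1-e_3)(2e_3-e_1-e_2)/27=27\,y_1y_2y_3/27=t_3$, and similarly for $t_2$.

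There is no genuine obstacle here: the mathematical content lives entirely in the Lemma, and the corollary is its $n=3$ specialization. The only places demanding care are the two bits of interpretation and normalization noted above, namely reading the invariance as that of symmetric polynomials and tracking the factor $1/3$ in $e_0$ throughout the substitutions; everything else is routine verification.
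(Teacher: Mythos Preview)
Your proposal is correct and follows exactly the paper's approach: the corollary is stated immediately after the Lemma with the one-line remark ``We apply this lemma to the ring of polynomials of the variables as $x_1=e_1,x_2=e_2,x_3=e_3$ when $n=3$,'' and no further argument is given. Your additional verification of the closed forms for $t_2$ and $t_3$ in terms of $s_1,s_2,s_3$ is routine bookkeeping the paper omits, and your observation that ``polynomials'' must be read as ``symmetric polynomials'' (inherited from the Lemma) is a useful clarification of an ambiguity in the stated corollary.
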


\subsection{\blue{Definition of the equation $G_6(e,a)$}}\label{theoremG5}
For an equation $G(e)$ with local exponents $e$, we denote
by $G({\bm e}_1\to{\bm e}_1-{\bf 1})$ the equation with exponents
${\bm e}_1$ shifted to ${\bm e}_1-{\bf 1}$ and so on.
Now we can state the main theorem of this paper.

\begin{thm}\label{shiftopG6} Let $G_6$ denote an equation $H_6$ 
with the Riemann scheme $R_6$ and with the accessory parameter $T_{10}$ 
 replaced by a polynomial 
in $e_1,\dots,e_9$. We assume that it admits shift operators relative 
to the shifts of blocks ${\bm e}_i\to{\bm e}_i\pm{\bf 1}\ (i=1, 4, 7)$. 
Namely, for $i=1$, assume that the equation
  \[G_6({\bm e}_1\to {\bm e}_1+{\bf 1})\circ P= Q\circ G_6\]
admits a non-zero solution $(P,Q)$ and similarly for other cases. Then
the term $T_{10}$ is written as
\[T_{10}=S_{10}+R,\]
  where
\[
\begin{array}{l}
  S_{10}:=(-5-s_{21}+s_{22}-5s_{23}+s_{31}-s_{32}-3s_{33})/2 \\[2mm]
\quad\qquad +(s_{11}-7s_{13}+s_{11}s_{13}+s_{11}s_{23}-s_{13}s_{21}+s_{13}s_{22})/3\\[2mm]
\quad\qquad  +(s_{11}^2-s_{12}^2+s_{13}^2-s_{11}s_{21}+s_{12}s_{22}+s_{13}s_{23})/6\\[2mm]
\quad\qquad  +(s_{11}^2-s_{12}^2)s_{13}/9+(s_{11}^3-s_{12}^3)/27,
  \end{array}
\]
and $R$ is any element of the $\mathbb{C}$-algebra generated by
\[t_{2i}:=s_{2i}-s_{1i}^2/3 {\rm\quad and\quad } 
t_{3i}:=2s_{1i}^3/27 - s_{1i}s_{2i}/3 + s_{3i},\quad i=1,2,3.
\]
\end{thm}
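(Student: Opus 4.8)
The plan is to convert the existence of shift operators for $G_6$ into a system of difference equations for the polynomial $T_{10}(e)$, to solve that system, and to read off the stated form. First I would use Theorem~\ref{shopH6}: for each generating block shift $sh_j$ ($j=1,2,3$) there is a triple $(P_j,Q_j,\alpha_j)$ with $H_6(sh_j(e),u-\alpha_j)\circ P_j=Q_j\circ H_6(e,u)$ for the free accessory parameter $u$. This $P_j$ exists unconditionally, so specializing $u=T_{10}(e)$ it is a shift operator sending ${\rm Sol}(G_6(e))$ isomorphically onto ${\rm Sol}(H_6(sh_j(e),\,T_{10}(e)-\alpha_j(e)))$; in particular the latter equation is monodromy-equivalent to $G_6(e)$. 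The hypothesis of the theorem supplies a second shift operator ${\rm Sol}(G_6(e))\to{\rm Sol}(G_6(sh_j(e)))={\rm Sol}(H_6(sh_j(e),T_{10}(sh_j(e))))$, so $G_6(sh_j(e))$ is also monodromy-equivalent to $G_6(e)$. Both target equations carry the Riemann scheme $R_6(sh_j(e))$, are irreducible for generic $e$ (Proposition~\ref{generic_is_irred}), and share the same monodromy; since for generic exponents the accessory parameter is determined by the monodromy, their accessory parameters agree. This forces, for $j=1,2,3$, the difference equation $T_{10}(sh_j(e))=T_{10}(e)-\alpha_j(e)$.

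Next I would solve this inhomogeneous system. I would present the explicit $S_{10}$ of the statement as a particular solution and verify by direct polynomial computation the three identities $S_{10}(e)-S_{10}(sh_j(e))=\alpha_j(e)$, where $\alpha_1=s_{13}+s_{23}+1$, $\alpha_2=0$, and $\alpha_3$ is the cubic recorded in Theorem~\ref{shopH6}. Granting this, $R:=T_{10}-S_{10}$ satisfies the homogeneous system $R(sh_j(e))=R(e)$ for $j=1,2,3$. Because $sh_1,sh_2,sh_3$ generate the whole group of block shifts (as noted, $sh_1^{-1}\circ sh_2^{-1}\circ sh_3$ shifts ${\bm e}_7$), $R$ is invariant under each of the three diagonal block shifts $\sigma_i:{\bm e}_i\to{\bm e}_i+{\bf 1}$.

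It then remains to identify this invariant ring. Since every coefficient of $H_6$ in its canonical $(\theta,\partial)$-form, including the accessory parameter, is a symmetric function of the exponents within each of the three blocks (Proposition~\ref{eqwithR6}), the admissible $T_{10}$, and hence $R$, is symmetric within each block; this symmetry is exactly what excludes spurious shift-invariants such as $e_1-e_2$. I would then apply Corollary~\ref{t2t3} block by block: treating the block-$2$ and block-$3$ symmetric functions as a ground ring, a block-$1$-symmetric polynomial invariant under $\sigma_1$ lies in the ring generated by that ground ring together with $t_{21}$ and $t_{31}$; iterating the argument over blocks $2$ and $3$ shows $R\in\mathbb{C}[t_{2i},t_{3i}:i=1,2,3]$, the ring generated by $1$ and the $t_{2i},t_{3i}$. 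Combining with the particular solution gives $T_{10}=S_{10}+R$, as claimed.

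The main obstacle is twofold. Computationally, the verification that the displayed $S_{10}$ really is a particular solution of all three difference equations is the heaviest step, and it is the only place where the explicit values of the $\alpha_j$ are used. Conceptually, the delicate point is the reduction in the first paragraph: one must be certain that a shift operator cannot exist unless the two accessory parameters coincide, which rests on the irreducibility of $G_6$ for generic $e$ and on the accessory parameter being pinned down by the monodromy. Together with the block-symmetry of $T_{10}$, these are the facts that turn the problem into the clean invariant-theoretic description of $R$.
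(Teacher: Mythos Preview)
Your approach is essentially the paper's: reduce the hypothesis to the three difference equations $T_{10}(sh_j(e))-T_{10}(e)=\text{const}_j$, exhibit $S_{10}$ as a particular solution, and identify $R=T_{10}-S_{10}$ as a block-shift-invariant, block-symmetric polynomial via Corollary~\ref{t2t3}. The paper's own proof is terser---it simply says ``thanks to Theorem~\ref{shopH6}, we have only to solve the system''---whereas you spell out the reduction through monodromy equivalence and uniqueness of the accessory parameter. Your explicit remark that block symmetry of $T_{10}$ (coming from Proposition~\ref{eqwithR6}) is needed to exclude invariants like $e_1-e_2$, and your iterated application of Corollary~\ref{t2t3} block by block, make the invariant-theoretic step cleaner than the paper's one-line citation. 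The only point not strictly contained in the paper is your appeal to ``the accessory parameter is determined by the monodromy for generic exponents''; this is the natural justification, but the paper does not state or prove it and instead treats the reduction as immediate from Theorem~\ref{shopH6}.
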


%We could not decide whether there exist shift operators for other shifts, such as $e_2\to e_2+3$.

\begin{cor}\label{t2t3bis}
When $T_{10}$ is a polynomial in $e_1,\dots,e_9$ of degree 3, then
\[T_{10}=S_{10}+R,\quad R=a_0+a_1t_{21}+a_2t_{22}+a_3t_{23}+a_4t_{31}+a_5t_{32}+a_6t_{33},\]
where $a_0,\dots,a_6$ are free constants.
\end{cor}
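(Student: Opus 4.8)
The plan is to read the corollary off as a graded refinement of Theorem \ref{shiftopG6}. That theorem already guarantees that any admissible $T_{10}$ has the shape $T_{10}=S_{10}+R$, where $R$ lies in the ring generated by $1$ and the six polynomials $t_{2i},t_{3i}$ $(i=1,2,3)$. So the whole task is to determine which elements $R$ of this ring survive once we impose that $T_{10}$ — and hence $R=T_{10}-S_{10}$ — be a polynomial of degree at most $3$ in $e_1,\dots,e_9$.

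First I would record two degree facts. Inspecting the explicit expression of $S_{10}$ shows that its top-degree part is cubic and nothing of higher degree occurs, so $\deg S_{10}=3$; consequently, if $T_{10}$ has degree $\le 3$ then $R=T_{10}-S_{10}$ has degree $\le 3$ as well. Next, from the definitions $t_{2i}=s_{2i}-s_{1i}^2/3$ and $t_{3i}=2s_{1i}^3/27-s_{1i}s_{2i}/3+s_{3i}$, each generator combines terms of a single total degree, so $t_{2i}$ is homogeneous of degree $2$ and $t_{3i}$ is homogeneous of degree $3$ in the $e_j$.

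The key step is an algebraic-independence and grading argument. By the algebraic lemma and Corollary \ref{t2t3}, applied to each block ${\bm e}_1,{\bm e}_4,{\bm e}_7$ separately, the pair $t_{2i},t_{3i}$ is algebraically independent within block $i$; since the three blocks involve disjoint variables, the full collection $t_{21},t_{22},t_{23},t_{31},t_{32},t_{33}$ is jointly algebraically independent, and the ring in which $R$ lives is the polynomial ring $\mathbb{C}[t_{21},t_{22},t_{23},t_{31},t_{32},t_{33}]$. I would then expand $R$ uniquely as a $\mathbb{C}$-linear combination of monomials in these six generators. Because each generator is homogeneous (of degree $2$ or $3$), every such monomial is homogeneous in the $e_j$, of degree equal to the sum of the degrees of its factors. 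The monomials of degree $\le 3$ are exactly $1$ (degree $0$), the three $t_{2i}$ (degree $2$), and the three $t_{3i}$ (degree $3$); there is no degree-$1$ monomial, and every product of two or more generators has degree $\ge 4$. Since $R$ has degree $\le 3$, its homogeneous components of degree $\ge 4$ vanish, and by algebraic independence the coefficients of all such monomials are zero, forcing
\[R=a_0+a_1t_{21}+a_2t_{22}+a_3t_{23}+a_4t_{31}+a_5t_{32}+a_6t_{33}.\]
Finally I would observe that the constants are genuinely free: algebraic independence means distinct tuples $(a_0,\dots,a_6)$ give distinct $R$, and conversely each such $R$ produces a degree-$\le 3$ candidate $T_{10}=S_{10}+R$ covered by Theorem \ref{shiftopG6}.

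I expect the main (and essentially only) obstacle to be the bookkeeping that secures joint algebraic independence of the six block generators together with the homogeneity of each $t_{\ast i}$; once these are in place the degree count is immediate. In this sense the corollary is a direct specialization of the theorem rather than an independent result, with all the substantive work already carried out in establishing $T_{10}=S_{10}+R$.
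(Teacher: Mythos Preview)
Your proof is correct and is exactly the intended argument: the paper states this as an immediate corollary of Theorem \ref{shiftopG6} without a separate proof, and the reasoning you supply — that $S_{10}$ is cubic, that each $t_{2i}$ (resp.\ $t_{3i}$) is homogeneous of degree $2$ (resp.\ $3$), and that by algebraic independence any nonlinear monomial in the $t_{\ast i}$ has degree $\ge 4$ — is precisely the degree bookkeeping the paper leaves implicit.
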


\begin{dfn}The operator $H_6$ with the cubic polynomial $T_{10}$ as above 
in the corollary will be denoted as $G_6(e,a)$. \end{dfn}

\subsection{Proof of Theorem \ref{shiftopG6}}% \ref{shiftopG6}}
Thanks to Theorem \ref{shopH6}, we have only to solve the system for $T_{10}(e)$:

\[
\begin{array}{ll}
  T_{10}(sh_1)-T_{10}&=s_{13}+s_{23}+1,\\[2mm]
T_{10}(sh_2)-T_{10}&=0,\\[2mm]
T_{10}(sh_3)-T_{10}&=20-s_{11}^2/3-2s_{11}s_{13}/3+s_{12}^2/3-s_{13}^2/3\\
&-2s_{11}+7s_{13}+s_{21}-s_{22}+2s_{23}.
\end{array}
\]
One can check that the polynomial $S_{10}$ solves these system 
of three identities. The second identity, for example, 
says that $T_{10}$ is a polynomial of $t_{22}$ and $t_{32}$ 
with coefficients independent of $\{e_4,e_5,e_6\}$. 
Now, the difference $R=T_{10}-S_{10}$ is a polynomial invariant 
under $sh_1$, $sh_2$ and $sh_3$; therefore, we have the theorem
in view of Corollary \ref{t2t3}.

\subsection{Inverse shift operators and S-values of $G_6$}
The shift operators
$$\begin{array}{ll}P_{+00}&=x^3(x-1)^5\partial^5+\cdots,\\
P_{0+0}&=x^5(x-1)^3\partial^5+\cdots,\\
P_{++-}&=x^3(x-1)^3\partial^5+\cdots\end{array}$$
for the equation $G(e,a)$ depends linearly on the parameters $a_0,\dots, a_6$ as follows:\footnote{they are listed in G6PQ.txt in FDEdata mentioned in the end of Introduction.}

\[\begin{array}{ll}
   P_{+00}&=\overline{P}_{+00}+R(x-1)^3\bigl(x\partial^2(s+1)\partial\bigr),\\[1mm]
   P_{0+0}&=\overline{P}_{0+0}+Rx^3\bigl((x-1)\partial^2(s+1)\partial\bigr),\\[1mm]
   P_{++-}&=(H_6-p_0)/\partial\\[1mm]
   &=\overline{P}_{++-}+R\bigl(x(x-1)\partial^2(s+1)(2x-1)\partial+s(s+1)\bigr),\end{array}\]
where $R=a_0+t_{21}a_1+\cdots+t_{33}a_6$,
and $\overline{P}_{+00}$, $\overline{P}_{0+0}$ and $\overline{P}_{++-}$ 
are operators excluding the terms with $a_0, \dots, a_6$. 

The S-values do not depend on the parameter $a$'s, and are exactly the same to those for $H_6$ given in Proposition \ref{SvalueE6}.
\subsection{Adjoint and the coordinate changes $x\to1-x$ and $x\to1/x$}
The operator $G(e,a)$ is symmetric under adjoint and the coordinate changes interchanging $\{0,1,\infty\}$:
\begin{thm}\label{symmetriesG6}\noindent
  \begin{itemize}[leftmargin=*] \setlength{\itemsep}{2pt}
\item Adjoint symmetry: The adjoint of $G_6(e,a)$ is equal to
   $$G_6({\bm 2}-{\bm e}_1,{\bm 2}-{\bm e}_4,{\bm 1}-{\bm e}_7,%r=1-r,
  -a_0,-a_1,-a_2,-a_3,a_4,a_5,a_6).$$
\item $(x\to1-x)$-symmetry: $$G_6({\bm e},a)|_{x\to1-x}=G_6({\bm e}_4,{\bm e}_1,{\bm e}_7,-a_0,-a_2,-a_1,-a_3,-a_5,-a_4,-a_6),$$

\item $(x\to1/x)$-symmetry: $$x^{r-3}G_6({\bm e},a)|_{x\to1/x}\circ x^{-r}=G_6({\bm e}_7-s{\bm1},{\bm e}_4,{\bm e}_1+s{\bm1},-a_0,-a_3,-a_2,-a_1,-a_6,-a_5,-a_4),$$
%  where $r$ is a new parameter defined as $$ r=-s=(e_1+\cdots+e_9-6)/2,$$

%\item Differentiation symmetry: $$\partial E_6({\bm e})=E_6({\bm e}_1-{\bm 1},{\bm e}_4-{\bm 1},{\bm e}_7+{\bm 1})\partial,$$
  where $G_6|_{x\to1-x}$ and $G_6|_{x\to1/x}$ are $G_6$ after the coordinate changes $x\to1-x$ and $x\to1/x$, respectively.
\end{itemize}
 \end{thm}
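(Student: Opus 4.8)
The plan is to reduce the three symmetries of $G_6(e,a)$ to the corresponding symmetries of $H_6$ already recorded in \S\ref{cochH6}, using only the defining relation $G_6(e,a)=H_6(e,f(e,a))$ with $f(e,a)=S_{10}(e)+a_0+a_1t_{21}+\cdots+a_6t_{33}$. Each symmetry in \S\ref{cochH6} is an operator identity sending $H_6(e,T_{10})$ to $H_6(\sigma(e),\,\varepsilon\,T_{10}+c(e))$, where $\sigma$ is the relevant reflection/permutation of the blocks, $c(e)$ is the explicit cubic polynomial listed there, and the sign $\varepsilon\in\{+1,-1\}$ is read off from the ``$\pm T_{10}$'' appearing in the respective item ($\varepsilon=-1$ for the adjoint and for $x\to1-x$). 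Substituting $T_{10}=f(e,a)$ into such an operator identity immediately yields an operator identity for $G_6$, and it takes the asserted form $G_6(\sigma(e),\tilde a)=H_6(\sigma(e),f(\sigma(e),\tilde a))$ if and only if the single scalar relation
\[\varepsilon\,f(e,a)+c(e)=f(\sigma(e),\tilde a)\]
holds identically in $e$ and $a$. Thus the entire theorem reduces to bookkeeping on the accessory parameter; no new operator computation is needed beyond \S\ref{cochH6}.

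Next I would read off the transformed parameters $\tilde a$. By Corollary \ref{t2t3} the quantities $1,t_{2i},t_{3i}$ are algebraically independent, so the relation above splits into its coefficients along these independent monomials, and the linear-in-$a$ part is governed entirely by how the block invariants transform. Since $t_{2i}=s_{2i}-s_{1i}^2/3$ depends only on the mutual differences within a block, it is preserved under each reflection ${\bm e}_i\to {\bm n}_i-{\bm e}_i$ and under a uniform shift, and is merely relabelled when $\sigma$ permutes the three blocks; whereas $t_{3i}$, the product of the three centred exponents, is likewise invariant under shifts and permutations but picks up a sign $(-1)$ under each reflection. Feeding in the specific $\sigma$ of each case---reflection of all three blocks for the adjoint, the transposition ${\bm e}_1\leftrightarrow{\bm e}_4$ for $x\to1-x$, and the permutation with uniform shifts ${\bm e}_1\to{\bm e}_7-s{\bm 1}$, ${\bm e}_7\to{\bm e}_1+s{\bm 1}$ for $x\to1/x$---and matching the coefficient of each $t_{2i}$ and $t_{3i}$ then determines $\tilde a$ uniquely, reproducing the permutations and signs stated in the three items; in particular the coefficient of the constant $1$ forces $\tilde a_0=\varepsilon a_0$.

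What then remains is the purely scalar, $a$-free identity
\[\varepsilon\,S_{10}(e)+c(e)=S_{10}(\sigma(e))\]
in the nine variables $e_1,\dots,e_9$ (with $s$ substituted from the Fuchs relation), one for each of the three transformations. I expect this to be the main obstacle: $S_{10}$ and the constants $c(e)$ are cubic in the $e_i$, so the verification, though conceptually routine, is a lengthy symmetric-function computation best confirmed with Maple, as elsewhere in the paper. I would carry it out through the power sums $s_{1i},s_{2i},s_{3i}$ of \eqref{e19tos} to keep expressions compact, and I would guard against sign or indexing slips by using two natural consistency tests before trusting the final identities: applying the adjoint, or the $x\to1-x$ map, twice must return the identity on both $e$ and $a$, and the coordinate changes together must realise the expected $S_3$-action on $\{0,1,\infty\}$ with consistently composing parameter transformations.
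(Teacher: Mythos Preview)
Your approach is correct and essentially identical to the paper's: the paper first verifies the $a=0$ case by direct computation (your scalar identity $\varepsilon\,S_{10}(e)+c(e)=S_{10}(\sigma(e))$), then handles general $a$ by recording how the block invariants $t_{2i},t_{3i}$ transform under each $\sigma$, exactly as you describe. The only difference is packaging---you start from the $H_6$-level identities of \S\ref{cochH6} and specialise $T_{10}=f(e,a)$, whereas the paper states the $E_6$ case as a standalone computation---but the content is the same.
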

%If $G_6(e,a)$ enjoys all the symmetries, adjoint and the coordinate changes $x\to1-x$ and $x\to1/x$, then $a_0=\cdots=a_6=0$.

%\subsubsection{Proof of Theorem \ref{symmetriesG6}}
When $T_{10}=S_{10}$, that is, $a_0=\cdots=a_6=0$, a straightforward computation (use $(\theta,\partial)$-form for the adjoint and the coordinate change $x\to1/x$, and $(x,\partial)$-form for $x\to1-x$) leads to the result. 
\comment{In general we have only to notice that for ${\bm e}_{{\rm adj}}=({\bm 2}-{\bm e}_1,{\bm 2}-{\bm e}_4,{\bm 1}-{\bm e}_7),$
$$t_{2j}({\bm e}_{{\rm adj}})=t_{2j}({\bm e}),\ j=1,2,3,\qquad t_{3j}({\bm e}_{{\rm adj}})=-t_{3j}({\bm e}),\ j=4,5,6,$$
for ${\bm e}_{ch01}=({\bm e}_4,{\bm e}_1,{\bm e}_7$),
$$t_{i1}({\bm e}_{ch01})=t_{i2}({\bm e}),\ t_{i2}({\bm e}_{ch01})=t_{i1}({\bm e}),\ t_{i3}({\bm e}_{ch01})=t_{i3}({\bm e}),\ \ i=2,3,$$
for ${\bm e}_{ch0\infty}=({\bm e}_7-s{\bm1},{\bm e}_4,{\bm e}_1+s{\bm1})$,
$$t_{i1}({\bm e}_{ch0\infty})=t_{i3}({\bm e}),\ t_{i2}({\bm e}_{ch0\infty})=t_{i2}({\bm e}),\ t_{i3}({\bm e}_{ch0\infty})=t_{i1}({\bm e}),\ \ i=2,3.$$}

\section{Equation $E_6:=G_6(e,0)$}\secttoc
\begin{dfn}When $a_0=\cdots=a_6=0$, $G_6(e,a)$ is called $E_6(e)$.\end{dfn}
The equation $E_6(e)$ is very symmetric:% in the sense that the following properties hold.
\begin{thm}\label{symmetriesE6}\noindent
  \begin{itemize}[leftmargin=*] \setlength{\itemsep}{2pt}
  \item Shift relations:
  $$  E_6({\bm e}_1\pm{\bm 1},{\bm e}_4,{\bm e}_7)\circ P_{\pm00}=Q_{\pm00}\circ E_6({\bm e}),\qquad  E_6({\bm e}_1,{\bm e}_4\pm{\bm 1},{\bm e}_7)\circ P_{0\pm0}=Q_{0\pm0}\circ E_6({\bm e}),$$
$$  E_6({\bm e}_1\pm{\bm 1},{\bm e}_4\pm{\bm 1},{\bm e}_7\mp{\bm 1})\circ P_{\pm\pm\mp}=Q_{\pm\pm\mp}\circ E_6({\bm e}).$$
\item Differentiation symmetry: $$\partial E_6({\bm e})=E_6({\bm e}_1-{\bm 1},{\bm e}_4-{\bm 1},{\bm e}_7+{\bm 1})\partial,$$
\item Adjoint symmetry: The adjoint of $E_6(e)$ is equal to
   $$E_6({\bm 2}-{\bm e}_1,{\bm 2}-{\bm e}_4,{\bm 1}-{\bm e}_7).$$
\item $(x\to1-x)$-symmetry: $$E_6({\bm e})|_{x\to1-x}=E_6({\bm e}_4,{\bm e}_1,{\bm e}_7),$$
\item $(x\to1/x)$-symmetry: $$x^{-s-3}E_6({\bm e})|_{x\to1/x}\circ x^{s}=E_6({\bm e}_7-s{\bm1},{\bm e}_4,{\bm e}_1+s{\bm1}),$$
  where $E_6|_{x\to1-x}$ and $H_6|_{x\to1/x}$ are $H_6$ after the coordinate changes $x\to1-x$ and $x\to1/x$, respectively.
\end{itemize}
\end{thm}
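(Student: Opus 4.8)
The plan is to derive every item of the theorem by specializing the corresponding statement about $G_6(e,a)$ to the single point $a=0$. The essential observation is that $a=0$ is a fixed point of all the parameter substitutions appearing in Theorem \ref{symmetriesG6}: each of them sends $(a_0,\dots,a_6)$ to a vector obtained by sign changes and permutations of the coordinates, and both operations fix the origin. Consequently the symmetries of $G_6$, which for general $a$ move the base point within the seven-parameter family, collapse into genuine self-symmetries of $E_6=G_6(e,0)$.

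First I would dispose of the shift relations. By construction (Theorem \ref{shiftopG6} together with the explicit operators of Theorem \ref{shopH6}), the polynomial $T_{10}=S_{10}+R$ is chosen precisely so that for each block shift $sh_j$ the shift operator carries $G_6(e,a)$ to $G_6(sh_j(e),a)$ with the \emph{same} $a$, i.e.\ $G_6(sh_j(e),a)\circ P_j=Q_j\circ G_6(e,a)$. Setting $a=0$ yields the three displayed shift relations for $E_6$ simultaneously. The differentiation symmetry is then not a separate computation but the special case of $sh_3:e\to(\bm{e}_1-\bm{1},\bm{e}_4-\bm{1},\bm{e}_7+\bm{1})$, for which Theorem \ref{shopH6} records $P_{--+}=Q_{--+}=\partial$; the relation $E_6(\bm{e}_1-\bm{1},\bm{e}_4-\bm{1},\bm{e}_7+\bm{1})\circ\partial=\partial\circ E_6(e)$ rearranges to the stated identity $\partial E_6(\bm{e})=E_6(\bm{e}_1-\bm{1},\bm{e}_4-\bm{1},\bm{e}_7+\bm{1})\partial$.

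Next I would read off the remaining three symmetries from Theorem \ref{symmetriesG6}. For the adjoint, that result gives $G_6(e,a)^*=G_6(\bm{2}-\bm{e}_1,\bm{2}-\bm{e}_4,\bm{1}-\bm{e}_7,-a_0,-a_1,-a_2,-a_3,a_4,a_5,a_6)$; at $a=0$ the parameter block is again $0$, yielding $E_6(e)^*=E_6(\bm{2}-\bm{e}_1,\bm{2}-\bm{e}_4,\bm{1}-\bm{e}_7)$. The same specialization applied to the $x\to1-x$ and $x\to1/x$ formulas of Theorem \ref{symmetriesG6} produces the two coordinate-change symmetries of $E_6$, taking $r=-s$ in the $x\to1/x$ case so that the conjugating powers match $x^{-s-3}\,\cdot\,x^{s}$.

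There is no genuinely hard step remaining, since all the analytic content already resides in Theorems \ref{shiftopG6}, \ref{shopH6}, and \ref{symmetriesG6}; the present statement is essentially their corollary. The only point requiring care is the bookkeeping that $a=0$ is simultaneously fixed by all the induced parameter maps, and in particular confirming that the negations and permutations in the adjoint, in $x\to1-x$, and in $x\to1/x$ are mutually consistent at the origin. This is routine once the fixed-point observation is in hand, and it also explains conceptually why $E_6$ is the most symmetric member of the family: $a=0$ is the unique common fixed point of the three involutions on the $a$-space.
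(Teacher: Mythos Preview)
Your proposal is correct and matches the paper's approach: the paper states Theorem~\ref{symmetriesE6} immediately after defining $E_6:=G_6(e,0)$ without a separate proof, treating it as an immediate specialization of Theorems~\ref{shiftopG6}, \ref{shopH6}, and~\ref{symmetriesG6} at $a=0$. Your explicit observation that $a=0$ is a common fixed point of all the induced $a$-substitutions, and your identification of the differentiation symmetry as the $sh_3$ shift relation with $P_{--+}=Q_{--+}=\partial$, simply make precise what the paper leaves implicit.
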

Since we have adjoint symmetry as in the theorem, Proposition \ref{RelationPQ} is applicable to know the second members of shift operators $(P,Q)$.  %This equation $E_6$ has several nice properties to follow.

\subsection{Interpolative expression of $E_6$ using $V$}\label{E6RedInter}
Let $V:=\partial^3\backslash E_6(e_9=3-e_1-\cdots-e_8)$, that is, $E_6(e_9=3-e_1-\cdots-e_8)=\partial^3\circ V$, as in \S \ref{fromG6toG3}. Put
$$V_1=V,\ V_0=V(e'),\ V_{-1}=V_0(e'),\ V_{-2}=V_{-1}(e'),\  $$
where $e'=(e_1-1,\dots,e_6-1,e_7+1,e_8+1),$ and 
$$U:=\frac{(s-1)s(s+1)(s+2)}6\left\{
\frac{\partial^3 \circ V_1}{s-1}
-3\frac{\partial^2 \circ V_0 \circ \partial}{s}
+3\frac{\partial \circ V_{-1} \circ \partial^2}{s+1}
-\frac{V_{-2} \circ \partial^3}{s+2}
\right\},$$
where $s=2-(e_1+\cdots+e_8+e_9)/3.$ 
%$$T':=E_6(e_1-s-2,\dots,e_6-s-2,e_7+s+2,e_8+s+2,12-e_1-\dots-e_8+s+2),$$
%a slight modification of $E_6$.
Then, by a straightforward computation, we have an interpolative expression of $E_6$ by use of $V$:

  \begin{prp}
    $$E_6-U=-3(s-1)s(s+1)(s+2)\left\{\left(x^2 - x + \frac13\right)\partial^2 + \left(x - \frac12\right)(e_7 + e_8 + 1)\partial + e_7e_8\right\}.$$
    \end{prp}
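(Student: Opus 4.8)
The plan is to read $U$ as a Lagrange interpolation polynomial in the scalar parameter $s$. The four scalar coefficients multiplying $\partial^3V_1$, $\partial^2V_0\partial$, $\partial V_{-1}\partial^2$ and $V_{-2}\partial^3$ in the definition of $U$ are exactly the Lagrange basis polynomials
\[
L_1(s)=\frac{s(s+1)(s+2)}{6},\quad
L_0(s)=-\frac{(s-1)(s+1)(s+2)}{2},\quad
L_{-1}(s)=\frac{(s-1)s(s+2)}{2},\quad
L_{-2}(s)=-\frac{(s-1)s(s+1)}{6}
\]
attached to the nodes $s=1,0,-1,-2$. Since $V_1,V_0,V_{-1},V_{-2}$ are built from $V$ evaluated at shifts of $(e_1,\dots,e_8)$ and do not involve $s$, the operator $U$ is a polynomial in $s$ of degree at most $3$. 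I would therefore fix $e_1,\dots,e_8$ and regard $e_9=6-3s-e_1-\cdots-e_8$ (equivalently $s$) as the running variable; inspecting the $(\theta,\partial)$-form of $E_6$ then shows that $E_6$ is a polynomial in $s$ of degree at most $4$, the top degree arising from the factor $(\theta+e_9)$ inside $B_0$.

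Next I would establish the four node identities
\[
E_6\big|_{s=1}=\partial^3V_1,\qquad
E_6\big|_{s=0}=\partial^2V_0\partial,\qquad
E_6\big|_{s=-1}=\partial V_{-1}\partial^2,\qquad
E_6\big|_{s=-2}=V_{-2}\partial^3 .
\]
The first is the definition of $V=V_1$ (\S\ref{fromG6toG3}), and \S\ref{factorH6} already supplies the factorizations $E_6|_{s=0}=\partial^2\circ X_3\circ\partial$, $E_6|_{s=-1}=\partial\circ X_3'\circ\partial^2$, $E_6|_{s=-2}=X_3''\circ\partial^3$. What remains is to identify the middle factors with the shifted copies of $V$, and here the differentiation symmetry $\partial\,E_6(e)=E_6(e')\partial$ of Theorem \ref{symmetriesE6} (with $e'=sh_3(e)$, which raises $s$ by $1$) does the work. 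Composing it with the factorization at $s=0$ gives $\partial^3X_3\partial=\partial\,E_6|_{s=0}=E_6(e')|_{s=1}\partial=\partial^3V(e')\partial=\partial^3V_0\partial$, and cancelling $\partial^3$ on the left and $\partial$ on the right in the domain $D$ yields $X_3=V_0$. Iterating the same step at $s=-1$ and $s=-2$ gives $X_3'=V_{-1}$ and $X_3''=V_{-2}$.

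With the node identities in hand, $E_6-U$ is a polynomial in $s$ (coefficients in $D$, with $s$ central) vanishing at $s=1,0,-1,-2$; the remainder theorem applied four times shows it is divisible by $(s-1)s(s+1)(s+2)$, and since $\deg_s(E_6-U)\le4$ while $\deg_s U\le3$, the quotient is a single $s$-independent operator $C=[s^4]E_6$. Thus $E_6-U=(s-1)s(s+1)(s+2)\,C$, and the proposition reduces to the identity
\[
C=-3\bigl\{(x^2-x+\tfrac13)\partial^2+(x-\tfrac12)(e_7+e_8+1)\partial+e_7e_8\bigr\}.
\]

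The hard part will be this last step: extracting the top $s$-coefficient of $E_6$. I expect the leading term of $T_0$ to give $[s^4]T_0=-3(\theta+e_7)(\theta+e_8)=-3\bigl(x^2\partial^2+(e_7+e_8+1)x\partial+e_7e_8\bigr)$, which already fixes the overall factor $-3$ and matches the $\partial^0$-term; but the $\partial^2$- and $\partial$-coefficients of the stated answer ($x^2-x+\tfrac13$ and $(x-\tfrac12)(e_7+e_8+1)$, rather than $x^2$ and $(e_7+e_8+1)x$) force genuine $s^4$-contributions from $T_1\partial$ and $T_2\partial^2$ as well, whose coefficients $B_1,B_2$ carry $s$-dependence through $s_{13},s_{23},s_{33}$ and hence through $e_9=-3s+\text{const}$. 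Collecting these is the only laborious point; it is a finite, mechanical computation once $e_9=6-3s-e_1-\cdots-e_8$ is substituted into $B_0,B_1,B_2$ and the $s^4$ terms are gathered and converted to $(x,\partial)$-form.
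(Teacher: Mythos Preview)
Your proof is correct and is a more structured argument than the paper's, which disposes of the proposition with the single phrase ``by a straightforward computation''. Your Lagrange-interpolation reading of $U$ explains precisely why $E_6-U$ is divisible by $(s-1)s(s+1)(s+2)$ and reduces everything to extracting the $s^4$-coefficient of $E_6$, which you correctly identify as the only laborious (and purely mechanical) step. One small simplification: you can bypass the appeal to \S\ref{factorH6} entirely, since iterating the differentiation symmetry $\partial\,E_6(e)=E_6(e')\,\partial$ of Theorem~\ref{symmetriesE6} and cancelling powers of $\partial$ on the left in the domain $D$ already yields the node identities $E_6|_{s=1-k}=\partial^{3-k}\circ V_{1-k}\circ\partial^{k}$ for $k=0,1,2,3$ directly.
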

This expression makes the decomposition of $E_6$
described in Proposition \ref{prp1113} clear.

 \subsection{Explicit expression of the decomposition [1113] when $s=2,3,\dots$}\label{s23}
 By Proposition \ref{prp1113}, when $s=1,2,3,\dots$,
 the equation $H_6$ is reducible of type  $[1113]$.
 In this section, for $E_6$, we find explicit expression of the factors of decomposition [1113],  when $s=2$, $3$, $\dots$.
Recall (\S \ref{fromG6toG3byFactor}) $E_6(s=1)=\partial^3\circ V$, where
$$V=x^3B_0(\theta)+x^2B_1(\theta+1)+\cdots,\quad B_0(\theta)=(\theta+e_7)(\theta+e_8)(\theta+e_9).$$
 
 Assume $e_7$, $e_8$, $e_9\notin\mathbb{Z}$,
 that is, $B_0(\theta=k)\not=0$ $(k\in\mathbb{Z})$.
 Recall the shift relation $E_6(e-u)\circ \partial=\partial\circ E_6(e)$,
 in particular
$$E_6(s=n+1)\circ\partial=\partial\circ E_6(s=n),$$
and set 
$$E^{(n)}=E_6(s=n+1),\quad n=0,1,\dots$$
They satisfy
$$E^{(0)}:=\partial^3\circ V,\quad E^{(n)}\circ \partial^n
=\partial^n\circ E^{(0)},\quad {\it i.e.}, \quad
E^{(n)}:=(\partial^n\circ E^{(0)})/ \partial^{n}.$$

\begin{lemma}\label{lemma:E(1)}
$E^{(n)}(1)$ is a non-zero constant.
\end{lemma}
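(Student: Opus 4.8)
The plan is to read $E^{(n)}(1)$ as the operator $E^{(n)}$ applied to the constant function $1$, and to evaluate it via the $(\theta,\partial)$-form. The elementary observation driving everything is that for any operator in $(\theta,\partial)$-form $E=T_0(\theta)+T_1(\theta)\partial+\cdots+T_r(\theta)\partial^r$ one has $\partial^j\cdot 1=0$ for $j\ge 1$ and $\theta\cdot 1=0$, so that $E\cdot 1=T_0(0)$, a constant depending only on the exponents. Hence $E^{(n)}(1)=T_0^{(n)}(0)$, where $T_0^{(n)}$ is the $T_0$-component of $E^{(n)}$, and the whole lemma reduces to identifying this component and checking it does not vanish.

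First I would pin down $T_0^{(n)}$. Since $E^{(n)}=E_6(s=n+1)$ and, by Proposition \ref{eqwithR6}, the accessory parameter $T_{10}$ enters only the coefficient $B_1$ of $T_1$ and never $T_0$, the $T_0$-part of $E^{(n)}$ coincides with that of $H_6$ at $s=n+1$, namely $T_0^{(n)}(\theta)=(\theta+n+1)(\theta+n+2)(\theta+n+3)\,B_0(\theta)$ with $B_0(\theta)=(\theta+e_7)(\theta+e_8)(\theta+e_9)$. Evaluating at $\theta=0$ gives $E^{(n)}(1)=(n+1)(n+2)(n+3)\,e_7e_8e_9$, which is manifestly a constant. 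Nonvanishing is then immediate: since $n\ge 0$ the factor $(n+1)(n+2)(n+3)$ never vanishes, and by the standing assumption $e_7,e_8,e_9\notin\mathbb{Z}$ each $e_j\neq 0$, so $B_0(0)=e_7e_8e_9\neq 0$.

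The only point requiring care — and what I regard as the main (minor) obstacle — is justifying that $E^{(n)}$ really is $E_6(s=n+1)$ in standard $(\theta,\partial)$-form, so that its $T_0$ may be read off from Proposition \ref{eqwithR6}. This is already secured by the differentiation symmetry / shift relation $E^{(n)}\circ\partial^n=\partial^n\circ E^{(0)}$ recorded above, which fixes $E^{(n)}$ as $E_6$ with $s$ raised by $n$. By contrast, a tempting shortcut — applying the shift relation directly to the constant $1$ — is a dead end, because $\partial^n\cdot 1=0$ forces both sides to vanish identically for $n\ge 1$ and yields no information; this is precisely why the argument must pass through the explicit $T_0(0)$ rather than through the recursion defining $E^{(n)}$.

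<br>

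In summary, the proof is short: reduce $E^{(n)}(1)$ to $T_0^{(n)}(0)$ by the $(\theta,\partial)$-form, use Proposition \ref{eqwithR6} to obtain $T_0^{(n)}(0)=(n+1)(n+2)(n+3)\,e_7e_8e_9$, and invoke $e_7,e_8,e_9\notin\mathbb{Z}$ together with $n\ge 0$ to conclude it is a nonzero constant.
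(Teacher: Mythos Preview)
Your approach is correct and arguably more direct than the paper's, but there is a small slip in identifying $T_0^{(n)}$. The operator $E^{(n)}$ is obtained from $E^{(0)}$ by applying the differentiation shift $n$ times, and (Theorem~\ref{symmetriesE6}) this shift moves not only $s\to s+1$ but also $e_7,e_8,e_9\to e_7+1,e_8+1,e_9+1$. Thus
\[
T_0^{(n)}(\theta)=(\theta+n+1)(\theta+n+2)(\theta+n+3)(\theta+e_7+n)(\theta+e_8+n)(\theta+e_9+n),
\]
and $E^{(n)}(1)=(n+1)(n+2)(n+3)\,B_0(n)$, not $(n+1)(n+2)(n+3)\,B_0(0)$. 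The conclusion survives unchanged, since $e_7,e_8,e_9\notin\mathbb{Z}$ forces $e_j+n\neq 0$ for every integer $n\ge 0$, hence $B_0(n)\neq 0$; but you should record the correct value.

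The paper takes a genuinely different route. Rather than read off $T_0^{(n)}(0)$, it writes $1=\partial^n(\tfrac{1}{n!}x^n)$ and uses the shift relation $E^{(n)}\partial^n=\partial^n E^{(0)}=\partial^{n+3}V$ to reduce the computation to $\partial^{n+3}V(\tfrac{1}{n!}x^n)$; only the top-degree term $\tfrac{1}{n!}B_0(n)x^{n+3}$ of $V(\tfrac{1}{n!}x^n)$ survives, giving $\tfrac{(n+3)!}{n!}B_0(n)$. So your observation that applying the shift relation to the constant $1$ is a dead end is right, but the paper's workaround---feeding in $x^n/n!$ instead---is precisely what makes the recursion usable. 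Your argument is shorter and uses only Proposition~\ref{eqwithR6}; the paper's has the minor advantage of working entirely with the fixed operator $V$ and never needing to track how the exponents of $E^{(n)}$ shift with $n$---which is exactly the point on which you slipped.
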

\begin{proof} The identity
  $$\begin{array}{ll}
    E^{(n)}(1)&=\ E^{(n)} \partial^n (\frac1{n!} x^n) 
= \partial^n E^{(0)} (\frac1{n!} x^n) 
= \partial^n \partial^3 V(\frac1{n!} x^n)\\[2mm]
&=\ \partial^{n+3}(\frac1{n!}B_0(\theta=n)x^{n+3}+\cdots)
=((n+3)!/n!)B_0(\theta=n) \end{array}$$
 asserts the claim.
\end{proof} %\hfill$\square$ \bigbreak 

\begin{lemma}\label{LetQ1Q2}
  Let $Q_1$, $Q_2$ be non-zero differential operators
  with rational function coefficients.
Assume $f:=Q_2(1)$ is a non-zero rational function, and $Q_1 Q_2(1)$ is a non-zero constant.
Then there exist differential operators $\tilde{Q}_1, \tilde{Q}_2$ such that
\begin{align*} 
\tilde{Q}_1 \circ \partial &=\partial \circ Q_1 \circ f, \\
\tilde{Q}_2 \circ \partial &=\partial \circ \frac1{f} \circ Q_2, \\
\tilde{Q}_1 \circ \tilde{Q}_2 \circ \partial &=\partial \circ Q_1 \circ Q_2.
\end{align*}
\end{lemma}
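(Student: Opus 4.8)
The plan is to reduce the whole statement to a single elementary fact about the ring $D=\mathbb{C}(x)[\partial]$: an operator $R\in D$ is right-divisible by $\partial$, that is $R=\tilde R\circ\partial$ for some $\tilde R\in D$, if and only if $R(1)=0$, where $R(1)$ denotes the result of applying $R$ to the constant function $1$. This holds because right-division of $R$ by the order-$1$ operator $\partial$ leaves an order-$0$ remainder $r_0\in\mathbb{C}(x)$, so that $R=\tilde R\circ\partial+r_0$; applying this identity to the constant $1$ and using $\partial(1)=0$ gives $R(1)=\tilde R(\partial(1))+r_0=r_0$. Hence the remainder is $r_0=R(1)$, and it vanishes exactly when $R(1)=0$. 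This criterion is what lets me \emph{define} $\tilde Q_1$ and $\tilde Q_2$ as quotients, and the two hypotheses of the lemma are precisely the two vanishing conditions I will need.

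First I would define $\tilde Q_1$ as the right quotient of $\partial\circ Q_1\circ f$ by $\partial$. To see that this quotient exists in $D$, I check the annihilation condition: $(\partial\circ Q_1\circ f)(1)=\partial\bigl(Q_1(f\cdot 1)\bigr)=\partial\bigl(Q_1(f)\bigr)$. Since $f=Q_2(1)$ by definition, we have $Q_1(f)=Q_1\bigl(Q_2(1)\bigr)=(Q_1 Q_2)(1)$, which is assumed to be a non-zero constant; therefore $\partial\bigl(Q_1(f)\bigr)=0$, and the division succeeds, yielding $\tilde Q_1\circ\partial=\partial\circ Q_1\circ f$. Next I would define $\tilde Q_2$ as the right quotient of $\partial\circ\frac1f\circ Q_2$ by $\partial$, checking $(\partial\circ\frac1f\circ Q_2)(1)=\partial\bigl(\tfrac1f\cdot Q_2(1)\bigr)=\partial\bigl(\tfrac1f\cdot f\bigr)=\partial(1)=0$, again using $Q_2(1)=f$. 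This gives $\tilde Q_2\circ\partial=\partial\circ\frac1f\circ Q_2$, establishing the first two displayed identities.

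The third identity then follows purely formally, by composing the first two and telescoping: $\tilde Q_1\circ\tilde Q_2\circ\partial=\tilde Q_1\circ(\partial\circ\frac1f\circ Q_2)=(\tilde Q_1\circ\partial)\circ\frac1f\circ Q_2=(\partial\circ Q_1\circ f)\circ\frac1f\circ Q_2=\partial\circ Q_1\circ Q_2$, where the last step uses that the multiplication operators satisfy $f\circ\frac1f=\mathrm{id}$ (legitimate since $f$ is a non-zero rational function). I do not anticipate a genuine obstacle here: the entire content of the lemma is the observation that the two hypotheses, $f=Q_2(1)$ and $(Q_1Q_2)(1)$ constant, are exactly the two instances of $R(1)=0$ required to make the two divisions well-defined. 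The only point demanding care is bookkeeping, namely verifying that each hypothesis is invoked at the correct division step and that the middle cancellation $f\circ\frac1f=\mathrm{id}$ is carried out between the two quotients.
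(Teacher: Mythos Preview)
Your proof is correct and follows essentially the same approach as the paper: verify that $\partial(Q_1(f))=\partial((Q_1Q_2)(1))=0$ and $\partial(\tfrac1f Q_2(1))=\partial(1)=0$ so that both right-hand sides are right-divisible by $\partial$, then compose to obtain the third identity. You have simply made explicit the divisibility criterion $R=\tilde R\circ\partial\iff R(1)=0$ that the paper uses implicitly.
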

\begin{proof}
Since $\partial(Q_1 (f)) = \partial(Q_1 Q_2(1))=0$
and $\partial(\frac1{f} Q_2(1)) = \partial(1)=0$, 
the right-hand sides of the above two first formulae are divisible from the right by $\partial$.
The last equation is obtained by the combination of first two.
\end{proof} %\hfill$\square$ \bigbreak 

We start by putting
$$Q_1^{(0)}:=\partial^3,\quad Q_2^{(0)}:=V=x^3(x-1)^3\partial^3+\cdots;$$
they satisfy $E^{(0)}=Q_1^{(0)}\circ Q_2^{(0)}$.
Apply Lemma \ref{LetQ1Q2} to
$$f=f^{(n)}:=Q_2^{(n)}(1),\quad Q_1=Q_1^{(n)},\quad Q_2=Q_2^{(n)},\quad Q_1\circ Q_2=E^{(n)}$$
to define  $Q_1^{(n+1)}$ and $Q_2^{(n+1)}$ inductively:
\begin{equation}\label{Q1Q2}
  \begin{aligned} 
 Q_1^{(n+1)}\circ \partial &=\partial\circ Q_1^{(n)}\circ f^{(n)},\\
 Q_2^{(n+1)}\circ \partial &=\partial\circ\frac1{f^{(n)}}\circ Q_2^{(n)},\\
 Q_1^{(n+1)}\circ Q_2^{(n+1)}\circ\partial&=\partial\circ Q_1^{(n)}\circ Q_2^{(n)}.
  \end{aligned}
\end{equation}%\end{align*}
Note that $Q_1^{(n)}\circ Q_2^{(n)}= E^{(n)}$, $Q_1^{(n+1)}\circ Q_2^{(n+1)}= E^{(n+1)}$,
and that $f^{(n)}$ is a non-zero rational function by Lemma~\ref{lemma:E(1)}.
Note also
$$Q_1^{(1)}=f^{(0)}\partial^3+\cdots,\quad \cdots,
\quad Q_1^{(n)}=f^{(0)}\cdots f^{(n-1)}\partial^3+\cdots,$$
$$Q_2^{(1)}=\frac{x^3(x-1)^3}{f^{(0)}}\partial^3+\cdots,\quad \cdots,
\quad Q_2^{(n)}=\frac{x^3(x-1)^3}{f^{(0)}\cdots f^{(n-1)}}\partial^3+\cdots.$$

We define the differential operator $P^{(n)}$ of order $n$ inductively by
$$P^{(n)}:=\partial\circ\frac1{f^{(n-1)}}P^{(n-1)}=\partial\circ\frac1{f^{(n-1)}}\circ \partial\circ\frac1{f^{(n-2)}}\circ\cdots\circ \partial\circ\frac1{f^{(1)}}\circ \partial\circ\frac1{f^{(0)}}.$$
Then by definition,  we have the following lemma:

\begin{lemma}\label{explicit1113} 
\begin{itemize}
\item[{\rm(1)}]
$Q_1^{(n)}\circ P^{(n)}=\partial^{n+3}$.
\item[{\rm(2)}]
${\rm Sol} (P^{(n)})$ is a subspace of $\langle 1,x,\dots,x^{n+2}\rangle$
of dimension $n$.
\item[{\rm(3)}]
The solution space of $Q_1^{(n)}$ is a $3$-dimensional subspace of $\mathbb{C}(x)$.
\end{itemize}
\end{lemma}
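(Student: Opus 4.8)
The plan is to deduce all three parts from the single identity (1), which itself follows by a short induction on $n$. For the base case $n=0$ the recursion for $P^{(n)}$ bottoms out at $P^{(0)}=\mathrm{id}$ (consistent with the fully expanded formula, whose rightmost factor is $\partial\circ\frac1{f^{(0)}}$), and $Q_1^{(0)}=\partial^3$, so $Q_1^{(0)}\circ P^{(0)}=\partial^3$. For the inductive step I would combine the defining relation $Q_1^{(n+1)}\circ\partial=\partial\circ Q_1^{(n)}\circ f^{(n)}$ from \eqref{Q1Q2} with the recursion $P^{(n+1)}=\partial\circ\frac1{f^{(n)}}\circ P^{(n)}$. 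Composing and cancelling $f^{(n)}\circ\frac1{f^{(n)}}$ gives
$$Q_1^{(n+1)}\circ P^{(n+1)}=\partial\circ Q_1^{(n)}\circ P^{(n)}=\partial\circ\partial^{n+3}=\partial^{n+4},$$
where the middle equality is the induction hypothesis. Here I use that each $f^{(n)}=Q_2^{(n)}(1)$ is a non-zero rational function (Lemma \ref{lemma:E(1)}), so the operators $\frac1{f^{(n)}}$ are legitimate.

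For (2) I would first read off from the construction that $P^{(n)}$ has order exactly $n$, its leading coefficient being $1/(f^{(0)}\cdots f^{(n-1)})\ne 0$; hence its solution space has dimension exactly $n$. The containment is then immediate from (1): if $P^{(n)}u=0$, then $\partial^{n+3}u=Q_1^{(n)}(P^{(n)}u)=0$, so $u\in\mathrm{Ker}\,\partial^{n+3}=\langle 1,x,\dots,x^{n+2}\rangle$. Thus $\mathrm{Ker}\,P^{(n)}$ is an $n$-dimensional subspace of the polynomials of degree $\le n+2$; in particular every solution is entire, so the local and global solution spaces coincide and the dimension count is unambiguous.

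For (3) I would invoke the factorization $\partial^{n+3}=Q_1^{(n)}\circ P^{(n)}$ furnished by (1). Writing $W:=\langle 1,x,\dots,x^{n+2}\rangle=\mathrm{Sol}(\partial^{n+3})$, any $w\in W$ satisfies $Q_1^{(n)}(P^{(n)}w)=\partial^{n+3}w=0$, so $P^{(n)}(W)\subseteq\mathrm{Sol}(Q_1^{(n)})$; and since $P^{(n)}$ has rational coefficients while $W$ consists of polynomials, $P^{(n)}(W)\subseteq\mathbb{C}(x)$. By (2) the restriction $P^{(n)}|_W$ has kernel of dimension $n$, so rank--nullity gives $\dim P^{(n)}(W)=(n+3)-n=3$. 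On the other hand $Q_1^{(n)}$ has order $3$ (leading term $f^{(0)}\cdots f^{(n-1)}\partial^3$), so $\dim\mathrm{Sol}(Q_1^{(n)})=3$; comparing dimensions yields $\mathrm{Sol}(Q_1^{(n)})=P^{(n)}(W)$, a $3$-dimensional space of rational functions.

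The whole argument is essentially bookkeeping once (1) is established. The one point requiring genuine care is the claim that $\mathrm{Ker}\,P^{(n)}$ has dimension \emph{exactly} $n$ rather than merely at most $n$: this rests on the non-vanishing of the leading coefficient $1/(f^{(0)}\cdots f^{(n-1)})$, which in turn rests on each $f^{(i)}$ being a non-zero rational function (Lemma \ref{lemma:E(1)}). With that in place the standard existence theorem for linear ordinary differential equations delivers a full $n$-dimensional solution space at any ordinary point, and the containment in $W$ promotes these local solutions to globally defined polynomials, making the dimension counts in (2) and (3) rigorous.
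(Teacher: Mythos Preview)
Your proof is correct and follows essentially the same approach as the paper. The paper's argument for (1) is the identical induction via $Q_1^{(n+1)}\circ\partial=\partial\circ Q_1^{(n)}\circ f^{(n)}$ and the recursion for $P^{(n)}$; for (2) the paper merely records $\mathrm{Ker}\,\partial^{n+3}=\langle 1,x,\dots,x^{n+2}\rangle$, and (3) is left implicit, so your rank--nullity argument simply spells out what the paper leaves to the reader.
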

\begin{proof}
(1) We use $P^{(n+1)} = \partial \circ \frac{1}{f^{(n)}} \circ P^{(n)}$,
then 
$$\begin{array}{ll}Q_1^{(n+1)}\circ P^{(n+1)} &= Q_1^{(n+1)}\circ\partial
  \circ \frac{1}{f^{(n)}} \circ P^{(n)}\\[2mm]
  & = \partial\circ Q_1^{(n)}\circ f^{(n)}\circ \frac{1}{f^{(n)}}
  \circ P^{(n)} =\partial\circ Q_1^{(n)}\circ P^{(n)}.\end{array}$$
(2) ${\rm Ker\ } \partial^{n+3} = \langle 1,x,\dots,x^{n+2}\rangle.$
\end{proof} % \hfill$\square$ \bigbreak 

We prepare another lemma:
\begin{lemma}\label{LLL}
Let $Q$ be a differential operator over $\mathbb{C}(x)$ of order three 
whose leading term is $\partial^3$,
such that the solution space is a $3$-dimensional vector space in $\mathbb{C}(x)$.
\begin{itemize}
\item[{\rm(1)}]
For linearly independent solutions  $h_1, h_2, h_3 \in \mathbb{C}(x)$, set
\[ \def\arraystretch{1.2}
\begin{array}{ll}
L_3&:=\partial-f_3, \quad f_3={h_3'}/{h_3},\ \quad{\rm put}\quad g_2 := L_3(h_2),\\
L_2 &:= \partial - f_2,\quad f_2={g'_2}/{g_2},\ \quad{\rm put}\quad g_1 := L_2\circ L_3(h_1), \\
L_1 &:= \partial - f_1,\quad f_1={g'_1}/{g_1}.\end{array}
\]
Then we have
\[
Q = L_1\circ L_2\circ L_3.
\]
\item[{\rm(2)}]
Conversely, if $Q$ has an expression $L_1\circ L_2\circ L_3$ such as
$$L_i=\partial - f_i(x),\quad f_i(x)\in\mathbb{C}(x) \quad(i=1,2,3),$$
then 
$$f_3=h_3'/h_3,\quad f_2=g_2'/g_2,\ g_2=L_3(h_2),\quad f_1=g_1'/g_1,\ g_1=L_2\circ L_3(h_1)$$ for some solutions $h_j$ $(i=3,2,1)$.
\end{itemize}

\end{lemma}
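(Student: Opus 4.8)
The plan is to build on two elementary facts: a first-order operator $\partial - h'/h$ annihilates $h$ and has a kernel of dimension exactly one, and a nonzero operator of order $d$ annihilates a $\mathbb{C}$-vector space of functions of dimension at most $d$. For part (1) I would first note that $L_3=\partial-f_3$ kills $h_3$, so it annihilates the line $\langle h_3\rangle$ and maps the solution space $V=\langle h_1,h_2,h_3\rangle$ onto the two-dimensional space $\langle L_3(h_1),g_2\rangle$, where $g_2=L_3(h_2)$. Next $L_2=\partial-g_2'/g_2$ kills $g_2$, so $L_2\circ L_3$ annihilates both $h_2$ and $h_3$ and maps $V$ onto the line $\langle g_1\rangle$, with $g_1=L_2\circ L_3(h_1)$. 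Finally $L_1=\partial-g_1'/g_1$ kills $g_1$, so $L_1\circ L_2\circ L_3$ annihilates all of $h_1,h_2,h_3$. Along the way one checks $g_2\neq 0$ and $g_1\neq 0$: if $g_2=0$ then $h_2\in\langle h_3\rangle$, and if $g_1=0$ then $h_1$ lies in the two-dimensional kernel of $L_2\circ L_3$ already spanned by $h_2,h_3$, each contradicting linear independence.

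To conclude part (1) I would observe that $L_1\circ L_2\circ L_3$ has order three with leading term $\partial^3$, exactly as $Q$, and that its kernel contains the three-dimensional space $V=\ker Q$. Hence $Q-L_1\circ L_2\circ L_3$ has order at most two yet annihilates a three-dimensional space of functions; by the dimension principle it is the zero operator, so $Q=L_1\circ L_2\circ L_3$.

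For the converse, part (2), I would run the flag argument backwards. Given $Q=L_1\circ L_2\circ L_3$ with $f_i\in\mathbb{C}(x)$, consider the filtration $\ker L_3\subseteq\ker(L_2\circ L_3)\subseteq\ker Q$, each term lying inside $\ker Q\subseteq\mathbb{C}(x)$ by hypothesis. The submultiplicativity $\dim\ker(AB)\le\dim\ker A+\dim\ker B$ (because $B$ maps $\ker(AB)$ into $\ker A$ with kernel $\ker B$), together with $\dim\ker Q=3$ and $\dim\ker L_i\le 1$, forces $\dim\ker L_3=1$ and $\dim\ker(L_2\circ L_3)=2$. I would then choose $h_3$ spanning $\ker L_3$, a vector $h_2\in\ker(L_2\circ L_3)\setminus\ker L_3$, and $h_1\in\ker Q\setminus\ker(L_2\circ L_3)$; these are linearly independent solutions in $\mathbb{C}(x)$. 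By construction $f_3=h_3'/h_3$, while $g_2:=L_3(h_2)$ is a nonzero element of $\ker L_2$ so $f_2=g_2'/g_2$, and $g_1:=L_2\circ L_3(h_1)$ is a nonzero element of $\ker L_1$ so $f_1=g_1'/g_1$, which is precisely the asserted description.

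The computations here are routine; the only genuine point requiring care is ensuring that the intermediate functions $g_1,g_2$, and the recovered solutions $h_1,h_2,h_3$ in part (2), are nonzero and actually lie in $\mathbb{C}(x)$ rather than in a larger differential extension. This is exactly where the standing hypothesis that the entire solution space of $Q$ already sits inside $\mathbb{C}(x)$ is indispensable, and it is the one place the argument uses more than the formal bookkeeping of first-order factors.
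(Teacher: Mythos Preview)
Your proof is correct and follows essentially the same approach as the paper: both arguments show in part (1) that $h_1,h_2,h_3$ are annihilated by $L_1\circ L_2\circ L_3$, and in part (2) both build the filtration $\ker L_3\subset\ker(L_2\circ L_3)\subset\ker Q$ of dimensions $1,2,3$ and pick the $h_i$ adapted to it. Your write-up is in fact more complete than the paper's, since you explicitly justify the passage from ``$L_1L_2L_3$ kills $h_1,h_2,h_3$'' to ``$Q=L_1L_2L_3$'' via the order/dimension argument and you verify that $g_1,g_2\neq 0$.
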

\begin{proof}\begin{itemize}
\item[{\rm(1)}] Easy to see that $h_3,h_2$ and $h_1$ solve $L_1\circ L_2\circ L_3$.
\item[{\rm(2)}]  Set
  \[ \def\arraystretch{1.2}
  \begin{array}{ll}W_3 &= \{ u \in \mathbb{C}(x) \mid L_1L_2L_3u=0 \},\\
  W_2 &:= \{ u \in\mathbb{C}(x) \mid L_2 L_3 u = 0 \},\\
  W_1 &:= \{ u \in \mathbb{C}(x) \mid L_3 u = 0\}.
  \end{array}\]
Then $W_1 \subset W_2 \subset W_3$ and $\dim W_i = i$ for $i=1,2,3$.
We take $h_3, h_2, h_1 $ so that
\[
\langle h_3 \rangle = W_1,
\quad
\langle h_2,h_3 \rangle = W_2,
\quad
\langle h_1, h_2,h_3 \rangle = W_3.
\]
\end{itemize}
% Then the differential operators of the assertion in both-hand sides has the same solution space and the same leading coefficient.
\end{proof}

Apply these lemmas to
$$Q=\frac{1}{f^{(0)}\cdots f^{(n-1)}}Q_1^{(n)},$$
and we have the conclusion.

\begin{prp}Define $f^{(n)}$, $Q_1^{(n)}$ and $Q_2^{(n)}$ by $\eqref{Q1Q2}$. Then $E_6(s=n+1)$ $(n=1,2,\dots)$ factors as $Q_1^{(n)}\circ Q_2^{(n)}$. For a basis $\{h_1,h_2,h_3\}$ of the solution space of $Q_1^{(n)}$, define
  the first-order operators $\{L_1,L_2,L_3\}$ as in Lemma $\ref{LLL}$. Then
  $$Q_1^{(n)}=f^{(0)}\cdots f^{(n-1)}L_1\circ L_2\circ L_3.$$
  Though these three operators  $L_1,L_2$ and $L_3$ are not uniquely determined,
  they are controlled by Lemma $\ref{LLL}$.
\end{prp}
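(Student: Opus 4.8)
The plan is to assemble the proposition from the four lemmas that immediately precede it: the factorization $E_6(s=n+1)=Q_1^{(n)}\circ Q_2^{(n)}$ is settled by induction on $n$, and the $L_1L_2L_3$-decomposition of $Q_1^{(n)}$ is obtained by a direct appeal to Lemma \ref{LLL}.

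First I would prove the factorization $E_6(s=n+1)=Q_1^{(n)}\circ Q_2^{(n)}$ by induction on $n$. The base case $n=0$ is the factorization $E^{(0)}=\partial^3\circ V$ recorded in \S \ref{fromG6toG3}. For the inductive step, I would observe that Lemma \ref{lemma:E(1)} guarantees $E^{(n)}(1)$ is a nonzero constant; since by hypothesis $E^{(n)}=Q_1^{(n)}\circ Q_2^{(n)}$, the hypotheses of Lemma \ref{LetQ1Q2} hold with $Q_1=Q_1^{(n)}$, $Q_2=Q_2^{(n)}$ and $f=f^{(n)}=Q_2^{(n)}(1)$, so the operators $Q_1^{(n+1)},Q_2^{(n+1)}$ defined by \eqref{Q1Q2} genuinely exist and satisfy the third identity there. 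Combining that identity with the differentiation symmetry of Theorem \ref{symmetriesE6}, in the form $E^{(n+1)}\circ\partial=\partial\circ E^{(n)}$, I get
$$Q_1^{(n+1)}\circ Q_2^{(n+1)}\circ\partial=\partial\circ Q_1^{(n)}\circ Q_2^{(n)}=\partial\circ E^{(n)}=E^{(n+1)}\circ\partial.$$
Because $D=\mathbb{C}(x)[\partial]$ is an integral domain, I may cancel the right factor $\partial$ to conclude $E^{(n+1)}=Q_1^{(n+1)}\circ Q_2^{(n+1)}$, closing the induction.

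Next I would reduce $Q_1^{(n)}$ to a monic third-order operator. Its leading $\partial$-coefficient is $f^{(0)}\cdots f^{(n-1)}$, as recorded just before the statement and as follows at once from the first line of \eqref{Q1Q2} by induction; hence $Q:=(f^{(0)}\cdots f^{(n-1)})^{-1}Q_1^{(n)}$ is an order-three operator over $\mathbb{C}(x)$ with leading term $\partial^3$. By Lemma \ref{explicit1113}(3) its solution space, which coincides with that of $Q_1^{(n)}$, is a three-dimensional subspace of $\mathbb{C}(x)$. These are precisely the hypotheses of Lemma \ref{LLL}; applying part (1) to the chosen basis $\{h_1,h_2,h_3\}$ produces the first-order operators $L_1,L_2,L_3$ with $Q=L_1\circ L_2\circ L_3$, whence $Q_1^{(n)}=f^{(0)}\cdots f^{(n-1)}\,L_1\circ L_2\circ L_3$, which is the asserted formula.

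Finally, the nonuniqueness clause follows from Lemma \ref{LLL}(2): every factorization of $Q$ into first-order factors $\partial-f_i$ arises from a complete flag $W_1\subset W_2\subset W_3$ of the solution space together with representatives $h_3,h_2,h_1$, so the admissible triples $L_1,L_2,L_3$ are exactly catalogued by such flags. I expect no serious obstacle: the only points needing care are the legitimacy of the recursion \eqref{Q1Q2} at every stage, which rests entirely on Lemma \ref{lemma:E(1)}, and the rationality of the solution space, which is Lemma \ref{explicit1113} — both already in hand. The single genuinely structural step is the cancellation of $\partial$ in the induction, valid because $D$ has no zero divisors; everything else is bookkeeping that chains the four lemmas together.
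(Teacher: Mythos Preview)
Your proposal is correct and follows essentially the same route as the paper: the factorization $E^{(n)}=Q_1^{(n)}\circ Q_2^{(n)}$ is obtained by induction via Lemma~\ref{LetQ1Q2} and the shift relation $E^{(n+1)}\circ\partial=\partial\circ E^{(n)}$ (the paper records this as ``Note that $Q_1^{(n)}\circ Q_2^{(n)}=E^{(n)}$'' just after \eqref{Q1Q2}), and the decomposition of $Q_1^{(n)}$ into first-order factors is exactly the paper's instruction to ``apply these lemmas to $Q=\frac{1}{f^{(0)}\cdots f^{(n-1)}}Q_1^{(n)}$''. One small wording point: $D=\mathbb{C}(x)[\partial]$ is noncommutative, so ``integral domain'' is slightly nonstandard --- but what you use, namely that $D$ has no zero divisors so that $\partial$ may be cancelled on the right, is correct.
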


\begin{remark}
  The three operators $L_1,L_2$ and $L_3$ have apparent singularities not only at the roots and the poles of $f^{(0)}\cdots f^{(n-1)}$ but also at the points depending on the choice of the basis $\{h_1,h_2,h_3\}$.% We do not know how to control them.
\end{remark}

\section{Shift operators of $H_5$}\label{E5}
%\nostcrule
\secttoc
We find shift operators and reducibility conditions for $H_5$.
Recall 
  \[H_5=H_5(e_1,\dots,e_8):=H_6(e_9=0)/\partial=x\overline{T}_0+\overline{T}_1+\overline{T}_2\partial +\overline{T}_3\partial ^2\]
where
\[\def\arraystretch{1.1} \begin{array}{rcl}
   \overline{T}_0 &=& (\theta-r+1)(\theta-r+2)(\theta-r+3)(\theta+e_7+1)(\theta+e_8+1), \\
   \overline{T}_1 &=& (\theta-r+1)(\theta-r+2)B_{51},\quad B_{51}:=B_1(e_9=0),\\
   \overline{T}_2 &=& (\theta-r+2)B_{52},\quad B_{52}:=B_2(e_9=0), \\
   \overline{T}_3 &=& -(\theta+3-e_1)(\theta+3-e_2)(\theta+3-e_3).
\end{array} \]
Its Riemann scheme is
$$ \left(\begin{array}{ccccc}
  0&1&e_1-1&e_2-1&e_3-1\\
  0&1&e_4-1&e_5-1&e_6-1\\ 
    1-r&2-r&3-r&e_7+1&e_8+1\end{array}\right),\qquad r=-s=(e_1+\cdots+e_8-6)/3.$$
This equation has $(x\to1-x)$-symmetry and adjoint symmetry
    but has no $(x\to1/x)$-symmetry nor differentiation symmetry
    as are summarized in \S 2.1 and \S 2.2.

\subsection{Shift operators of $H_5$, S-values  and reducibility conditions}\label{E5Shift}

\begin{thm}\label{shiftopE5}
  Equation $H_5$ has shift operators relative to the shifts
  of blocks $\{e_1,e_2,e_3\}$ and  $\{e_4,e_5,e_6\}$.
  Explicit form is tabulated in \S \ref{AppenE5}.
\end{thm}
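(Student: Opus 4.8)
The plan is to descend the shift operators of $H_6$ to $H_5$ through the factorization $H_6(e_9=0)=H_5\circ\partial$ recalled in \S\ref{G6G5}. The two blocks $\{e_1,e_2,e_3\}$ and $\{e_4,e_5,e_6\}$ are precisely the blocks $\bm{e}_1$ and $\bm{e}_4$ of $H_6$, and the shifts $sh_1,sh_2$ of Theorem \ref{shopH6} leave $e_9$ untouched; hence they are compatible with the specialization $e_9=0$. Moreover, by the $(x\to1-x)$-symmetry of $H_5$ (which swaps the first two rows of its Riemann scheme, i.e. $\{e_1,e_2,e_3\}\leftrightarrow\{e_4,e_5,e_6\}$) it suffices to treat one block, say $\bm{e}_4$, and obtain the operators for the other by this symmetry.

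For the descending shift I would take the relation $H_6(sh_2(e),u-\alpha_2)\circ P_{0-0}=Q_{0-0}\circ H_6(e,u)$ of Theorem \ref{shopH6} and set $e_9=0$. Since $sh_2$ fixes $e_9$, both sides factor through $\partial$, giving $H_5(sh_2(e))\circ\partial\circ P_{0-0}=Q_{0-0}\circ H_5(e)\circ\partial$. Now I commute $\partial$ past $P_{0-0}=\theta+s$: using $\partial\theta=(\theta+1)\partial$ one gets $\partial\circ(\theta+s)=(\theta+s+1)\circ\partial$, so $\partial\circ P_{0-0}=P'\circ\partial$ with $P'=\theta+s+1$ (and likewise $\partial\circ P_{-00}=((x-1)\partial+s+1)\circ\partial$). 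Because $D=\mathbb{C}(x)[\partial]$ is a domain, right multiplication by $\partial$ is injective, so cancelling $\partial$ from the right yields $H_5(sh_2(e))\circ P'=Q_{0-0}\circ H_5(e)$, exhibiting $P'$ as a descending block shift operator of $H_5$ with second member $Q'=Q_{0-0}$.

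For the ascending shift the same device is applied to the inverse operator $P_{0+0}$ of \S\ref{P_0p0}, starting from $H_6(\bm{e}_4+\bm{1})\circ P_{0+0}=Q_{0+0}\circ H_6(e)$, which exists by Proposition \ref{inverseofshiftop}. After setting $e_9=0$ this reads $H_5(\bm{e}_4+\bm{1})\circ\partial\circ P_{0+0}=Q_{0+0}\circ H_5(e)\circ\partial$, and the whole argument goes through provided $\partial\circ P_{0+0}$ is right-divisible by $\partial$. An operator $R$ is right-divisible by $\partial$ exactly when it annihilates constants, i.e. $R(1)=0$; since $(\partial\circ P_{0+0})(1)=\partial\bigl(P_{0+0}(1)\bigr)$, it suffices that $P_{0+0}(1)$ be a constant. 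Here the hypothesis $e_9=0$ is decisive: in $P_{0+0}=x^3P_{-3}+x^2P_{-2}+xP_{-1}+P_0$ the coefficients $P_{-2},P_{-1}$ carry an overall factor $\theta$ and so satisfy $P_{-2}(0)=P_{-1}(0)=0$, while $P_{-3}(0)=s(s+1)B_0(0)=s(s+1)e_7e_8e_9$ vanishes precisely because $B_0(0)=e_7e_8e_9=0$ at $e_9=0$; only $P_0(0)=-Sv_{0-0}/(s-1)$ survives, so $P_{0+0}(1)$ is constant. Hence $\partial\circ P_{0+0}=P'\circ\partial$, and cancelling $\partial$ gives the ascending operator for $H_5$.

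The hard part is exactly this right-divisibility step for the order-five ascending operators: for the first-order descending operators the commutation $\partial\circ P=P'\circ\partial$ is automatic, whereas for $P_{0+0}$ (and its companion $P_{+00}$ of \S\ref{P_p00}, obtained by the $e_9$-preserving change $x\to1-x$) it rests on the vanishing of the leading coefficient on constants, which is what $e_9=0$ secures. Granting existence, the operators are unique up to a constant by Proposition \ref{uniquenessofshiftop} together with the generic irreducibility of $H_5$ (Proposition \ref{generic_is_irred}), so the operators produced this way are the ones tabulated in \S\ref{AppenE5}; since the accessory parameter of $H_5$ is $T_{10}|_{e_9=0}$ (\S\ref{TheTenTable}), the accompanying shifts of the accessory parameter are the restrictions $\alpha_j|_{e_9=0}$.
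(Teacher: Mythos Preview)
Your approach is essentially the paper's: descend the block-shift relations of $H_6$ through the factorization $H_6(e_9=0)=H_5\circ\partial$, commute $\partial$ to the right, and cancel. You even supply a justification the paper leaves implicit, namely why $\partial\circ P$ is right-divisible by $\partial$ in the ascending case (your computation that $P_{0+0}(1)$ is constant precisely when $e_9=0$ is correct and to the point).

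One step is missing. For the ascending shifts your $P'$ with $\partial\circ P_{0+0}=P'\circ\partial$ has order $5$, the same as $H_5$, so it does not yet meet Definition~\ref{DefShift} (order strictly less than $n$), nor does it match the order-$4$ operators tabulated in \S\ref{AppenE5}. The paper fixes this by Euclidean division: write $P'=A\circ H_5+P_2$ with ${\rm order}(P_2)<5$; then $H_{5sh}\circ P_2=(Q-H_{5sh}\circ A)\circ H_5$ is the desired shift relation, and $P_2$ is the tabulated operator. You should add this reduction. (Your appeal to Proposition~\ref{uniquenessofshiftop} is also a bit loose, since that proposition is stated for equations with the accessory parameter specialized; for $H_5$ with free accessory parameter, uniqueness requires that both candidate operators hit the same $ap'$, which here follows because both come from the same $H_6$-relation.)
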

\noindent
\textsc{Notation}: $P_{\pm0}$ denotes the shift operator of $H_5$
for the shift ${{\bm e}_1}\pm{\bm 1}$,
and $P_{0\pm}$ for  ${{\bm e}_4}\pm{\bm 1}$.

\begin{prp}\label{SvalueE5} The S-values for the shifts of blocks:
\[ \def\arraystretch{1.1} \begin{array}{ll}
Sv_{-0}&=P_{+0}({\bm e}_1-1)\circ P_{-0}=(r-1)(r-2)(e_4-r)(e_5-r)(e_6-r),\\
Sv_{0-}&=P_{0+}({\bm e}_4-1)\circ P_{0-}= -(r-1)(r-2)(e_1-r)(e_2-r)(e_3-r).
\end{array}\]
\end{prp}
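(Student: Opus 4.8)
The plan is to reproduce, for the fifth-order operator $H_5$, the explicit inverse-operator construction that was carried out for $H_6$ in \S\ref{P_0p0}. The ascending operator $P_{-0}$ for the shift ${\bm e}_1\to{\bm e}_1-{\bf 1}$ is of order one (tabulated in \S\ref{AppenE5}), so its inverse $P_{+0}$ is of order four. By definition $Sv_{-0}=P_{+0}({\bm e}_1-{\bf 1})\circ P_{-0}$ modulo $H_5$, and the general theory of \S\ref{GenShiftS}, together with the irreducibility of $H_5$ for generic exponents (Proposition \ref{generic_is_irred}) and Schur's lemma, guarantees this composition reduces to a scalar. The whole task is therefore to exhibit that scalar in factored form.

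First I would construct $P_{+0}$ explicitly, exactly as $P_{0+0}$ was built for $H_6$: choose a monomial $c(x)$ so that the orders match and solve
$$P_{-0}({\bm e}_1\to{\bm e}_1-{\bf 1})\circ P_{+0}=c(x)\,H_5+Sv_{-0}$$
for the order-four operator $P_{+0}$ and the emergent constant $Sv_{-0}$. Passing to the $(\theta,\partial)$-form of $H_5=x\overline{T}_0+\overline{T}_1+\overline{T}_2\partial+\overline{T}_3\partial^2$ turns this into a chain of polynomial identities in $\theta$, coefficient block by coefficient block. The constant $Sv_{-0}$ is then isolated by evaluating the last identity at the single value of $\theta$ that annihilates the linear factor coming from $P_{-0}$, precisely as the substitution $\theta=1-s$ was used in \S\ref{P_0p0}. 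This produces $Sv_{-0}=(r-1)(r-2)(e_4-r)(e_5-r)(e_6-r)$, a product of five linear factors, matching the order of $H_5$.

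For the companion identity I would invoke the $(x\to1-x)$-symmetry of $H_5$ recorded in \S\ref{E5}, which interchanges $x=0$ with $x=1$ and hence the blocks ${\bm e}_1\leftrightarrow{\bm e}_4$. Applying it to the relation defining $Sv_{-0}$ transports it to the relation defining $Sv_{0-}$ with $(e_1,e_2,e_3)\leftrightarrow(e_4,e_5,e_6)$, the overall sign being fixed by the behaviour of $P_{-0}$ and $P_{0+}$ under $\partial\to-\partial$; this yields $Sv_{0-}=-(r-1)(r-2)(e_1-r)(e_2-r)(e_3-r)$. As an independent check, both answers must be compatible with the $H_6$ S-values of Proposition \ref{SvalueE6} via the factorization $H_6(e_9=0)=H_5\circ\partial$ of \S\ref{G6G5}: setting $e_9=0$ and $s=-r$ gives $Sv_{-00}|_{e_9=0}=r\,Sv_{-0}$ and $Sv_{0-0}|_{e_9=0}=r\,Sv_{0-}$, the extra factor $r=-s$ being accounted for by the constant solution of $H_6(e_9=0)$ that $\partial$ annihilates.

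The main obstacle I expect is the explicit construction and $(\theta,\partial)$-reduction of the order-four inverse operator $P_{+0}$: matching coefficients against a fifth-order equation is bulky (this is where the authors rely on Maple), and one must pin down the normalization of $P_{+0}$ so that the emergent constant is literally the S-value rather than a nonzero scalar multiple of it. Once the normalizations of $P_{-0},P_{+0},P_{0-},P_{0+}$ are fixed, the symmetry step and the $H_6$-consistency check are comparatively routine.
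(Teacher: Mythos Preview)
Your plan is sound and is exactly the route the paper takes for $H_6$ in \S\ref{P_0p0}--\S\ref{P_p00}; the paper does not spell out a separate proof of Proposition~\ref{SvalueE5} beyond listing the operators in \S\ref{AppenE5}, so reproducing the $H_6$ argument is the intended method. Two small corrections are worth making before you carry it out.

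First, the displayed identity you write computes $Sv_{+0}$, not $Sv_{-0}$: if $P_{+0}$ sends ${\rm Sol}(H_5(e))$ to ${\rm Sol}(H_5({\bm e}_1+{\bf 1}))$, then the return map is $P_{-0}$ evaluated at ${\bm e}_1+{\bf 1}$ (not ${\bm e}_1-{\bf 1}$), and the constant that drops out is $Sv_{+0}(e)$. You then get $Sv_{-0}(e)=Sv_{+0}({\bm e}_1-{\bf 1})$ by Proposition~\ref{twoSvalues}. This is exactly how $Sv_{0-0}$ is obtained from $Sv_{0+0}$ in the $H_6$ case.

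Second, reverse the order of your two computations. The $(\theta,\partial)$-form trick of \S\ref{P_0p0} hinges on the descending operator being $\theta+{\rm const}$, so that left-multiplication commutes nicely with powers of $x$. For $H_5$ this holds for $P_{0-}=x\partial+1-r=\theta+1-r$, but \emph{not} for $P_{-0}=(x-1)\partial+1-r$. So compute $Sv_{0+}$ (hence $Sv_{0-}$) first by solving $(\theta-r)\circ P_{0+}=x^2H_5+{\rm const}$ in $(x,\theta)$-form, read off the constant by substituting $\theta=r$ in the degree-zero block, and \emph{then} transport to $Sv_{-0}$ via the $(x\to1-x)$-symmetry, tracking the sign exactly as in \S\ref{P_p00}. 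Your $H_6$ consistency check $Sv_{-00}|_{e_9=0}=r\cdot Sv_{-0}$ is correct and is a good sanity check on that sign.
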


\begin{thm} \label{redcondE5}
  If one of $r, e_1-r, \dots, e_6-r$ is an integer,
  then the equation $H_5$ is reducible.
  \end{thm}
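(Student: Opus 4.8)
The plan is to combine the two S-value computations of Proposition~\ref{SvalueE5} with the propagation of reducibility along shift relations (Theorem~\ref{red_atoap}). Writing
$$Sv_{-0}=(r-1)(r-2)(e_4-r)(e_5-r)(e_6-r),\qquad Sv_{0-}=-(r-1)(r-2)(e_1-r)(e_2-r)(e_3-r),$$
I first note that these vanish precisely on the ``base loci'' $r\in\{1,2\}$ and $e_j-r=0$ $(j=1,\dots,6)$, and that on each such locus Proposition~\ref{Sred} already forces $H_5$ to be reducible. The task is therefore to spread this reducibility from the base loci to every configuration in which one of $r,e_1-r,\dots,e_6-r$ is an \emph{arbitrary} integer. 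The vehicle is that, by Theorem~\ref{shiftopE5}, $H_5(e)$ and its block shifts are linked by shift relations $H_5(sh(e))\circ P=Q\circ H_5(e)$, along which reducibility transfers in both directions by Theorem~\ref{red_atoap}.

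The key bookkeeping is the action of the block shifts on the seven quantities. Since $r=(e_1+\cdots+e_8-6)/3$, the shift ${\bm e}_1\to{\bm e}_1+{\bm 1}$ sends $r\mapsto r+1$, fixes $e_1-r,e_2-r,e_3-r$, and decreases each of $e_4-r,e_5-r,e_6-r$ by $1$; symmetrically, ${\bm e}_4\to{\bm e}_4+{\bm 1}$ sends $r\mapsto r+1$, fixes $e_4-r,e_5-r,e_6-r$, and decreases $e_1-r,e_2-r,e_3-r$ by $1$. In particular each of $r,e_1-r,\dots,e_6-r$ moves only by integers under every block shift, so the hypothesis ``one of them is an integer'' is preserved along the chains of shift relations, and any integer value of a given quantity can be driven to one of its base values by a suitable power of a single block shift.

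Carrying this out gives three cases. If $r\in\mathbb{Z}$, I apply ${\bm e}_1\to{\bm e}_1\pm{\bm 1}$ the requisite number of times to reach an equation with $r=1$; there $Sv_{-0}=0$, so that equation is reducible, and transporting back along the shift relations by Theorem~\ref{red_atoap} makes the original $H_5$ reducible. If $e_j-r\in\mathbb{Z}$ for some $j\in\{1,2,3\}$, I instead use ${\bm e}_4\to{\bm e}_4\pm{\bm 1}$, which moves $e_j-r$ by $\mp1$ while keeping it integral, to reach $e_j-r=0$, where the factor $(e_j-r)$ of $Sv_{0-}$ vanishes. The remaining case $e_j-r\in\mathbb{Z}$ for some $j\in\{4,5,6\}$ is handled the same way using ${\bm e}_1\to{\bm e}_1\pm{\bm 1}$ (and is in any event the $x\to1-x$-image of the previous case, so the $(x\to1-x)$-symmetry of $H_5$ could be used to avoid repeating the computation). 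In every case Proposition~\ref{Sred} yields reducibility at the endpoint and Theorem~\ref{red_atoap} returns it to $H_5(e)$.

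The main obstacle is not the final algebra but the legitimacy of the propagation step: one must ensure that the shift relations of Theorem~\ref{shiftopE5} persist as operator identities with non-vanishing $P$ and $Q$ all along the chain, so that Theorem~\ref{red_atoap} genuinely applies at each link, and that the endpoint configurations, although constrained by one linear relation among the exponents, stay otherwise generic enough for the S-value factorizations of Proposition~\ref{SvalueE5} and the no-logarithmic assumptions to remain valid. Granting this, the argument is exactly parallel to the proof of Theorem~\ref{redcondE6} for $H_6$.
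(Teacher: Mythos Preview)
Your argument is correct and follows the same strategy the paper uses throughout: vanishing of an S-value (Proposition~\ref{SvalueE5}) gives reducibility at a base locus via Proposition~\ref{Sred}, and then Theorem~\ref{red_atoap} transports reducibility along the chain of shift relations furnished by Theorem~\ref{shiftopE5}. The paper does not spell out a separate proof of Theorem~\ref{redcondE5}; it is meant to follow from the S-values and the general shift machinery in exactly the way you describe, parallel to the one-line justification given for Theorem~\ref{redcondE6}.
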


\noindent
{Proof of Theorem \ref{shiftopE5}}:
Let $sh$ be a shift of blocks ${\bm e}_i\to{\bm e}_i\pm{\bm1}$ $(i=1,4)$,
and $H_{6sh}$  be $H_6$ with shift $sh$. We have the shift relation
  \[ H_{6sh}\circ P = Q\circ H_6.\]
Let us see what happens if we put $e_9=0$ in this relation. We have
$$H_6(e_9=0)=H_5\circ \partial\quad{\rm and}
\quad H_{6sh}(e_9=0)=H_{5sh}\circ \partial,
$$
hence
$$H_{5sh}\circ \partial\circ P = Q\circ H_5\circ \partial.$$
Define $P_1$ by 
\[\partial\circ P = P_1\circ \partial,\]
then we get
\[H_{5sh}\circ P_1 = Q\circ H_5.\]
Divide $P_1$ by $H_5$ on the right:
\[P_1=A\circ H_5+P_2,\quad {\rm deg\ }(P_2)<5={\rm deg\ }(H_5),\]
and we have the shift relation 
\[\pushQED{\qed} H_{5sh}\circ P_2= (Q-H_{5sh}\circ A)\circ H_5.
\qedhere\popQED \]

\begin{example} Shift operator $P_{+0}$ for the shift $sh:{\bm e}_1\to{\bm e}_1+1.$\par\noindent
  In this case, $H_{5sh}=H_5({\bm e}_1+1)$
  and we have $\partial\circ P_{+00}(e_9=0) = P_1\circ \partial$
  for some $P_1$. Let
\[P_1=A\circ H_5+ P_2\quad{\rm and}\quad Q_2=Q_{+00}(e_9=0)-H_{5sh}\circ A.\]
Then, we have the shift relation: $H_{5sh}\circ P_2 = Q_2 \circ H_5$,
where $P_2= x^3(x-1)^4(r+1)\partial^4+\cdots$ and 
$Q_2$ similar. Hence, $P_2=P_{+0}$ and $Q_2=Q_{+0}$ are obtained
as listed in \S \ref{AppenE5}.
\end{example}

\begin{example} Shift operator $P_{-0}$ for the shift $sh:{\bm e}_1\to{\bm e}_1-1.$\par
\par\noindent
In this case, for $H_6$,
\[P_{-00}=(x-1)\partial-r,\quad Q_{-00}=(x-1)\partial+3-r,\]
and $H_{5sh}=H_5({\bm e}_1-1)$.
Defining $P_2$ and $Q_2$ as above,
we have the shift relation  $H_{5sh}\circ P_2 = Q_2 \circ H_5$, where
$$P_2=P_{-0}:=(x-1)\partial+1-r, \quad Q_2=Q_{-0}:=(x-1)\partial+3-r.$$
\end{example}

For the shifts ${\bm e}_4\to{\bm e}_4\pm 1$, we have similar results.
Refer to  \S \ref{AppenE5}.

\begin{remark} The shift relations of $H_6$, which include shift of $e_9$, produce no new relations of $H_5$.
\end{remark}

\subsection{Reducible cases of $H_5$}\label{E5Fact}
When $H_5$ is reducible as in Theorem \ref{redcondE5},
the equation $H_5$ factorizes and $H_4$ and $H_3$ appear as factors:
% \par\smallskip\noindent

\begin{enumerate}[leftmargin=*]
  \renewcommand{\labelenumi}{\arabic{enumi})}
\item When $e_1-r=1$, {\it i.e.,} $e_1=(e_2+\cdots+ e_8-3)/2$,
  we find that $H_5$ factors of type [1,4],
  and the factor [4] has Riemann scheme as
   $$\left(\begin{array}{ccccc}
x=0:&  0 &1 &e_2-1  &e_3-1\\
x=1:&  0 &e_4-1 &e_5-1&e_6-1\\
x=\infty:&  e_7+1&e_8+1&7/2-e_{28}/2&9/2-e_{28}/2\end{array}\right),\quad e_{28}=e_2+\cdots+e_8.$$
After exchanging $x=1$ and $x=\infty$,
we multiply $(x-1)^{7/2-e_{28}/2}$ from the right. Renaming the exponents as
  $$0,\ 1,\ \epsilon_1,\ \epsilon_2;\quad0,\ 1,\ \epsilon_3,\ \epsilon_4;\quad s,\ \epsilon_5,\ \epsilon_6,\ \epsilon_7,$$
we can check that this coincides with $H_4(\epsilon)$,
which is defined in \S 1, %\ref{E4ap1},
and has $7$ $(=8-1)$ independent parameters.
\medskip

\item When $r=2$, $H_5$ factors as $[3,1,1]$. The factor $[1,1]$ is
  just $\partial^2$ and the Riemann scheme of
  $x^{-e_3-2}(x-1)^{-e_6-2}\circ[3]\circ x^{e_3-3}(x-1)^{e_6-3}$ is
  $$\left(\begin{array}{cccc}
x=0:&  0  &e_1-e_3  &e_2-e_3\\
x=1:&  0  &e_4-e_6  &e_5-e_6\\
x=\infty:&  e_3+e_6-3 &e_3+e_6+e_7-3& 9-e_1-e_2-e_4-e_5-e_7\end{array}\right).$$
   Renaming these exponents as 
  $$0,\ \epsilon_1,\ \epsilon_2;\quad 0,\ \epsilon_3,\ \epsilon_4;\quad s,\ \epsilon_5,\ \epsilon_6,$$
   we can check that this coincides with $H_3(\epsilon)$,
   which already appeared as a factor of $H_6$ (\S\ref{factorH6}), 
   and is defined in \S 1. This has $6$ $(=7-1)$ independent parameters.
\end{enumerate}

\noindent Summing up, we have the following proposition.
\begin{prp}\label{factorE5} $1)$ For $i=1,\dots,6$, 
  $$\begin{array}{ccccccc}
    e_i-r= &\cdots&-1&0      &    1  &2&\cdots\\
    & \cdots&[4,1] &[4,1]A0&[1,4]A0&[1,4] &\cdots
  \end{array}$$
  When $e_i+s=0,1$, the factor $[4]$ is {essentially} $H_4$.
%  defined in Section $\ref{E4ap1}$.
    \par\noindent
 $2)$   $$\begin{array}{cccccccc}
    r=& \cdots&-1    & 0&     1&2&3&\cdots\\
    &\cdots&[1,1,3] &[1,1,3]A0&[1,3,1]A0&[3,1,1]A0&[3,1,1]
    &\cdots\end{array}$$
    When $r=0,1,2$, the factor $[3]$ is {essentially} $H_3$.% defined in Section $\ref{E3}$.
\end{prp}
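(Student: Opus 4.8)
The plan is to pin down one explicit factorization in each of the two families and then transport it along the shift operators of \S\ref{E5Shift}, using the S-values of Proposition \ref{SvalueE5} to locate precisely where the type is permitted to jump. Since the coefficients of $H_5$ are symmetric in $\{e_1,e_2,e_3\}$ and in $\{e_4,e_5,e_6\}$, and since the $(x\to1-x)$-symmetry interchanges the blocks ${\bm e}_1\leftrightarrow{\bm e}_4$, it suffices to prove part~1 for $i=1$; the cases $i=2,\dots,6$ then follow by relabelling. Keeping all remaining exponents generic, the two S-values
$$Sv_{-0}=(r-1)(r-2)(e_4-r)(e_5-r)(e_6-r),\qquad Sv_{0-}=-(r-1)(r-2)(e_1-r)(e_2-r)(e_3-r)$$
vanish only at the isolated integers singled out below, so by Proposition \ref{FactorType} the factorization type is constant between consecutive vanishing points and can change only there.

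For part~1 I would take item~1) as the base case: at $e_1-r=1$ the equation is of type $[1,4]A0$ with $[4]$ essentially $H_4$. The shift ${\bm e}_4\to{\bm e}_4-{\bf1}$, realized by the order-$1$ operator $P_{0-}$ (the image of $P_{-0}$ under $x\to1-x$), fixes $e_4-r,e_5-r,e_6-r$ and sends $e_1-r$ to $e_1-r+1$; its S-value $Sv_{0-}$ is nonzero for every integer $e_1-r\ge1$, so Proposition \ref{FactorType} propagates $[1,4]$ to all $e_1-r\ge1$, apparent singular points of the order-$1$ factor appearing for $e_1-r\ge2$ (whence the drop of the $A0$ label). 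The partner case $[4,1]A0$ at $e_1-r=0$ I would obtain from item~1) by the adjoint symmetry of $H_5$ (whose effect on the Riemann scheme is recorded in \S\ref{GenAdjAdj}), which sends $e_1-r$ to $1-(e_1-r)$ and reverses the order of factors, turning $[1,4]A0$ into $[4,1]A0$; Proposition \ref{FactorType} then propagates $[4,1]$ to all $e_1-r\le0$. The unique jump, across the zero of $Sv_{0-}$ at $e_1-r=0$, is consistent with these two explicitly identified cases and is moreover forced by Proposition \ref{H_1PQH_0H_1red}: in the relation $H_5(e')\circ P_{0-}=Q_{0-}\circ H_5(e)$ both $P_{0-}$ and $Q_{0-}$ have order $1$, and the vanishing S-value puts us in the branch where the extreme factor is carried from one end to the other.

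For part~2 I would start from item~2): at $r=2$ the equation is of type $[3,1,1]A0$ with $[3]$ essentially $H_3$. Here the lowering operator $P_{-0}$ has order $1$ and decreases $r$ by $1$, and $Sv_{-0}$ vanishes exactly at $r=2$ and $r=1$. Descending from $r=2$, the vanishing of $Sv_{-0}$ at $r=2$ together with Proposition \ref{H_1PQH_0H_0red} moves the trailing order-$1$ factor to the front, changing $[3,1,1]$ into $[1,3,1]$ at $r=1$; repeating at the second zero $r=1$ changes $[1,3,1]$ into $[1,1,3]$ at $r=0$. The self-adjoint value $r=1$ gives $[1,3,1]A0$, and the adjoint symmetry (which sends $r$ to $2-r$) carries the $[3,1,1]A0$ at $r=2$ to $[1,1,3]A0$ at $r=0$, confirming the two outer $A0$ cases. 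Finally Proposition \ref{FactorType}, applied with $Sv_{-0}\neq0$, extends $[3,1,1]$ to all $r\ge2$ and $[1,1,3]$ to all $r\le0$, the $A0$ label being lost once apparent singular points appear, exactly as in the reducible Gauss case of \S\ref{E2Fact} and Proposition \ref{apparentsing}.

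The routine but lengthy part will be the verification of the base cases, i.e.\ matching the Riemann schemes of the order-$4$ and order-$3$ factors to $H_4$ and $H_3$, already carried out in items~1) and 2). The genuine obstacle is the behaviour at the S-value zeros: one must rule out the ``filtration-preserving'' alternative of Propositions \ref{H_1PQH_0H_0red}--\ref{H_1PQH_0H_1red} and confirm the ``extreme-factor'' alternative. This is guaranteed because a vanishing S-value forces the relevant order-$1$ shift operator to have a nonzero kernel on the solution space; being a one-dimensional monodromy-invariant subspace, that kernel must be the solution space of the unique extreme order-$1$ factor, so the operator is a constant multiple of that factor and the type jumps as claimed.
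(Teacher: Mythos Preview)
Your approach is essentially the paper's: the paper simply writes ``Summing up, we have the following proposition'' after the two explicit base cases (item~1 for $e_1-r=1$, item~2 for $r=2$), leaving the propagation along the shift lattice to the general machinery. Your write-up makes that machinery explicit and invokes the adjoint symmetry in the right places, so the overall strategy is correct and matches the paper.

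There is, however, one genuine imprecision in your last paragraph. You assert that the one-dimensional kernel of the order-$1$ shift operator ``must be the solution space of the \emph{unique} extreme order-$1$ factor''. At $r=2$ the trailing $[1,1]$ is $\partial^2$, so every nonzero element of $\langle 1,x\rangle$ spans a one-dimensional monodromy-invariant subspace; there is no unique order-$1$ right factor. Concretely, $P_{-0}\big|_{r=2}=(x-1)\partial-1$ kills $x-1$, not the solution $1$ of the ``obvious'' last factor $\partial$. What is true is that the kernel of $P_{-0}$, being a one-dimensional invariant subspace, determines \emph{some} order-$1$ right divisor $F_t'=\partial-u'/u$ of $H_5$, and then $P_{-0}=g\,F_t'$ with $g\in\mathbb{C}(x)$ (not a constant). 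Proposition~\ref{H_1PQH_0H_0red} should be read with this $\mathbb{C}(x)$-multiple interpretation (as in the proof of Theorem~\ref{red_atoap}, case~(3)), and applied to the refactored decomposition ending in $F_t'$. With that correction your jump argument goes through; the same caveat applies at $r=1$, where $P_{-0}=(x-1)\partial$ kills the constant $1$, and one must again refactor before invoking the proposition. The self-adjointness remark at $r=1$ alone does not pin down $[1,3,1]$ (it only forces a palindromic type \emph{or} two distinct types), so you do need the corrected jump argument there.
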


\subsection{Table of shift operators of $H_5$}\label{AppenE5} %, E5PQSdata}
\noindent {\sc Important convention}: {\it For a polynomial $U$ of $\theta$,
we denote by $U[k]$ the polynomial $U(\theta=\theta+k)$; say, $U[-2]$ for
$U(\theta=\theta-2)$. For a polynomial $B$ depending on parameters, $B_s$ denotes
the polynomial $B$ with shifted parameters in question.
}
\renewcommand{\labelenumi}{(\ref{AppenE5}.\arabic{enumi})}
\begin{enumerate}
\item $[-0]\quad({\bm e}_1-{\bm 1} =[e_1-1,e_2-1,e_3-1,r-1])$
\[P_{-0} = (x-1)\partial +1-r,\qquad Q_{-0} = (x-1)\partial +3-r.\]
$[+0]\quad({\bm e}_1+{\bm 1}=[e_1+1,e_2+1,e_3+1,r+1])$
\[\begin{array}{rcl}
P_{+0}&=&x^3P_{nnn}+x^2P_{nn}+xP_n+P_0+P_1\partial,\\ [1mm]
Q_{+0}&=&x^3Q_{nnn}+x^2Q_{nn}+xQ_n+Q_0+Q_1\partial,
\end{array}\]
\[
\hskip-36pt
\begin{array}{lcp{10.8cm}}\hline\noalign{\smallskip}  
  P_{nnn} &=&$(\theta-r+1)(\theta-r+2)(\theta+e_7+1)(\theta+e_8+1)$, \\
  P_{nn} &=& $-(\theta-2r+3)(\theta-r+1)(\theta+e_7+1)(\theta+e_8+1)+(\theta+1-r)B_{51}$,  \\
  P_{n} &=& $r(r-1)(\theta+e_7+1)(\theta+e_8+1) - (\theta-2r+2)B_{51} + \theta B_{52}[-1]$,  \\
  P_{0}&=& $-(\theta+r-1)(\theta+1-e_1)(\theta+1-e_2)(\theta+1-e_3)-(\theta-r+1)B_{52}[-1]$,  \\
  P_{1}&=& $(\theta+2-e_1)(\theta+2-e_2)(\theta+2-e_3)$,  \\
%\end{array} \]  % \hline
%\[  \begin{array}{lcp{10.8cm}}
  Q_{nnn} &=& $(\theta-r+3)(\theta-r+4)(\theta+e_7+3)(\theta+e_8+3)$, \\
  Q_{nn} &=& $-(\theta-2r+2)(\theta-r+3)(\theta+e_7+2)(\theta+e_8+2)+(\theta-r+3)B_{51s}[2]$, \\
  Q_{n} &=& $r(r-1)(\theta+e_7+1)(\theta+e_8+1) - (\theta-2r+2)B_{51s}[1] + (\theta+3)B_{52s}[1]$, \\
  Q_{0} &=& $-(\theta+r+1)(\theta+2-e_1)(\theta+2-e_2)(\theta+2-e_3)-(\theta-r+1)B_{52s}$, \\
  Q_{1}&=& $ (\theta+2-e_1)(\theta+2-e_2)(\theta+2-e_3)$, \\
\noalign{\smallskip}  \hline \noalign{\smallskip}  
&& $B_{51s}=B_{51}({\bm e}_1+{\bm 1}),\quad B_{52s}:=B_{52}({\bm e}_1+{\bm 1})$.
\end{array} \]

\vskip0.5pc
\item $[0-]\quad({\bm e}_4-{\bm 1}=[e_4-1,e_5-1,e_6-1,r-1])$
\[  P_{0-} = x\partial +1-r, \qquad Q_{0n}= x\partial+3-r.\]
  $[0+]\quad({\bm e}_4+{\bm 1}=[e_4+1,e_5+1,e_6+1,r+1])$
\[ \begin{array}{rcl}
  P_{0+}&=&x^3P_{nnn}+x^2P_{nn}+xP_n+P_0+P_1\partial,\\ [1mm]
  Q_{0+}&=&x^3Q_{nnn}+x^2Q_{nn}+xQ_n+Q_0+Q_1\partial,
\end{array} \]  
\[
\hskip-36pt
\begin{array}{lcp{10.8cm}}\hline\noalign{\smallskip}  
  P_{nnn} &=& $(\theta-r+1)(\theta-r+2)(\theta+e_7+1)(\theta+e_8+1)$,  \\ 
  P_{nn}  &=& $(\theta-r+1)B_{51}$,  \\ 
  P_{n}  &=&  $\theta B_{52}[-1]$,  \\ 
  P_{0}  &=&  (see\ below)  \\
  Q_{nnn}  &=& $(\theta-r+3)(\theta-r+4)(\theta+e_7+3)(\theta+e_8+3)$, \\
  Q_{nn}  &=& $(\theta-r+3)B_{51s}[2]$, \\
  Q_{n}  &=&  $(\theta+2)B_{52s}[1]$, \\
  Q_{0} &=& $P_{0}[2]$ \\
\noalign{\smallskip}    \hline \noalign{\smallskip}  
&& $B_{51s}=B_{51}({\bm e}_4+1),\quad B_{52s}=B_{52}({\bm e}_4+1)$.
\end{array}
\]
\[
\begin{array}{lcl}
  P_{0} &=& -\theta^4-(r+2-e_1-e_2-e_3)\theta^3-(r^2+(2-e_1-e_2-e_3)r-e_1 \\
  &&\qquad -e_2-e_3+e_1e_2+e_1e_3+e_2e_3)\theta^2 -(r^3+(2-e_1-e_2-e_3)r^2 \\
  &&\quad -(e_1+e_2+e_3-e_1e_2-e_1e_3-e_2e_3)r-2+e_1\\
  &&\quad +e_2+e_3-e_1e_2e_3)\theta-(r-1)(r-e_1+1)(r+1-e_2)(r+1-e_3).
\end{array}
\]

\end{enumerate}

\section{Shift operators of $H_4$ }\label{E4ap1}
%\nostcrule
\secttoc
In this section, we study the equation $H_4$.
As is stated in Proposition \ref{factorE5},
this equation appears as a factor of $H_5$, when $e_1-r=1$. It is also obtained from $H_3$ via middle convolution: practically, express $\partial\circ H_3$ as a linear combination of $\theta^i\partial^j$ ($0\le i+j\le4$) and replace $\theta$ by $\theta-u$.
%$$\partial^s\circ\partial \circ H_3\circ\partial^{-s}.$$

\subsection{A shift operator of $H_4$}
The equation $H_4=H_4(e_1,\dots,e_7)$ is defined in \S\ref{TheTenTable}.
Its $(x,\partial)$-form is as follows:
  $$H_4=x^2(x-1)^2\partial^4+\cdots+p_0,\quad p_0= e_5e_6e_7e_8.$$
It is easy to check that 
\[H_4(e') \circ \partial = \partial\circ H_4(e),
\quad e'=(e_1-1,\dots,e_4-1,e_5+1,e_6+1,e_7+1),
\]
which, in particular, implies $H_4$ has differentiation symmetry.
Thus, $\partial$ is the shift operator for the shift $e\to e'.$
Set $R=x^2(x-1)^2\partial^3+p_3\partial^2+p_2\partial+p_1$. Then we have
  \[R\circ \partial = H_4-p_0 \equiv -p_0\quad{\rm mod}\ H_4.\]
  This implies that $R$ gives the inverse of the map
  $\partial:{\rm Sol}(H_4(e)\longrightarrow {\rm Sol}(H_4(e'))$,
  and that the corresponding S-value is $p_0$.

\begin{prp}\label{E4red}If one of
  $$e_5,\ e_6,\ e_7,\ e_8(=s=4-(e_1+\cdots+e_7))$$ is an integer,
  then the equation $H_4$ is reducible.
  \end{prp}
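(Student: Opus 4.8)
The plan is to recycle, in dimension four, exactly the S-value mechanism already used to prove the reducibility criteria for $H_6$ (Theorem~\ref{redcondE6}) and $H_5$ (Theorem~\ref{redcondE5}). The essential data are already in hand: the differentiation symmetry $H_4(e')\circ\partial=\partial\circ H_4(e)$ exhibits $\partial$ as the shift operator for the shift $e\to e'=(e_1-1,\dots,e_4-1,e_5+1,e_6+1,e_7+1)$; the operator $R=x^2(x-1)^2\partial^3+p_3\partial^2+p_2\partial+p_1$ furnishes its inverse with $R\circ\partial=H_4-p_0\equiv -p_0\pmod{H_4}$; and the associated S-value is therefore $-p_0=-e_5e_6e_7e_8$. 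The first step is to record the immediate consequence of Proposition~\ref{Sred}: whenever this S-value vanishes, i.e.\ when one of $e_5,e_6,e_7,e_8$ equals $0$, both $H_4(e)$ and its shift $H_4(e')$ are reducible.

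Next I would verify the compatibility of the shift with the four exponents at infinity. Under $e\to e'$ the exponents $e_5,e_6,e_7$ increase by $1$ by construction, while the dependent exponent $e_8=s=4-(e_1+\cdots+e_7)$ satisfies $e_8'=4-\sum_{i=1}^{7}e_i'=e_8+1$, since $e_1,\dots,e_4$ each drop by $1$ and $e_5,e_6,e_7$ each rise by $1$. Hence the shift simultaneously translates the whole block $(e_5,e_6,e_7,e_8)$ by $(+1,+1,+1,+1)$; in particular it preserves the integrality of each individual exponent and moves it through all integers.

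The conclusion then follows by a one-dimensional descent. Suppose one of $e_5,e_6,e_7,e_8$, say $e_k$, is an integer $n$. Applying the shift $e\to e'$ (or its inverse $R$) exactly $|n|$ times produces a configuration $\hat e$ in which $\hat e_k=0$, so that its S-value $\hat e_5\hat e_6\hat e_7\hat e_8$ vanishes and $H_4(\hat e)$ is reducible by the first step. Each consecutive pair in this chain is linked by the shift relation $H_4(e')\circ\partial=\partial\circ H_4(e)$, so Theorem~\ref{red_atoap} propagates reducibility in both directions along the chain; iterating it back from $\hat e$ to $e$ shows that $H_4(e)$ itself is reducible. Alternatively, one may phrase this through the remote S-value of Proposition~\ref{remoteS}, which is the telescoping product of the individual S-values and vanishes precisely when some factor $e_k+j$ hits $0$.

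The only real care needed is bookkeeping rather than computation: one must confirm that the shift relation and its inverse $R$ are available at every node of the chain, which they are since $\partial$ and $R$ are given explicitly and unconditionally, and that Theorem~\ref{red_atoap} may be applied at each step irrespective of whether intermediate equations are reducible. No new estimate or construction is required; the $H_6$ and $H_5$ arguments are reproduced almost verbatim, with the single differentiation shift playing the role previously played by the block shifts.
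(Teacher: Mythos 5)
Your proposal is correct and is essentially the paper's own (implicit) argument: the paper proves this proposition by exhibiting $\partial$ and $R$ as the shift operator and its inverse with S-value $\pm p_0=\pm e_5e_6e_7e_8$, after which Proposition~\ref{Sred} gives reducibility when some $e_k$ ($k=5,\dots,8$) vanishes, and Theorem~\ref{red_atoap} propagates it along the chain of differentiation shifts, which translate the block $(e_5,e_6,e_7,e_8)$ by $(+1,+1,+1,+1)$. Your bookkeeping of the dependent exponent $e_8=s$ under the shift and the two-directional use of Theorem~\ref{red_atoap} match what the paper leaves to the reader.
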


We could not find other shift operator than $\partial$.

\subsection{Reducible cases of $H_4$}\label{E4ap1Red} 

\begin{prp}\label{E4redE3}
\[\def\arraystretch{1.1} \begin{array}{cccccccc}
e_5,\dots,e_8=\ & \cdots   &-1    & 0     & 1     & 2        &\cdots\\
  &   [31]    & [31] &[31]A0&[13]A0 &[13]&  [13]  
\end{array}
\]
  In particular, when $e_7=0,1$, we have
\[\def\arraystretch{1.1} \begin{array}{lcl}H_4(e_7=0)&=&
    H_3(e_1-1,\dots,e_4-1,e_5+1,e_6+1)\circ\partial,\\
    H_4(e_7=1)&=& \partial\circ H_3(e).
\end{array}\]
\end{prp}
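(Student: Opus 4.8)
The plan is to reduce the whole table to the two boundary factorizations at $e_7=0$ and $e_7=1$, and then to propagate these along the shift $e\to e'=(e_1-1,\dots,e_4-1,e_5+1,e_6+1,e_7+1)$ for which $(\partial,\partial)$ is an order-$1$ shift operator. I would work throughout in the $(\theta,\partial)$-form $H_4=\mathcal{T}_0+\mathcal{T}_1\partial+\mathcal{T}_2\partial^2$ of \S\ref{TheTenTable}, where $\mathcal{T}_0=(\theta+e_5)(\theta+e_6)(\theta+e_7)(\theta+e_8)$ and $\mathcal{T}_2=(\theta-e_1+2)(\theta-e_2+2)$. When $e_7=0$ the factor $\theta=x\partial$ appears in $\mathcal{T}_0$, so that $\mathcal{T}_0=\tilde g(\theta)\,\theta=x\,\tilde g(\theta+1)\,\partial$ with $\tilde g(\theta)=(\theta+e_5)(\theta+e_6)(\theta+e_8)$ (using $f(\theta)x=x f(\theta+1)$); hence the entire operator is right-divisible by $\partial$ and $H_4(e_7=0)=V\circ\partial$ with $V=x\,\tilde g(\theta+1)+\mathcal{T}_1+\mathcal{T}_2\partial$ of order $3$. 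Comparing $V$ with the $(\theta,\partial)$-form $H_3=xS_n+S_0+S_1\partial$, the coefficient of $x$ gives $\tilde g(\theta+1)=(\theta+e_5+1)(\theta+e_6+1)(\theta+e_8+1)=S_n$ and the coefficient of $\partial$ gives $\mathcal{T}_2=(\theta-e_1+2)(\theta-e_2+2)=S_1$, both matching $H_3$ with $b=(e_1-1,\dots,e_4-1,e_5+1,e_6+1)$ and $b_7=e_8+1$; this is the claimed factor.

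The remaining, purely computational, step is to check $\mathcal{T}_1=S_0$: both are cubics in $\theta$ with leading term $-2\theta^3$, and equating the lower coefficients is a routine polynomial identity which also fixes the correspondence of the accessory parameters $\mathcal{T}_{10}\leftrightarrow a_{00}$. For $e_7=1$ I would proceed by adjoint: the adjoint symmetry of $H_4$ sends $e_7\mapsto 1-e_7$, so the $e_7=1$ operator is $(H_4(e_7=0))^*$ up to the relabelling $e_i\mapsto 1-e_i$; taking the adjoint of $H_4(e_7=0)=H_3(\cdots)\circ\partial$ and using $\partial^*=-\partial$ together with the adjoint symmetry of $H_3$ yields $H_4(e_7=1)=\partial\circ H_3(e)$. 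Alternatively, since $(\partial,\partial)$ is a shift operator for $e\to e'$ and $\partial$ is exactly the right factor in $H_4(e_7=0)=V\circ\partial$, Proposition \ref{H_1PQH_0H_0red} (the case ``$P$ is a constant times $F_t$'') immediately gives $H_4(e_7=1)=\partial\circ V$, i.e. the type $[31]$ at $e_7=0$ flips to $[13]$ at $e_7=1$.

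To obtain the full table I would iterate the order-$1$ shift relation $H_4(e')\circ\partial=\partial\circ H_4(e)$, reading it forward with Proposition \ref{H_1PQH_0H_0red} (to pass from a factorization of $H_4(e)$ to one of $H_4(e')$ when $e_7$ increases) and backward with Proposition \ref{H_1PQH_0H_1red} (when $e_7$ decreases). Starting from the two $A0$ factorizations just established: for $e_7\ge1$ the left factor stays $\partial$ and one is in the ``otherwise'' case, so the type $[13]$ is preserved while, by Propositions \ref{apparentsing} and \ref{polynomSol}, apparent singular points off $\{0,1,\infty\}$ accumulate for $e_7\ge2$; symmetrically, for $e_7\le0$ the type $[31]$ persists with apparent singularities appearing for $e_7\le-1$. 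This reproduces the $[31]/[13]$ pattern and shows $A0$ holds precisely at $e_7=0,1$. Finally, since $e_5,e_6,e_7,e_8$ are the four generic exponents at $x=\infty$ and enter $\mathcal{T}_0$ symmetrically, the statement for $e_7$ transfers verbatim to $e_5,e_6,e_8$. I expect the main obstacle to be the middle-coefficient identification $\mathcal{T}_1=S_0$ (equivalently, pinning down the accessory-parameter correspondence), together with the careful bookkeeping of the apparent singular points needed to justify the $A0$ labels exactly at the two boundary values.
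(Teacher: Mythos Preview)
Your proposal is correct and closely parallel to the paper's argument. For the two boundary cases the paper simply observes that $H_4(e_7=1)$ is left-divisible by $\partial$ (since $\mathcal T_0$ then contains the factor $\theta+1=\partial x$) and that $H_4(e_7=0)$ is right-divisible by $\partial$, and identifies the order-$3$ factor with $H_3$ by reading off its Riemann scheme; it does not spell out the table, leaving that to the general shift machinery. Your treatment of $e_7=0$ is essentially identical to the paper's, while for $e_7=1$ you take an indirect route (adjoint, or the ``$P=F_t$'' clause of Proposition~\ref{H_1PQH_0H_0red}) instead of the paper's direct left-division; both work, and your argument has the virtue of making explicit why the type flips from $[31]$ to $[13]$. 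Your propagation of the table via Propositions~\ref{H_1PQH_0H_0red}--\ref{H_1PQH_0H_1red} is exactly how the analogous tables for $H_6$ are obtained (Propositions~\ref{E6red_e9} and~\ref{prp1113}), so this fills in what the paper leaves implicit. The only point that would require more care to make rigorous is the claim that $A0$ fails for $|e_7|$ large: appealing to Propositions~\ref{apparentsing} and~\ref{polynomSol} is the right idea, but one should check in each step that the apparent singular points actually appear (i.e.\ that the relevant polynomial solution has nonzero degree); the paper does not prove this either.
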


\begin{proof} When $e_7=1$, $H_4$ factors as $[\partial, F_1]$.
  The local exponents of $F_1=x^2(x-1)^2\partial^3+\cdots$ are
$$[0,e_1,e_2],\ [0,e_3,e_4],\ [e_5,e_6,3-e_1-\cdots-e_6].$$
$F_1$ coincides with $H_3$ without modification. 
%Refer to Section \ref{E3}  for $H_3$.
\par\noindent
When $e_7=0$, $H_4$ factors as $[F_0,\partial]$. The local exponents of $F_0=x^2(x-1)^2+\cdots$ are
$$[0,e_1-1,e_2-1],\ [0,e_3-1,e_4-1],\ [e_5+1,e_6+1,5-e_1-\cdots-e_6],$$
and  $F_0=H_3(e_1-1, \dots, e_4-1, e_5+1, e_6+1)$.
\end{proof} % \hfill$\square$

\newpage
\bibliographystyle{amsalpha}

\bigskip

\noindent 
Yoshishige Haraoka

Josai University, Sakado 350-0295, Japan

haraoka@kumamoto-u.ac.jp

%Supported by the JSPS grant-in-aid for scientific research B, No.20H01810

\medskip \noindent
Hiroyuki Ochiai

Department of Mathematics, Kyushu University, Fukuoka 819-0395, Japan 

ochiai@imi.kyushu-u.ac.jp

\medskip \noindent 
Takeshi Sasaki

Kobe University, Kobe 657-8501, Japan 

yfd72128@nifty.com

\medskip \noindent 
Masaaki Yoshida 

Kyushu University, Fukuoka 819-0395, Japan 

myoshida1948@jcom.home.ne.jp
\end{document}